\newtheorem{theorem}{Theorem}[section]
\newtheorem{remark}[theorem]{Remark}
\newtheorem{definition}[theorem]{Definition}
\newtheorem{lemma}[theorem]{Lemma}
\newtheorem{proposition}[theorem]{Proposition}
\newtheorem{conjecture}[theorem]{Conjecture}
\newtheorem{corollary}[theorem]{Corollary}
\newtheorem{example}[theorem]{Example}
\newtheorem{question}[theorem]{Question}
\newcommand\C{{\mathbb{C}}}
\def\Sym{\mathop{\rm Sym}\nolimits}
\def\cE{{\mathcal E}}
\def\cG{{\mathcal G}}
\def\cH{{\mathcal H}}
\def\cF{{\mathcal F}}
\begin{document}
\title[]{Numerical and Kodaira dimensions of cotangent bundles}

\author{Fr\'ed\'eric Campana}
\address{Universit\'e Lorraine \\
 Institut Elie Cartan\\
Nancy \\ }

\email{frederic.campana@univ-lorraine.fr}



\date{\today}

\begin{abstract} We conjecture the equality of the numerical and Kodaira dimensions $\nu_1^*(X)$ and $\kappa_1^*(X)$ for the cotangent bundle of compact K\"ahler manifolds $X$, generalising the classical case of the canonical bundle. We show or reduce it to the classical case of the canonical bundle for some peculiar manifolds: among them, the rationally connected ones, or resolutions of varieties with klt singularities and trivial first Chern class, in which case we show that $\nu_1^*(X)=\kappa_1^*(X)=q'(X)-dim(X)$, where $q'(X)$ is the maximal irregularity of a finite \'etale cover of $X$. The proof rests on the Beauville-Bogomolov decomposition, and a direct computation for smooth models of quotients $A/G$ of complex tori by finite groups. We conjecture that these equalities hold true, much more generally, when $X$ is `special'. The invariant $\kappa_1^*$ was already introduced and studied by Fumio Sakai in \cite{Sak}, the particular case of the preceding conjecture when $\kappa_1^*(X)=-dim(X)$ was introduced and studied in \cite{HP20}. 

\

R\'ESUM\'E:Nous conjecturons l'\'egalit\'e entre les dimensions num\'erique et de Kodaira $\nu_1^*(X)$ et $\kappa_1^*(X)$ pour le fibr\'e cotangent des vari\'et\'es K\"ahl\'eriennes compactes $X$, g\'en\'eralisant le cas classique du fibr\'e canonique.Nous la d\'emontrons ou la r\'eduisons au cas classique du fibr\'e canonique pour certaines classes de vari\'et\'es, parmi lesquelles: les vari\'et\'es rationnellement connexes, ainsi que les mod\`eles lisses des vari\'et\'es \`a singularit\'es klt et  premi\`ere classe de chern triviale, pour lesquelles nous montrons que $\nu_1^*(X)=\kappa_1^*(X)=q'(X)-dim(X)$, o\`u $q'(X)$ est l'irregularit\'e maximale des rev\^etements \'etales de $X$. La preuve repose sur la d\'ecomposition de Bogomolov-Beauville, et un calcul direct pour les mod\`eles lisses des quotients $A/G$ de tores complexes par un groupe fini. Nous conjecturons que ces \'egalit\'es restent vraies, bien plus g\'en\'eralement, lorsque $X$ est `sp\'eciale'. L'invariant $\kappa_1^*$ a \'et\'e introduit et \'etudi\'e auparavant par Fumio Sakai dans \cite{Sak}, et l'\'egalit\'e  $\nu_1^*(X)$ and $\kappa_1^*(X)$ conjectur\'ee et \'etudi\'ee lorsque $\kappa_1^*(X)=-dim(X)$ dans \cite{HP20}. 

\end{abstract}

\maketitle

\tableofcontents

\section{Introduction: Cotangent dimensions.}

Let $X$ be a connected compact complex manifold of dimension $n$, and $L$ a line bundle on $X$. Two invariants of $L$, its numerical and  Kodaira-Iitaka dimensions, are classically defined\footnote{$\nu(X,L):=min\{k\in \Bbb Z\vert \forall D,\exists C(D)>0\vert h^0(X,mL+D)\leq C(D).m^k$, $m \to +\infty\}$, $\forall D\geq 0$; $\kappa$ is defined similarly with $D=0$.}: $$\nu(X,L)\geq \kappa(X,L)\in \{-\infty,0,\dots,n\}.$$ Both are preserved by surjective pullbacks. We consider here the case when $X$ is projective, or compact K\"ahler. The equivalent invariants $\nu^*(X,L),\kappa^*(X,L)$ are also sometimes used: they coincide with $\nu(X,L),\kappa(X,L)$, except when these take the value $-\infty$, in which case their assigned value is $-1$.

\begin{remark}\label{rnu=k}
In general, we have $\kappa(X,L)<\nu(X,L)$ (see Example \ref{nu>k}). There are however cases when we have equality:

1.If $\nu(X,L)=n$, then $\kappa(X,L)=n$ (we skip the easy proof). 

Consequently:

2. If $\kappa(X,L)=(n-1)$, then $\nu(X,L)=(n-1)$.
\end{remark}

\smallskip

The most important invariants of bimeromorphic geometry are the canonical bundle $L:=K_X$, and $\kappa(X):=\kappa(X,K_X), \nu(X):=\nu(X,K_X)$. 

\smallskip

A central conjecture, with far-reaching consequences, is: 

\begin{conjecture}\label{ConjK} If $X$ is compact K\"ahler, one has: $\kappa(X)=\nu(X)$.
 \end{conjecture}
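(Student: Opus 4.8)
The plan is to reduce Conjecture~\ref{ConjK} to the Abundance Conjecture (in its numerical form) via the (K\"ahler) Minimal Model Program, and then to attack abundance by induction on $n=\dim X$ together with a case analysis on the value of $\nu(X)$.

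\emph{Reduction to $K_X$ nef.} Since both $\nu(X,K_X)$ and $\kappa(X,K_X)$ are bimeromorphic invariants and are preserved under divisorial contractions and flips, I would first run an MMP on $X$. If $X$ is uniruled then $\kappa(X)=\nu(X)=-\infty$ and there is nothing to prove, so assume $X$ is not uniruled; then, granting the MMP, one reaches a minimal model $X'$ bimeromorphic to $X$ with $K_{X'}$ nef, and $\kappa(X)=\kappa(X')$, $\nu(X)=\nu(X',K_{X'})$. For a nef line bundle one has $\nu(X',K_{X'})=\max\{k: c_1(K_{X'})^k\not\equiv 0\}$, so the conjecture becomes: \emph{if $K_X$ is nef, then $\kappa(X)=\nu(X)$.}

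\emph{Case analysis on $\nu:=\nu(X,K_X)$.} If $\nu=n$, then $K_X$ is big and $\kappa(X)=n$ by the base-point-free theorem --- the easy extreme already recorded in the first part of Remark~\ref{rnu=k}. If $\nu=0$, then $K_X\equiv 0$ and one must show $K_X$ is torsion, so that $\kappa(X)=0$; this uses the Beauville--Bogomolov decomposition of a manifold with vanishing first Chern class together with the structure of numerically trivial line bundles, where Simpson's theory of local systems enters. The intermediate range $0<\nu<n$ is the heart of the problem: the strategy is (i) to establish nonvanishing, $\kappa(X)\geq 0$; (ii) to pass to a resolution of the Iitaka fibration $f\colon X\dashrightarrow Y$ of some $|mK_X|$, whose general fibre $F$ has $\kappa(F)=0$; and (iii) to apply the canonical bundle formula $K_X\sim_{\bQ} f^*\!\left(K_Y+B_Y+M_Y\right)$ so as to transfer the equality $\kappa=\nu$ to a generalised log pair on the lower-dimensional base $Y$, closing the argument by induction on dimension.

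\emph{Where the difficulty lies.} Each of these three pillars is only partially available, and the obstacles are substantial rather than cosmetic. First, in the compact K\"ahler (non-projective) setting the MMP, Kawamata--Viehweg vanishing, and the base-point-free theorem are established only in low dimension, so beyond that one must first exploit the Albanese map and the Beauville--Bogomolov machinery to descend to the projective case --- precisely the kind of reduction the body of this paper carries out for the cotangent bundle. Second, the nonvanishing step $\kappa(X)\geq 0$ for $K_X$ nef (indeed for $K_X$ merely pseudoeffective) is itself a major open conjecture, known only through special mechanisms: positivity of direct image sheaves, abundance of local systems, or dimension $\leq 3$. Third, even granting nonvanishing, the inductive step through the canonical bundle formula requires the semiampleness of the moduli part $M_Y$ and an abundance statement for generalised log pairs, neither of which is known in general, so the induction does not close. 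In short, the proposed route is the expected one, but its ingredients --- K\"ahler MMP, nonvanishing, and inductive abundance via the canonical bundle formula --- remain theorems only in part; Conjecture~\ref{ConjK} is currently known for $n\leq 3$ and open beyond.
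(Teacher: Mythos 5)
The statement you were asked about is Conjecture~\ref{ConjK} itself: the paper offers no proof of it, and indeed never claims one. It is stated as a central open problem (the numerical abundance conjecture for $K_X$) and is only ever \emph{assumed} in the sequel, e.g.\ in Propositions~\ref{punr} and~\ref{exRC}, where uniruledness and rational connectedness are deduced conditionally from it via \cite{MiMo86} and \cite{BDPP}. So there is no ``paper proof'' to compare yours against.

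Your proposal, by its own admission, is a strategy survey rather than a proof, and the gaps you flag are exactly the ones that make it a proposal and not an argument: (i) the nonvanishing step $\kappa(X)\geq 0$ for $K_X$ nef (or merely pseudoeffective) is itself an open conjecture in dimension $\geq 4$, and without it the entire case $0<\nu<n$ has no Iitaka fibration to induct along; (ii) even granting nonvanishing, the inductive step through the canonical bundle formula requires semiampleness (or at least an abundance statement) for the moduli part $M_Y$ of a generalised pair on the base, which is open; and (iii) in the compact K\"ahler non-projective setting the MMP and base-point-free machinery you invoke in the reduction to $K_X$ nef are available only in low dimension, so even the first reduction is conditional. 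None of these can be patched by the techniques appearing in this paper: the tools used here (Beauville--Bogomolov decomposition, the explicit analysis of torus quotients $A/G$, the filtration arguments of Theorem~\ref{tRQ}) address the cotangent-bundle invariants $\nu_1,\kappa_1$ for special classes of manifolds and presuppose, rather than supply, Conjecture~\ref{ConjK} where it is needed. In short: your outline is the standard expected route and your own assessment of its status (known for $n\leq 3$, open beyond) is accurate, but it does not constitute a proof, and the paper contains none either.
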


If $E$ is a vector bundle of rank $r>0$ on $X$, let $\pi:P_E:=\Bbb P(E)\to X$ be the bundle of hyperplanes of $E$ over $X$ and $L_E:=\mathcal{O}_{P_E}(1)$.

\begin{definition} In this situation, define\footnote{The invariant $\lambda(X,E)$ defined in \cite{Sak} thus coincides with $\kappa^*(X,E)$.}: 

$\nu(X,E):=\nu(P_E,L_E)-(r-1)$ and $\kappa(X,E):=\kappa(P_E,L_E)-(r-1)$. Both take values in $\{-\infty, -(r-1),\dots,n\}$.

We shall also use the equivalent invariants:

 $\nu^*(X,E):=\nu^*(P_E,L_E)-(r-1)$ , $\kappa^*(X,E):=\kappa^*(P_E,L_E)-(r-1)$, which thus coincide with $\nu(X,E), \kappa(X,E)$ respectively, except that the value $-\infty$ is replaced by $-r$. They take values in $\{-r,-(r_1),\dots,n\}$.
 
  These starred invariants behave better under products.

We say that $E$ is not pseudoeffective if $L_E$ is not pseudoeffective, that is: if $\nu(X,E)=-\infty$.
\end{definition}

Substracting $(r-1)=dim(P_E)-dim(X)$ is a natural normalisation. Indeed:  it does not change anything when $r=1$, $\kappa(X,T\otimes L)=\kappa(X,L)$ and $\nu(X,T\otimes L)=\nu(X,L)$ when $T$ is a trivial bundle, $L$ is any line bundle. Finally: $\kappa(X,E)=n$ if and only if $E$ is big (ie: if so is $L_E$).

\begin{example}\label{nu>k} In general, $\nu(E)\geq \kappa(E)$, the inequality can be strict. Let, for example, $X$ be complex projective of dimension $n>0$, and let $0\to \mathcal{O}_X\to E\to \mathcal{O}_X\to 0$ be a non-split extension. Then $\kappa(X,E)=-1$, and $\nu(X,E)=0$ (proof in \ref{exnu>k}).\end{example}

\begin{remark}\label{rnu=k'} From Remark \ref{rnu=k} we deduce that $\nu(X,E)=\kappa(X,E)$ if either $\nu(X,E)=n$, or if $\kappa(X,E)=(n-1)$. 
\end{remark}

\smallskip

$\bullet$  The cases $E=\Omega^p_X,p>0,$ are of central interest for the bimeromorphic classification. We write $\nu_p(X):=\nu(X,\Omega^p_X)$, $\kappa_p(X):=\kappa(X,\Omega^p_X)$, and so: $\kappa_n(X)=\kappa(X)$. We shall also use the equivalent variants $\nu_p^*(X):=\nu^*(X,\Omega^p_X)$ and $\kappa_p^*(X):=\kappa^*(X,\Omega^p_X)$. 

These invariants are preserved by birational equivalence, finite \'etale covers, and increase under dominant rational maps. These invariants are thus still defined if $X$ is singular irreducible, by taking any smooth model of $X$.

\smallskip

We formulate the extension to every $p>0$ of Conjecture \ref{ConjK}:

\begin{conjecture}\label{ConjO} We have: $\nu_p(X)=\kappa_p(X)$ for any compact connected K\"ahler manifold $X$ and any $p>0$.
\end{conjecture}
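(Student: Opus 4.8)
Since $\kappa_p(X)\le\nu_p(X)$ always, and the equality is automatic when $\nu_p(X)=-\infty$, the content of Conjecture \ref{ConjO} is to prove $\nu_p(X)=\kappa_p(X)$ whenever $\Omega^p_X$ is pseudoeffective; by Remark \ref{rnu=k'} one may moreover assume $\kappa_p(X)\le\dim(X)-2$. As these invariants are birational and stable under finite étale covers, the plan is to run the core map $c_X\colon X\to Y$, which splits the geometry into a base orbifold $(Y,\Delta)$ of general type and ``special'' fibres, to prove the statement for each piece, and to glue. On the general-type base the case $p=\dim$ is abundance, i.e.\ Conjecture \ref{ConjK}; so the heart of the matter is the case where $X$ is itself special, and---below it in the conjectural tower of fibrations---the cases where the orbifold fibres are rationally connected or have vanishing Kodaira dimension.

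\textbf{The building blocks, unconditionally.} If $X$ is rationally connected, then $H^0(X,\Sym^m\Omega^p_X)=0$ for all $m,p>0$ (as $\Sym^m\Omega^p_X\hookrightarrow(\Omega^1_X)^{\otimes mp}$ and $X$ carries no symmetric differentials), so $\Omega^p_X$ is not pseudoeffective and $\kappa_p(X)=\nu_p(X)=-\infty$, which settles this case and, for the core decomposition, makes rationally connected fibres contribute nothing. The essential input is the $\kappa=0$ case. When the first Chern class is trivial and the singularities are klt, the Beauville--Bogomolov decomposition makes a finite étale cover split as $A\times\prod_i V_i\times\prod_j W_j$ with $A$ abelian, $V_i$ strict Calabi--Yau and $W_j$ irreducible hyperkähler; on each $V_i$ and $W_j$ one has $H^0(\Sym^m\Omega^1)=0$ for $m>0$, and $\Omega^p$ contributes only its finitely many parallel forms, so all pseudoeffectivity of $\Omega^p$ comes from the abelian factor, where $\Omega^p$ is trivial and hence $\nu_p=\kappa_p$ on the nose; descending and bookkeeping the irregularity gives $\nu_1^*(X)=\kappa_1^*(X)=q'(X)-\dim(X)$. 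The one genuinely new computation is that this must be carried out on a smooth model $\widetilde{A/G}$ of a quotient of a complex torus $A$ by a finite group $G$: one expresses $H^0$ of the symmetric powers of $\Omega^1_{\widetilde{A/G}}$ in terms of $G$-invariant data upstairs on $A$, checks that these are accounted for by subtori stabilised by subgroups of $G$, and matches $\nu_1^*$ with $\kappa_1^*$ by a direct count.

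\textbf{Reassembling, and the main obstacle.} Gluing over the core requires an orbifold $C_{n,m}$-type additivity---$\kappa_p(X)$ bounded below by $\kappa$-contributions of fibre and base orbifold, with the matching upper bound for $\nu_p$---together with abundance on the general-type base. The principal obstacles, in increasing order of severity, are: the full abundance Conjecture \ref{ConjK}, needed on the general-type part of the core (open for $\dim>4$); the orbifold additivity of $\nu_p$ and $\kappa_p$ along fibrations in the generality required here; and, most seriously, the structure theory of special manifolds itself---that being special is equivalent to being an iterated tower of orbifold-rationally-connected and orbifold-$\kappa=0$ fibrations---which remains conjectural. Absent these, the realistic target is precisely what the abstract announces: an unconditional proof for rationally connected $X$ and for resolutions of klt varieties with trivial first Chern class, together with a reduction of the remaining ``special'' cases to Conjecture \ref{ConjK}.
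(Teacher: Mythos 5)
The statement you are addressing is a conjecture: the paper does not prove Conjecture \ref{ConjO}, and neither does your proposal claim to -- it is a program sketch, and in outline it coincides with the paper's own program (reduction over the MRC/core map, rationally connected fibres as in Theorem \ref{tRQ}, klt varieties with $c_1=0$ via the Beauville--Bogomolov decomposition plus the torus-quotient case $A/G$, everything else conditional on Conjecture \ref{ConjK} and on orbifold additivity). So as a description of what can realistically be done, you land where the paper lands. But two steps inside your sketch are genuinely flawed as stated, and they sit exactly at the difficult points.

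First, you repeatedly infer non-pseudoeffectivity from the vanishing of sections: ``$X$ carries no symmetric differentials, so $\Omega^p_X$ is not pseudoeffective'' for rationally connected $X$, and likewise for the Calabi--Yau and hyperk\"ahler factors. This conflates $\kappa_p(X)=-\infty$ with $\nu_p(X)=-\infty$; by definition $\nu$ allows twisting by an arbitrary auxiliary divisor, so $h^0(\Sym^m\Omega^p_X)=0$ says nothing about $h^0(\Sym^m\Omega^p_X\otimes A)$ for $A$ ample. A K3 surface has no symmetric differentials, yet the statement $\nu_1=-\infty$ for K3 (and more generally for CY/HK factors) is the deep theorem of H\"oring--Peternell \cite{HP18}, which the paper imports as a black box in Proposition \ref{carab}; it is not a consequence of the absence of sections. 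For rationally connected $X$ the conclusion $\nu_p(X)=-\infty$ is true, but by restricting to very free rational curves (a movable-class argument), not by your stated reason; and note that the paper's converse direction (``$\kappa_p=-\infty$ for all $p$ implies rationally connected'', Proposition \ref{exRC}) is itself conditional on Conjecture \ref{ConjK}.

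Second, and for the same reason, your description of the torus-quotient step -- ``expresses $H^0$ of the symmetric powers of $\Omega^1$ in terms of $G$-invariant data upstairs \dots and matches $\nu_1^*$ with $\kappa_1^*$ by a direct count'' -- misses what actually has to be proved there. Counting $G$-invariant untwisted symmetric differentials only computes $\kappa_1^*$. To pin down $\nu_1^*(A/G)$ one must show vanishing, for $m\gg0$, of $H^0(\Sym^m\Omega^1\otimes\mathcal{O}(D))$ on a smooth model for \emph{every} effective divisor $D$; this is the content of the paper's Theorem \ref{Cyclic} (a weighted blow-up coordinate computation at an isolated fixed point of a cyclic group of primary order), combined with the splitting of $G$ into its fixed-point-free part and the subgroup generated by elements with fixed points, and the bookkeeping of Lemmas \ref{primary}, \ref{G-neral} and \ref{lemmapsf}. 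Similarly, your gluing step over fibrations quietly needs the hypothesis that there are no multiple fibres in codimension one (condition 3 of Corollary \ref{cRQ}; Remark \ref{Delta} shows the additivity fails without it), which is why the paper's reduction for general special manifolds remains conjectural.
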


See \S.\ref{motivations} below for some motivations for Conjecture \ref{ConjO}.

\begin{remark}\label{rnu=k"} From Remark \label{rnu=k'} we deduce that $\nu_1(X)=\kappa_1(X)$ if either $\nu_1(X)=n$, or if $\kappa_1(X)=(n-1)$.
\end{remark}

There are two natural generalisations of Conjecture \ref{ConjO} that should also be considered:

1. The case of smooth orbifold pairs $(X,D)$, needed to deal with the general classical case where $D=0$ (see Remark \ref{Delta} and \S\ref{Sspec}).

2. The general $T$- symmetrical tensors associated to Young's tableaux studied in \cite{BR}, which include both symmetric and alternating forms, and jet differentials, for their relevance to hyperbolicity problems.

\smallskip

The invariant $\kappa_1^*(X)$ has already been introduced and studied in \cite{Sak}\footnote{Where it is denoted $\lambda(X)$.}, several ideas of which are used below. When $\kappa_1(X)=-\infty$, the preceding conjecture \ref{ConjO} coincides with the one independently\footnote{We formulated Conjecture \ref{ConjO} in Spring 2017 independently of \cite{HP20}, but did not succeed in proving it for non isotrivial elliptic surfaces with $\kappa=1$ by an algebro-geometric approach, and let it thus unpublished. The proof given in \cite{HP20} indeed rests on analytic arguments.} formulated in \cite{HP20}. The present text has important overlap with \cite{HP20}, especially in \S\ref{Ssurf}, but its scope seems to be different.

The invariants $\kappa_p$ and $\nu_p$ for the cotangent bundles are easily seen (see \cite{Sak}, Theorem 1) to be preserved by finite \'etale covers and birational equivalence. For a product $X=Y\times Z$, and $p=1$, we have\footnote{For $p>1$, the expression for $\kappa^*_p(Y\times Z)$ is not so simple. For example, if $S,T$ are surfaces, $T$ an Abelian surface, the computation of $\kappa_2(S\times T)$ involves an estimate of $h^0(S\times T, Sym^m(\Omega^1_S\otimes \Omega^1_T))=h^0(S, \oplus_{j=0}^{j=m}Sym^j(\Omega^1_S)\otimes Sym^{m-j}(\Omega^1_S))$. When $S$ is of general type, this estimate can be obtained by the method used in \cite{GG}, Proposition 1.10, and statement 1.21. Moreover, in general $Sym^m(A\otimes B)$ is not expressible in terms of direct sums of tensor products of $Sym^{\bullet}A\otimes Sym^{\bullet}B$.}: $\kappa_1^*(Y\times Z)=\kappa_1^*(Y)+\kappa_1^*(Z)$. Also, if $f:X\to Y$ is onto, then $\nu_1(X)\geq \nu_1(Y)$, and $\kappa_1(X)\geq \kappa_1(Y)$ . 

Depending on the property considered, either $\nu^*,\kappa^*$ or $\nu,\kappa$ are better suited for a simple formulation, and we shall use the one better suited to the cases at hand.

When $X$ is of general type, no special pattern seems to emerge to relate $\kappa_1(X)$ to other algebro-geometric invariants of $X$, except possibly the fundamental group, by \cite{BKT}: the linear representations of $\pi_1(X)$ have finite image if $\kappa_1(X)=-\infty$. Conjecture \ref{ConjO} seems particularly difficult in this case. However, for submanifolds with ample normal bundle in Abelian varieties, and complete intersections in projective spaces, the value of $\kappa_1$ has been determined in \cite{Sak}, Theorems 7 and 8. His method adapts to the determination of $\nu_1$, and thus to solve Conjecture \ref{ConjO} in these cases. 

We deal here with mainly with the `opposite' case of `special' manifolds, and solve Conjecture \ref{ConjO}  only for particular classes of them.

In contrast to $\kappa(X)=\kappa_n(X), \kappa_1(X)$ and $\nu_1(X)$ are not deformation, and so not topological invariants. This already happens for (`non-special') elliptic surfaces with $\kappa=1$ (see \cite{Sak}, Example 3). We expect however these two invariants to be determined by the fundamental group of $X$ whenever $X$ is `special' (see Conjecture \ref{Conjspec}). We show this in the particular case where $X$ is a smooth model of a compact K\"ahler variety with klt singularities and $c_1=0$ (see Theorem \ref{c_1=0}). The proof rests on the singular Bogomolov-Beauville decomposition and the elementary but lengthy case of quotients of compact tori by finite groups actions (which is the main contribution of this article).

Notice also that Conjectures \ref{ConjK} and \ref{ConjO} do not  extend to arbitrary subsheaves of the cotangent bundle (although the known situations where they fail appear to be rather exceptional). See Remark \ref{motivations}.

The present text proves (or reduces to Conjecture \ref{ConjK}) the above conjecture \ref{ConjO} in the following peculiar situations:

\smallskip

{\bf 1. Rationally connected manifolds.} (\S\ref{RC})

Here we show, assuming Conjecture \ref{ConjK}, that $X$ is rationally connected if (and only if) $\kappa_p(X)=-\infty, \forall p>0$. In other words: Conjecture \ref{ConjO} follows from Conjecture \ref{ConjK} when $\kappa_p(X)=-\infty, \forall p>0$. We also show a relative version (see Theorem \ref{tRQ}) according to which Conjecture \ref{ConjO} holds for any $X$ if it holds when $X$ is not uniruled, or equivalently (still assuming Conjecture \ref{ConjK}), when $\kappa(X)\geq 0$. 

We finally stress some similarities between (complex projective) manifolds $X$ with $\kappa_1(X)=-\infty$ and Rationally connected manifolds, and give our initial motivations for Conjecture \ref{ConjO}.

\smallskip

{\bf 2. Submanifolds of Abelian varieties and projective spaces.}

We recall the value of $\kappa_1$ determined in \cite{Sak}, Theorems 7 (resp. Theorem 8), when $X$ is a submanifold with ample normal bundle in a Abelian variety (resp. when $X$ is a complete intersection of codimension smaller than its dimension in a projective space), and show that his method of proof permits to compute $\nu_1=\kappa_1$ as well in these cases. We conclude this section with remarks on related situations.

{\bf 3. Klt varieties with $c_1=0$ and their smooth models.}(\S\ref{Sc_1=0}, \S\ref{Storusq})

We prove that, for any $X$ admitting\footnote{Conjecturally, this is true whenever $\kappa(X)=0$.} a birational model $X'$ with klt singularities and $c_1=0$, we have: $\nu_1^*(X)=\kappa_1^*(X)=q'(X)-dim(X)$. Here $q'(X)$ is the maximum of the irregularities of the finite {\bf \'etale} covers of $X'$, or equivalently the maximal rank of the Abelianisations of the finite index subgroups of $\pi_1(X')$. The invariant $q'(X)$ should be carefully distinguished from its {\bf quasi-\'etale} variant $q^+(X')$, which is not a deformation invariant of the smooth models. In \S\ref{Sc_1=0}, using the singular Bogomolov-Beauville decomposition, we reduce the computation of $\nu_1^*(X)$ and $\kappa_1^*(X)$ to the particular case where  $X'=A/G$ where $A$ is a compact complex torus, and $G$ a finite group of automorphisms of $A$. In \S\ref{Storusq}, which is the main contribution of the present text, we then show the equalities above for any  $X'$ of the form $A/G$.

Smooth models of klt varieties with $c_1=0$, and especially of quotients $A/G$, build a small, but possibly representative class of `special' manifolds considered in the next section. 

\smallskip

{\bf 4. Special manifolds.}(\S\ref{Sspec})

Recall that a compact K\"ahler manifold $X$ is said to be `special' if $\kappa(X,L)<p$, for any $p>0$, and any rank-one coherent subsheaf $L\subset \Omega^p_X$. 

If $X$ has a dominant fibration $f:X\dasharrow Z$, with $\kappa(Z)=p=dim(Z)$, it is not special. In particular, $X$ is not special if it is of general type. The specialness condition is however considerably stronger than the absence of fibrations with base of general type.

Basic examples of special manifolds are rationally connected manifolds, and manifolds with $\kappa=0$. Conditionally in the Conjecture $C_{n,m}^{orb}$ (stated in \cite{Ca04}, Conjecture 4.1), special manifolds are towers of fibrations with fibres either rationally connected or with $\kappa=0$ in a suitable `orbifold' sense.

We conjecture that the property shown for smooth models of klt varieties with $c_1=0$ holds true for all special manifolds.

\begin{conjecture} \label{Conjspec} If $X$ is special, $\nu_1^*(X)=\kappa_1^*(X)=q'(X)-dim(X)$.

In particular: $\nu_1(X)=-\infty$ if and only  $q'(X)=0$.\end{conjecture}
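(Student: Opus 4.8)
The plan is to reduce Conjecture~\ref{Conjspec} to Theorem~\ref{c_1=0}, so that all the genuine content lies in the quotient--torus computation already carried out there; the reduction itself rests on the standard conjectures of the area (non-vanishing, existence of minimal models, abundance, and $C_{n,m}^{orb}$), so it does not give an unconditional proof. There are three steps: strip off the uniruled part by the maximal rationally connected (MRC) fibration; observe that a non-uniruled special manifold has $\kappa=0$; and pass to a klt model with $c_1=0$ and invoke Theorem~\ref{c_1=0}.

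First, suppose $X$ is special and uniruled, and let $f\colon X\to Z$ be its MRC fibration, realised as a morphism on suitable smooth bimeromorphic models. Then $Z$ is not uniruled (Graber--Harris--Starr) and is again special (Campana), while the general fibre $F$, being rationally connected, is simply connected with $H^0(F,\Sym^k\Omega^1_F)=0$ for all $k\ge 1$. The first property gives $\pi_1(X)\cong\pi_1(Z)$, hence $q'(X)=q'(Z)$; the vanishing on $F$ gives, by the relative analysis of \S\ref{RC}, $\kappa_1^*(X)=\kappa_1^*(Z)-\dim F$; and, once the non-uniruled case is settled so that $\nu_1^*(Z)=\kappa_1^*(Z)$, Theorem~\ref{tRQ} yields $\nu_1^*(X)=\kappa_1^*(X)$. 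Since $\kappa_1^*(Z)-\dim F=(q'(Z)-\dim Z)-\dim F=q'(X)-\dim X$, the formula for $X$ follows from that for $Z$, and we may assume $X$ is not uniruled.

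Second, a special manifold satisfies $\kappa(X)\le 0$ (Campana: if $\kappa(X)\ge 1$ the Iitaka fibration produces either a positive-dimensional quotient of general type in the orbifold sense or a rank-one subsheaf $L\subset\Omega^q_X$ with $\kappa(X,L)=q$ for some $q\ge 1$, either way contradicting specialness). On the other hand, since $X$ is not uniruled, $K_X$ is pseudoeffective by Boucksom--Demailly--Paun--Peternell, so $\kappa(X)\ge 0$ granting the non-vanishing conjecture; hence $\kappa(X)=0$. Third, running a minimal model program on $X$ and invoking abundance produces a bimeromorphic model $X'$ of $X$ with klt (indeed terminal) singularities and $K_{X'}\equiv 0$ --- the conjectural input flagged in the introduction. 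Since $\kappa_1^*$, $\nu_1^*$ and $q'$ are bimeromorphic invariants, Theorem~\ref{c_1=0} applied to $X'$ gives $\nu_1^*(X)=\kappa_1^*(X)=q'(X)-\dim X$. The final assertion follows at once: $\nu_1(X)=-\infty$ is the statement $\nu_1^*(X)=-\dim X$, equivalent by the formula to $q'(X)=0$.

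The honest difficulty is that this is only a \emph{reduction}: all the real work sits in Theorem~\ref{c_1=0}, and the three steps cannot presently be carried out unconditionally --- step two needs the non-vanishing conjecture, and step three needs both the existence of minimal models and abundance. A route closer to the spirit of the conjecture would not collapse to the case $\kappa=0$ but would instead follow the tower of orbifold fibrations with rationally connected or $\kappa=0$ fibres through which a special manifold is (conjecturally, granting $C_{n,m}^{orb}$) built, establishing an orbifold $C_{n,m}$-type additivity statement for the pair of invariants $(\kappa_1^*,\nu_1^*)$ at each stage. The main obstacle on that route --- arguably the true crux of the conjecture --- is precisely such an additivity result along orbifold fibrations, which is not known even for ordinary fibrations; the reduction above avoids it only by invoking the bound $\kappa(X)\le 0$ for special $X$ to bypass the general structure theory.
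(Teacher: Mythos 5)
There is a genuine gap, and it sits exactly where your reduction does all its work: the claim in your second step that a special, non-uniruled manifold has $\kappa(X)=0$ is false. Specialness only forbids fibrations whose \emph{orbifold} base is of general type; it does not force the Iitaka fibration to be trivial. For a special $X$ with $0<\kappa(X)<\dim X$, the saturated pullback $L\subset\Omega^q_X$ of the canonical bundle of the Iitaka base has $\kappa(X,L)=\kappa$ of the orbifold base $(Z,\Delta_J)$, which is $<q$ precisely because $X$ is special, so no contradiction arises. Concretely, a non-isotrivial properly elliptic surface $f\colon X\to\Bbb P^1$ without multiple fibres is special, not uniruled, and has $\kappa(X)=1$; more generally special manifolds realise every Kodaira dimension in $\{-\infty,0,1,\dots,n-1\}$ (see \S\ref{Ssurf}, where special surfaces are characterised by $q'(X)\le 2$, and Proposition \ref{psurf}). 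Consequently your reduction collapses Conjecture \ref{Conjspec} onto the sub-case ``$\kappa=0$ after removing the uniruled part'', i.e.\ onto Theorem \ref{c_1=0}, and silently discards precisely the manifolds that carry the real difficulty: towers involving fibrations with abelian (e.g.\ elliptic) fibres and nontrivial orbifold structure. These are the cases the text identifies as the crux (Question \ref{qalb}), and they are open already for non-isotrivial elliptic surfaces with $\kappa=1$ — indeed for $q'=0$ this is exactly the conjecture of H\"oring--Peternell, settled in \cite{HP20} only by analytic methods, and for $g(C)\ge 1$ it is left open in \S\ref{Ssurf}.

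Your first step (MRC reduction via Theorem \ref{tRQ}, with $q'(X)=q'(Z)$ because rationally connected fibres are simply connected) and your third step (passing to a klt model with $c_1\equiv 0$ and invoking Theorem \ref{c_1=0}) are in themselves consistent with what the paper does, but they only apply once one genuinely is in the $\kappa\le 0$ situation. The statement you are addressing is a conjecture, and the intended (still conditional) strategy sketched in \S\ref{Sspec} is the route you relegate to a remark: run the relative core decomposition $c=(J\circ r)^n$ over the Albanese, and prove additivity of $(\kappa_1^*,\nu_1^*)$ along each fibration of the tower, with fibres either rationally connected (Theorem \ref{tRQ}, Corollary \ref{cRQ}), or with $\kappa=0$ and $q'=0$, or complex tori — the torus-fibre case, in the orbifold setting, being the unresolved heart of the problem. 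So the last paragraph of your proposal correctly names the obstacle, but the assertion that the bound $\kappa(X)\le 0$ for special $X$ lets you bypass it is exactly the false step.
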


It is easy to see that $\kappa_1^*(X)\geq q'(X)-dim(X)$ if $X$ is special, so the content of the conjecture is the reverse inequality, saying that symmetric differentials come from the Albanese variety, after a suitable finite \'etale cover of $X$.

We explain in this same section how one could expect  to reduce Conjecture \ref{Conjspec} to statements about the behaviour of $\kappa_1, \nu_1$ on fibrations with generic fibres having either: 

1. $\kappa^+=-\infty$, or:

2.  $\kappa=0$ and $q'=0$, or: 

3. being Abelian varieties,

 One however then needs to consider the much more delicate `orbifold' context.

We also proposed in \cite{CRM}, Remark 7.3,  the following conjecture relating numerical and Kodaira dimensions:

\begin{conjecture}\label{Conjnuspec} If $X$ is special, then $\nu(X,L)<p,\forall p>0,\forall L\subset \Omega^p_X$, of rank one.
\end{conjecture}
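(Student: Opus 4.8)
The plan is to argue by the same logic that underlies the statement `$\kappa(X,L)<p$', passing from the Kodaira dimension to the numerical dimension via the structure theory of special manifolds. Suppose for contradiction that some rank-one subsheaf $L\subset\Omega^p_X$ satisfies $\nu(X,L)=p$. Since $L$ is a subsheaf of $\Omega^p_X$, we automatically have $\nu(X,L)\leq p$ (Bogomolov's inequality for subsheaves of $\Omega^p$), so the extremal case $\nu(X,L)=p$ is precisely the one to exclude. The first step is to reduce, by replacing $X$ with a finite \'etale cover and then a birational model, to the case where $L$ is saturated in $\Omega^p_X$ and defines a foliation-theoretic or fibration-theoretic structure; here one invokes that a rank-one saturated subsheaf $L\subset\Omega^p_X$ with $\nu(X,L)$ maximal gives rise (by the Bogomolov-Castelnuovo-de Franchis type arguments, as used in the proof of the $\kappa$-version) to a meromorphic map $g:X\dasharrow Z$ onto a $p$-dimensional base with $L$ pulled back from $Z$, up to the relevant positivity.

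The second step is to analyze $Z$: one wants to show that $\nu(X,L)=p$ forces $\nu(Z)=p=\dim(Z)$, hence by Conjecture \ref{ConjK} applied to $Z$ (or, if one wants an unconditional statement, by using only that $\nu(Z)=\dim(Z)$ implies $\kappa(Z)=\dim(Z)$, which is Remark \ref{rnu=k}.1, and is \emph{unconditional}) that $\kappa(Z)=\dim(Z)=p$. This would exhibit a fibration $X\dasharrow Z$ with $\kappa(Z)=\dim(Z)=p>0$, contradicting the specialness of $X$. The key point making the second step work is that numerical positivity of $L$ on the total space should descend to numerical positivity of $K_Z$ on the base, via the fact that $L$, being a sub-line-bundle of $\Omega^p_X$ that is the pullback of a line bundle from the $p$-dimensional $Z$, is dominated by (a twist of) $g^*K_Z$; more precisely, the saturation of $g^*\Omega^p_Z=g^*K_Z$ inside $\Omega^p_X$ contains $L$, and numerical dimensions are monotone along such inclusions modulo exceptional loci.

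The main obstacle, I expect, is exactly this descent of the numerical dimension from $X$ to $Z$: the numerical dimension $\nu$ is notoriously ill-behaved under pullback and pushforward compared to $\kappa$ (it is only known to be preserved by \emph{surjective} pullback, and the inequality $\nu(X,g^*M)\geq \nu(Z,M)$ can in principle be strict after twisting by effective exceptional divisors coming from the indeterminacy of $g$ and from the orbifold multiple fibres of $g$). Handling this rigorously forces one into the orbifold framework: one must replace $Z$ by the orbifold base $(Z,\Delta_g)$ of $g$, show $L\subset \Omega^p_X$ actually comes from $K_{(Z,\Delta_g)}$, and prove that $\nu(X,L)=p$ implies $\nu(Z,K_{(Z,\Delta_g)})=p$, which is then an orbifold analogue of the big case of Conjecture \ref{ConjK}. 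This is why the conjecture is stated rather than proved, and why, as with Conjecture \ref{Conjspec}, the serious content lies in the orbifold additivity/descent statements for $\nu_1$ alluded to in \S\ref{Sspec}. A reasonable intermediate result one \emph{can} hope to establish is the implication ``Conjecture \ref{ConjO} (orbifold form) $\Rightarrow$ Conjecture \ref{Conjnuspec}'', reducing the numerical statement to the already-conjectured equality $\nu_1=\kappa_1$.
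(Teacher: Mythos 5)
There is no paper proof to compare against: the statement you are addressing is stated in the paper as Conjecture \ref{Conjnuspec} and left open, the only known case being $p=1$, proved in \cite{PRT} by foliation-theoretic/analytic methods; your text, by your own admission in its last paragraph, is a strategy sketch rather than a proof, so it does not close the gap. More importantly, the gap is not only the one you acknowledge (descent of $\nu$ to an orbifold base and an orbifold form of Conjecture \ref{ConjK}); it occurs already at your first step. A Bogomolov--Castelnuovo--de Franchis type construction of a map $g:X\dasharrow Z$ with $\dim Z=p$ requires global sections of (powers of) $L$, i.e. some positivity of $\kappa(X,L)$, and the hypothesis $\nu(X,L)=p$ provides no sections whatsoever. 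The paper itself points to Brunella's examples (\cite{Brun}, recalled in Remark \ref{motivations} and Question \ref{rnu>kappa}): saturated rank-one $L\subset\Omega^1_X$ with $\nu(X,L)=1$ but $\kappa(X,L)=-\infty$, where the associated foliation has non-algebraic leaves, so no fibration attached to $L$ exists at all. These examples live on non-special $X$ (irreducible bidisc quotients), so they do not refute the conjecture, but they do refute your proposed mechanism: numerical positivity of a rank-one subsheaf of $\Omega^p_X$ cannot, by itself, be converted into a dominant map onto a $p$-dimensional base whose canonical class inherits that positivity.

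Even the weaker intermediate claim you suggest, namely that an orbifold form of Conjecture \ref{ConjO} would imply Conjecture \ref{Conjnuspec}, is not established by the sketch: Conjecture \ref{ConjO} concerns $\nu_p$ and $\kappa_p$ of $\Omega^p_X$ itself (via $L_E$ on $\Bbb P(\Omega^p_X)$), not of its rank-one subsheaves, and the paper explicitly warns that the equality $\nu=\kappa$ fails for such subsheaves (this is precisely the content of Question \ref{rnu>kappa} and of \S\ref{Sfol}). Any honest approach to \ref{Conjnuspec} must therefore work directly with numerically positive rank-one subsheaves without sections -- which is what \cite{PRT} does for $p=1$ through the structure theory of the induced foliation/distribution -- rather than routing through Kodaira-dimension-type fibration arguments.
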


This is proved for $p=1$ in \cite{PRT}. A subbundle $L\subset \Omega^1_X$ with $\nu(X,L)=1$ does not need, however, to have $\kappa(X,L)=1$, as the examples of Brunella show (\cite{Brun}, Exemples 1 and 2, p.132). See Remark \ref{motivations}.(2) and question \ref{rnu>kappa}.

\smallskip

{\bf 5. Surfaces not of general type.} (\S\ref{Ssurf})

For surfaces of general type, no general pattern seems to emerge to relate $\kappa_1$ or $\nu_1$ to other algebro-geometric invariants if $c_1^2\leq c_2$ on minimal models. With the exception of $\pi_1$ whose linear representations turn out to have finite images if $\kappa_1=-\infty$ (\cite{BKT}). Even the border case $c_1^2=c_2$ is open. If $3c_1<c_2$, the classification established in \cite{R} implies that $\pi_1^{alg}$ is finite.

\begin{proposition} \label{psurf} If $X$ is a smooth projective surface not of general type, then $\nu_1^*(X)=\kappa_1^*(X)$, unless possibly when $\kappa(X)=1$, the elliptic fibration $f:X\to C$ is not isotrivial, and its orbifold base has a pseudoeffective canonical bundle.\end{proposition}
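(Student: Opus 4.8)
The plan is to run through the Enriques--Kodaira classification of smooth projective surfaces $X$ not of general type, organised by $\kappa(X)\in\{-\infty,0,1\}$, and in each case either compute $\kappa_1^*(X)$ and $\nu_1^*(X)$ directly or invoke the structural results already recorded in the excerpt. For $\kappa(X)=-\infty$: by the classification $X$ is either rational or birationally ruled over a curve $C$. Rational surfaces are rationally connected, so by the rationally connected case (\S\ref{RC}) we have $\kappa_p(X)=-\infty$ for all $p>0$, hence $\kappa_1^*(X)=\nu_1^*(X)=-2$. For $X$ ruled over a curve $C$ of genus $g\geq 1$, one pulls back symmetric differentials from $C$; since $\kappa_1, \nu_1$ increase under dominant maps and are birational invariants, $\kappa_1^*(X)\geq \kappa_1^*(C)$ and similarly for $\nu_1$, and an upper bound comes from the relative cotangent sequence $0\to f^*\Omega^1_C\to \Omega^1_X\to \Omega^1_{X/C}\to 0$ together with the fact that the relative part contributes nothing pseudoeffective on a ruled surface; this pins both invariants to the value coming from the curve $C$, where $\kappa_1^*(C)=\nu_1^*(C)$ is elementary ($-\infty$ if $g=0$, $0$ if $g=1$, $1$ if $g\geq 2$).

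For $\kappa(X)=0$: after passing to a minimal model and then, if necessary, a finite \'etale cover (which preserves both invariants), the Beauville--Bogomolov-type structure theorem (equivalently, the $c_1=0$ case) puts $X$ into the situation of Theorem~\ref{c_1=0}, so $\nu_1^*(X)=\kappa_1^*(X)=q'(X)-2$. Concretely the minimal model is, up to \'etale cover, an abelian surface ($q'=2$, value $0$), a K3 surface ($q'=0$, value $-2$), or a bielliptic surface — and bielliptic surfaces are exactly of the form $A/G$ with $A$ an abelian surface, so they are covered by the computation of \S\ref{Storusq}. The only real work here is to match the invariants of the minimal model with those of $X$, which follows because $\kappa_1,\nu_1$ are birational invariants for surfaces; alternatively one cites \cite{Sak} directly for these cases.

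For $\kappa(X)=1$: $X$ carries a unique elliptic fibration $f:X\to C$. Here one uses the orbifold base $(C,\Delta_f)$ of $f$ (the multiple fibres contributing the orbifold divisor $\Delta_f$) and analyses $\Omega^1_X$ via the fibration. The sub-line-bundle $f^*\Omega^1_C(\lceil\Delta_f\rceil$-type corrections$)\subset \Omega^1_X$ has Kodaira dimension governed by the orbifold canonical bundle $K_C+\Delta_f$, which by the canonical bundle formula for elliptic fibrations computes $\kappa(X)=\kappa(C,K_C+\Delta_f)$. If the orbifold base $(C,\Delta_f)$ is \emph{not} of log-general type, i.e. $\deg(K_C+\Delta_f)\leq 0$, then $\kappa(C,K_C+\Delta_f)\leq 0$, forcing $\kappa(X)\leq 0$, contrary to $\kappa(X)=1$; so in the $\kappa=1$ case the orbifold base always has $\deg(K_C+\Delta_f)>0$, which is the ``pseudoeffective canonical bundle'' of the excluded case. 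If moreover the fibration is isotrivial, a finite \'etale base change trivialises the monodromy and presents $X$ (up to \'etale cover and birational modification) as a quotient $(B\times E)/G$ with $E$ elliptic, again reducing to the $A/G$ computation and yielding $\nu_1^*=\kappa_1^*$. This leaves precisely the stated exceptional configuration — $\kappa(X)=1$, $f$ not isotrivial, orbifold base with $K_C+\Delta_f$ pseudoeffective (equivalently here, of positive degree) — uncovered.

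The main obstacle is the non-isotrivial $\kappa(X)=1$ case that one is \emph{not} excluding, namely non-isotrivial elliptic fibrations whose orbifold base is \emph{not} of log-general type (so the exceptional clause, which requires $K_C+\Delta_f$ pseudoeffective, does not trigger): there the variation of Hodge structure produces a genuinely non-split contribution and one must show that no extra pseudoeffective sub-line-bundle of $\Omega^1_X$ appears beyond what $f^*(K_C+\Delta_f)$ already gives, i.e. that $\nu_1^*(X)=\kappa_1^*(X)$ still holds even though a priori $\nu_1$ could jump. This is exactly the point where, for $\kappa(X)=1$, the author notes the algebro-geometric approach stalled and analytic input (as in \cite{HP20}) is needed; so I would handle it by the Viehweg--Zuo / \cite{PRT}-type positivity estimates bounding $\nu_1(X)$ from above by the orbifold data, combined with the general lower bound $\kappa_1^*(X)\geq\kappa^*(C,K_C+\Delta_f)$, and reconcile the two — treating the remaining mismatch (when $K_C+\Delta_f$ is pseudoeffective but $f$ non-isotrivial) as the genuinely open residual case flagged in the statement.
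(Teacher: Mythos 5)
There is a genuine gap, and it sits exactly at the hardest case of the statement. Your step ``$\deg(K_C+\Delta_f)\leq 0 \Rightarrow \kappa(X)\leq 0$, hence in the $\kappa(X)=1$ case the orbifold base always has pseudoeffective canonical bundle'' is false: Kodaira's canonical bundle formula for a relatively minimal elliptic fibration reads $K_X=f^*(K_C+\Delta_f+L)$ with a moduli part $L$ of degree $\chi(\mathcal{O}_X)\geq 0$, so $\kappa(X)=\kappa(C,K_C+\Delta_f+L)$, not $\kappa(C,K_C+\Delta_f)$. A non-isotrivial elliptic surface over $\Bbb P^1$ with no multiple fibres and $\chi(\mathcal{O}_X)\geq 3$ has $\kappa(X)=1$ while $K_C+\Delta_f=K_{\Bbb P^1}$ is anti-ample. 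This configuration is \emph{not} covered by the exceptional clause of Proposition \ref{psurf}, so a proof must establish $\nu_1(X)=\kappa_1(X)=-\infty$ there; your proposal first argues (incorrectly) that the case cannot occur, and later, when you do acknowledge it as ``the main obstacle'', you only gesture at Viehweg--Zuo / \cite{PRT}-type estimates. That does not suffice: \cite{PRT} bounds $\nu(X,L)$ for rank-one subsheaves $L\subset\Omega^1_X$, which does not control the pseudoeffectivity of $\mathcal{O}_{\Bbb P(\Omega^1_X)}(1)$ (compare Proposition \ref{exnu>k}, where $\kappa=-1$ but $\nu=0$, and Brunella's examples). In the paper this case is precisely where the analytic nonvanishing theorem of \cite{HP20} (their Theorem 1.2, Proposition 5.4) is invoked, and the author explicitly records that an algebro-geometric argument is not known; no substitute for it appears in your proposal.

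Two smaller points. First, even the computation of $\kappa_1^*$ in the non-isotrivial case is not free: the paper obtains $f_*(Sym^m\Omega^1_X)=mK_C$ via the non-split extension on the fibres and then controls the saturation using divisors partially supported on the fibres (Lemmas \ref{lemmapsf} and \ref{inject}); your sketch asserts the answer without this step. Second, in the isotrivial case your reduction ``to the $A/G$ computation'' is not literally available when the base curve of the trivialising cover has genus at least $2$, since $(C'\times E)/G$ is then not a torus quotient; the paper instead uses Sakai's computation $\kappa_1=1$ plus Remark \ref{rnu=k"} for $g\geq 2$, and an explicit local analysis of sections of $Sym^m\Omega^1_X$ near the singular fibres (Lemma \ref{singfib}, Corollary \ref{surfk=1}) for $g\leq 1$. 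These parts could be repaired along the lines you indicate, but the non-isotrivial case with non-pseudoeffective orbifold base cannot, as written, and that is the essential content of the proposition.
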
 

This was essentially proved in \cite{HP20} when $\kappa_1(X)=-\infty$.

\medskip

The proof rests on classification.  The computation of $\kappa_1$ was essentially done in \cite{Sak}. When $\kappa=1$, and the elliptic fibration is isotrivial, we give a shorter proof of this equality.

Two cases are left open: non-isotrivial elliptic surfaces with $\kappa=1$ over a curve of genus $g$ either $1$, or at least $2$. A possible approach might be using the solution of Conjecture \ref{ConjK} on the $3$-fold pair $(\Bbb P(\Omega^1_X), D)$ for a suitable effective $\Bbb Q$-divisor linearly equivalent to $\mathcal{O}_{\Bbb P(\Omega^1_X)}(2)$.

\medskip

We raise finally in {\bf \S\ref{Sfol}} a question motivated by the fact that Conjecture \ref{ConjO} only partially extends to saturated subsheaves of $\Omega^p_X$.

\subsection{Motivations for Conjecture \ref{ConjO}} \label{motivations}

\

 1. The initial reason here for considering manifolds with $\nu_1(X)=-\infty$ comes fom the algebraicity criterion for foliations proved (even if not explicitely stated there) in \cite{CP19}, 
 according to which a foliation $\cF\subset T_X$ on $X$ projective has algebraic leaves if its dual $\cF^*$ is not pseudo-effective.  The examples of Brunella (\cite{Brun}, Ex.1, p.132) of rank-one regular foliations $\cF$ with leaves non-algebraic such that $\nu(\cF^*)=1$ and $\kappa(\cF^*)=-\infty$ on irreducible quotients of the bidisc show that the condition $\kappa(\cF^*)=-\infty$ does not imply the algebraicity of leaves. 
 
 Although the algebraicity criterion is void if $\cF=T_X$, 
 it suggests to consider manifolds $X$ with $\Omega^1_X$ not pseudo-effective, that is, with $\nu_1(X)=-\infty$. The link with rationally connected manifolds also stems from \cite{CP19}, 
 where the case in which the negativity of the minimal slope of $\cF$ with respect to some movable class on $X$ (which strengthens the non-pseudo-effectivity of $\cF^*)$ is shown to imply the rational connectedness of the leaves of $\cF$. 
  It is however not immediately clear whether or not the leaves should have a non-pseudo-effective cotangent bundle when $\cF^*$ is not pseudo-effective.

  2. Projective (or compact K\"ahler) manifolds $X$ with $\nu_1(X)=-\infty$ appear to bear some similarities with the
 rationally connected ones, defined by $\nu_p(X)=-\infty,\forall p>0$. The latter are simply-connected, while the former 
 have finite fundamental groups, at least at the linear representation level, by \cite{BKT}. On the other hand, they may lie at the opposite 
 of the spectrum based on the classification according to the positivity of the canonical bundle: the former ones can have $K_X$ ample, while the latter ones may be Fano (with $-K_X$ ample). 
  
  3. Set, as in \cite{Ca95}, $\gamma d(X):=dim(X)-\pi(X)$, where $\pi(X)$ is the largest dimension of a connected submanifold $Z$ through a general point of $X$ such that the image of $\pi_1(Z)$ inside $\pi_1(X)$ is finite. 
  
  The comparison theorem $\kappa^+(X)\geq \gamma d(X)$ if $\chi(\mathcal{O}_X)\neq 0$ of \cite{Ca95} leads to bound from above $\kappa^+(X):=max\{\kappa(X,det(\cF)), \forall \cF\subset \Omega^p_X,\forall p>0\}$. In \cite{Ca95}, $\kappa^+(X)$ was conjecturally equal to $\kappa(X)$ if $\kappa(X)\geq 0$. When $K_X$ is pseudo-effective, the bound $\kappa^+(X)\leq \nu(K_X)$ is proved in \cite{CP19}. Both upper bounds should coincide according to Conjecture \ref{ConjK}. Again, the MRC fibration of $X$ permits to reduce to the case when $X$ is not uniruled, since $\gamma d(X)$ and $\kappa^+(X)$ are the same for $X$ and its `rational quotient'.

\section{Rationally connected manifolds.} \label{RC}

\subsection{Uniruledness and rational connectedness}

Recall first the standard observation:

\begin{proposition}\label{punr} Assume Conjecture \ref{ConjK}. The following are then equivalent if $X$ is projective.

1. $X$ is uniruled.

2. $\kappa(X)=-\infty$.
\end{proposition}

\begin{proof} Assume $\kappa(X)=-\infty$. By Conjecture \ref{ConjK}, $\nu(X)=-\infty$. Then $X$ is uniruled, by \cite{MiMo86} and \cite{BDPP}. More precisely: \cite{BDPP} shows that if $\nu(X)=-\infty$, $X$ is covered by projective curves with negative intersection with $K_X$, and \cite{MiMo86} then shows that $X$ is uniruled. The reverse implication is easy.\end{proof}

Let us next give a standard example in which Conjecture \ref{ConjO} follows from the classical Conjecture \ref{ConjK}:

\begin{proposition}\label{exRC} Assume Conjecture \ref{ConjK} to be true. If a compact K\"ahler manifold $X$ has $\kappa_p(X)=-\infty,\forall p>0$, then $X$ is rationally connected, and so $\nu_p(X)=-\infty,\forall p>0$. \end{proposition}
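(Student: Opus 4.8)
The plan is to argue by contrapositive via the MRC fibration: if $X$ is \emph{not} rationally connected, I will produce a nonzero section of some symmetric power of $\Omega^p_X$ for some $p>0$, contradicting $\kappa_p(X)=-\infty$. First I would recall that, by hypothesis, $\kappa_n(X)=\kappa(X)=-\infty$, so Conjecture~\ref{ConjK} gives $\nu(X)=-\infty$, and hence by Proposition~\ref{punr} (in the K\"ahler case one invokes \cite{MiMo86}, \cite{BDPP}, \cite{Br}) $X$ is uniruled. Then I would pass to the MRC (maximal rationally connected) fibration $f\colon X\dashrightarrow Z$, which after blowing up I may assume is a morphism with $Z$ a smooth compact K\"ahler manifold that is \emph{not} uniruled (by Graber--Harris--Starr, or its K\"ahler analogue); since birational modifications do not change the $\kappa_p$, I may replace $X$ by this model. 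If $X$ is rationally connected we are done, so assume $\dim Z=:m>0$.

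The key step is then: since $Z$ is not uniruled, Proposition~\ref{punr} (i.e. Conjecture~\ref{ConjK} again, together with \cite{BDPP}) gives $\kappa(Z)=\nu(Z)\geq 0$, so $K_Z$ is pseudoeffective and in fact $h^0(Z,mK_Z)\neq 0$ for some $m>0$. Now $K_Z=\det\Omega^m_Z$, and pulling back along $f$ gives a nonzero map $\mathcal{O}_X\to f^*(mK_Z)=f^*\big(m\,\Omega^m_Z\big)^{\otimes\bullet}$; composing with the natural inclusion $f^*\Omega^m_Z\hookrightarrow \Omega^m_X$ (the differential of $f$ on $m$-forms, which is injective since $f$ is a dominant morphism between the generic smooth loci), one obtains a nonzero section of $\Sym^m(\Omega^m_Z)$-type, hence of a symmetric power of $\Omega^m_X$ after identifying $\det$ with the relevant line in $\Sym$. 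More precisely, $f^*K_Z$ is a rank-one subsheaf of $\Omega^m_X$ with $\kappa(X, f^*K_Z)=\kappa(Z,K_Z)\geq 0$, and this subsheaf of $\Omega^m_X$ already forces $\kappa_m(X)=\kappa(X,\Omega^m_X)\geq 0 > -\infty$, contradicting the hypothesis. Therefore $m=0$, i.e. $X$ is rationally connected. The final clause, that rationally connected manifolds satisfy $\nu_p(X)=-\infty$ for all $p>0$, is the statement that $\Omega^p_X$ (equivalently $\mathcal{O}_{\mathbb{P}(\Omega^p_X)}(1)$) is not pseudoeffective for any $p>0$ on a rationally connected $X$; I would quote this from the literature (it follows, for instance, from \cite{CP19} or \cite{BDPP}: a rationally connected manifold is covered by rational curves along which every $\Omega^p$ is "negative enough"), rather than reprove it.

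The main obstacle I expect is the passage to a \emph{good} model of the MRC fibration in the K\"ahler (non-projective) category and the precise functoriality statement $f^*\Omega^m_Z\hookrightarrow\Omega^m_X$ with the matching behaviour of $\kappa$: one must be careful that the MRC fibration of a compact K\"ahler manifold exists with non-uniruled base (this uses the K\"ahler MMP / Brunella's and H\"oring--Peternell's results), and that "birationally equivalent" genuinely preserves all the $\kappa_p$ and $\nu_p$ as asserted in the Introduction. Once those structural inputs are granted, the cohomological part of the argument is essentially a one-line pullback. A secondary point worth a sentence in the write-up is that the hypothesis $\kappa_p(X)=-\infty$ for \emph{all} $p>0$ is used twice: once as $p=n$ to get uniruledness of $X$, and once as $p=m=\dim Z$ to get the contradiction; the intermediate values of $p$ are not needed, which is consistent with the sharper relative statement announced as Theorem~\ref{tRQ}.
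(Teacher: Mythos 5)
Your argument is, in substance, the paper's: both pass to the MRC fibration, use Proposition~\ref{punr} (Conjecture~\ref{ConjK} together with \cite{MiMo86}, \cite{BDPP}) to turn non-uniruledness of the base $Z$ into $\kappa(Z)\geq 0$ (the paper arranges this contrapositively: $\kappa(R_X)=-\infty$ would force $R_X$ uniruled, contradicting \cite{GHS}), and then lift pluricanonical sections of $Z$ through the generically injective map $f^*\Omega^m_Z\hookrightarrow \Omega^m_X$, $m=\dim Z$, so that $f^*K_Z$ is a rank-one subsheaf of $\Omega^m_X$ with $\kappa\geq 0$, contradicting $\kappa_m(X)=-\infty$. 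That cohomological core is correct, and your remark that only $p=\dim Z$ is needed at this stage matches the paper. (Your preliminary step deducing uniruledness of $X$ itself from $\kappa_n(X)=-\infty$ is actually superfluous: if $X$ were not uniruled the MRC base would be $X$ itself and the same contradiction applies.)

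The one genuine gap is exactly the obstacle you flag but do not close: the availability of the MRC fibration with non-uniruled base, of \cite{GHS}, and of Proposition~\ref{punr} in the compact K\"ahler category. As stated in the paper, Proposition~\ref{punr} and the inputs \cite{MiMo86}, \cite{BDPP}, \cite{GHS} are projective statements, so appealing to unspecified ``K\"ahler analogues'' leaves the non-projective case unproved. The paper disposes of this in one line, and this is the idea missing from your write-up: since $\kappa_2(X)=-\infty$, one has $h^0(X,\Omega^2_X)=0$, hence $h^{2,0}(X)=0$, and Kodaira's projectivity criterion shows that $X$ is projective from the outset; after that the whole argument runs in the projective category and no K\"ahler MMP or H\"oring--Peternell-type input is needed. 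Inserting that observation (which uses the hypothesis for $p=2$, a third use of the assumption beyond the two you count) turns your proposal into the paper's proof. The final clause, that rationally connected manifolds have $\nu_p=-\infty$ for all $p>0$, is likewise only asserted in the paper, so quoting it is consistent with the text.
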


\begin{proof}Notice first that $X$ is then projective, by a theorem of Kodaira, since $h^0(X,\Omega^2_X)=0$. Let then $r_X:X\to R_X$ be the rational quotient (also called the MRC fibration) of $X$. If $p:=dim(R_X)>0$, we have: $-\infty=\kappa_p(X)+rank (\Omega^p_X)-1\geq \kappa_p(R_X)=\kappa(R_X)$ (the first equality is obvious, since sections of $m.K_{R_X}$ lift to sections of $Sym^m(\Omega^1_{X})$). By Proposition \ref{punr}, $R_X$ is uniruled, contradicting \cite{GHS} since $p>0$. Hence $p=0$, $X$ is rationally connected, and so $\nu_p(X)=-\infty,\forall p>0$.\end{proof}

\begin{remark} 1. The preceding proof rests in an essential way on positive characteristic methods, through the use of \cite{MiMo86}.

2. When $-K_X$ is ample (i.e: $X$ is Fano), $X$ is rationally connected, and so the equalities $\nu_p(X)=-\infty,\forall p>0$ are known unconconditionally, but using \cite{MiMo86}, again. It would be interesting to find a characteristic zero argument to deduce that $\nu_p(X)=-\infty,$ $ \forall p>0$, or even just that $\kappa_p(X)=-\infty,$ $ \forall p>0,$ when $-K_X$ is ample. 

3. Another partial result (\cite{BC}, see Remark \ref{rem ample}), proved unconditionally using $L^2$ methods, is that $\pi_1(X)=\{1\}$ if $\kappa_p(X)=-\infty, \forall p>0$, without proving first that $X$ is rationally connected. \end{remark}

\subsection{Relative version}

The following relative version of the preceeding result, applied to the rational quotient (or MRC), shows that Conjecture \ref{ConjO} is true for any variety $X$ if it is true  for non uniruled varieties.

\begin{theorem}\label{tRQ} Let $f:X\to B$ be a fibration with rationally connected fibres, and $X,B$ complex projective smooth of dimensions $n,b$. Then $\nu_p(X)+C_n^p=\nu_p(B)+C_b^p$ and $\kappa_p(X)+C_n^p=\kappa_p(B)+C_b^p, \forall p>0$.

More generally, we have:

$\nu(X, \otimes^m(\Omega^1_X))+n^m= \nu(B, \otimes^m(\Omega^1_B))+b^m,$ and:

 $\kappa(X, \otimes^m(\Omega^1_X))+n^m= \kappa(B, \otimes^m(\Omega^1_B))+b^m, \forall m>0$.
 
 The addition of $C_n^p,n^m$ is a consequence of the normalisation of $\nu,\kappa$. \end{theorem}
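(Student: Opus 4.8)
The plan is to reduce everything to a single statement about pullbacks of tensor powers of the cotangent sheaf along a rationally connected fibration, and then to prove that statement by comparing spaces of sections on $X$ and on $B$. Since $\nu$ and $\kappa$ for $E=\otimes^m(\Omega^1_X)$ are by definition (after the normalisation by $\dim(\bP(E))-\dim(X)$) just $\nu$ and $\kappa$ of $\cO_{\bP(E)}(1)$, and since $h^0(\bP(E),\cO(N)) = h^0(X, \Sym^N E)$, the claimed equalities $\nu(X,\otimes^m\Omega^1_X)+n^m=\nu(B,\otimes^m\Omega^1_B)+b^m$ and likewise for $\kappa$ amount to the assertion that the growth rates of $h^0(X,\Sym^N(\otimes^m\Omega^1_X))$ and of $h^0(B,\Sym^N(\otimes^m\Omega^1_B))$ (also twisted by arbitrary divisors, for $\nu$) agree — the shift by $n^m-b^m$ in the exponent being exactly the rank difference $\mathrm{rk}(\otimes^m\Omega^1_X)-\mathrm{rk}(\otimes^m\Omega^1_B)$ built into the normalisation. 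The first two displayed equalities ($\nu_p$, $\kappa_p$) then follow from the general tensor statement, since $\Omega^p_X$ is a direct summand of $\otimes^p\Omega^1_X$ in characteristic zero, or alternatively can be argued in the same way with $\Lambda$ in place of $\otimes$; I would actually prove the $\otimes^m$ version directly and then remark that the Schur-functor variants, including $\Omega^p$, go through verbatim.

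The heart of the matter is the isomorphism on global sections $H^0(X,\Sym^N(\otimes^m\Omega^1_X)\otimes f^*L)\cong H^0(B,\Sym^N(\otimes^m\Omega^1_B)\otimes L)$ for every line bundle $L$ on $B$ and all $N,m>0$. The key geometric input is the relative cotangent exact sequence
\[
0\lra f^*\Omega^1_B\lra \Omega^1_X\lra \Omega^1_{X/B}\lra 0,
\]
together with the fact that, $F$ being the rationally connected fibre, $H^0(F,\Sym^a(\otimes^b\Omega^1_F))=0$ for all $a>0$ (rationally connected manifolds have $\kappa_q=-\infty$ and in fact carry no nonzero symmetric tensor-power differentials at all, $H^0(F,(\Omega^1_F)^{\otimes k})=0$ for $k>0$, by a theorem going back to the fact that $\Omega^1_F$ is ``generically ample-free'' along free rational curves). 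So I would argue that any section of a tensor operation applied to $\Omega^1_X$ restricted to a general fibre $F$ must land in the subsheaf generated by $f^*\Omega^1_B|_F$, i.e.\ it is ``horizontal''; filtering $\Sym^N(\otimes^m\Omega^1_X)$ by the sub-bundle coming from $f^*\Omega^1_B$, the associated graded pieces other than the top one $f^*\Sym^N(\otimes^m\Omega^1_B)$ involve at least one factor of a positive tensor power of $\Omega^1_{X/B}$, which has no sections on the general fibre; an inductive/devissage argument on the filtration then shows the restriction map $H^0(X,\Sym^N(\otimes^m\Omega^1_X)\otimes f^*L)\to H^0(X, f^*\Sym^N(\otimes^m\Omega^1_B)\otimes f^*L)$ is injective, and the target is $H^0(B,\Sym^N(\otimes^m\Omega^1_B)\otimes L)$ by the projection formula together with $f_*\cO_X=\cO_B$ and $R^0$-flatness. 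For surjectivity, $f^*\Sym^N(\otimes^m\Omega^1_B)\subset \Sym^N(\otimes^m\Omega^1_X)$ gives the reverse inclusion directly. Taking $N\to\infty$ and reading off polynomial growth (for $\kappa$, with $L=\cO_B$; for $\nu$, allowing $L$ to be an arbitrary auxiliary divisor pulled back from $B$, which suffices because, again by the fibrewise vanishing, twisting by divisors not pulled back from $B$ cannot increase the growth rate) yields the claimed equalities of $\nu$ and $\kappa$, with the exponent shift being precisely $n^m-b^m$.

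The step I expect to be the main obstacle is making the devissage along the filtration of $\Sym^N(\otimes^m\Omega^1_X)$ genuinely rigorous and uniform in $N$: one must check that every graded piece other than the top horizontal one really does restrict to zero on $H^0$ of the general fibre, which requires knowing $H^0(F,\Sym^{a_0}(\otimes^{b_0}\Omega^1_F)\otimes(\text{something globally generated or trivial on }F))=0$ whenever a positive tensor power of $\Omega^1_{X/B}$ enters — i.e.\ a vanishing theorem for all Schur-type functors applied to $\Omega^1_F$ on a rationally connected $F$, not just for $\kappa_q(F)=-\infty$. This is available (it follows from the existence of a free rational curve through a general point, along which $\Omega^1_F$ pulls back to a sum of nonpositive line bundles with at least one strictly negative, combined with the fact that a global section of a vector bundle on $F$ restricts to a section on the $\bP^1$, which must vanish), but one has to phrase it carefully enough to cover the twist by $f^*L$ and the nontriviality of the normal bundle $N_{F/X}$; handling the general, possibly non-flat, fibres — or rather, arguing on a general fibre and then spreading out — is the routine-but-delicate part. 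The projection-formula identification $H^0(X,f^*\cG)=H^0(B,\cG)$ is standard given $f_*\cO_X=\cO_B$, which holds because $f$ has rationally connected (hence connected, with $H^0(F,\cO_F)=\bC$) fibres and $X$ is smooth.
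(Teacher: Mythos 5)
There is a genuine gap, and it sits exactly at the point you flag as a reduction step for $\nu$: you claim that it suffices to twist by divisors pulled back from $B$ ``because, by the fibrewise vanishing, twisting by divisors not pulled back from $B$ cannot increase the growth rate.'' This is not justified, and the vanishing you invoke is not the relevant one. To bound $\nu$ you must control $h^0(X,\Sym^N(\otimes^m\Omega^1_X)\otimes A)$ for $A$ ample on $X$, and once $A$ is there the graded pieces of your filtration restrict to the fibre $F$ as $\Sym^j(\otimes^\bullet\Omega^1_F)\otimes A_{|F}$ with $A_{|F}$ ample; these do have nonzero sections for small $j>0$, so your ``horizontality'' (injectivity of restriction to the top graded piece) simply fails. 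What is true — and is the actual mechanism in the paper's proof — is that non-pseudoeffectivity of $\Omega^1_F$ (i.e.\ $\nu_1(F)=-\infty$, unconditional for rationally connected $F$) gives $f_*(\Sym^j(\Omega^1_{X/B})\otimes A)=0$ only for $j\geq m_A$, with $m_A$ depending on $A$ but not on the symmetric degree; the finitely many pieces with $0<j<m_A$ genuinely contribute and must be counted, which the paper does by pushing them to $B$, injecting the torsion-free pushforwards into $A'^{\oplus R}$ (Lemma \ref{inject}), and observing that each then grows like $h^0(B,\Sym^{N-j}\Omega^1_B\otimes A')=O(N^{\nu_1(B)+b-1})$. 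Your sketch has no substitute for this counting step.

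A second, independent gap is the treatment of singular fibres. The relative cotangent filtration with graded pieces $f^*\Sym^{N-j}(\Omega^1_B)\otimes\Sym^j(\Omega^1_{X/B})$ is only valid over the smooth locus of $f$; globally the relevant subsheaf is the saturation of $f^*\Sym^{N-j}(\Omega^1_B)$ inside $\Sym^N(\Omega^1_X)$, which acquires poles of order growing linearly in $N-j$ along non-reduced components of fibres. So a global section is not forced into $f^*\Sym^N(\Omega^1_B)\otimes f^*L$, only into its saturation, and one must show the extra poles do not inflate the growth. This is where the paper uses two nontrivial inputs you omit: the theorem of Graber--Harris--Starr, guaranteeing that every $f^*(D)$ has a reduced non-exceptional component (no multiple fibres in codimension one, since the fibres are rationally connected), and the ``partially supported on the fibres'' Lemma \ref{lemmapsf}, which bounds the effect of the resulting divisor uniformly in $N$. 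That this is not ``routine spreading out'' is shown by Remark \ref{Delta} in the paper: for a fibration with fibres having $\nu_1=-\infty$ but with multiple fibres, the conclusion is false even for $\kappa_1$. Your pullback inclusion $f^*\Sym^N(\otimes^m\Omega^1_B)\subset\Sym^N(\otimes^m\Omega^1_X)$ does give the easy inequality $\nu_p(X)\geq\nu_p(B)$ (suitably normalised), and your fibrewise Schur-functor vanishing is a correct ingredient, but as written the proof of the reverse inequality — the whole content of the theorem — is missing its two essential steps.
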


\begin{proof} We prove it for $\nu_1$, the proof for $\kappa_1$ being obtained by taking $D=0$ in the next proof. 
The inequality $\nu_1(B)\leq \nu_1(X)$ is clear, so we just need to prove the opposite inequality. Flattening $f$ by suitable modifications of $B$ and $X$, by means of Raynaud or Hironaka's theorems, we shall assume that $f$ is `neat', which means that the $f$-exceptional divisor $\cE$ is also $u$-exceptional for some birational morphism $u:X\to X_0$, with $X_0$ smooth. By Hartog's theorem, the sections of $Sym^m(\Omega^1_X)\otimes u^*(A_0)$ over $(X\setminus \cE)$ thus all extend to $X$, and we shall ignore their possible poles occuring over $\cE$, for any $m>0$, since those do not alter the spaces of sections, for any $A_0$ ample on $X_0$, which is sufficient to compute $\nu_1(X)$.

To simplify notations, we write $S^m(X):=Sym^m(\Omega^1_X)$, $S^m(X/B):=(Sym^m(\Omega^1_{X/B})/Torsion)^{**}$, and $\bullet^{sat}$ for the saturation inside $S^m(X)$ of a subsheaf $\bullet\subset S^m(X)$. 

Let $D\subset B$ be an irreducible divisor, and let $f^*(D):=\sum_kt_k.E_k+R$, where $R$ is $f$-exceptional, and the $E_k's$ are distinct prime divisors such that $f(E_k)=D,\forall k$, the $t_k's$ being positive integers. 
By \cite{GHS}, $inf_k(\{t_k\})=1$. Thus $E:=f^*(D)-\sum_kE_k-R$ is `partially supported on the fibres of $f$' (see Definition \ref{PSF} below).

At the generic point of any $E_k$, so in codimension at least $2$ on $X$, we can choose local coordinates $(x):=(x_1,\dots, x_n)$ such that $f((x))=(y_1:=x_1^{t_k}, y_2=x_2,\dots,y_d:=x_d)$ in suitable local coordinates $(y)=(y_1,\dots,y_d)$ of $B$, where $d:=dim(B)$. By a simple local computation in these coordinates, we thus have\footnote{A more precise version is obtained using Log-differentials, but the coarse one used here is sufficient for our purposes.}: $f^*(S^{m-j}(B))^{sat}\otimes S^j(X/B)\subset f^*(S^{m-j}(B)((m-j).(t_k-1).E_k))\otimes S^j(X/B)$, and $f^*(S^{m-j}(B))^{sat}= f^*(S^{m-j}(B))$ on the generic point of the (always existing, by \cite{GHS}) components $E_k$ with $t_k=1$.

We have, for any $m>0$, over the generic fibre of $f$, a natural filtration of $S^m(X)$ with 
quotients $f^*(S^{m-j}(B))\otimes S^j(X/B)$, for $0\leq j\leq m$. By the preceding observation, we thus get outside of $\cE$ and an additional codimension $2$ locus, a filtration of $S^m(X)$ with associated graded the reflexive sheaves $\oplus_{j=0}^{j=m}f^*(S^{m-j}(B))^{sat}\otimes S^j(X/B)$. This implies, for any ample $A$ on $X$, with $t\geq t_k,\forall k$, and $E$ partially supported on the fibres of $f$, that:
$$h^0(X,S^m(X)\otimes A)\leq \oplus_{j=0}^{j=m}h^0(X,f^*(S^{m-j}(B)((m-j).t)E)\otimes S^j(X/B)\otimes A)$$
Let $\cG_j:=S^j(X/B)\otimes A$. Since $\nu_1(X_b)=-\infty$ on the generic, smooth, fibre $X_b$ of $f$, there is an integer $m_A$, independent of $m$, for $A$ fixed, such that the torsion free sheaf $f_*(\cG_j)$ vanishes generically, hence everywhere, on $B$, for $j\geq m_A$. 

We thus have: $h^0(B,S^{m-j}(B)\otimes f_*(\cG_j))=0$ if $j\geq m_A$, and so:
$h^0(X,S^m(X)\otimes A)\leq \sum_{j=0}^{j=m_A}h^0(B,S^{m-j}(B)(s.E)\otimes \cG_j)$, where $s>0$ is defined in Lemma \ref{lemmapsf} (proved in the next subsection), and taken sufficiently large, so as to work simultaneously for all of the finitely many sheaves $\cG_j$, $j\leq m_A$.

We thus have: $$h^0(X,S^m(X)\otimes A)\leq \sum_{0\leq j<m_A}h^0(X,S^{m-j}(B)\otimes f_*(\cG_j (sE)).$$
Applying the next Lemma \ref{inject}, we obtain an injection $f_*(\cG_j (sE))\to A'^{\oplus R}$ for some ample $A'$ on $B$, and some positive integer $R$, this being valid for any $j\leq m_A$.

Thus $h^0(X,S^m(X)\otimes A)\leq R.(\sum_{0\leq j<m_A}h^0(X,S^{m-j}(B)\otimes A'))$.

Since $ h^0(B,S^{m-j}(B)\otimes A')\leq C.m^{\nu_1(B)+dim(B)-1}$, for some constant $C=C(A')$, we get:
$h^0(X,S^m(X)\otimes A)\leq C.R.m^{\nu_1(B)+dim(B)-1}$, which is the claimed inequality.

For $\otimes^m(\Omega^1_X)$, and so each $\nu_p$, the proof is the same, using filtration by ordered $m$-tuples. 
\end{proof}

We used the following lemma (and also Lemma \ref{lemmapsf}, treated in the next subsection):

\begin{lemma}\label{inject} Let $\cF$ be a torsionfree sheaf on $B$. There exists an integer $R>0$ and an ample line bundle 
$A$ on $B$ such that $\cF$ injects into $A^{\oplus R}$ 
\end{lemma}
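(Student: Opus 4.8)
This is a standard fact, and I would prove it by a \textquotedblleft dualise, globally generate, dualise back\textquotedblright{} argument. Write $\cF^\vee := \mathcal{H}om(\cF,\OO_B)$, and recall that, since $\cF$ is torsionfree on the variety $B$, the canonical morphism $\cF\to\cF^{**}=(\cF^\vee)^\vee$ is injective. Hence it is enough to embed $\cF^{**}$ into some $A^{\oplus R}$.

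Fix a very ample line bundle $\OO_B(1)$ on $B$. For $m$ sufficiently large the coherent sheaf $\cF^\vee(m)$ is globally generated, so there is a surjection $\OO_B^{\oplus R}\twoheadrightarrow\cF^\vee(m)$ for some $R>0$. Applying the contravariant left-exact functor $\mathcal{H}om(-,\OO_B)$ turns this surjection into an injection
$$\bigl(\cF^\vee(m)\bigr)^\vee\hookrightarrow\mathcal{H}om(\OO_B^{\oplus R},\OO_B)=\OO_B^{\oplus R}.$$
Using the identity $\mathcal{H}om(\cG(\ell),\OO_B)\cong\mathcal{H}om(\cG,\OO_B)(-\ell)$ one identifies $\bigl(\cF^\vee(m)\bigr)^\vee=(\cF^\vee)^\vee(-m)=\cF^{**}(-m)$, so twisting the above injection by $\OO_B(m)$ yields $\cF^{**}\hookrightarrow\OO_B(m)^{\oplus R}$. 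Composing with $\cF\hookrightarrow\cF^{**}$ and setting $A:=\OO_B(m)$ (which is ample) gives the claim.

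There is no real obstacle here; the only points to double-check are the elementary sheaf-theoretic facts used above: that $\cF\to\cF^{**}$ is injective for torsionfree $\cF$ on an integral base, that $\mathcal{H}om(-,\OO_B)$ sends surjections to injections and commutes with finite direct sums, and the compatibility of $\mathcal{H}om(-,\OO_B)$ with line-bundle twists. All of these are standard, so the lemma is purely routine. (One could equally well run the same procedure on the reflexive hull $\cG:=\cF^{**}$ directly, using $(\cG^\vee)^\vee\cong\cG$, and then compose with $\cF\hookrightarrow\cG$; this is the same argument.)
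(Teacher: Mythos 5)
Your proof is correct and follows essentially the same route as the paper: reduce to the reflexive hull via $\cF\hookrightarrow\cF^{**}$, choose an ample twist making the dual globally generated, and dualise the resulting surjection $\mathcal{O}_B^{\oplus R}\to \cF^{*}(A)$ back to an injection, then untwist. The only difference is notational (the paper twists by the ample bundle $A$ directly rather than by a power $\OO_B(m)$ of a fixed very ample bundle), so there is nothing further to add.
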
 
\begin{proof} We may assume $\cF$ to be reflexive since it injects into its bidual $\cF^{**}$. Choose $A$ ample so that $\cF^*(A)$ is generated by its global sections. 
Dualise the corresponding surjective map $\mathcal{O}_B^{\oplus R}\to \cF^*(A)$ to obtain the injective map $\cF^{**}(-A)\to \mathcal{O}_B^{\oplus R}$, and tensorise with $A$.
\end{proof}

The proof of Theorem \ref{tRQ} actually works in a broader context:

\begin{corollary}\label{cRQ} Let $f:X\to B$ be a fibration between complex projective manifolds. Assume that:

1. $f$ is `neat'.

2. $\nu_1(X_b)=-\infty$, for $X_b$ a generic smooth fibre of $f$.

3. For each prime divisor $D\subset B$, $f^*(D)$ contains at least one reduced component which is not $f$-exceptional.

Then $\nu_1(X)+dim(X)=\nu_1(B)+dim(B)$. 
\end{corollary}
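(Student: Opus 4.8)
The plan is to observe that Corollary~\ref{cRQ} is not a new theorem but rather an extraction of the proof of Theorem~\ref{tRQ}: the only places where the rational connectedness of the fibres of $f$ was used were (a) to guarantee, via \cite{GHS}, that for each prime divisor $D\subset B$ the pullback $f^*(D)$ contains a reduced, non-$f$-exceptional component, so that the correction divisor $E=f^*(D)-\sum_k E_k-R$ is partially supported on the fibres of $f$; and (b) to guarantee, via $\nu_1(X_b)=-\infty$ on a generic fibre, that $f_*(S^j(X/B)\otimes A)$ vanishes for $j$ larger than some $m_A$ independent of $m$. Hypotheses (2) and (3) of the corollary are precisely these two consequences isolated as standalone assumptions, and hypothesis (1) (neatness) replaces the flattening step performed at the start of the proof of Theorem~\ref{tRQ}. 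So the proof is: rerun the argument of Theorem~\ref{tRQ} verbatim, substituting assumption (3) for the appeal to \cite{GHS} in the construction of the filtration of $S^m(X)$ and in the claim that $E$ is partially supported on the fibres, and substituting assumption (2) for the vanishing of $f_*(\cG_j)$.

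Concretely, first I would note that the inequality $\nu_1(B)\le \nu_1(X)$ is automatic from functoriality of symmetric differentials under the surjection $f$ (stated in the introduction as ``if $f:X\to Y$ is onto then $\nu_1(X)\ge\nu_1(Y)$''), together with the normalisation constants: $\nu_1(X)+\dim(X)\ge \nu_1(B)+\dim(B)$ needs a small check but follows since $f^*\Omega^1_B\hookrightarrow\Omega^1_X$ and the generic fibre contributes nothing to growth when $\nu_1(X_b)=-\infty$; in fact this direction is subsumed in the detailed estimate. For the reverse inequality I would reproduce, step by step: the Hartogs/neatness reduction to ignore poles along $\cE$ (now justified by hypothesis (1) directly rather than after a flattening); the local coordinate computation at the generic point of each $E_k$ giving $f^*(S^{m-j}(B))^{sat}\otimes S^j(X/B)\subset f^*(S^{m-j}(B)((m-j)(t_k-1)E_k))\otimes S^j(X/B)$; the natural filtration of $S^m(X)$ over the generic fibre with graded pieces $f^*(S^{m-j}(B))^{sat}\otimes S^j(X/B)$; the bound $h^0(X,S^m(X)\otimes A)\le \sum_{j=0}^{m}h^0(X, f^*(S^{m-j}(B)((m-j)t\,E))\otimes S^j(X/B)\otimes A)$; the truncation at $j=m_A$ coming from assumption (2); Lemma~\ref{lemmapsf} to absorb $(sE)$ into sections over $B$; Lemma~\ref{inject} to inject $f_*(\cG_j(sE))$ into $A'^{\oplus R}$; and finally the polynomial estimate $h^0(B,S^{m-j}(B)\otimes A')\le C\,m^{\nu_1(B)+\dim(B)-1}$, which after summing the finitely many terms $j<m_A$ gives $h^0(X,S^m(X)\otimes A)\le C'\,m^{\nu_1(B)+\dim(B)-1}$, i.e. $\nu_1(X)+\dim(X)-1\le \nu_1(B)+\dim(B)-1$.

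The step that requires the most care — and the one I would single out as the main point to verify rather than a genuine obstacle — is that hypothesis (3) really does suffice in place of \cite{GHS} everywhere the latter was invoked. In the proof of Theorem~\ref{tRQ}, \cite{GHS} appears twice: once to say $\inf_k\{t_k\}=1$, i.e. that some $E_k$ is reduced in $f^*(D)$, and once to say that among the $E_k$ there is a reduced, non-$f$-exceptional component so that the saturation $f^*(S^{m-j}(B))^{sat}$ equals $f^*(S^{m-j}(B))$ at its generic point. Hypothesis (3) gives exactly the latter, stronger, statement (``at least one reduced component which is not $f$-exceptional''), and this reduced non-exceptional $E_k$ is what makes $E=f^*(D)-\sum_k E_k - R$ partially supported on the fibres of $f$ in the sense of Definition~\ref{PSF}, since along such an $E_k$ we have $t_k=1$ and $E$ has no component there. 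I would then conclude that the remainder of the argument, including Lemmas~\ref{lemmapsf} and \ref{inject}, goes through word for word, and that the ``more general'' $\otimes^m\Omega^1$ statement is not needed here — Corollary~\ref{cRQ} only asserts the $\nu_1$ equality — so no additional combinatorial bookkeeping over ordered $m$-tuples is required. Hence $\nu_1(X)+\dim(X)=\nu_1(B)+\dim(B)$, as claimed. $\qed$
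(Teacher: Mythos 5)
Your proposal is correct and takes essentially the same route as the paper, which offers no separate argument for Corollary \ref{cRQ} beyond the remark that the proof of Theorem \ref{tRQ} works in this broader context: hypothesis (3) stands in for the appeal to \cite{GHS} (making $E$ partially supported on the fibres and killing the saturation discrepancy along a reduced non-exceptional component), hypothesis (2) stands in for the vanishing of $f_*(\cG_j)$ for $j\geq m_A$, and hypothesis (1) replaces the flattening step. Your handling of the easy inequality via $\nu_1(X)\geq\nu_1(B)$ and the restriction to the $\nu_1$ case (no tensor bookkeeping needed) is exactly the intended reading.
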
 

 The example of the next Remark \ref{Delta} shows that Condition $3$ in Corollary \ref{cRQ} is essential. With the notion of `orbifold base' $(B,\Delta_f)$ of a fibration $f:X\to B$ introduced in \cite{Ca04}, it means that $\Delta_f=0$, or equivalently, that $f$ has no multiple fibres (in the `infimum', not `gcd', sense) in codimension one.

\begin{remark}\label{Delta} If $f:X\to Y$ is a fibration, with smooth fibres $X_y$ such that $\nu_1(X_y)=-\infty$, it is not true in general that $\kappa_1^*(X)+dim(X)= \kappa_1^*(Y)+ dim(Y)$. This equality also fails for $\nu_1^*$. Indeed: Let $S$ be an Enriques surface with universal cover $T$ (a $K3$-surface), with $S=T/(t)$, where $t$ is the Enriques involution. Let $C$ be a hyperelliptic curve of genus $g>1$ with hyperelliptic involution $h$. Let $X':=C\times T$, we have: $\kappa_1^*(X')=1+(-2)=-1$ (see for example \cite{HP18} for the equality $\nu_1^*(S)=-2$ for a $K3$ surface. The weaker statement for $\kappa_1^*$ goes back to S. Kobayashi). On the other hand, let $(t\times h)$ be the involution acting without fixpoint on $X'$. Let $X:=X'/(t\times h)$. We thus have $\kappa_1^*(X)=\kappa_1^*(X')=-1$. But $X$ is equipped with a fibration $f:X\to C/(h)=\Bbb P_1:=B$ with smooth fibres $S$, so that $\kappa_1^*(X)+dim(X)=\kappa_1^*(X')+dim(X')=1+(-2)+2=1$, but $\kappa_1^*(B)+dim(B)=0$. The equality is however restored if we introduce the `orbifold base' $(\Bbb P_1,\Delta_f)$ of $f$, and define suitably $\kappa_1(B,\Delta)=\kappa(B, K_B+\Delta)$. In higher dimensions, one needs to use the sheaves $Sym^{[m]}(\Omega^1(B,\Delta))$ introduced in \cite{Ca04} to define $\kappa_1(X,\Delta)$.\end{remark}

\subsection{Divisors partially supported on the fibres of a fibration.}

\begin{definition}\label{PSF} Let $f:T\to W$ be a proper surjective holomorphic map with connected fibres (a `fibration')  between connected complex manifolds, and let $E\subset T$ be a reduced divisor.
We say that $E$ is `partially supported on the fibres of $f'$ if, for any divisorial irreducible component $D$ of $f(E)\subset W$, there exists an irreducible component of $f^{-1}(D)$ which is surjectively mapped onto $D$ by $f$, but is not contained in $E$.\end{definition}

\begin{example}\label{rcfibres} 1. In the situation of \ref{PSF}, let $D\subset W$ be an irreducible divisor, and let $f^*(D):=\sum_{k\in K} t_k.E_k+R$ be the scheme theoretic inverse image of $D$, where the $E_k's$ are distinct irreducible divisors surjectively mapped by $f$ onto $D$, and $R$ is $f$-exceptional. Let $t:=inf_{k}\{t_k\}$, and let $E_t:=\sum_kt.E_k$. Then $E:=f^*(D)-E_t-R$ is partially supported on the fibres of $f$ (and empty if $R=0$, and if $t_k=t,\forall k$).

2. One of the situations in which we will use this notion is the following one: assume that the fibres of $f$ are rationally connected and $T,W$ are projective. By \cite{GHS}, $f^*(D)$ contains at least a reduced component, for any $D\subset W$. Thus $f^*(D)-f^{-1}(D):=E_D$ is either partially supported on the fibres of $f$, or empty.

 \end{example} 

The divisors partially supported on the fibres of a fibration\footnote{The referee points out that this notion also appears in N. Nakayama's book \cite{Nak}, p.103 under the name of `divisor of insufficient fibre type', and that this book also contains the particular case of Lemma \ref{lemmapsf} when $\cG=\mathcal{O}_T$.} are used in the following way, inspired from \cite{Ca04} and slightly extending \cite{Ca16}, Lemmas 6.11,  Corollary 6.13 (where $\cF$ is of rank one, there).

\begin{lemma}\label{lemmapsf} Let $f:T\to W$ be a fibration, with $T,W$ complex projective smooth and connected. Let $\cF,\cG$ be  torsionfree coherent sheaves on $W,T$ respectively, and $E\subset T$ be partially supported on the fibres of $f$. There exists $s=s(\cG, E)>0$, such that, $\forall s'>s$ the natural inclusion map :
$H^0(T, f^*(\cF)(sE)\otimes \cG)\to H^0(T, f^*(\cF)(s'E)\otimes \cG)$ is surjective, hence bijective, where $f^*(\cF)(s'E):= f^*(\cF)\otimes \mathcal{O}_T(s'E)$. 
The constant $s=s(\cG,E)$ depends on $\cG$ and $E$, but not\footnote{The bound $s$ also depends on an auxiliary ample line bundle $H$ on $W$ if $dim(W)>1$.} on $\cF$.
\end{lemma}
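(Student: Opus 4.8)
\textbf{Proof plan for Lemma \ref{lemmapsf}.}

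The plan is to reduce the statement to a pointwise (or rather, divisorial-component-wise) analysis of the saturation behaviour of sections of $f^*(\cF)\otimes \cG$ near the divisor $E$, and then to invoke Noetherianity to get a uniform bound $s$. First I would observe that, since $E$ is partially supported on the fibres of $f$, for each irreducible component $D$ of $f(E)$ that is a divisor in $W$, there is a component $E_D'$ of $f^{-1}(D)$ dominating $D$ with $E_D'\not\subset E$; the components of $E$ itself, on the other hand, are either $f$-exceptional or dominate such a $D$. The key local fact is: if $\sigma$ is a section of $f^*(\cF)(s'E)\otimes\cG$, its "pole order" along any component $E_i$ of $E$ is controlled, because near the generic point of $E_i$ the pullback $f^*(\cF)$ has no poles along $E_i$ at all when $E_i$ dominates $D$ and $f^{-1}(D)$ already contains the reduced component $E_D'$ — there the local generator of $\mathcal{O}_W(-D)$ pulls back to a function vanishing only to order $1$ on $E_D'$ (not on $E_i$), so $f^*(\cF)$ is already "$E_i$-saturated". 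Hence a section of $f^*(\cF)(s'E)\otimes\cG$ that is regular in codimension $1$ away from $E$ cannot actually use the allowed pole along those $E_i$ dominating some $D$ — its restriction to a general fibre, or to a general point of $E_i$, forces the vanishing.

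Concretely, the argument I would run is the following. Consider the quotient sheaf $\mathcal{Q}_{s'} := f^*(\cF)(s'E)\otimes\cG\,/\,f^*(\cF)(sE)\otimes\cG$, supported on $E$. I want to show that for $s$ large enough (depending on $\cG$ and $E$, and an auxiliary ample $H$ on $W$), the connecting map $H^0(T,f^*(\cF)(s'E)\otimes\cG)\to H^0(T,\mathcal{Q}_{s'})$ is zero, equivalently every section of the larger sheaf already lies in the smaller one. Since $T$ is projective and the sheaves in question form an increasing union whose stalks stabilise (the sheaf $\varinjlim_{s'} f^*(\cF)(s'E)\otimes\cG$ is quasi-coherent but each stalk is a finitely generated module over a Noetherian ring once we... no — rather, I would bound things numerically). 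The cleanest route: choose an ample $H$ on $W$ and write $\cF\hookrightarrow H^{\oplus R}$ for suitable $R$ after twisting (as in Lemma \ref{inject}), reducing to $\cF=\mathcal{O}_W$, then to a line bundle $f^*H^{-1}$ twist — so it suffices to prove: $H^0(T,\cO_T(s'E)\otimes\cG\otimes f^*H^{-N})$ is independent of $s'\gg0$. Now a section here restricts, on the component $E_D'$ not contained in $E$ over each divisorial $D\subset f(E)$, and restricts to general fibres over non-divisorial images; the point is that increasing $s'$ only adds pole along components $E_i$ of $E$, each of which maps to some $D$ having such an $E_D'$. Choose a very general complete intersection curve $C\subset T$ (pullback of a very general curve in $W$ plus general hyperplanes), meeting every $E_i$ and every $E_D'$ transversally in the smooth locus. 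On $C$, the line bundle $\cO_T(s'E)\otimes f^*H^{-N}$ has degree $s'\cdot(\sum E_i\cdot C) - N(H\cdot f_*C)$, while the "available positivity" from the constraint "regular away from $E$, i.e. the divisor of the section is $\geq -s'E + f^*(\text{something}\geq 0)$" forces, fibre by fibre over $D$, a compensating vanishing on $E_D'\cap C$ of order growing with $s'$ at the same rate. Balancing these gives that beyond some $s$, no new sections appear.

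\textbf{Main obstacle.} The delicate part is making the "compensating vanishing" precise and uniform in $\cG$ — a torsionfree, possibly non-locally-free sheaf — rather than for $\cG=\cO_T$. For $\cG=\cO_T$ this is essentially the statement in Nakayama's book (as the referee notes) and follows from the elementary divisor-of-insufficient-fibre-type calculus: the relation $f^*D = \sum t_k E_k + R$ with $\min_k t_k$ attained on a reduced $E_k$ forces any $f^*(\cF)(s'E)$-section to have its "pole along $E$" bounded by a multiple of the coefficient of that reduced component, which does not grow. To handle general $\cG$, I would pass to the reflexive hull and work away from a codimension-$2$ locus where $\cG$ is locally free (Hartogs-type extension handles the rest, as $E$ contributes no new sections over a codimension-$2$ set), then run the local analysis at generic points of components of $E$ with $\cG$ locally free there — this is exactly the codimension-$1$ reduction already used in the proof of Theorem \ref{tRQ}. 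The uniformity "$s$ does not depend on $\cF$" then falls out because the local pole-order bound at the generic point of each $E_i$ depends only on the multiplicity data of $f^*D$ and on $\cG$, not on $\cF$; the ample $H$ enters only to turn the a priori bound on $\varinjlim_{s'}$ into an honest finite $s$ via boundedness of the relevant Hilbert-type function. I expect writing this last Noetherian/boundedness step carefully — ensuring $s$ is independent of $\cF$ while possibly depending on $H$ — to be the part requiring the most care, and I would model it closely on \cite{Ca16}, Lemma 6.11 and Corollary 6.13.
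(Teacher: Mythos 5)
Your two mechanisms both fail at the decisive point, so the proposal has a genuine gap. The ``key local fact'' is not a fact: at the generic point of a component $E_i$ of $E$ the sheaf $\mathcal{O}_T(s'E)$ is trivial, so no codimension-one analysis at $E_i$ can prevent a global section of $f^*(\cF)(s'E)\otimes\cG$ from having an honest pole of order $s'$ along $E_i$. The saturation computation you import from the proof of Theorem \ref{tRQ} bounds pole orders of subsheaves of a fixed locally free sheaf (there, $Sym^m(\Omega^1_T)$); here there is no ambient sheaf, and the lemma is irreducibly global. Already for $f:T\to W$ a surface over a curve with a reducible fibre $F=E+E'$, $E'\not\subset E$, the stabilisation of $H^0(T,\mathcal{O}_T(s'E)\otimes A)$ holds because $E^2=-E\cdot E'<0$, a numerical fact invisible to any local or generic-fibre argument. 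Your global step then uses the wrong curves: a very general complete intersection curve $C$ dominating a curve of $W$ meets the effective divisor $E$ nonnegatively, so $\deg\bigl((\mathcal{O}_T(s'E)\otimes f^*H^{-N})|_C\bigr)$ grows with $s'$ and no vanishing can be extracted from it; the ``compensating vanishing on $E'_D\cap C$ of order growing with $s'$'' is exactly the heart of the matter and is nowhere established.

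The paper's proof goes the other way. It reduces $\cG$ (not $\cF$) to an ample line bundle $A$ on $T$ via Lemma \ref{inject} — legitimate, since $s$ may depend on $\cG$ — makes $E$ simple normal crossings, and cuts $T$ by generic members of $|A|$ and of $f^*|H|$ down to a surface fibred over a curve. There the hypothesis that $E$ is partially supported on the fibres means $E$ omits at least one component of each fibre over $D$, so the intersection matrix of its components in a fibre is negative definite, and one gets a cycle $E^+=\sum_i m_iE_i$ supported on $E$ with $E^+\cdot E_i<0$ for all $i$ (\cite{BPV}, Corollary I.2.11); Claim 2 of the paper shows the $m_i$ can be chosen uniformly over the generic surface slices. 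Then $H^0(E^+,f^*(\cF)\otimes(s'E^++A))=0$ for $s'\ge s$, and stabilisation follows from the restriction exact sequences, first for $E^+$ and then for $E$ by sandwiching $E\le E^+\le\mu E$. Independence of $\cF$ is automatic because the decisive estimates take place on curves contracted by $f$, where $f^*(\cF)$ is a direct sum of trivial bundles. By contrast, your reduction $\cF\hookrightarrow H^{\oplus R}$ undercuts the very uniformity you must prove ($R$ and the twist depend on $\cF$, so you would need the bound uniformly over all powers $H^{\otimes N}$, which is the same problem again), and descending stabilisation from $(H^{\otimes N})^{\oplus R}$ back to $\cF$ would require $f^*\cF$ to be saturated in $f^*(H^{\otimes N})^{\oplus R}$ along the components of $E$, which the embedding of Lemma \ref{inject} does not provide. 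Keep $\cF$ untouched and place the numerical argument on the fibre curves, where $\cF$ drops out of the computation.
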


\begin{proof} Let us now choose $A$ ample on $T$ such that $\cG\subset A^{\oplus R}$, for some $R$. 
It is thus sufficient to consider the case when $\cG=A$, which we do from now on. We may of course choose $A$ to be as ample as we wish, for example very ample.

Hartog's theorem shows that we may, and shall, assume that $E$ is of simple normal crossings, by suitably blowing $T$ up. Indeed: let $\pi:T'\to T$ be a modification such that the strict transform $E'$ of $E$ is of simple normal crossings, and partially supported on the fibres of $f':=f\circ \pi:T'\to W$. 
We have thus $\pi^*(E)=E'+\cE$, with $\cE$ $\pi$-exceptional.  We have: $\pi^*(H^0(T, \pi^*(f^*(\cF)\otimes s'E+A))=H^0(T', (f')^*(\cF)\otimes (s'E'+A'+s'\cE))=H^0(T', (f')^*(\cF)\otimes (s'E'+A'))$, by Hartog's theorem. This easily implies that Lemma \ref{lemmapsf} holds true for $(f,E)$ if it holds for $(f',E')$.

We first consider the special case when $T$ is a surface and $W$ a curve. We will reduce to this case after this.

Let thus $f:T\to W$ be a fibration from a surface to a curve. Thus $E$ is concentrated on finitely many fibres of $f$. By increasing $s$, we can easily reduce to the case where $E$ is connected and contained in a single fibre of $f$, case which we now consider. In this case, $E$ is exceptional in $T$, and $E=\sum_i E_i$, the $E_i$ being the components of $E$. By \cite{BPV}, Corollary I.2.11, we may equip the components $E_i$ of positive integral multiplicities $m_i>0$ such that, if $E^+:=\sum_im_i.E_i$, then we have: $E^+.E_i<0,\forall i$.

{\bf Claim 1:} There exists $s:=s(A,E)$ such that:

 $H^0(E^+,f^*(\cF)\otimes (s'E^++A))=0,\forall s'\geq s$.

 \begin{proof} (of Claim 1). Take $s>\frac{(A-m_i.E_i).E_i}{-E^+.E_i},\forall i$. We thus have: 
 
 $(s'E^++A-k.E_i).E_i<0,\forall s'\geq s, \forall k$, integer such that: $k\leq m_i$.
 
 We shall now show that: 
 
 $H^0(k.E_i,f^*(\cF)\otimes (s'E^++A))=0,\forall i, \forall k, 1\leq k\leq m_i$.
 
 Applying this with $k=m_i,\forall i$, we get the claim. Fix some $i$, and proceed inductively on $k$.
 
 For $k=1$ we need to show that $H^0(E_i,f^*(\cF)\otimes (s'E^++A))=0$. But this is clear since $f^*(\cF)\otimes (s'E^++A)$ is a sum of line bundles of negative degrees on $E_i$, because $f^*(\cF)$ is a sum of trivial line bundles on $E_i$.
 
The induction step from $1\leq k<m_i$ to $(k+1)$ is obtained similarly, tensoring with $f^*(\cF)\otimes (s'E^++A))$ the decomposition exact sequence (see \cite{BPV},II,\S 1, Equation (4), applied with $A=E_i, B=k.E_i$):

$0\to \mathcal{O}_{E_i}(-k.E_i)\to \mathcal{O}_{(k+1).E_i}\to \mathcal{O}_{kE_i}\to 0$, and applying $H^0$, we get the exact sequence:

$0\to H^0(E_i, f^*(\cF)\otimes (s'E^++A-k.E_i))\to H^0((k+1)E_i, f^*(\cF)\otimes (s'E^++A))\to H^0(k.E_i, f^*(\cF)\otimes (s'E^++A)),$ in which the two extreme terms, and so also the middle one, vanish, proving the claim.\end{proof}

\medskip

\begin{remark}1. The $m_i's$, and so $s$, depend only on the intersection matrix of the components of $E$ (and of course, on $A$ and $E$), that is: on the dual graph of $E$ and the self-intersections of the components of $E$ in $T$, since $E$ is of simple normal crossings.

2. The proof shows that if $E^{\bullet}:=\sum_ik_i.E_i$, where $k_i>0,\forall i$ are arbitrary, we still get $H^0(E^{\bullet},f^*(\cF)\otimes (s'E^++A))=0,\forall s'\geq s(\bullet)$, where  $s(\bullet)>\frac{(A-k_i.E_i).E_i}{-E^+.E_i},\forall i$ (this is not used in the sequel).
\end{remark}

We now reduce the proof of Lemma \ref{lemmapsf} for $f:T\to W$ with $dim(T)=n\geq 2,$ $ dim(W)=d\geq 1$ to the preceding claim. Take first the intersection of $(n-d-1)$ generic members of the linear system $A$ to get by restriction $f_t:T_t\to W$, which is a fibration in curves. If $d>1$, choose next a very ample line bundle $H$ on $W$, and consider a generic member $W_{s}$ of the complete intersection of $(d-1)$ generic members of the linear system $H$ . Then cut $f^{-1}(W_s)$ by $T_t$ to get by restriction $f_{t,s}:T_{t,s}\to W_s$, which is a fibration from a smooth surface to a smooth curve. The divisor $E_{t,s}:=E\cap T_{t,s}$ is partially supported on the fibres of $f_{t,s}$. By Bertini theorem, it is still of simple normal crossings. 

{\bf Claim 2:} There are multiplicities $m_i>0$ on the components $E_i$ of $E$ such that\footnote{We restrict here $E$ over an irreducible divisorial component $D$ of $f(E)$.}, for all generic $(t,s)$ as above, if we define $E^+:=\sum_im_i.E_i$, and $E^+_{t,s}:=E^+\cap T_{t,s}=\sum_im_i.E_{i,t,s}$, then $E^+_{t,s}.C'<0$, for each irreducible component $C'$ of $E_{t,s}$.

\medskip

We first prove that claim 2 implies the conclusion of Lemma \ref{lemmapsf}. 

We indeed have: $H^0(E^+,f^*(\cF)\otimes (s'E^++A))=0,\forall s'\geq s=s(A,E,H)$, since this is true by claim 1 and restriction to the $(E_{t,s})'s$ which cover a nonempty Zariski open subset of $E$.

From which follows by induction on $s'\geq s$ and the standard exact sequences for the $H^0$-cohomology, that:

 $H^0(T,f^*(\cF)\otimes (s'E^++A))\cong H^0(T,f^*(\cF)\otimes ((s'+1)E^++A)),\forall s'\geq s$.
 
 We thus have the statement of Lemma \ref{lemmapsf} for $E^+$. 
 
 Since $E\leq E^+\leq \mu.E$, , if $\mu:=max_i\{m_i\}$, this easily implies that $H^0(T,f^*(\cF)\otimes (s"E+A))\cong H^0(T,f^*(\cF)\otimes (s^+E+A)),\forall s"\geq s^+=s.\mu$, which is Lemma \ref{lemmapsf} for $E$. Indeed, we have, for any sheaf $\cH$, the following sequence of injective maps:
 
 $H^0(T,\cH\otimes sE^+)\to H^0(T,\cH\otimes (\mu.sE))\to H^0(T,\cH\otimes((\mu.s+j)E))\to H^0(T,\cH\otimes((\mu.s+j)).E^+)\to H^0(T,\cH\otimes(s+((m-1)s+j)E^+))$, and so, if its composition is bijective, each map in this sequence is bijective, for any $j\geq 0$. In particular, the second map in this sequence is bijective, if we choose $\cH=f^*(\cF)\otimes A$. This implies Lemma \ref{lemmapsf}.
 
 \medskip

 We still need to show the Claim 2. Let $D$ be a divisorial component of $f(E)$. We restrict $E$ over $D$. Let $E_{t,s}:=E\cap T_{t,s}$, and $E_{i,t,s}:=E_i\cap T_{t,s}$, so that $E_{t,s}:=\sum_iE_{i,t,s}$. Then $f_{t,s}:E_{t,s}\to W_s$ maps $E_{t,s}$ to a finite set $D\cap W_s$ of cardinality $D.H^{d-1}$. Let $w\in D\cap W_s$ be one of these points, and fix also some $(t,s)$. Write then $C_i:=E^w_{i,t,s}$ for the fibre of $E_{i,t,s}$ over $w$, and $C:=\sum_iC_i$. We can now further decompose each $C_i=\sum_{h}C_{i,h}$, were the $C_{i,h}$ are irreducible disjoint, since $E_{t,s}$ is of simple normal crossings in $T_{t,s}$, so that $C_i$ is smooth.
 
 We shall now prove the following two properties, which imply Claim 2, since the $m_i>0$ in property {\bf P2} depend only on the $C_i^2$ and on the dual graph of $C$, which are locally constant in $(t,s,w)$, hence independent of $(t,s, w)$:
 
 {\bf P1:} If $i\neq k\in I$, then $C_k.C_{i,h}=C_k.C_{i,h'}, \forall h,h'$, and $C_{i,h}^2$ do not depend on $h, t,s,w$, for each given $i$.
 
 {\bf P2:} Let  $C^+:=\sum_im_i.C_i$, where the $m_i's$ are positive integers such that $C^+.C_i<0,\forall i\in I$. Then $C^+.C_{i,h}<0,\forall i,h$, and these $m_i$ satisfy these inequality for each $(t,s,w)$.

 Proof of {\bf P1:} Let $f:E_i\to D$ have Stein factorisation $g:E_i\to D', u:D'\to D$, with $d$ finite of geometric degree $\delta$ and $g$ connected (i.e: with connected fibres), and $f=u\circ g$. For any $w\in D$ generic, let $D'_w:=\sum_hD_{w,h}$ the finite fibre of $d$ over $w$. Cutting $T$ by $(n-d-1)$ generic members of $A$ to get $T_t$, we restrict $f$ to $T_t$, and get $f_t:E_i\cap T_t\to D$, which has Stein factorisation $f_t=u\circ g_t$, where $g_t:E_i\cap T_t\to D'$ is the restriction of $g$ to $E_i\cap T_t$. The fibres of $g_t:E_i\cap T_t\to D'$ are, by Bertini theorem, irreducible curves $E_{i,t,h,w}$ which build an algebraic family parametrised by the irreducible variety $D'$. The curves $E_{i,t,h,w}$ are thus  cohomologically equal, and we have, for each of these curves $E_{i,t,h,w}$, for $w\in D$, $(w,h)\in D'$ the equality: $\delta.[E_{i,t,h,w}]=[E_i].[A]^{n-d-1}.[F])$, where $F$ is any fibre of $f$, since $\sum_hE_{i,t,h,w}=E_i\cap f_t^{-1}(w)\cap T_t$.
 
 Restricting $T_t$ over $W_s$, intersection of $(d-1)$ generic members of the linear system $H$ on $W$, preserves these properties, except then that $w\in D_s:=D\cap W_s$.

 Let $N:=[D].[H]^{d-1}$, so that $N.[F]=f^*([D].[H]^{d-1})$ is the number of fibres of $f_{t,s}:T_{t,s}\to W_s$ containing components of $E_{t,s}$. For any $(t,s, w\in D_s)$, the intersection numbers\footnote{Recall that $C_{i,h}=E_{i,t,s,h,w}$, for any $(t,s,h,w)$.} $N.\delta.C_{i,h}^2$ are thus equal to the self-intersection of the reducible curve $E_{i,t,s,w}$ in $T_{t,s}$, that is: $[E_i\cap T_{t,s}]^2=[E_i]^2.A^{n-d-1}.f^*([H]^{d-1})$, since $C_{i,h}.C_{i,h'}=0,\forall h\neq h'$, the curves $C_{i,h}$ and $C_{i,h'}$ being disjoint. This shows the second property claimed by {\bf P1}

 In the same way, if $k\neq i\in I$: $[E_k].[E_i].[A]^{n-d-1}.f^*([H]^{d-1})=N.\delta.C_k.C_{i,h},\forall h$, which shows the first property claimed by {\bf P1}.
 
 The independence on $(t,s,w)$ is clear by the cohomological description of all these numbers.

 Proof of {\bf P2:} Recall that $\delta$ is the geometric degree of the preceding map $d:D'\to D$, that is: the number of the (disjoint) irreducible components $C_{i,h}$ of $C_i$. 
 
 By assumption: $\forall i:0>(\sum_km_k.C_k).C_i=\sum_{k\neq i}m_kC_k.C_i+m_i.C_i^2$
 
 But: $(\sum_{k\neq i}m_kC_k).C_i=\delta.\sum_{k\neq i}m_k.C_k.C_{i,h}, \forall h$, by {\bf P1}, and:
 
$m_i.C_i^2=\delta.m_i.C_{i,h}^2, \forall h$, since $C_{i,h}.C_{i,h'}=0$ if $h\neq h'$, and $C_{i,h}^2$ does not depend on $h$.

Thus: $0>(\sum_{k}m_kC_k).C_i=\delta.(\sum_{k}m_kC_k).C_{i,h},\forall h$, which is {\bf P2}.
\end{proof}

\section{Submanifolds of Abelian varieties and Projective spaces.}\label{Ab}

 When $X$ is of general type, no algebro-geometric pattern is known to describe $\kappa_1(X)$, even for surfaces. In the following two cases, Sakai gave a simple (but `extrinsic', depending on an embedding) description.

\begin{proposition}\label{pab}(\cite{Sak}, Theorem 7) Let $A$ be an Abelian variety and $X\subset A$ be a submanifold with ample normal bundle. Then: $\kappa_1(X)=min\{n,c\}$, where $n$ is the dimension of $X$, and $N$ is the dimension of $A$, and $c:=N-n$ the codimension of $X$ in $A$.\end{proposition}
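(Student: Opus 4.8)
The plan is to compute $\kappa_1(X) = \kappa(\mathbb{P}(\Omega^1_X), \mathcal{O}(1)) - (n-1)$ by analysing the global symmetric differentials $H^0(X, \Sym^m \Omega^1_X)$ directly, using the conormal exact sequence of the embedding $X \subset A$. Write $N = \dim A$, $c = N - n$, and let $V := H^0(A, \Omega^1_A)$, so $\dim V = N$ and $\Omega^1_A = V \otimes \mathcal{O}_A$ is trivial. Restricting to $X$, the pullback $V \otimes \mathcal{O}_X \to \Omega^1_X$ is surjective, with kernel the conormal bundle $N^*_{X/A}$, which by hypothesis is \emph{anti-ample} (i.e. $N^*_{X/A}$ is a negative bundle since $N_{X/A}$ is ample); here $\mathrm{rk}\, N^*_{X/A} = c$.

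First I would establish the upper bound $\kappa_1(X) \le \min\{n, c\}$. The surjection $V \otimes \mathcal{O}_X \twoheadrightarrow \Omega^1_X$ induces a closed embedding $\mathbb{P}(\Omega^1_X) \hookrightarrow X \times \mathbb{P}(V) = X \times \mathbb{P}^{N-1}$ under which $\mathcal{O}_{\mathbb{P}(\Omega^1_X)}(1)$ is the restriction of $\mathrm{pr}_2^* \mathcal{O}_{\mathbb{P}^{N-1}}(1)$. Hence $H^0(\mathbb{P}(\Omega^1_X), \mathcal{O}(m))$ receives a surjection from $H^0(X,\mathcal{O}_X) \otimes \Sym^m V = \Sym^m V$, which shows $\kappa(\mathbb{P}(\Omega^1_X),\mathcal{O}(1)) \le N - 1$, i.e. $\kappa_1(X) \le N - 1 - (n-1) = c$. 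The bound $\kappa_1(X) \le n$ is automatic since $\kappa_1 \le \dim X$ always. So $\kappa_1(X) \le \min\{n,c\}$.

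The substance is the lower bound $\kappa_1(X) \ge \min\{n, c\}$. Dualising the conormal sequence gives $0 \to T_X \to T_A|_X = V^* \otimes \mathcal{O}_X \to N_{X/A} \to 0$ with $N_{X/A}$ ample of rank $c$. The idea is to produce many symmetric differentials: from the embedding $\mathbb{P}(\Omega^1_X) \subset X \times \mathbb{P}(V)$, the complement of $\mathbb{P}(\Omega^1_X)$ in the ambient bundle is (the projectivisation of) $\mathbb{P}(N_{X/A})$, and one analyses sections of $\mathcal{O}(m)$ on $\mathbb{P}(\Omega^1_X)$ via the exact sequence relating $\mathcal{O}_{X\times\mathbb{P}(V)}(m)$, its restriction to $\mathbb{P}(\Omega^1_X)$, and the twisted ideal sheaf supported on $\mathbb{P}(N_{X/A})$. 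Concretely, one gets a surjection $\Sym^m V \to H^0(X,\Sym^m\Omega^1_X)$ whose kernel is controlled by $H^0(X, \Sym^{m-1}(N_{X/A})\otimes(\ldots))$-type terms; ampleness of $N_{X/A}$ forces the kernel to grow like $\Sym^m$ of a $c$-dimensional space worth of "relations", so that $h^0(X,\Sym^m\Omega^1_X)$ grows at least like $m^{\min\{n,c\}-1}$ up to the $\mathbb{P}(\Omega^1_X)$-normalisation — equivalently $\kappa(\mathbb{P}(\Omega^1_X),\mathcal{O}(1)) \ge \min\{n,c\} + n - 1$. One should also exhibit explicit nonzero sections: if $c \ge n$, the $n$-dimensional quotient $\Omega^1_X$ of the trivial bundle already shows $\mathcal{O}_{\mathbb{P}(\Omega^1_X)}(1)$ is semiample of maximal Iitaka dimension $n + (n-1)$ when the Gauss map $X \to \mathrm{Gr}(n,V)$ is finite (which ample normal bundle guarantees), giving $\kappa_1(X) = n$; and if $c < n$ one restricts to this situation along generic $c$-dimensional linear sections. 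I expect the main obstacle to be precisely this finiteness/non-degeneracy input — showing that ampleness of the normal bundle makes the Gauss map (or the map $\mathbb{P}(\Omega^1_X) \to \mathbb{P}(V)$) behave well enough that the naive dimension count of sections of $\mathcal{O}(m)$ is actually attained and not killed by the relations coming from $N_{X/A}$. This is where Sakai's argument (Theorem 7 of \cite{Sak}), and its adaptation here, must do real work.
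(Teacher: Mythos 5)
Your overall starting point (the conormal sequence of $X\subset A$, triviality of $\Omega^1_A$, ampleness of $N_{X/A}$) is indeed the skeleton of Sakai's argument, which the paper only quotes (\cite{Sak}, Theorem 7) and partially re-runs in the proof of Theorem \ref{tab}. But the execution has a genuine gap at its very first step: the claimed surjection $\Sym^m V\twoheadrightarrow H^0(X,\Sym^m\Omega^1_X)$ is false in general. Surjectivity of the bundle map $\Sym^mV\otimes\mathcal{O}_X\to\Sym^m\Omega^1_X$ does not pass to global sections. Concretely, take $X=C$ a genus~$2$ curve embedded in its Jacobian $A$ (so $n=c=1$, $N_{C/A}\cong K_C$ ample): then $\dim\Sym^mV=m+1$ while $h^0(C,\Sym^m\Omega^1_C)=h^0(C,mK_C)=2m-1$, so the map cannot be onto for $m\geq 3$. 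This breaks your upper bound as argued, and it also undercuts the lower-bound discussion, whose logic is in any case backwards: a large kernel of a map onto $H^0(X,\Sym^m\Omega^1_X)$ would make $h^0$ smaller, not larger, so "the kernel grows like $\Sym^m$ of a $c$-dimensional space of relations" cannot produce the required growth $h^0\gtrsim m^{\min\{n,c\}+n-1}$. The correct mechanism, which is what Sakai uses and what the paper reproduces (twisted by an ample $H$) to prove Theorem \ref{tab}, is the Koszul resolution of $\Sym^m\Omega^1_X$ by the trivial bundles $\wedge^qN^*\otimes\Sym^{m-q}(\Omega^1_A|_X)$, $0\leq q\leq c$; its hypercohomology spectral sequence gives
$$h^0(X,\Sym^m\Omega^1_X)\ \leq\ \sum_{q=0}^{c}h^q(X,\wedge^qN^*)\cdot \mathrm{rk}\,\Sym^{m-q}(\Omega^1_A)\ =\ O(m^{N-1}),$$
i.e. exactly the upper bound $\kappa_1(X)\leq c$, with the higher cohomology of the fixed bundles $\wedge^qN^*$ playing the role your surjectivity claim tried to suppress.

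For the lower bound, which is where ampleness must actually be used, your sketch does not yet contain an argument. In the case $c\geq n$ you reduce everything to generic finiteness of the tautological map $\mathbb{P}(\Omega^1_X)\to\mathbb{P}(V)$ (equivalently a Gauss-map-type statement), but you give no derivation of this from ampleness of $N_{X/A}$ — and you identify it yourself as the main obstacle, so the key step is missing rather than proved. In the case $c<n$ the proposed reduction "restrict to generic $c$-dimensional linear sections" does not work: an abelian variety has no linear sections, and even if you cut $X$ by general ample divisors of $A$ to get $Y\subset X$ of dimension $c$ with ample normal bundle in $A$, symmetric differentials only restrict from $X$ to $Y$, not conversely, so abundance of sections on $Y$ gives no lower bound for $h^0(X,\Sym^m\Omega^1_X)$. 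Sakai's proof instead converts ampleness of $N_{X/A}$ into vanishing statements for the Koszul terms, so that the growth of $h^0(X,\Sym^m\Omega^1_X)$ can be computed rather than merely bounded; some quantitative input of this kind is unavoidable, and it is precisely what is absent from the proposal.
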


From Remark \ref{rnu=k"}, we thus deduce that Conjecture \ref{ConjO} is true when $min\{n,c\}\geq (n-1)$, that is, when $c\geq (n-1)$. Thus Conjecture \ref{ConjO} is true for surfaces with ample normal bundles in Abelian varieties. 

\begin{theorem}\label{tab}If $X$ is as in Proposition \ref{pab}, we have: $\nu_1(X)=\kappa_1(X)$.
\end{theorem}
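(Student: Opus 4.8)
The plan is to mimic Sakai's argument for $\kappa_1$ (Proposition \ref{pab}) and upgrade it to compute $\nu_1$, using the fact that the two invariants agree at the endpoints of their range (Remark \ref{rnu=k"}). Write $n=\dim X$, $N=\dim A$, $c=N-n$, and recall $\kappa_1(X)=\min\{n,c\}$. Since $\nu_1(X)\geq\kappa_1(X)$ always, the case $\kappa_1(X)=n$ (i.e.\ $c\geq n$) is immediate from Remark \ref{rnu=k} part 1. So I may assume $c<n$, in which case $\kappa_1(X)=c$ and I must show $\nu_1(X)\leq c$; equivalently, on $P:=\bP(\Omega^1_X)$ with tautological bundle $L:=\cO_P(1)$, I must show $\nu(P,L)\leq n+c-1$.

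First I would recall the geometric source of symmetric differentials on $X\subset A$. The cotangent bundle sits in the conormal exact sequence $0\to N^*_{X/A}\to \Omega^1_A|_X\to \Omega^1_X\to 0$, and since $\Omega^1_A\cong\cO_A^{\oplus N}$ is trivial, dualizing and projectivizing gives a morphism $\phi:P=\bP(\Omega^1_X)\to \bP(\Omega^1_A|_X)=X\times\bP^{N-1}$, hence (composing with the second projection) a map $\psi:P\to\bP^{N-1}$ with $\psi^*\cO_{\bP^{N-1}}(1)=L$. The key point, exactly as in \cite{Sak}, is that the image $\psi(P)$ — the closure of the conormal variety's image in $\bP^{N-1}$ — has dimension at most $n+c-1$: through a general point $x\in X$ the fiber $\psi(\bP(\Omega^1_{X,x}))$ is a linear $\bP^{n-1}$, and the base directions contribute at most $c$ more (the conormal directions $N^*_{X/A}$ form a bundle of rank $c$, and the full preimage structure shows the image is the projectivized conormal variety, of dimension $(n-1)+c$ when $c\le n$, cf. the ampleness of $N_{X/A}$ which forces the conormal map to have the expected fiber dimension).

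Given this, the numerical dimension bound follows formally: $L=\psi^*\cO(1)$ is the pullback of an ample (indeed very ample) line bundle under the morphism $\psi:P\to\psi(P)$, so $\nu(P,L)=\nu(\psi(P),\cO(1)|_{\psi(P)})=\dim\psi(P)\leq n+c-1$. Here I use that for the pullback of an ample bundle along a surjective morphism, the numerical dimension equals the dimension of the image — this is standard and uses that $L^{\dim\psi(P)}\cdot(\text{fiber class})$ computations or, more simply, that $\nu(\psi(P),\text{ample})=\dim\psi(P)$ and numerical dimension is preserved under surjective pullback (stated in the excerpt: ``Both are preserved by surjective pullbacks''). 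Combined with $\nu_1(X)=\nu(P,L)-(r-1)=\nu(P,L)-(n-1)$ this gives $\nu_1(X)\leq c=\kappa_1(X)$, and hence equality. The boundary cases $c=n-1$ and $c=n$ are also covered directly by Remark \ref{rnu=k"}, so the only real content is the range $1\le c\le n-2$.

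The main obstacle I anticipate is making precise the claim that the conormal (Gauss-type) map $\psi:P\to\bP^{N-1}$ has image of dimension \emph{exactly} (or at most) $n+c-1$, and that $L$ is genuinely semiample via this map rather than merely numerically controlled. One must rule out that $\psi$ contracts $P$ onto something larger than expected or that $L$ fails to descend; here the ampleness of $N_{X/A}$ is essential — it is precisely what Sakai exploits to identify $\kappa_1(X)$ with $\min\{n,c\}$, and the same input forces the conormal variety to have the expected dimension and $L$ to be the pullback of $\cO(1)$. I would therefore lift Sakai's dimension count for $\kappa_1$ verbatim — it already produces a surjection onto an $(n+c-1)$-dimensional image with $L$ pulled back from an ample bundle — and observe that this computation controls $\nu$ and not just $\kappa$, since a pullback of an ample class from a $k$-dimensional variety has numerical dimension exactly $k$. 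The rest is the elementary observation that $\nu_1\ge\kappa_1$ always and $\le$ in the endpoint cases, so the only range needing the geometric input is $1\le c\le n-2$, where Sakai's construction already delivers the bound.
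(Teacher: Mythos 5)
Your argument is correct in substance, but it reaches the key inequality $\nu_1(X)\le c$ by a different route than the paper. The paper proves a lemma valid for an arbitrary submanifold $X\subset A$ (no ampleness of $N_{X/A}$ needed) by tensoring Sakai's Koszul-type resolution coming from the conormal sequence with an ample line bundle $H$ on $A$: the resulting spectral sequence, together with $\wedge^qN^*=0$ for $q>c$ and the triviality of $\Sym^{m-q}(\Omega^1_A)$, gives $h^0(X,\Sym^m(\Omega^1_X)\otimes H)\le C\,m^{N-1}$, hence $\nu_1(X)\le N-1-(n-1)=c$. You instead exploit that $\Omega^1_X$ is a quotient of the trivial bundle $\Omega^1_A|_X\cong\cO_X^{\oplus N}$, so that $L=\cO_{\bP(\Omega^1_X)}(1)$ is globally generated and is the pullback of $\cO_{\bP^{N-1}}(1)$ under the induced morphism $\psi:\bP(\Omega^1_X)\to\bP^{N-1}$; then $\nu(P,L)\le\dim\psi(P)\le N-1$, which is the same bound, and the conclusion follows from Sakai's $\kappa_1(X)=\min\{n,c\}$ together with $\nu_1\ge\kappa_1$ and the trivial endpoint case $c\ge n$, exactly as in the paper. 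Both proofs use the ampleness of the normal bundle only through Proposition \ref{pab}; your version is softer and nearly formal (projection formula plus the standard estimate $h^0(Z,A^{\otimes m}\otimes\cF)=O(m^{\dim Z})$ for $\cF$ coherent and $A$ ample), whereas the paper's Koszul twisting is the argument that also carries over to the complete-intersection case of Theorem \ref{tN}, where $\Omega^1_X$ is not globally generated and one needs the vanishing theorem of \cite{KO}.

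Two small corrections to your write-up. First, the discussion of the ``conormal variety'' and the claim that ampleness of $N_{X/A}$ is essential to control $\dim\psi(P)$ are unnecessary and partly inaccurate: $\psi(P)$ is the union of the projectivized (co)tangent spaces of $X$, not the projectivized conormal variety, and the bound $\dim\psi(P)\le n+c-1$ is automatic simply because $\psi(P)\subseteq\bP^{N-1}$ and $N-1=n+c-1$; ampleness enters only in Sakai's computation of $\kappa_1$. Second, you only need the inequality $\nu(P,\psi^*\cO(1))\le\dim\psi(P)$, which is best obtained directly from $h^0(P,mL+D)=h^0\bigl(\psi(P),\cO(m)\otimes\psi_*\cO_P(D)\bigr)$; this avoids invoking invariance of $\nu$ under surjective pullback for the possibly singular image $\psi(P)$, where the paper's definition of $\nu$ does not literally apply.
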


\begin{proof} If $c\geq n$, whe have $\kappa_1(X)=n$, hence $\nu_1(X)=n\leq c$.

If $n<c$, the next more general Lemma shows that $\nu_1(X)\leq c$. We thus have $\nu_1(X)=\kappa_1(X)=c$.

\begin{lemma} Let $X\subset A$ be a submanifold of dimension $n$ in an Abelian variety of dimension $N$. Then $\nu_1(X)\leq N-n:=c$.
\end{lemma}

\begin{proof} Let $H$ be an ample line bundle on $A$, and $N$ the normal bundle of $X$ in $A$. We tensorise by $H$ the Koszul-type sequence constructed in \cite{Sak}, proof of Theorem 7. We thus get a spectral sequence with $E_1$-terms $E_1^{-q,q}=H^q(X, \wedge^qN^*\otimes Sym^{m-q}(\Omega^1_A)\otimes H), 0\leq q\leq m,$ abutting to $H^0(X,Sym^m(\Omega^1_X)\otimes H)$. 

Since $h^0(X,\Sym^m(\Omega^1_X)\otimes H)=\sum_{q=0}^{q=m} E_{\infty}^{-q,q}$, for any $m\geq 0$, and $dim(E_1^{-q,q})\geq dim(E_{\infty}^{-q,q}),\forall m, q$, we have (because $\wedge^qN^*=0$ if $q>c$): 

$h^0(X,Sym^m(\Omega^1_X)\otimes H)\leq \sum_{q=0}^{q=c}h^q(X,\wedge^qN^*\otimes Sym^{m-q}(\Omega^1_A)\otimes H)$

$=\sum_{q=0}^{q=c}H^q(X,\wedge^qN^*\otimes H).rank(Sym^{m-q}(\Omega^1_A))$ 

$\leq (\sum_{q=0}^{q=c}H^q(X,\wedge^qN^*\otimes H)).rank(Sym^m(\Omega^1_A))\leq C. m^{N-1},$

for a suitable constant $C>0$. Hence $\nu_1(X)\leq N-1-(n-1)=c$.
\end{proof}\end{proof}

\begin{proposition}\label{N}(\cite{Sak}, Theorem 8) Let $X\subset \Bbb P_N$ be a smooth complete intersection. Assume that $2n>N$, then: $\kappa_1(X)=-\infty$, $n$ being the dimension of $X$.
\end{proposition}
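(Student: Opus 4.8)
The plan is to bound $h^0(X,\Sym^m\Omega^1_X)$ for each $m>0$ and show it is zero, which is exactly the assertion $\kappa_1(X)=-\infty$. First I would reduce to the case where all degrees $d_i$ of the complete intersection $X=X_{d_1,\dots,d_c}\subset\bP_N$ are $\ge 2$: a linear equation realises $X$ as a complete intersection of the same dimension $n$ inside $\bP_{N-1}$, and the inequality $2n>N$ is only strengthened by this. Writing $c=N-n$ for the codimension, the hypothesis $2n>N$ becomes $2c<N$, and this is the only way it will enter.

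The engine is a two-stage descent following Sakai's method (the same mechanism as in the proof of Theorem~\ref{tab}). From the conormal sequence $0\to N^*\to\Omega^1_{\bP_N}|_X\to\Omega^1_X\to0$, with $N^*=N^*_{X/\bP_N}=\bigoplus_i\mathcal{O}_X(-d_i)$, taking the degree-$m$ part of the Koszul complex that computes $\Sym(\Omega^1_X)$ out of $\Sym(\Omega^1_{\bP_N}|_X)$ and running the spectral sequence of this complex gives
\[
h^0\big(X,\Sym^m\Omega^1_X\big)\ \le\ \sum_{q=0}^{c}h^q\big(X,\ \wedge^qN^*\otimes\Sym^{m-q}(\Omega^1_{\bP_N}|_X)\big).
\]
Each summand on the right is the restriction to $X$ of $\bigoplus_{|I|=q}\Sym^{m-q}(\Omega^1_{\bP_N})(-d_I)$, where $d_I=\sum_{i\in I}d_i\ge2q$. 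Tensoring the Koszul resolution $0\to\wedge^cE^*\to\cdots\to\mathcal{O}_{\bP_N}\to\mathcal{O}_X\to0$ (with $E=\bigoplus_i\mathcal{O}(d_i)$) by these sheaves and again passing to the spectral sequence converts cohomology on $X$ into cohomology on $\bP_N$, so that in total $h^0(X,\Sym^m\Omega^1_X)$ is bounded by a finite sum of terms $h^p\big(\bP_N,\Sym^k\Omega^1_{\bP_N}(-\ell)\big)$ with $p=q+a$, $0\le q,a\le c$, $k=m-q\ge0$, and $\ell=d_I+d_J\ge2p$.

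The final step is to see that every such term vanishes. For this I would use the Euler sequence to produce, for each $k\ge1$, the exact sequence
\[
0\to\Sym^k\Omega^1_{\bP_N}\to\mathcal{O}(-k)^{\oplus\binom{N+k}{k}}\to\mathcal{O}(-k+1)^{\oplus\binom{N+k-1}{k-1}}\to0
\]
(using that $\wedge^2$ of a line bundle is zero). Combined with $H^i(\bP_N,\mathcal{O}(j))=0$ for $0<i<N$, this yields $H^p(\bP_N,\Sym^k\Omega^1_{\bP_N}(j))=0$ for every $2\le p\le N-1$ and all $j,k$, while for $p=0$, resp.\ $p=1$, the group injects into, resp.\ is a quotient of, a direct sum of copies of $H^0(\mathcal{O}(j-k))$, resp.\ $H^0(\mathcal{O}(j-k+1))$. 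Now $2c<N$ forces $p=q+a\le2c\le N-1$, so no term occurs in the top degree $N$, and the whole middle range $2\le p\le N-1$ is annihilated by Bott vanishing; the degrees $p=0$ and $p=1$ arise only for very small $q,a$, and there the negative shift $-\ell$ together with $k=m-q$ and $m>0$ makes $j-k=-\ell-k<0$ (resp.\ $-\ell-k+1\le0$), so those groups vanish too. Hence the bound gives $h^0(X,\Sym^m\Omega^1_X)=0$ for all $m>0$, i.e.\ $\kappa_1(X)=-\infty$.

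I expect the only genuine difficulty to be bookkeeping: correctly composing the two spectral sequences (conormal Koszul on $X$, then the Koszul resolution of $\mathcal{O}_X$ down to $\bP_N$), keeping the degree shifts $d_I+d_J$ and the exponents $m-q$ aligned, and verifying that the single numerical inequality $2c<N$ is exactly what excludes the cohomological degree $p=N$ and the positive-twist situations that could otherwise contribute. There is no conceptual obstacle beyond this: once $2c<N$ is available, the argument is a mechanical vanishing computation.
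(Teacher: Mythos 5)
Your argument is correct, and its skeleton is the same as the one the paper uses (following \cite{Sak}) in the proof of Theorem \ref{tN}: the first stage, bounding $h^0(X,\Sym^m\Omega^1_X)$ by $\sum_{q\le c}h^q\bigl(X,\wedge^qN^*\otimes \Sym^{m-q}(\Omega^1_{\bP_N}\vert_X)\bigr)$ via the conormal Koszul complex, is identical. The difference is in how the vanishing of these terms is obtained. The paper simply quotes the Kobayashi--Ochiai theorem (\cite{KO}, p.~521): $H^i\bigl(X,\Sym^{m}(\Omega^1_{\bP_N}\vert_X)\otimes H^t\bigr)=0$ for $i<n$ and $m\ge t+2$, and the hypothesis $2n>N$ enters as $c=N-n<n$, so that all cohomological degrees $q\le c$ fall in the range of that vanishing. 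You instead descend once more, using the Koszul resolution of $\mathcal{O}_X$ by $\wedge^\bullet\bigl(\oplus_i\mathcal{O}(-d_i)\bigr)$, and prove the required vanishing on $\bP_N$ by hand from the Euler sequence $0\to\Sym^k\Omega^1_{\bP_N}\to\mathcal{O}(-k)^{\oplus\binom{N+k}{k}}\to\mathcal{O}(-k+1)^{\oplus\binom{N+k-1}{k-1}}\to0$; here $2n>N$ enters in the equivalent form $2c\le N-1$, which keeps the total degree $p=q+a\le 2c$ away from $H^N$, where Serre duality would otherwise produce nonzero groups for the negative twists $-d_I-d_J$. What your route buys is self-containedness (no appeal to \cite{KO}), at the cost of a second spectral sequence and some bookkeeping; what the paper's route buys is brevity and the fact that the same cited vanishing, with a twist $H^k$, immediately gives the stronger statement $\nu_1(X)=-\infty$ of Theorem \ref{tN} (your computation also adapts to a fixed twist for $m\gg0$, so it would yield that too). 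One cosmetic slip: in the $p=1$ case the correct criterion is $-\ell-k+1<0$, not $\le 0$ (since $H^0(\mathcal{O})\ne0$); with your reduction to $d_i\ge2$ one indeed has $-\ell-k+1\le-1$, so nothing is affected.
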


\begin{theorem}\label{tN} If $X$ is as in Proposition \ref{N}, then $\nu_1(X)=-\infty$.
\end{theorem}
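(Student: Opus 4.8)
The plan is to mimic the proof of Theorem \ref{tab} by tensoring an appropriate Koszul-type resolution with an ample line bundle and estimating cohomology degree by degree. Let $X=X_1\cap\dots\cap X_c\subset\bP_N$ be the smooth complete intersection, so $n=N-c$, and let $N_{X/\bP_N}=\bigoplus_{i=1}^c\OO_X(d_i)$ be its (positive) normal bundle, $d_i\geq 1$. Restricting the Euler sequence on $\bP_N$ to $X$ and taking symmetric powers gives a filtration of $\Sym^m(\Omega^1_{\bP_N})|_X$, while the conormal sequence
\begin{equation*}
0\to N^*_{X/\bP_N}\to \Omega^1_{\bP_N}|_X\to \Omega^1_X\to 0
\end{equation*}
yields, via the Koszul complex of the inclusion $N^*_{X/\bP_N}\hookrightarrow\Omega^1_{\bP_N}|_X$, an exact resolution
\begin{equation*}
0\to \wedge^c N^*\otimes \Sym^{m-c}(\Omega^1_{\bP_N}|_X)\to\dots\to N^*\otimes \Sym^{m-1}(\Omega^1_{\bP_N}|_X)\to \Sym^m(\Omega^1_{\bP_N}|_X)\to \Sym^m(\Omega^1_X)\to 0
\end{equation*}
(this is exactly the sequence Sakai constructs in the proof of \cite{Sak}, Theorem 8). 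Tensoring by an ample $H=\OO_X(1)$ and breaking into short exact sequences produces a spectral sequence with $E_1$-terms
\begin{equation*}
E_1^{-q,q}=H^q\!\left(X,\wedge^q N^*\otimes \Sym^{m-q}(\Omega^1_{\bP_N}|_X)\otimes H\right),\qquad 0\leq q\leq c,
\end{equation*}
abutting to $H^0(X,\Sym^m(\Omega^1_X)\otimes H)$, so that $h^0(X,\Sym^m(\Omega^1_X)\otimes H)\leq\sum_{q=0}^{c}\dim E_1^{-q,q}$.

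Now I would bound each term. Since $\Sym^{m-q}(\Omega^1_{\bP_N})$ has a filtration with graded pieces that are direct sums of line bundles $\OO_{\bP_N}(-j)$ with $0\leq j\leq m-q$ (coming from the Euler sequence), $\Sym^{m-q}(\Omega^1_{\bP_N})|_X\otimes H$ is filtered by sheaves of the form $\OO_X(1-j)$, $0\leq j\leq m-q$, with multiplicities summing to $\mathrm{rk}\,\Sym^{m-q}(\Omega^1_{\bP_N})=\binom{m-q+N-1}{N-1}=O(m^{N-1})$. Hence
\begin{equation*}
\dim E_1^{-q,q}\leq \sum_{j=0}^{m-q}\mu_{q,j}\cdot h^q\!\left(X,\wedge^q N^*\otimes\OO_X(1-j)\right),
\end{equation*}
where $\sum_j\mu_{q,j}=O(m^{N-1})$ and $\wedge^q N^*=\bigoplus \OO_X(-d_{i_1}-\dots-d_{i_q})$ is a fixed negative bundle. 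The crucial point is that each individual $h^q(X,\wedge^q N^*\otimes\OO_X(1-j))$ must be bounded by a constant independent of $m$ (equivalently of $j$); once that is in hand, we get $h^0(X,\Sym^m(\Omega^1_X)\otimes H)\leq C\,m^{N-1}$ and therefore $\nu_1(X)\leq N-1-(n-1)=c$. But Proposition \ref{N} tells us $\kappa_1(X)=-\infty$, and to conclude $\nu_1(X)=-\infty$ it does not suffice to show $\nu_1(X)\leq c$; rather one must show the full polynomial in $m$ is bounded, i.e. $h^0(X,\Sym^m(\Omega^1_X)\otimes H)$ is bounded by a constant.

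So the real argument needs the sharper input that, in the range $2n>N$ (i.e. $c<n$), the cohomology groups $H^q(X,\wedge^q N^*\otimes\OO_X(k))$ vanish for all $k$ and all $q\geq 1$, and that $H^0(X,\OO_X(k))$ together with the $q=0$ contributions only see negative twists and hence vanish for the relevant pieces — this is precisely the vanishing Sakai exploits to get $\kappa_1=-\infty$, and it comes from Nakano/Le Potier-type vanishing for complete intersections (or the Bott-type vanishing on $\bP_N$ combined with the Koszul resolution of $\OO_X$) in the stated numerical range. Concretely, I would run Sakai's cohomology vanishing argument from \cite{Sak}, Theorem 8 verbatim but keep the ample twist $H$: the same vanishing statements that kill $H^0(X,\Sym^m(\Omega^1_X))$ for $m>0$ also kill $H^0(X,\Sym^m(\Omega^1_X)\otimes H)$ for $m$ large, because adding a fixed ample twist changes the relevant degrees by a bounded amount and the vanishing range in $2n>N$ is an \emph{open} condition — it holds for $\OO_X(1-j)$ with any $j$ once $m$, hence potentially $j$, is large, and the finitely many small values of $j$ contribute a bounded total. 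The main obstacle is thus bookkeeping: verifying that introducing the ample line bundle $H$ does not destroy any of the Nakano-type vanishing statements used to prove $\kappa_1(X)=-\infty$, and handling uniformly in $m$ the finitely many "boundary" twists where positivity is marginal; modulo that, the spectral sequence estimate forces $h^0(X,\Sym^m(\Omega^1_X)\otimes H)$ to be bounded, giving $\nu_1(X)=-\infty$.
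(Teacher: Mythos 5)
Your strategy is the one the paper itself follows (Sakai's Koszul resolution and the associated spectral sequence, with the ample twist kept, and the restriction $q\le c<n$ forced by $2n>N$), but the step you defer as ``bookkeeping'' is the entire content of the proof, and in fact no marginal-case analysis is needed: the vanishing theorem Sakai invokes (Kobayashi--Ochiai, \cite{KO}, p.~521) is already stated with an arbitrary twist, namely $H^i\bigl(X,\Sym^m(\Omega^1_{\Bbb P_N}\vert_X)\otimes H^t\bigr)=0$ for all $i<n$ and all $m\ge t+2$. Since $\wedge^qN^*$ is a direct sum of line bundles $H^s$ with $s\le -q$ and vanishes for $q>c$, every term $E_1^{-q,q}=H^q\bigl(X,\wedge^qN^*\otimes\Sym^{m-q}(\Omega^1_{\Bbb P_N}\vert_X)\otimes H^k\bigr)$ with $0\le q\le c<n$ vanishes as soon as $m\ge k+2$, whence $h^0(X,\Sym^m(\Omega^1_X)\otimes H^k)=0$ for $m\ge k+2$. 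You should quote this statement rather than an unspecified ``Nakano-type vanishing changed by a bounded amount''. Your detour through line bundles is both unnecessary and unjustified: $\Sym^{m-q}(\Omega^1_{\Bbb P_N})$ admits no filtration with graded pieces that are sums of $\OO(-j)$ (on $\Bbb P_N$, $N\ge 2$, such an iterated extension of line bundles would split, and these symmetric powers are not sums of line bundles), and the intermediate-cohomology vanishing for line bundles on complete intersections does not apply to the actual coefficients $\Sym^{m-q}(\Omega^1_{\Bbb P_N}\vert_X)$; fortunately you discard that estimate anyway.

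A second correction concerns your reduction criterion: boundedness of $h^0(X,\Sym^m(\Omega^1_X)\otimes H)$ in $m$ does \emph{not} give $\nu_1(X)=-\infty$. By the paper's definition of $\nu$, boundedness for every fixed twist only yields $\nu(\Bbb P(\Omega^1_X),\OO(1))\le 0$, which is still compatible with $\OO(1)$ being pseudoeffective (a torsion line bundle has bounded twisted $h^0$ for every ample twist and $\nu=0$). What is needed, and what the vanishing above delivers, is $h^0(X,\Sym^m(\Omega^1_X)\otimes H^k)=0$ for all large $m$, and this for \emph{every} fixed $k\ge 0$, not only $k=1$: an ample class on $\Bbb P(\Omega^1_X)$ is of the form $\OO(1)\otimes\pi^*H^k$ with $k$ possibly large, so excluding pseudoeffectivity of $\OO_{\Bbb P(\Omega^1_X)}(1)$ requires the vanishing for arbitrary $k$. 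Once the Kobayashi--Ochiai statement is cited in the twisted form above, both points are settled simultaneously and your argument coincides with the paper's proof of Theorem \ref{tN}.
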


\begin{proof} Fix $k\geq 0$, let $H:=\mathcal{O}_{\Bbb P^N}(1)$,and write $S^m:=Sym^m(\Omega^1_{\Bbb P^N\vert X})$. 

As before, we have: $$h^0(X,Sym^m(\Omega^1_X)\otimes H^k)\leq \sum_{q=0}^{q=m}H^q(X,\wedge^qN^*\otimes S^{m-q}.$$
By the same reference \cite{KO}, p.521 as in \cite{Sak}, we next have: $$H^i(X,S^m\otimes H^t)=0,\forall i<n, m\geq t+2.$$

Since $\wedge^qN^*$ is a direct sum of $q$ line bundles of the form $H^s$ for $s\leq -q$, and is zero if $q>c:=N-n$, we get, for each $q\leq c$ and any $k\geq 0$: $E_1^{-q,q}=0$ if $m\geq k+2$, and so $h^0(X,Sym^m(\Omega^1_X)\otimes H^k)=0$ for $m\geq k+2$. This means that $\nu_1(X)=-\infty$, as claimed.
\end{proof}

\begin{remark}\label{rem ample} 1. When $2n<N$ in Proposition \ref{N}, and when the degree of $X$ is sufficiently large, $\Omega^1_X$ is ample for the generic member $X$, by \cite{BD}. Conjecture \ref{ConjO} is thus true in this case by Remark \ref{rnu=k"}, and the codimension inequality is then optimal. 

2. If $X$ is a surface, and $(c_1^2-c_2)(X)>0$, then $\kappa(X, \Omega^1_X)=2$ (Bogomolov), and Conjecture \ref{ConjO} is thus satisfied by Remark \ref{rnu=k"}.

However, already in the limit case $(c_1^2-c_2)(X)=0$, nothing seems to be known in general, not even for complete intersection surfaces in $\Bbb P_N$ (see the list of possible multidegrees in \cite{Sak}). 

When $X$ is an ample divisor in an Abelian threefold $A$, we have $(c_1^2-c_2)(X)=0$ and $\kappa_1(X)=1$, and Conjecture \ref{ConjO} is thus true, by either the preceeding Proposition \ref{pab} and Remark \ref{rnu=k"}, or by Theorem \ref{tab}.

In this case however, $\pi_1(X)=\pi_1(A)$ is infinite. It would be interesting to know the possible values of $(\kappa_1(X), \pi_1(X))$ for surfaces with $(c_1^2-c_2)=0$.

3. When $\kappa_1(X)=-\infty$, any $X$, it is shown in \cite{BKT} that every linear representation of $\pi_1(X)$ in any $Gl(N,K)$, for any field $K$, has finite image. One may thus wonder whether $\pi_1(X)$ itself should be finite, then. This is true at least in the following cases: 

3.a. $X$ is smooth of dimension $n$ in $\Bbb P_N$ with $2n>N$, by \cite{BL}.

3.b. $X$ is a surface with $3c_1^2<c_2$, by the classification in  \cite{R}.

3.c. $\kappa_p(X)=-\infty,\forall p>0$, by \cite{BC}. 

\end{remark}

\section{Klt varieties with $c_1=0$ and their smooth models.}\label{Sc_1=0}

\subsection{Comparing $\nu_1$ and $q^+$.}

The following proposition has also been established in \cite{Gac}, Theorem 1.2, to which we refer for more results and interesting examples (see also Remark \ref{r}(6) below).

\begin{proposition}\label{carab} Let $X'$ be a normal connected complex projective variety of dimension $n$ with klt singularities and trivial numerical class, and let $X$ be a smooth model of $X'$. 

Then: $\kappa^*_1(X)\leq \nu^*_1(X)\leq q^+(X')-n$, with equality after replacing $X'$ by a suitable finite {\bf quasi-\'etale} cover, where $q^+(X')\leq dim(X)$ is the maximal `quasi-\'etale' irregularity of $X'$. In particular: $\nu_1(X)=-\infty$ if $q^{+}(X')=0$.
\end{proposition}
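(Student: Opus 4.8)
The plan is to use the singular Beauville--Bogomolov decomposition to reduce to the case of a quotient of a torus, and then compute. First, the inequality $\kappa_1^*(X)\leq\nu_1^*(X)$ is automatic. For the bound $\nu_1^*(X)\leq q^+(X')-n$, I would argue as follows. Recall that $q^+(X')$ is defined as the maximum of the irregularities $q(Y)=h^1(Y,\mathcal O_Y)$ over finite quasi-\'etale covers $Y\to X'$ (equivalently, $h^0$ of holomorphic $1$-forms on a resolution of $Y$); since the relevant invariants are preserved under finite quasi-\'etale covers followed by resolution, and $\nu_1^*$ cannot decrease under such, it suffices to prove $\nu_1^*(X)= q^+(X')-n$ after passing to the cover that realises the maximal $q^+$. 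So I reduce to showing: if $X'$ has klt singularities, $c_1(X')=0$ numerically, and $q(X')=q^+(X')$ is already maximal, then on a smooth model $X$ one has $\nu_1^*(X)=q(X')-n$.

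The key step is the singular Beauville--Bogomolov decomposition (Druel--Greb--Guenancia--H\"oring--Peternell): after a finite quasi-\'etale cover, $X'$ splits as a product $A\times\prod Y_i\times\prod Z_j$, where $A$ is an Abelian variety of dimension $q^+(X')$, the $Y_i$ are irreducible Calabi--Yau type and the $Z_j$ are irreducible holomorphic symplectic type, and crucially the $Y_i,Z_j$ have vanishing augmented irregularity, i.e. $q^+(Y_i)=q^+(Z_j)=0$. Using $\kappa_1^*(U\times V)=\kappa_1^*(U)+\kappa_1^*(V)$ for products (stated in the introduction) and the analogous additivity for $\nu_1^*$ under products of smooth models, together with $\nu_1^*(A)=0$ for an Abelian variety of dimension $q^+$, the problem reduces to showing $\nu_1^*(W)=-\dim W$ for a smooth model $W$ of an irreducible CY or HK type variety with $q^+=0$. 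Here one invokes Proposition \ref{carab} itself in the form already available, or rather the essential input $q^+(W')=0\implies\nu_1(W)=-\infty$, which is the content one still needs. This is where the real work lies: one must show that a smooth model of an irreducible klt CY/HK variety with trivial augmented irregularity carries no pseudoeffective symmetric differentials at all.

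The main obstacle, therefore, is precisely this vanishing $\nu_1^*(W)=-\dim W$ for the irreducible factors. I expect the argument to go through the observation that $q^+(W')=0$ forces every finite quasi-\'etale cover to again have irregularity zero, so the Albanese is trivial at every level; combined with the fact that (by the earlier stated result, or by \cite{GKP} / \cite{BKT}-type arguments) symmetric differentials of klt $c_1=0$ varieties must "come from the Albanese" after a quasi-\'etale cover. Concretely: any pseudoeffective line subsheaf $L\subset\Omega^1_X$ would, via the canonical extension / orbifold structure of $X'$, yield a nonzero section of $\Omega^1$ on a finite quasi-\'etale cover, contradicting $q^+=0$. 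Making this precise requires care about the passage between $X'$ and its smooth model $X$ (controlling the exceptional divisor, as in the "neat model" discussion of \S\ref{RC}) and about the reflexive differentials on $X'$ versus honest differentials on $X$; once that dictionary is in place, the equality $\nu_1^*(X)=q^+(X')-n$ follows, with the "equality after a suitable finite quasi-\'etale cover" being exactly the statement that one has passed to the cover computing $q^+$.
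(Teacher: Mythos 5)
Your overall route is the same as the paper's: pass to the quasi-\'etale cover given by the singular Beauville--Bogomolov decomposition, write it as $Z'\times A$ with $A$ an Abelian variety of dimension $q^+(X')$ and $Z'$ a product of Calabi--Yau and Hyperk\"ahler factors, use additivity of $\kappa_1^*,\nu_1^*$ under products together with $\nu_1^*(A)=0$, and conclude via the fact that $\nu_1^*$ can only increase when passing from $X'$ to the cover. Up to that point your reduction is fine.

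The gap is in the step you yourself flag as ``where the real work lies'': the vanishing $\nu_1(Z)=-\infty$ for a smooth model $Z$ of an irreducible klt Calabi--Yau or Hyperk\"ahler variety (equivalently, a factor with $q^+=0$). The paper does not prove this; it quotes it from \cite{HP18}, where it is a consequence of the algebraic integrability theorem for foliations with numerically trivial canonical bundle, proved by MMP and stability techniques. Your proposed substitute does not work, for two reasons. First, $\nu_1(Z)=-\infty$ means that $\mathcal{O}_{\Bbb P(\Omega^1_Z)}(1)$ is not pseudoeffective; ruling out pseudoeffective \emph{line subsheaves} $L\subset \Omega^1_Z$ is a strictly weaker statement and does not imply non-pseudoeffectivity of the tautological class (already for a K3 surface the absence of interesting line subsheaves is easy, while the non-pseudoeffectivity of $\Omega^1$ is exactly the hard content of \cite{HP18}). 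Second, even granting a pseudoeffective $L\subset\Omega^1_Z$, pseudoeffectivity produces no sections: the inference ``psef line subsheaf $\Rightarrow$ nonzero holomorphic $1$-form on some finite quasi-\'etale cover, contradicting $q^+=0$'' is unjustified (a psef class need not be effective on any finite cover; at best, arguments \`a la \cite{CP19} bound $\nu(Z,L)$ by $\nu(K_Z)=0$, which is far from a contradiction). So as written the key input is missing; the correct fix is simply to invoke the H\"oring--Peternell theorem at this point, as the paper does, rather than attempt to rederive it from $q^+=0$ by the sketched mechanism.
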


\begin{proof} By the singular Bogomolov-Beauville decomposition (see \cite{HP18} and  the references there, or \cite{Ca20} in the projective case, and \cite{BGL} in the K\"ahler case), $X'$ admits a finite quasi-\'etale cover $X"=Z'\times A$ which is a product, with $Z'$ a product of Hyperk\"ahler and Calabi-Yau varieties,  and $A$ an Abelian variety of dimension $q^+(X')=q^+(X")$. By \cite{HP18}, $\nu_1(Z)=-\infty$ if $Z'$ is either Hyperk\"ahler or Calabi-Yau, or a product of such, and $Z$ a smooth model of $Z'$. Thus, by the behaviour of $\kappa^*_1,\nu^*_1$ under products, $\kappa^*_1(X")=\nu^*_1(X")=\kappa_1(A)+(q^+(X')-n)=q^+(X")-n$. Since $\nu_1^*(X')\leq \nu_1^*(X")$, we get the claimed inequality.
\end{proof}

\subsection{Comparing $\nu_1$ and $q'$.}

Without passing to a suitable finite quasi-\'etale cover, we have a similar statement, replacing $q^+$ by $q'$, but the proof, although elementary, becomes much longer, because of singular quotients $A/G$ of compact complex tori by finite group actions, treated in the next section.

\begin{definition} If $X$ is a normal connected projective complex space with rational singularities, we define $q'(X)$ as the maximum (possibly $+\infty$) of the Albanese variety of $X$, or equivalently of the rank of the abelianisation of the finite index subgroups of $X$. \end{definition}

We thus have $q'(X)\leq q^+(X)$, with strict inequality possible, for example when $X$ is a singular Kummer surface, in which case $q^+=2$, and $q'=0$. When $X$ is smooth, $q^+(X)=q'(X)$, since quasi-\' etale covers are \' etale. For curves of general type, we have $q'(X)=+\infty$. Both $q^+$ and $q'$ are bounded by $n=dim(X)$ if $\kappa(X)=0$, or more generally if $X$ is special. 

We gather some properties of $q'$ used in the sequel in the following Proposition \ref{pq'}.

\begin{proposition}\label {pq'} Let $X'$ be a normal connected projective complex space with klt singularities. Then:

1. $X$ has rational singularities.

2. $\pi_1(X)=\pi_1(X')$ if $X$ is any smooth model of $X'$. In particular: $q'(X)=q'(X')$. 

3. $q^+(Z')=q'(Z')=0$ if $Z'$ is a product of Hyperk\"ahler and Calabi-Yau varieties\footnote{It is conjectured that $\pi_1(X')$ is finite.}.

\end{proposition}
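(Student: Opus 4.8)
\textbf{Proof plan for Proposition \ref{pq'}.} The three assertions are essentially standard facts about klt singularities, and the plan is to reduce each to known structural results rather than to prove anything from scratch.

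For part 1, the plan is to invoke the well-known theorem of Elkik and Kawamata-Viehweg that klt (indeed log canonical, in the presence of a boundary) singularities are rational. Concretely, for a klt variety $X'$ one has a $\Q$-factorial terminalization or simply a log resolution $\pi : X \to X'$, and the discrepancy condition forces $R^i\pi_*\OO_X = 0$ for $i>0$; combined with the Grauert-Riemenschneider vanishing theorem this gives rational singularities. I would just cite this (e.g. Kollár-Mori, Theorem 5.22).

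For part 2, the key input is that klt singularities are, in particular, quotient singularities up to finite quasi-étale cover in codimension two, but more efficiently one uses Takayama's theorem (building on Kollár) that for a variety with rational (even log terminal) singularities and any resolution $\pi: X \to X'$, the natural map $\pi_* : \pi_1(X \setminus \mathrm{Exc}(\pi)) \to \pi_1(X')$ and hence $\pi_1(X) \to \pi_1(X')$ is an isomorphism; this is because the exceptional fibres are covered by rational curves and klt singularities are, analytically-locally, simply connected in a suitable sense, or more precisely because a resolution of a rational singularity does not change $\pi_1$. So I would cite Takayama (``Local simple connectedness of resolutions of log-terminal singularities'') for the isomorphism $\pi_1(X) \cong \pi_1(X')$. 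The equality $q'(X) = q'(X')$ is then immediate, since $q'$ is defined purely in terms of $\pi_1$ (maximal rank of abelianizations of finite-index subgroups), and this invariant depends only on the isomorphism class of the group.

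For part 3, the point is that a Calabi-Yau or hyperkähler variety $Z'$ in the sense of the singular Beauville-Bogomolov decomposition has, by definition, vanishing reflexive holomorphic $1$-forms (and in the Calabi-Yau case vanishing reflexive $p$-forms for $0<p<\dim$), so $h^0(Z', \Omega^{[1]}_{Z'}) = 0$, and the same holds for any finite quasi-étale cover, because such covers of CY/HK varieties are again (products of) CY/HK varieties by the characterization in the decomposition theorem. Since the irregularity (dimension of the Albanese) of a variety with rational singularities is $h^0$ of reflexive $1$-forms on any resolution, which agrees with $h^0(Z', \Omega^{[1]}_{Z'})$, one gets $q(Z'') = 0$ for every finite quasi-étale cover $Z''$, hence $q^+(Z') = 0$, and a fortiori $q'(Z') = 0$. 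I would assemble this from the decomposition theorem references already cited (\cite{HP18}, \cite{Ca20}, \cite{BGL}). The only mild subtlety — and the step I would treat most carefully — is making sure that ``quasi-étale cover of a CY/HK product is again a CY/HK product'', which is exactly the content of the decomposition theorem's stability statements, so it is a citation rather than a new argument; no genuine obstacle arises here.
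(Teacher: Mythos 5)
Your proposal is correct and follows the paper's own proof essentially verbatim: the paper also disposes of Claim 1 by citing \cite{KM}, Theorem 5.22, of Claim 2 by citing Takayama \cite{Tak}, and of Claim 3 by the singular Beauville--Bogomolov decomposition \cite{HP18}, with the vanishing of the (augmented) irregularity of CY/HK factors built into that framework exactly as you describe. The extra detail you supply (Grauert--Riemenschneider for rationality, stability of CY/HK products under quasi-\'etale covers) is consistent with the cited sources and introduces no gap.
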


\begin{proof} Claim 1 (resp. Claim 2) follows from \cite{KM}, Theorem 5.22 (resp. \cite{Tak}, Theorem 1.1). Claim 3 follows from the singular Beauville-Bogomolov decomposition \cite{HP18}.\end{proof}

\begin{lemma}\label{lq'} Let $f:X\to A$ be surjective map from $X$, normal projective with klt singularities, on the Abelian variety $A$, with generic fibres connected such that $q=0$. Let $G$ be a finite group acting holomorphically and effectively on $X$. Let $\pi:X\to X/G$ be the natural projection. Then $f=a_X$ is the Albanese map of $X$, and there is so an induced action of $G$ on $A$, and  quotient maps $g:X/G\to A/G$ and $p:A\to A/G$  such that $g\circ \pi=p\circ f$. 

1. Then $Alb(X/G)=Alb(A/G)$. In particular, $q(X/G)=q(A/G)$.

2. If $q'(X/G)=q(X/G)$, then $q'(X/G)=q'(A/G)$.

\end{lemma}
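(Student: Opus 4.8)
The plan is to set up the commutative diagram, use the Albanese universal property to descend the $G$-action, and then analyse the finite \'etale covers of $X/G$ via the covers of $A/G$.

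\medskip

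\textbf{Step 1: The Albanese identification and the induced diagram.} First I would invoke Lemma \ref{lq'} (the part already proved in the excerpt, or the setup preceding part 1) to know that $f = a_X$ is the Albanese map of $X$, that $G$ acts on $A = \mathrm{Alb}(X)$ by functoriality of the Albanese, and that $g \circ \pi = p \circ f$, where $g : X/G \to A/G$ and $p : A \to A/G$ are the quotient maps. Since $q(X)=q(A/G)$... more precisely, since the generic fibre of $f$ has $q=0$, and in fact by part 1 we already have $\mathrm{Alb}(X/G) = \mathrm{Alb}(A/G)$, so $q(X/G) = q(A/G)$. The point now is to upgrade the equality of $q$ to an equality of $q'$, i.e. to compare the irregularities of \emph{finite \'etale covers}.

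\medskip

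\textbf{Step 2: From a finite \'etale cover of $X/G$ realising $q'$ to one of $A/G$.} By hypothesis $q'(X/G) = q(X/G)$. The key observation is that $q' $ is always computed by \emph{some} finite \'etale cover: pick a finite \'etale cover $Y \to A/G$; I want to bound $q'(A/G)$ below by $q'(X/G)$ and above by $q'(X/G)$. For the lower bound $q'(A/G) \le q'(X/G)$: any finite \'etale cover $W \to A/G$ pulls back along $g$ to a finite \'etale cover $W\times_{A/G}(X/G)\to X/G$ whose (relevant component's) irregularity is at least $q(W)$, because the map $W\times_{A/G}(X/G)\to W$ has connected generic fibre isomorphic to a fibre of $g$, and such fibres have $q=0$ (they are covered by fibres of $f$, which have $q=0$); hence $q(W) \le q'(X/G)$, and taking the sup over $W$ gives $q'(A/G) \le q'(X/G)$. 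For the reverse inequality $q'(X/G) \le q'(A/G)$: take a finite \'etale cover $V \to X/G$ with $q(V)$ as large as we like (approaching $q'(X/G)$); its Albanese $\mathrm{Alb}(V)$ receives $V$, and I want to show $\mathrm{Alb}(V)$ essentially comes from a finite \'etale cover of $A/G$. Here I use $q'(X/G) = q(X/G)$: this equality means the irregularity cannot be increased by going to finite \'etale covers of $X/G$, so actually $q(V) \le q(X/G) = q(A/G)$, and one must instead argue at the level of $\pi_1$-abelianisations; the finite \'etale cover of $A/G$ corresponding to a finite index subgroup of $\pi_1(X/G) = $ (image in) $\pi_1(A/G)$ realising the abelianisation rank does the job. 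Concretely, $\pi_1(X/G) \twoheadrightarrow \pi_1(A/G)$ has kernel the image of $\pi_1(\text{generic fibre of }g)$; but these fibres have $q=0$ hence trivial $H_1$ after passing to finite covers in the relevant sense, so the map on profinite-abelian completions of finite-index subgroups is "the same", giving $q'(X/G) = q'(A/G)$.

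\medskip

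\textbf{Step 3: Conclusion.} Combining the two inequalities of Step 2 yields $q'(X/G) = q'(A/G)$, which is part 2. I would then also record, for part 1, that the same diagram with $g_* : \mathrm{Alb}(X/G) \to \mathrm{Alb}(A/G)$ (induced by $g$) and the section coming from $p_*: \mathrm{Alb}(A) = A \to A/G$ being its own Albanese shows $g_*$ is an isomorphism, hence $q(X/G)=q(A/G)$.

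\medskip

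\textbf{Main obstacle.} The delicate point is Step 2, the reverse inequality $q'(X/G)\le q'(A/G)$: one has to rule out that passing to a finite \'etale cover $V$ of $X/G$ produces new holomorphic $1$-forms not pulled back (up to the cover) from $A/G$. This is exactly where the hypothesis $q'(X/G)=q(X/G)$ is essential, and the cleanest way to exploit it is via the exact sequence $\pi_1(F) \to \pi_1(X/G) \to \pi_1(A/G) \to 1$ for $F$ a smooth model of a generic fibre of $g$ (using that $X/G$ and $A/G$ have rational, indeed klt-type, singularities so $\pi_1$ is insensitive to resolution, by Proposition \ref{pq'}(2)): since $q(F)=0$ and, by the hypothesis, finite \'etale covers of $X/G$ do not raise $q$, the fibre contributes nothing to the abelianisations of finite-index subgroups, so $q'(X/G)$ is governed entirely by $q'(A/G)$. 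Making the fibre-contribution-vanishing rigorous — i.e. that $q'(X/G)=q(X/G)$ forces the image of $H_1(F)$ in the abelianisation of every finite-index subgroup of $\pi_1(X/G)$ to be finite — is the technical heart of the argument.
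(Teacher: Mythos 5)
The genuine gap is in your Step 2, and it is self-inflicted. You correctly assemble every fact that is needed: $q'(X/G)\geq q'(A/G)$ by pulling finite \'etale covers of $A/G$ back along the dominant map $g$; $q'(X/G)=q(X/G)$ by hypothesis; $q(X/G)=q(A/G)$ by part 1; and, implicitly, $q(A/G)\leq q'(A/G)$. But for the reverse inequality $q'(X/G)\leq q'(A/G)$ you abandon this and defer to an argument about abelianisations of finite-index subgroups of $\pi_1(X/G)$, whose key point (that the image of $H_1(F)$ is finite in every such abelianisation) you explicitly leave unproved -- so as written the proof is incomplete. Moreover that route is itself shaky: the exactness of $\pi_1(F)\to\pi_1(X/G)\to\pi_1(A/G)\to 1$ is not automatic for a fibration (multiple fibres/orbifold base issues), and the assertion that fibres with $q=0$ have ``trivial $H_1$ after passing to finite covers'' is false in general (a surface isogenous to a product, e.g. a Beauville surface, has $q=0$ while its \'etale cover $C_1\times C_2$ has $q\geq 4$). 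None of this machinery is needed: for every finite \'etale cover $V\to X/G$ one has $q(V)\leq q'(X/G)=q(X/G)=q(A/G)\leq q'(A/G)$, i.e. the sandwich $q(A/G)=q(X/G)=q'(X/G)\geq q'(A/G)\geq q(A/G)$ forces all four quantities to be equal. This one-line argument is exactly the paper's proof, and you had already written the decisive chain $q(V)\leq q(X/G)=q(A/G)$ before veering off into the $\pi_1$ detour.

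A secondary point: your sketch of part 1 is not correct as stated. The quotient $A/G$ is in general not an abelian variety and is not its own Albanese, so there is no ``section coming from $p_*$'' making $g_*$ an isomorphism. The intended argument (and the paper's) is simply that the generic fibres of $g$ are images of fibres of $f$, hence have $q=0$, hence are contracted by $a_{X/G}$; therefore $a_{X/G}$ factors through $g$, and the universal properties of the two Albanese maps identify $Alb(X/G)$ with $Alb(A/G)$, giving $q(X/G)=q(A/G)$.
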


\begin{proof} 1. There exists a natural map $a_g:Alb(X/G)\to Alb(A/G)$ such that $a_g\circ a_{X/G}=a_{A/G}\circ g$. Since the generic fibres of $g$ are images of fibres of $f$, they have $q=0$, and so are mapped to points by $a_{X/G}$, and the map $a_{X/G}$ factorises through $A/G$.The universal property of the Albanese map gives the conclusion.

2. Since we have a dominant map $g:(X/G)\to (A/G)$, we have: $q'(X/G)\geq q'(A/G)$. The assumptions and Claim 1 thus imply that: $q(A/G)=q(X/G)=q'(X/G)\geq q'(A/G)\geq q(A/G)$. All $4$ terms are thus equal.\end{proof}

\begin{theorem}\label{c_1=0}Let $X'$ be a normal connected complex projective variety of dimension $n$ with klt singularities and numerically trivial canonical class, and let $X$ be a smooth model of $X'$. Then:

1.  $\kappa^*_1(X)=\nu^*_1(X)=q'(X)-n$. 

In particular: 

2. $\kappa_1(X)=\nu_1(X)=0$ if and only if $X'$ is smooth and is an \'etale quotient of an Abelian variety.

3. $\kappa_1(X)=\nu_1(X)=-\infty$ if and only if $q'(X)=0$.

4. Assume $X$ is smooth compact K\"ahler with $\kappa(X)=0$. Assume the existence of a birational model of $X$ (with klt singularities and $c_1=0$). Then Claims 1,2,3 hold true for $X$. \end{theorem}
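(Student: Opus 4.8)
The plan is to reduce everything to the case $X' = A/G$ treated in the next section (as promised in the introduction), using the singular Bogomolov–Beauville decomposition as the structural tool; Claims 2 and 3 are then immediate corollaries of Claim 1 combined with the structure of $q'$, and Claim 4 follows from Claim 1 plus the birational invariance of all the quantities involved.

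First I would address Claim 1. By the singular Bogomolov–Beauville decomposition (\cite{HP18}, \cite{Ca20}, \cite{BGL}), there is a finite quasi-\'etale cover $X'' \to X'$ with $X'' = Z' \times T$, where $Z'$ is a product of Hyperk\"ahler and Calabi–Yau varieties and $T$ is a complex torus of dimension $q^+(X')$. Unlike in Proposition \ref{carab}, the cover is only \emph{quasi}-\'etale, so I cannot directly transport $\nu_1^*$ and $\kappa_1^*$ up to $X''$; this is precisely why $q'$ rather than $q^+$ is the right invariant. Instead I would argue as follows. The ramification of $X'' \to X'$ is supported on the singular locus, and one can take a further finite \'etale cover of $X'$ after which the Hyperk\"ahler/Calabi–Yau factor and the abelian factor separate in a way compatible with the fundamental group. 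More precisely, using Claim 2 of Proposition \ref{pq'} ($\pi_1(X) = \pi_1(X')$) and the fact that $q'$ is computed from finite \emph{\'etale} covers, I would pass to the maximal \'etale cover realizing $q'(X')$ and show that it carries an Albanese fibration $f: X \to A$ with $\dim A = q'(X)$ whose generic fibre $Y$ is a smooth model of a klt variety with $c_1 = 0$ and $q'(Y) = q(Y) = 0$. By \cite{HP18}, $\nu_1(Y) = \kappa_1(Y) = -\infty$ for such $Y$ (the fibre is, up to \'etale cover, a product of Hyperk\"ahler and Calabi–Yau pieces, possibly with a further $A/G$-type factor handled in the next section). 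Then I would apply the product/fibration machinery: $\kappa_1^*(X) \geq q'(X) - n$ always holds (the abelian quotient contributes symmetric differentials pulled back from $A/G$, as noted after Conjecture \ref{Conjspec}), and for the reverse inequality $\nu_1^*(X) \leq q'(X) - n$ I would use a filtration argument on $Sym^m(\Omega^1_X)$ along the fibration $f$ exactly as in the proof of Theorem \ref{tRQ} — the relative cotangent sheaf $S^j(X/A)$ has no sections along fibres for $j$ large because $\nu_1(Y) = -\infty$, so only the $f^*Sym^{m-j}(\Omega^1_A)$-part survives, contributing polynomial growth of degree $q'(X) - 1$ in $m$.

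The remaining subtle point — and the one I expect to be the main obstacle — is the presence of factors of the form $A/G$ (torus quotients by finite groups) in the decomposition: the singular Beauville–Bogomolov theorem produces an abelian factor $T$ only \emph{after} a quasi-\'etale cover, so before that cover the abelian part of $X'$ genuinely looks like $A/G$ with $G$ acting with fixed points, and the naive bound $\kappa_1^*(A/G) = q'(A/G) - \dim A$ is not obvious because $G$ may kill all the $G$-invariant symmetric differentials pulled back from $A$ while $q'(A/G)$ could still be positive (or zero). This is exactly the content deferred to \S\ref{Storusq}, and here I would simply cite that section: for $X' = A/G$ one has $\nu_1^*(X) = \kappa_1^*(X) = q'(X) - \dim A$. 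Granting that, the decomposition reduces the general klt $c_1 = 0$ case to a product of (i) Hyperk\"ahler/Calabi–Yau factors with $\nu_1 = \kappa_1 = -\infty$ and (ii) a torus-quotient factor, and the product formula $\kappa_1^*(Y \times Z) = \kappa_1^*(Y) + \kappa_1^*(Z)$ (with its $\nu_1^*$ analogue, which holds here because $\nu_1 = \kappa_1$ on each factor) closes Claim 1.

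Finally, Claims 2, 3, 4 are formal consequences. For Claim 3, $\kappa_1 = \nu_1 = -\infty$ means $\kappa_1^* = \nu_1^* = -n$, i.e. $q'(X) - n = -n$, i.e. $q'(X) = 0$, and conversely. For Claim 2, $\kappa_1 = \nu_1 = 0$ means $q'(X) = n = \dim X$; then the Albanese map of the \'etale cover realizing $q'$ is a generically finite morphism onto an abelian variety of the same dimension with klt (hence, being an abelian-variety image, \'etale-quotient) fibres — combined with $c_1 = 0$ this forces $X'$ smooth and an \'etale quotient of an abelian variety (one can invoke the characterization of klt $c_1 = 0$ varieties with maximal Albanese dimension, or argue directly that the decomposition has no Hyperk\"ahler/Calabi–Yau factor and no genuine $G$-fixed points). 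For Claim 4, $\nu_1^*, \kappa_1^*, q'$ and the hypothesis $\kappa(X) = 0$ are all birational invariants (and the K\"ahler case of Beauville–Bogomolov is available by \cite{BGL}), so the statement for $X$ follows from the statement applied to the assumed klt $c_1 = 0$ model $X'$.
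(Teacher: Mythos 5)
Your skeleton (defer the torus--quotient case to Theorem \ref{thq"}, use the singular Bogomolov--Beauville decomposition, observe that $\kappa_1^*\geq q'-n$ is the easy inequality and that Claims 2--4 are formal consequences of Claim 1) matches the paper. The genuine gap is at the decisive descent step. The decomposition produces the product $X_1=Z'\times A$ only as a \emph{quasi-\'etale Galois} cover, so $X'=(Z'\times A)/G$ with $G$ possibly acting with fixed points and mixing the two factors; $X'$ is \emph{not}, even after further finite \'etale covers, a product of a Hyperk\"ahler/Calabi--Yau--quotient factor and a torus--quotient factor, so your closing step ``the decomposition reduces the general case to a product \dots and the product formula closes Claim 1'' does not apply. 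The paper closes this step by a different mechanism which is absent from your proposal: since $Z'$ is not uniruled, $\Aut(Z'\times A)=\Aut(Z')\times \Aut(A)$ (Beauville's lemma), which produces a finite quotient map $h:X'\to (Z'/G')\times (A/G_1)$; Lemma \ref{lq'} identifies $q'(X')=q'(A/G_1)$, and combining $h$ with Proposition \ref{carab} and Theorem \ref{thq"} gives $\nu_1^*(X')\geq \kappa_1^*(X')\geq q'(X')-n$, which together with the upper bound for this class (obtained by the methods of \S\ref{Storusq}) yields Claim 1.

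Your substitute for that step --- running the filtration argument of Theorem \ref{tRQ}/Corollary \ref{cRQ} along the Albanese fibration of the \'etale cover realizing $q'$ --- does not fill the gap as written. First, it is vacuous exactly in the central case $q'(X)=0$: the Albanese is a point, the ``fibre'' is $X$ itself, and what must be proved there is $\nu_1(X)=-\infty$ for a mixed quotient $(Z'\times A)/G$ with $q'=0$; neither \cite{HP18} (which treats $Z'$, i.e.\ the case $q^+=0$) nor \S\ref{Storusq} (which treats pure $A/G$) covers this without the automorphism-splitting descent above. Your parenthetical ``the fibre is, up to \'etale cover, a product of Hyperk\"ahler/Calabi--Yau pieces with a further $A/G$-type factor'' is precisely the unproved structural claim, and note that $q^+$ can be positive while $q'=0$ (singular Kummer quotients), so Proposition \ref{carab} cannot substitute. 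Second, even when $q'>0$, Corollary \ref{cRQ} requires its hypothesis 3 (a reduced, non-exceptional component of $f^*(D)$ over every prime divisor $D$ of the base); Remark \ref{Delta} shows this hypothesis is essential precisely for fibrations whose fibres have $\nu_1=-\infty$, and you verify neither it nor your claims that the generic Albanese fibre is (a smooth model of) a klt variety with $c_1=0$ and $q'=q=0$ --- facts which essentially require the same structure theory you are trying to bypass. Claims 2--4, as you derive them, are fine once Claim 1 is available.
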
 

\begin{remark} In the situation of Claim 4, Claim 2 strengthens \cite{Mis}, in which the assumption $\kappa_1(X)=0$ is replaced by the assumption that $Sym^m(\Omega^1_X)$ is generated by its sections for large $m$. 
\end{remark}

We shall show below how Theorem \ref{c_1=0} follows from the Beauville-Bogomolov decomposition, and the special case of quotients of Abelian varieties by finite groups $A/G$ (i.e: Theorem \ref{thq"}, proved in the next section):

\begin{theorem}\label{thq"} For $A/G$ a quotient of an Abelian variety $A$ by a finite group $G$, we have: 
$\nu_1^*(A/G)=\kappa_1^*(A/G)=q'(A/G)-n$.
\end{theorem}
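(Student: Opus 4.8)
The plan is to set up the problem so that it reduces to a concrete computation of spaces of sections of symmetric powers of $\Omega^1$ on a smooth model of $A/G$. First I would fix a free resolution: choose a normal crossing resolution $\pi:X\to A/G$, and recall that for such a quotient the relevant invariants are birational, so it suffices to work either on $X$ or on a convenient model. The key reduction is to pull sections back to $A$: if $G$ acts on the complex torus $A$ with quotient $q:A\to A/G$, then (away from the branch locus, and hence everywhere by Hartogs/reflexivity) one has $H^0(A/G, \Sym^{[m]}(\Omega^1_{A/G}))=H^0(A,\Sym^m(\Omega^1_A))^G$, and more generally one must twist by an ample line bundle $D$ to compute $\nu_1^*$. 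Since $\Omega^1_A\cong\mathcal O_A^{\oplus n}$ is trivial, $\Sym^m(\Omega^1_A)$ is a trivial bundle of rank $\binom{m+n-1}{n-1}$, so $H^0(A,\Sym^m(\Omega^1_A))=\Sym^m(W)$ where $W=H^0(A,\Omega^1_A)$ is the $n$-dimensional space of translation-invariant $1$-forms, carrying a linear $G$-action through the differential of the $G$-action on $A$.

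The heart of the argument is then linear-algebraic: I must understand $\Sym^m(W)^G$ as $m\to\infty$, and the analogous invariants $H^0(A,\Sym^m(\Omega^1_A)\otimes D)^G$. Write $W=W_0\oplus W_1$ where $W_0=W^G$ is the invariant part, of dimension $q'(A/G)$ — indeed the invariant $1$-forms on $A$ descend to $1$-forms coming from the Albanese of (a finite étale cover of) $A/G$, which is exactly what pins down $q'$. On $W_0$ the symmetric algebra $\Sym^\bullet(W_0)$ consists entirely of invariants, contributing growth of order $m^{q'(A/G)-1}$ to $\dim\Sym^m(W)^G$. The point to prove is that the $G$-action on $W_1$ has \emph{no} nonzero invariants in any $\Sym^{\geq 1}(W_1)$-direction in a way that would push the growth rate higher: more precisely $\dim\Sym^m(W)^G = \dim\bigl(\bigoplus_{j} \Sym^{m-j}(W_0)\otimes \Sym^j(W_1)\bigr)^G$, and each $(\Sym^j(W_1))^G$-graded piece is finite-dimensional and, summed with the $\Sym^{m-j}(W_0)$ factor, still contributes only $O(m^{q'(A/G)-1})$. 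Combined with the twist by $D$, which at worst multiplies section counts by a polynomial in $m$ of the \emph{same} degree $q'(A/G)-1$ (because $D$ restricted to the fibres of the Albanese-type map $A/G\dashrightarrow A_0/(\text{image of }G)$ contributes a bounded amount, the fibres being quotients of subtori with $q=0$), this gives $\nu_1^*(A/G)+n-1\le q'(A/G)-1$, i.e.\ $\nu_1^*(A/G)\le q'(A/G)-n$. The reverse inequality $\kappa_1^*(A/G)\ge q'(A/G)-n$ is the easy direction: pulling back from the Albanese variety of a finite étale cover produces the required symmetric differentials, as already noted in the discussion following Conjecture \ref{Conjspec}. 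Since always $\kappa_1^*\le\nu_1^*$, all three quantities coincide.

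The main obstacle I anticipate is the bookkeeping around the branch locus and the twist by $D$. Extending $G$-invariant sections from $(A/G)\setminus(\text{branch locus})$ to a smooth model $X$, and conversely identifying $H^0(X,\Sym^m(\Omega^1_X)\otimes \pi^*A_0)$ with the invariants upstairs, requires the kind of Hartogs/reflexivity argument and the control of $f$-exceptional poles used in the proof of Theorem \ref{tRQ}; one must check that passing to a smooth model does not create or destroy sections in a way that changes the polynomial growth rate. The second delicate point is bounding the effect of the ample twist $D$: here one factors the natural map $A/G \to A_0/H$ onto the quotient of the "invariant subtorus" $A_0$ (where $H$ is the image of $G$ in $\Aut(A_0)$, which acts by translations up to finite data, so $A_0/H$ is again an abelian variety of dimension $q'(A/G)$), observes that the fibres are quotients of a complementary subtorus with vanishing irregularity, and uses Lemma \ref{lq'} together with the argument of Corollary \ref{cRQ} to see that sections of $\Sym^m\otimes D$ grow no faster along the fibres than a fixed power of $m$ independent of $m$. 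Once these two technical points are in place, the growth estimate and hence the theorem follow.
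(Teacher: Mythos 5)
There is a genuine gap, and it sits at the very first step: the identification $H^0$ (smooth model) $=H^0(A,\Sym^m(\Omega^1_A))^G$ is false, and with it the growth estimate that drives your whole argument. What Hartogs/reflexivity gives you is only the equality of the $G$-invariants upstairs with the \emph{reflexive} symmetric differentials on the singular quotient $A/G$; but $\nu_1^*$ and $\kappa_1^*$ are defined on a smooth model $X\to A/G$, and reflexive symmetric differentials on a klt quotient do \emph{not} lift to sections of $\Sym^m(\Omega^1_X)$ (symmetric differentials have no analogue of the extension property enjoyed by pluricanonical forms on canonical singularities). A concrete counterexample to your reduction: for the Kummer surface $A/\pm1$, the invariants $\Sym^m(W)^{\pm}$ are all symmetric tensors of even degree, growing linearly in $m$, while the smooth model is a K3 surface with $\nu_1^*=\kappa_1^*=-2$, i.e.\ no symmetric differentials at all. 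Relatedly, your key linear-algebra claim that $\dim\Sym^m(W)^G=O(m^{q'(A/G)-1})$ is false whenever $q'<n$: by Molien's formula the leading term comes from the identity element and $\dim\Sym^m(W)^G\sim m^{n-1}/(|G|(n-1)!)$. (Also note $\dim W^G=q(A/G)$, not $q'(A/G)$; to reach $q'$ one must first pass to the étale cover corresponding to the normal subgroup generated by elements with fixed points, a reduction your sketch does not carry out.)

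The content of the theorem is precisely what your proposal skips: one must show that the abundant $G$-invariant symmetric tensors on $A$, even after twisting by an effective or ample divisor, acquire poles along the exceptional divisors of a resolution over the fixed points and therefore do not survive on $X$. In the paper this is done by reducing (via the $E'/F'$ group-theoretic splitting and finite étale covers) to a cyclic group of primary order with an isolated fixed point, and then performing an explicit coordinate computation on a chart of the weighted blow-up resolving the singularity $\frac1N(1,a_2,\dots,a_n)$ (Theorem \ref{Cyclic}), which shows $h^0(A/G,\Sym^m(\Omega^1_{A/G})(D))=0$ for $m\gg0$; the general case is then assembled from the subtori $C_g=\mathrm{Im}(g-1)$, $C_G=\sum_g C_g$ and the elementary intersection Lemma \ref{cap}, yielding the bound by $\Sym^m(\Omega^1_{B_G})$ with $\dim B_G=q=q'$. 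Your handling of the ample twist (bounded contribution along the fibres of the map to the ``invariant'' torus quotient) is in the right spirit of Lemma \ref{B_g} and Claim 3 of Lemma \ref{G-neral}, but it presupposes exactly this twisted vanishing on the fibre quotients, which your argument never establishes; knowing only that those fibres have $q=0$ is not enough. The easy inequality $\kappa_1^*\ge q'-n$ is fine as you state it.
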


\begin{proof} (of Theorem \ref{c_1=0}, assuming Theorem \ref{thq"}) Claims 2 and 3 are immediate consequences of Claim 1: $\kappa_1^*=\nu_1^*=q'-n$, which we now prove. 

We may assume that $q'(X')=q(X')$ by replacing $X'$ with a suitable finite \'etale cover since $\nu_1^*,\kappa_1^*, q'$ and $dim(X)$ are invariant under finite \'etale covers. After another finite and quasi-\'etale cover $X_1$ with canonical singularities of $X'$, and Galois of group $G$, we have a product $X'_1=Z'\times A$, with $q^+(Z')=q'(Z')=0$, and so $X'=X_1/G$. Since we have the projection $f:X_1\to A$ with fibre $Z'$, we can apply Lemma \ref{lq'} since the Albanese map is surjective with connected fibres for this class of varieties, and then conclude that $q'(X')=q'(A/G)$. 

Now $Aut(Z')$ is discrete (since $Z'$ is not uniruled). We thus have: $Aut(X'_1)=Aut(Z')\times Aut(A)$, by \cite{beauv}, Lemma, p.8 (the proof of which works in the singular case as well). We thus have an injection: $G<G'\times G_1$, where $G'<Aut(Z'),G_1<Aut(A)$ are finite groups of automorphisms of $Z'$ and $A$ respectively, images of the projections of $G$. From this injection, we get a natural finite quotient map:  $h:X'\to (Z'/G')\times (A/G_1)$. We may, and shall, assume that $G$ is isomorphic to $G_1$ by first dividing with the kernel $K$ of the projection $G\to G'$, and replacing $Z'$ by $Z'/K$ in the sequel. 

From the quotient map $h:X'\to (Z'/G')\times (A/G)$, we  deduce that $\nu_1^*(X')\geq \nu_1^*(Z'/G')+\nu_1^*(A/G)$. Since $\nu_1(Z')=\nu_1(Z'/G')=-\infty$ by Proposition \ref{carab}, we deduce that $\nu_1^*(X')\geq \nu_1^*(A/G)+\nu_1^*(Z')=\nu_1^*(A/G)-dim(Z')$. Theorem \ref{thq"}, proved in the next section, asserts that $\nu_1^*(A/G)=q'(A/G)-dim(A)$. Thus $\nu_1^*(X')\geq q'(A/G)-dim(A)-dim(Z')=q'(A/G)-dim(X')=q'(X')-dim(X')$, since $q'(X')=q'(A/G)$. Since Remark \ref{rem'q'}(3) below shows that $\nu_1^*(X')\leq q'(X')-dim(X')$ is always true for this class of varieties, we get that $\nu_1^*(X')=q'(X')-dim(X')$, as claimed.
The same argument shows that $\kappa_1^*(X')=q'(X')-n$, concluding the proof.\end{proof}

\subsection{Comparing $\nu_1$ and $\kappa$.}

Without assuming the existence of good minimal models, we can prove only a much weaker statement than \ref{c_1=0}.

\begin{proposition}\label{cab'} Assume $X$ is smooth and complex projective with $\nu(X)=0$. Then $\kappa_1(X)\leq \kappa(X)=0$.\end{proposition}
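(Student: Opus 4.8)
The statement to prove is that if $X$ is smooth projective with $\nu(X)=0$, then $\kappa_1(X)\leq \kappa(X)=0$. The first observation is that $\nu(X)=0$ forces $\kappa(X)=0$ as well: this is the classical inequality $\kappa(X)\leq\nu(X)$ combined with the fact that $\nu(X)=0$ implies $X$ is not uniruled (so $K_X$ is pseudoeffective and indeed $\nu(X)\geq 0$), and the abundance-type statement in numerical dimension zero, namely that $\nu(X)=0$ implies $\kappa(X)=0$, which is a theorem of Nakayama (a numerically trivial pseudoeffective canonical bundle is effective, up to torsion). So the content is really the bound $\kappa_1(X)\leq 0$, i.e. that $h^0(X,\mathrm{Sym}^m\Omega^1_X\otimes A)$ grows at most like $m^{n-1}$ for every ample $A$.

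\smallskip

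The natural approach is to use the Albanese map. Since $\nu(X)=0$, the canonical bundle is numerically trivial along every subvariety swept out by the fibres of $a_X\colon X\to \mathrm{Alb}(X)$, and more importantly one has strong structure results: by Kawamata's theorem the Albanese map of a variety with $\kappa(X)=0$ is surjective with connected fibres, and by the work on the numerical-dimension-zero case (Nakayama, or the generic vanishing circle of ideas) one expects that after a finite \'etale cover $X$ becomes birational to a product $F\times A$ with $F$ of Kodaira dimension zero and $q(F)=0$; at the level of the inequality we are after, however, we do not need the full product structure. Instead I would argue directly: filter $\mathrm{Sym}^m\Omega^1_X$ by pulling back $\mathrm{Sym}^{m-j}\Omega^1_{\mathrm{Alb}(X)}$ (a trivial bundle of rank $\binom{q+m-j-1}{q-1}$, polynomial of degree $q-1$ in $m$) tensored with $\mathrm{Sym}^j$ of the relative cotangent sheaf, exactly as in the proof of Theorem~\ref{tRQ}. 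This gives
$$h^0(X,\mathrm{Sym}^m\Omega^1_X\otimes A)\leq \sum_{j=0}^{m} \binom{q+m-j-1}{q-1}\, h^0\!\big(\mathrm{Alb}(X),\, (a_X)_*(\mathrm{Sym}^j\Omega^1_{X/\mathrm{Alb}}\otimes A)\big),$$
up to the usual codimension-two and exceptional-divisor corrections handled by Hartogs. The key point is then that the generic fibre $F$ of $a_X$ has $\nu_1(F)=-\infty$: indeed $F$ has $\kappa(F)=0$ and $q(F)=0$ (the latter because the Albanese is surjective), and for a variety with $q=0$ and $\kappa=0$... here one needs $\nu_1(F)=-\infty$, which is precisely of the nature of what the paper is trying to establish in general. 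For the proof at hand one can instead invoke the weaker, known fact that under $\nu(X)=0$ the pushforwards $(a_X)_*(\mathrm{Sym}^j\Omega^1_{X/\mathrm{Alb}})$ are torsion for $j$ large (bounded independently of $m$), by an argument on the generic fibre using that $K_F$ is numerically trivial and $F$ has no holomorphic $1$-forms.

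\smallskip

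Granting that the relevant pushforwards vanish for $j\geq j_0$ with $j_0$ independent of $m$, the sum collapses to $\sum_{j<j_0}\binom{q+m-j-1}{q-1}\,h^0(\mathrm{Alb}(X), (a_X)_*(\mathcal G_j))$ where each $\mathcal G_j$ is a fixed coherent sheaf, so each term is $O(m^{q-1})=O(m^{n-1})$ (since $q\leq n$), and therefore $h^0(X,\mathrm{Sym}^m\Omega^1_X\otimes A)=O(m^{n-1})$, i.e. $\nu_1(X)\leq n-1$, hence $\kappa_1(X)\leq n-1$; but we want the sharper $\kappa_1(X)\leq 0$, which needs a bit more: namely that $(a_X)_*\mathrm{Sym}^j\Omega^1_{X/\mathrm{Alb}}$ are not merely eventually torsion but that the $j=0$ term $(a_X)_*\mathcal O_X=\mathcal O_{\mathrm{Alb}}$ contributes only the constants and the higher-$j$ contributions are bounded, so that taking $A=\mathcal O_X$ (the definition of $\kappa_1$, not $\nu_1$) gives $h^0(X,\mathrm{Sym}^m\Omega^1_X)\leq h^0(\mathrm{Alb}(X),\mathrm{Sym}^m\Omega^1_{\mathrm{Alb}})\cdot(\text{const}) + O(1)$, and one concludes by a separate argument (or by invoking $\kappa_1(\mathrm{Alb})=0$ and monotonicity under the dominant-but-generically-finite-on-image structure) that the growth is in fact $O(1)$, giving $\kappa_1(X)\leq 0$. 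The main obstacle is precisely pinning down the behaviour on the fibre $F$: to get $\kappa_1(X)\leq 0$ rather than just $\kappa_1(X)\leq n-1$ one must know that the symmetric differentials of $X$ come, up to bounded multiplicity, from the Albanese torus, which is a generic-vanishing / Green--Lazarsfeld-type input, and making that rigorous under only the hypothesis $\nu(X)=0$ (without assuming good minimal models, as the subsection title stresses) is the delicate part.
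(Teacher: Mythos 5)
Your reduction of the equality $\kappa(X)=0$ to abundance in numerical dimension zero is fine (the paper cites \cite{Kaw} for this), but the core of your argument for $\kappa_1(X)\leq 0$ has a genuine gap, which you yourself flag: the step where the pushforwards $(a_X)_*(\mathrm{Sym}^j\Omega^1_{X/\mathrm{Alb}}\otimes A)$ are asserted to be torsion for $j$ large. This is not a ``weaker, known fact''. It amounts to a $\nu_1$-type vanishing ($\nu_1(F)=-\infty$, or at least eventual vanishing of ample-twisted symmetric differentials) for the general Albanese fibre $F$, a variety with $\kappa(F)=0$; that is exactly the kind of statement the paper can only establish for smooth models of klt varieties with $c_1=0$ (Theorem \ref{c_1=0}), i.e.\ assuming the existence of good minimal models --- which is precisely what this proposition is designed to avoid (the subsection heading stresses ``without assuming the existence of good minimal models''). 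Your parenthetical justification that $q(F)=0$ ``because the Albanese is surjective'' is also not a proof: surjectivity of $a_X$ does not control the irregularity of its fibres, and the product decomposition after an \'etale cover that you allude to is conjectural under the sole hypothesis $\nu(X)=0$. So the argument as written does not close. There is in addition a normalisation slip: if you did know $h^0(X,\mathrm{Sym}^m\Omega^1_X\otimes A)=O(m^{q-1})$, or even $O(m^{n-1})$, for every ample $A$, then after subtracting $n-1$ you would already obtain $\nu_1(X)\leq q-n\leq 0$ and hence $\kappa_1(X)\leq 0$; the extra difficulties you describe in your last paragraph are not needed once the fibrewise vanishing is granted --- but that vanishing is the missing ingredient.

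For comparison, the paper's proof takes a completely different, fibration-free route. Since $\nu(X)=0$, $K_X$ is pseudoeffective, so by \cite{CP19}, Theorem 7.3, every rank-one subsheaf $L\subset\otimes^m\Omega^1_X$ satisfies $\nu(X,L)\leq\nu(X)=0$. Applying this to $L=\det(E_m)$, where $E_m\subset \mathrm{Sym}^m(\Omega^1_X)$ is the subsheaf generated by the global sections, forces $\mathrm{rank}(E_m)=h^0(X,\mathrm{Sym}^m(\Omega^1_X))$: otherwise the sections would produce two linearly independent sections of $\det(E_m)$, giving $\kappa(X,L)\geq 1$ and contradicting $\nu(X,L)\leq 0$. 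Since $\mathrm{rank}(\mathrm{Sym}^m(\Omega^1_X))$ grows like $m^{n-1}$, one gets $h^0(X,\mathrm{Sym}^m(\Omega^1_X))=O(m^{n-1})$, i.e.\ $\kappa_1(X)\leq 0$, with no Albanese map, no relative filtration as in Theorem \ref{tRQ}, and no generic-vanishing or minimal-model input. If you want to salvage your approach you would have to prove the fibrewise vanishing unconditionally, which is of the same order of difficulty as the conjectures the paper leaves open.
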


\begin{proof} $K_X$ is pseudoeffective, since $\nu(X)=0$. Thus, by \cite{CP19}, Theorem 7.3, for any rank-one subsheaf $L\subset (\otimes^m \Omega^1_X)$, we have: $\nu(X,L)\leq \nu(X)=0$. 
For any $m>0$, if $E_m\subset Sym^m(\Omega^1_X)$ is the subsheaf generated by the global sections of $Sym^m(\Omega^1_X)$, the rank $r_m$ of $E_m$ is equal to $h^0(X, \Sym^m(\Omega^1_X))$, by the previous inequality $\nu(X,L)\leq 0$, applied to $L:=det(E_m)\subset (\otimes^N \Omega^1_X)$ for some suitable $N$, since the inequality $r_m<h^0(X,Sym^m(\Omega^1_X))$ would produce at least $2$ linearly independent sections of $L$. So $r_m$ grows as $m^{n-1+\kappa_1(X)}$ as $m$ goes to $+\infty$. On the other hand, $r_m\leq rk(Sym^m(\Omega^1_X))$, which grows like $m^{n-1}$. Thus $\kappa_1(X)\leq (n-1)-(n-1)=0$. The last equality $\kappa(X)=0$ if $\nu(X)=0$ is proved in \cite{Kaw}.\end{proof}

More generally, Conjecture $\Lambda_0$ in \cite{Sak} claims that: $\kappa_1(X)\leq \kappa(X)$ if $\kappa(X)\geq 0$. Conjecture \ref{ConjO} implies the following strengthening.

\begin{conjecture}\label{cjnu} 1. If $K_X$ is pseudoeffective, $\nu_1(X)\leq\kappa(X)$. 

2. If $X$ is uniruled, and $r:X\to R_X$ is its MRC fibration, then: $\nu_1(X)\leq \kappa(R_X)$.
\end{conjecture}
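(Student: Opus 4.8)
The plan is to deduce Conjecture~\ref{cjnu} from Conjecture~\ref{ConjO}, as announced. The starting observation is that Conjecture~\ref{ConjO} applied with $p=n$ is exactly Conjecture~\ref{ConjK}, since $\Omega^n_X=K_X$ is a line bundle, so $\nu_n(X)=\nu(X)$ and $\kappa_n(X)=\kappa(X)$. Hence, granting Conjecture~\ref{ConjO}, we may freely use that $\nu_p(X)=\kappa_p(X)$ for every $p>0$, and in particular $\nu(X)=\kappa(X)$, for every compact K\"ahler $X$.

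\emph{Reduction of (2) to (1).} Let $X$ be uniruled with MRC fibration $r\colon X\to R_X$; then $R_X$ is not uniruled, so $K_{R_X}$ is pseudoeffective by \cite{BDPP}. Resolving $r$ to a morphism with rationally connected fibres (which alters neither $\nu_1$ nor $\kappa(R_X)$) and applying Theorem~\ref{tRQ} with $p=1$ gives $\nu_1(X)+\dim X=\nu_1(R_X)+\dim R_X$, so $\nu_1(X)=\nu_1(R_X)-(\dim X-\dim R_X)\le\nu_1(R_X)$. By (1) applied to $R_X$ we get $\nu_1(R_X)\le\kappa(R_X)$, whence (2). So everything rests on (1).

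\emph{Reduction of (1) to Sakai's $\Lambda_0$.} Assume $K_X$ pseudoeffective; then $\nu(X)\ge 0$, hence $\kappa(X)=\nu(X)\ge 0$ by Conjecture~\ref{ConjK}, and $\nu_1(X)=\kappa_1(X)$ by Conjecture~\ref{ConjO}. Thus (1) is equivalent to $\kappa_1(X)\le\kappa(X)$ whenever $\kappa(X)\ge 0$, i.e.\ to Sakai's conjecture $\Lambda_0$. When $\kappa(X)=0$ one has $\nu(X)=\kappa(X)=0$, and the argument of Proposition~\ref{cab'} applies verbatim: by \cite{CP19}, Theorem 7.3, the subsheaf $E_m\subset \mathrm{Sym}^m(\Omega^1_X)$ generated by its global sections has rank equal to $h^0(X,\mathrm{Sym}^m(\Omega^1_X))$, which is bounded by $\mathrm{rank}\,\mathrm{Sym}^m(\Omega^1_X)=O(m^{n-1})$, so $\kappa_1(X)\le 0=\kappa(X)$. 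For $\kappa(X)=:k$ with $0<k<n$ (the case $k=n$ being trivial), one would use the Iitaka fibration $\phi\colon X\to I$ (after a modification), with $\dim I=k$ and general fibre $F$ of Kodaira dimension $0$, hence $\nu(F)=0$, hence $\kappa_1(F)\le 0$ by the case just treated. The conormal sequence of $\phi$ (exact up to torsion, handled as in the proof of Theorem~\ref{tRQ}) induces a filtration of $\mathrm{Sym}^m(\Omega^1_X)$ with graded pieces $\phi^*\mathrm{Sym}^j(\Omega^1_I)\otimes\mathrm{Sym}^{m-j}(\Omega^1_{X/I})$, so
\[
h^0(X,\mathrm{Sym}^m\Omega^1_X)\ \le\ \sum_{j=0}^{m}h^0\bigl(I,\,\mathrm{Sym}^j\Omega^1_I\otimes\phi_*\mathrm{Sym}^{m-j}\Omega^1_{X/I}\bigr).
\]
Here $\phi_*\mathrm{Sym}^{m-j}\Omega^1_{X/I}$ has generic rank $h^0(F,\mathrm{Sym}^{m-j}\Omega^1_F)=O((m-j)^{\dim F-1})$ because $\kappa_1(F)\le 0$; granting the expected quasi-positivity of these relative direct images in the orbifold sense of \cite{Ca04} (the positive part absorbed into the orbifold divisor $\Delta_\phi$ of $\phi$), each summand is $O\!\left((m-j)^{\dim F-1}\cdot h^0(I,\mathrm{Sym}^{[j]}\Omega^1(I,\Delta_\phi))\right)$, and the latter factor is $O(j^{\,2k-1})$ from the trivial bound $\kappa_1(I,\Delta_\phi)\le\dim I=k$. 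Summing over $j$ then yields $h^0(X,\mathrm{Sym}^m\Omega^1_X)=O\bigl(m^{(\dim F-1)+(2k-1)+1}\bigr)=O(m^{\,n+k-1})$, i.e.\ $\kappa_1(X)\le k=\kappa(X)$.

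\emph{Main obstacle.} Parts (1)--(2) and the case $\kappa(X)=0$ are immediate from Conjecture~\ref{ConjO} together with \cite{CP19} and \cite{BDPP}; the passage from $\kappa_1$ to $\nu_1$ costs nothing once Conjecture~\ref{ConjO} is in hand. The genuine difficulty is entirely concentrated in the last step for $\kappa(X)\ge 1$: controlling the positivity of the direct images $\phi_*\mathrm{Sym}^{m-j}\Omega^1_{X/I}$ on the Iitaka base $(I,\Delta_\phi)$, which is precisely the delicate orbifold $C_{n,m}$-type input that the paper flags in \S\ref{Sspec}. It is there, and not in the numerical-versus-Kodaira comparison itself, that one needs more than Conjecture~\ref{ConjO}.
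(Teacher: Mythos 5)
First, note that the statement you are proving is stated in the paper as Conjecture \ref{cjnu}: the paper offers no proof, only the assertion that it is the strengthening of Sakai's Conjecture $\Lambda_0$ implied by Conjecture \ref{ConjO}, together with the remark that Claim 1 and Theorem \ref{tRQ} yield Claim 2 when $X$ is projective. Your two reductions --- of (2) to (1) via Theorem \ref{tRQ}, \cite{BDPP} and \cite{GHS}, and of (1) to $\Lambda_0$ via Conjecture \ref{ConjO} (whose $p=n$ case is Conjecture \ref{ConjK}) --- are correct and coincide with what the paper indicates, and so does your treatment of the case $\kappa(X)=0$ via Proposition \ref{cab'}.

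The problem lies in your handling of $1\leq\kappa(X)=k<n$ and in the conclusion you draw from it. The direct-image argument you sketch does have a genuine gap --- the ``expected quasi-positivity'' of $\phi_*\mathrm{Sym}^{m-j}\Omega^1_{X/I}$ on the orbifold Iitaka base is exactly the kind of unproved input the paper only conjectures elsewhere --- but that gap is avoidable, and the paper avoids it. Proposition \ref{birstab} runs the Iitaka-fibration step using only the easy-addition inequality $\kappa_1(X)\leq \dim(Z)+\kappa_1(X_z)$ of \cite{Sak}, Theorem 4, combined with Proposition \ref{cab'} applied to the generic Iitaka fibre $X_z$: since $\kappa(X_z)=0$, Conjecture \ref{ConjK} gives $\nu(X_z)=0$, hence $\kappa_1(X_z)\leq 0$ and $\kappa_1(X)\leq\kappa(X)$; Conjecture \ref{ConjO} for $p=1$ then upgrades $\kappa_1(X)$ to $\nu_1(X)$. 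No control of the positivity of the relative direct images on the Iitaka base is needed, because the easy-addition bound already discards the fibre directions wholesale. Consequently your final diagnosis --- that ``one needs more than Conjecture \ref{ConjO}'' and that the residual difficulty is concentrated in the Iitaka-base positivity --- is mistaken: Conjecture \ref{ConjO} together with Propositions \ref{cab'} and \ref{birstab}, both proved in the paper, already yields Conjecture \ref{cjnu}, which is precisely what the sentence ``Conjecture \ref{ConjO} implies the following strengthening'' asserts.
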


\begin{remark} When $X$ is projective, Claim 1 and Theorem \ref{tRQ} imply Claim 2 .
\end{remark}

\begin{proposition}\label{birstab}Assume $\kappa(X)\geq 0$, and $\nu(X_z)=0$ for the generic fibre\footnote{Since we are dealing with bimeromorphic invariants, we assume that $\varphi$ is holomorphic.} $X_z$ of the Moishezon-Iitaka fibration $\varphi:X\to Z$ of $X$. 

Then: $\kappa_1(X)\leq \kappa(X)$.
\end{proposition}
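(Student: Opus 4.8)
\textbf{Proof proposal for Proposition \ref{birstab}.}

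The plan is to reduce Proposition \ref{birstab} to Proposition \ref{cab'} fibrewise, using the Iitaka fibration $\varphi:X\to Z$ together with a filtration argument for symmetric differentials along a fibration, exactly in the spirit of the proof of Theorem \ref{tRQ}. First I would replace $\varphi:X\to Z$ by a birationally equivalent model which is ``neat'' (flatten $\varphi$ by Raynaud/Hironaka, so that the $\varphi$-exceptional divisor is also exceptional for a birational morphism $X\to X_0$ with $X_0$ smooth); since $\kappa_1(X)$, $\kappa(X)$ and all the invariants in play are birational, this is harmless, and it allows me to ignore poles occurring over the $\varphi$-exceptional locus when estimating $h^0(X, Sym^m(\Omega^1_X)\otimes A)$ for $A$ ample on $X$. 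Writing $S^m(X):=Sym^m(\Omega^1_X)$, $S^m(X/Z):=(Sym^m(\Omega^1_{X/Z})/\mathrm{Torsion})^{**}$, and $d:=\dim Z$, I have over the generic fibre the natural filtration of $S^m(X)$ with graded pieces $\varphi^*(S^{m-j}(Z))\otimes S^j(X/Z)$, $0\le j\le m$, and after passing to saturations (allowing a controlled twist by a divisor supported over the ramification, as in the Theorem \ref{tRQ} computation) I get a filtration of $S^m(X)$ on $X$ minus a codimension-$2$ locus with reflexive graded pieces contained in $\varphi^*(S^{m-j}(Z))(c\,(m-j)E)\otimes S^j(X/Z)$ for a fixed divisor $E$ and constant $c$.

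Next I would feed in the fibre hypothesis $\nu(X_z)=0$. By Proposition \ref{cab'} applied to the smooth generic fibre $X_z$ (which has $\kappa(X_z)\le\kappa(X)$... actually one needs $\nu(X_z)=0 \Rightarrow \kappa(X_z)=0$ by \cite{Kaw}, and then $\kappa_1(X_z)\le 0$), we have $\kappa_1(X_z)\le 0$, so $h^0(X_z, S^j(X_z))$ is bounded independently of $j$; hence for $A$ ample on $X$ and $\cG_j:=S^j(X/Z)\otimes A$, the rank of $\varphi_*(\cG_j)$ — and in fact, after a further twist, its dimension of global sections over $Z$ against $S^{m-j}(Z)$ — is controlled. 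More precisely, the ranks $\mathrm{rk}\,\varphi_*(\cG_j)$ are bounded by a constant $c_A$ independent of $m$ and $j$ (this is where $\kappa_1(X_z)\le 0$, equivalently $h^0(X_z,Sym^j\Omega^1_{X_z})$ bounded, enters). Using Lemma \ref{inject} to embed each $\varphi_*(\cG_j(sE))$ into $A'^{\oplus R}$ for a fixed ample $A'$ on $Z$ and fixed $R$ (and Lemma \ref{lemmapsf}/the divisors-supported-on-fibres machinery to absorb the twists by $E$ at the cost of a uniform constant $s$), I obtain
\[
h^0(X, S^m(X)\otimes A)\ \le\ R\sum_{j=0}^{m} h^0\bigl(Z, S^{m-j}(Z)\otimes A'\bigr)\cdot c_A\ \le\ C\,(m+1)\, m^{\,\kappa_1(Z)+d-1}.
\]

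Finally, I would compare $\kappa_1(Z)$ with $\kappa(X)$: since $\varphi$ is the Iitaka fibration, $Z$ is of general type up to birational modification (the Iitaka model has $\kappa(Z)=\dim Z$ when we pass to the canonical model, but what I actually need is $\kappa_1(Z)\le\dim Z =d$, which is automatic), so $\kappa_1(Z)+d-1\le 2d-1$. Hmm — this only gives $h^0(X,S^m(X)\otimes A)\le C\,m^{2d}$, i.e. $\kappa_1(X)\le 2d-(n-1)$... that is too weak. The right input is rather that $\kappa(X)=\dim Z=d$ (the Iitaka fibration computes the Kodaira dimension), and that along the fibres $S^j(X/Z)$ contributes nothing to growth in $m$ because $\kappa_1(X_z)\le 0$; so the growth of $h^0(X,S^m(X)\otimes A)$ in $m$ is governed entirely by the growth of $h^0(Z, S^{m}(Z)\otimes A')$, which for the base of general type... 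The cleanest route: bound $h^0(Z, S^{m-j}(Z)\otimes A')$ using $\nu_1(Z)\le \dim Z$ trivially, giving $\kappa_1(X)\le \kappa_1(Z)\le \dim Z$; but $\kappa(X)=\dim Z$, so $\kappa_1(X)\le\kappa(X)$.

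\textbf{Main obstacle.} The genuinely delicate point is not the filtration bookkeeping — that is essentially copied from Theorem \ref{tRQ} — but controlling the base contribution: one needs $\kappa_1(Z)\le\dim Z=\kappa(X)$, which is trivially true, together with the assurance that the fibre factor $S^j(X/Z)$ never boosts the $m$-degree of $h^0$; this last point is precisely where the hypothesis $\nu(X_z)=0$ (via Proposition \ref{cab'}, giving $\kappa_1(X_z)\le 0$) is indispensable, and where the uniformity (in $j$, for fixed ample $A$) of the rank of $\varphi_*(S^j(X/Z)\otimes A)$ must be checked carefully — this uniformity is the analogue of the number $m_A$ appearing in the proof of Theorem \ref{tRQ}, but here, because $\kappa_1(X_z)=0$ rather than $-\infty$, the relevant sheaves do not vanish, only have bounded rank, so one tracks bounded rank instead of eventual vanishing. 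Handling the twists by the ramification divisor $E$ uniformly over all $j$ via Lemma \ref{lemmapsf} is the remaining technical nuisance.
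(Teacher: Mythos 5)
Your argument diverges from the paper's and, as written, has a genuine gap at its central step. By twisting with an ample $A$ on $X$ you are in fact trying to bound $\nu_1(X)$ (which would prove Conjecture \ref{cjnu}.1 in this setting, strictly stronger than the proposition), and the uniformity you invoke fails: the generic rank of $\varphi_*(\cG_j)$, $\cG_j:=S^j(X/Z)\otimes A$, equals $h^0(X_z,Sym^j\Omega^1_{X_z}\otimes A_{\vert X_z})$, and its growth in $j$ is governed by $\nu_1(X_z)$, not by $\kappa_1(X_z)$. Proposition \ref{cab'} only controls the untwisted spaces $h^0(X_z,Sym^j\Omega^1_{X_z})$ (and even there, $\kappa_1(X_z)\le 0$ means growth at most $j^{\dim X_z-1}$, not boundedness as your parenthesis asserts); the twisted growth is exactly the unknown quantity, cf. Proposition \ref{exnu>k} where $h^0(Sym^mE)=1$ while $h^0(Sym^mE\otimes A)$ grows linearly, and the discussion in \S\ref{Ssurf} explaining that precisely this phenomenon blocks the fibrewise computation of $\nu_1$ for elliptic fibrations. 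Two further steps also do not carry over from Theorem \ref{tRQ}: (i) the saturation twists $(m-j)(t_k-1)E_k$ are absorbable by Lemma \ref{lemmapsf} only when the correction divisor is partially supported on the fibres, which for rationally connected fibres came from \cite{GHS}; the Iitaka fibration may well have multiple fibres in codimension one (Remark \ref{Delta}, condition 3 of Corollary \ref{cRQ}), and then the twist grows with $m$ and cannot be absorbed; (ii) the constants produced by Lemmas \ref{inject} and \ref{lemmapsf} (the ample $A'$, the integer $R$, the bound $s$) depend on the sheaf $\cG_j$, hence on $j$; in Theorem \ref{tRQ} only the finitely many $j\le m_A$ occur, whereas here all $0\le j\le m$ contribute and no uniform choice is justified.

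The paper's proof avoids all of this and is essentially two lines: by easy addition along a fibration (\cite{Sak}, Theorem 4) one has $\kappa_1(X)\le \dim(Z)+\kappa_1(X_z)$; since $\varphi$ is the Iitaka fibration, $\dim(Z)=\kappa(X)$; and Proposition \ref{cab'} applied to the generic fibre (using $\nu(X_z)=0$) gives $\kappa_1(X_z)\le 0$. Because $\kappa_1$ is computed from untwisted symmetric differentials, restriction to a general fibre suffices and no relative filtration, no ample twist, and no control of multiple fibres is needed. If you drop the twist by $A$ and simply restrict sections of $Sym^m(\Omega^1_X)$ to a general fibre, you recover this easy-addition argument; as soon as the ample twist is introduced, the fibre contribution is a $\nu_1$-type quantity and the present hypotheses give no hold on it.
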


\begin{proof} We have (see \cite{Sak}, Theorem 4): $\kappa_1(X)\leq dim(Z)+\kappa_1(X_z)=\kappa(X)+\kappa_1(X_z)\leq \kappa(X)$, since Proposition \ref{cab'} and $\nu(X_z)=0$ imply that $\kappa_1(X_z)\leq 0$.
\end{proof}

\section{Torus quotients.} \label{Storusq}

The main objective of this section is to prove Theorem \ref{thq"}, and then the structure Proposition \ref{propdeco}. The proof of Theorem \ref{thq"} is long, but elementary. More conceptual arguments might permit to shorten it considerably.

\subsection{An elementary lemma on group actions.}\label{groupaction}

 Let $G$ be a multiplicative group acting on a set $X$. For $g\in G$, the fixpoint set of $g$ is $X^g:=\{x\in X\vert g.x=x\}$. 
 
  We say that $G$ acts on $X$ without fixpoints if $X^g=\emptyset,\forall g\neq 1_G$.
 
 \medskip
 
  We shall split the action of $G$ into two parts: the action induced by the normal subgroup $F'$ generated by elements having fixpoints, and the quotient action of the quotient group $G/F'$ on the set of $F'$-orbits of $X$, which is shown to have no fixpoints. This splitting is essential in our study of quotients $A/G$ of Abelian varieties by finite groups of automorphisms.
 
\medskip

Let $E:=\{g\in G\vert X^g=\emptyset\}\subset G$, and let  $F:=G\setminus E\subset G$ be its complement. These sets are invariant under conjugation in $G$. 

Let $E',F'$ be the subgroups of $G$ generated by $E,F$ respectively. These are thus normal subgroups of $G=E'\cup F'$. 

\medskip

We first have the classical lemma (proof omitted):

\begin{lemma} Either $G=E'$, or $G=F'$.
\end{lemma}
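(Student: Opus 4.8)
The statement is that if $E$ and $F$ are the sets of fixpoint-free and fixpoint-having elements of $G$ respectively, and $E'$, $F'$ the subgroups they generate, then $G$ equals one of $E'$, $F'$. Since the identity $1_G$ always has $X^{1_G}=X\neq\emptyset$, we have $1_G\in F$; thus $F\neq\emptyset$ and $F'$ is nontrivial (unless $G$ itself is trivial, in which case the statement is vacuous). The only thing to rule out is the "mixed" case in which $G$ is generated jointly by $E$-elements and $F$-elements but by neither alone, i.e.\ $E'\subsetneq G$ and $F'\subsetneq G$ simultaneously.

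First I would record the structural observation that $E$ is closed under conjugation: if $g\in E$ and $h\in G$, then $X^{hgh^{-1}}=h\cdot X^g=\emptyset$, so $hgh^{-1}\in E$; likewise $F$ is conjugation-invariant. Consequently both $E'$ and $F'$ are \emph{normal} subgroups of $G$. Now consider the quotient group $Q:=G/F'$. I claim the image of $E$ in $Q$ generates $Q$: indeed $G$ is generated by $E\cup F$, and every element of $F$ already lies in $F'$, so modulo $F'$ the group is generated by the image of $E$ alone. The key step is then to show that this image must be \emph{all of $Q$ equal to the image of a single fixpoint-free element}, or rather — the cleaner route — to exploit the following: an element $g\in E$ has $X^g=\emptyset$, and more importantly, the claim hinges on the fact that the set $E$ together with $1_G$ is actually already a subgroup, OR that $F'$ together with $E$ forces $G=F'$ or $G=E'$ via a counting/parity argument.

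The genuinely slick argument I would aim for: suppose $G\neq F'$, so $Q=G/F'$ is nontrivial; I want to conclude $G=E'$. Since $F'\trianglelefteq G$ and $F'$ contains every element with a fixpoint, the quotient $Q$ acts on the set $Y:=X/F'$ of $F'$-orbits, and this action of $Q$ has \emph{no} fixpoints except the identity — because if $\bar g\in Q$, $\bar g\neq 1$, fixed an orbit $F'\cdot x$, then some lift $g\in G$ would send $x$ into its $F'$-orbit, hence $g'g\cdot x=x$ for some $g'\in F'$, making $g'g$ an element with a fixpoint, so $g'g\in F'$, whence $g\in F'$ and $\bar g=1$, a contradiction. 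Thus $Q$ acts freely on $Y$. Now pick any $g\in E$; I want to show $g\notin F'$ would already force things, but more to the point: in a finite setting (which is the relevant case for $A/G$), a group acting freely and nontrivially on a nonempty set contains no nontrivial element fixing a point — fine — but I actually need the reverse direction. The cleanest finish is: every element of $Q$, being a product of images of elements of $E$, lies in the subgroup of $Q$ generated by the free-action elements; combined with the dichotomy that in $Q$ \emph{every} nonidentity element is fixpoint-free on $Y$, one gets $F\cap (G\setminus F')=\emptyset$ vacuously, and reorganizing yields $E'=G$.

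\textbf{Main obstacle.} The real content — and where I expect to spend the effort — is verifying that the quotient action of $Q=G/F'$ on $X/F'$ is fixpoint-free, since that is the lemma's whole point and the bridge to the $A/G$ application; the normality of $E',F'$ and the generation statements are routine. I would also need to be slightly careful about the trivial/degenerate cases ($G$ trivial, or $E=\emptyset$ so $F'=G$ immediately, or $X=\emptyset$) to make the dichotomy literally an "either/or". Since the paper says "proof omitted" and calls it "classical", I suspect the intended argument is exactly the normal-subgroup-plus-freeness observation above, and the one-line finish is that $G/F'$ has the property that all its nontrivial elements act freely, which (together with $E$ generating $G/F'$) is what one records as the useful statement even if one does not literally need $G=E'$.
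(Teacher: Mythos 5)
There is a genuine gap. Your argument establishes two things correctly: that $E$ and $F$ are stable under conjugation (so $E'$, $F'$ are normal), and that the quotient $Q=G/F'$ acts without fixpoints on the orbit set $F'\backslash X$. But that second fact is the paper's \emph{next} lemma (Lemma \ref{lnofix}), proved there separately, and it does not yield the dichotomy you were asked to prove. Your closing step --- ``every element of $Q$ lies in the subgroup generated by images of $E$\dots and reorganizing yields $E'=G$'' --- is a non sequitur: to conclude $G=E'$ from $G\neq F'$ you must show $F'\subset E'$, i.e.\ that every element with a fixpoint is a product of fixpoint-free elements, and nothing in the freeness of the quotient action, nor in the (already known) fact that $Q$ is generated by images of $E$, produces such an expression. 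The intermediate remark ``$F\cap(G\setminus F')=\emptyset$'' is indeed vacuous, as you say, and therefore carries no information; the observation that $E\cup\{1_G\}$ might be a subgroup is also false in general, so neither of the routes you sketch closes the argument.

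The intended proof is purely group-theoretic and does not involve the action beyond the partition $G=E\sqcup F$: since every element of $G$ lies in $E\subset E'$ or in $F\subset F'$, one has $G=E'\cup F'$ (this is exactly why the paper writes ``normal subgroups of $G=E'\cup F'$'' just before the lemma), and a group is never the union of two proper subgroups. Concretely, if $E'\neq G$ and $F'\neq G$, choose $a\notin E'$ and $b\notin F'$; then $a\in F'$ and $b\in E'$, and the element $ab$ gives a contradiction either way: if $ab\in E'$ then $a=(ab)b^{-1}\in E'$, while if $ab\in F'$ then $b=a^{-1}(ab)\in F'$. Normality is irrelevant here; the freeness statement you proved is the content of the subsequent lemma, not of this one.
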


Next:

\begin{lemma}\label{lnofix} Assume that $G=E'$. Let $Y:=F'\setminus X$ be the set of orbits of $X$ under the action of $F'$. Then $G/F'$ acts on $F'\setminus X$ without fixpoints. 
\end{lemma}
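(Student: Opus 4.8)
The goal is to show that the quotient group $\bar G := G/F'$ acts without fixpoints on the set of orbits $Y := F'\backslash X$. First I would unwind what a fixpoint of $\bar G$ on $Y$ means concretely: if $\bar g = gF' \in \bar G$ with $\bar g \neq 1_{\bar G}$ fixes an orbit $F'.x \in Y$, then $g\cdot(F'.x) = F'.x$ as subsets of $X$, i.e.\ there exists $h \in F'$ with $g\cdot x = h\cdot x$, hence $(h^{-1}g)\cdot x = x$, so $h^{-1}g \in F$ (it has a fixpoint, namely $x$). Then I want to conclude that $h^{-1}g \in F'$, which forces $g \in F'$, i.e.\ $\bar g = 1_{\bar G}$, a contradiction.

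The crux is therefore the implication: an element of $G$ that has a fixpoint lies in $F'$. But this is immediate from the definitions: $F = \{g \in G : X^g \neq \emptyset\}$, and $F'$ is by definition the subgroup generated by $F$, so $F \subseteq F'$; any element with a fixpoint is in $F$, hence in $F'$. So the argument is essentially a one-line chase once the definitions are spelled out, and I would present it exactly in that order: assume $\bar g$ fixes $F'.x$, produce $h \in F'$ with $gx = hx$, observe $h^{-1}g$ fixes $x$ so $h^{-1}g \in F \subseteq F'$, deduce $g \in F'$, hence $\bar g$ is trivial.

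The only genuinely substantive point — and the place where the hypothesis $G = E'$ is actually used — is subtler than the surface argument suggests, so I would be careful there. If $G = F'$ instead, the statement would be vacuous/degenerate ($\bar G$ trivial), so the hypothesis $G = E'$ is what guarantees $\bar G$ can be nontrivial and the "no fixpoints" conclusion has content; but the fixpoint-freeness computation itself does not logically require $G=E'$. I expect the author includes the hypothesis for the structural role it plays in the splitting (the previous lemma gives the dichotomy $G = E'$ or $G = F'$, and this lemma handles the first case), not because the verification needs it. So the main "obstacle" is really just making sure the set-theoretic manipulation of orbits is done cleanly — that $g\cdot(F'.x)$, being an $F'$-orbit (since $F'$ is normal, $g F' g^{-1} = F'$, so $g\cdot(F'.x) = (gF'g^{-1})\cdot(g x) = F'.(gx)$ is again a single $F'$-orbit), equals $F'.x$ precisely when $gx$ and $x$ lie in the same $F'$-orbit. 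That normality of $F'$ is what makes the induced $\bar G$-action on $Y$ well-defined in the first place, so I would state it explicitly before running the fixpoint argument.
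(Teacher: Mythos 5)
Your argument is correct and is essentially the paper's own proof: both unwind the fixpoint condition to $g\cdot x=f\cdot x$ for some $f\in F'$, note that $f^{-1}g$ then fixes $x$ and so lies in $F\subseteq F'$, and conclude $g\in F'$, i.e.\ $\bar g=1_{G/F'}$. Your side remarks (normality of $F'$ for the induced action, and the fact that the hypothesis $G=E'$ is not used in the computation itself) are accurate and consistent with the paper.
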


\begin{proof} For $g\in G$, let $\bar{g}:=g.F'\in G/F'$ be its image under the quotient $G\to G/F'$. In the same way, write $\bar{x}:=F'.x$ for the image of $x$ under the quotient $X\to F'\setminus X$.
Let $x\in X$, and denote by $S_x<F'<G$ the stabiliser of $x\in G$, and $S_{\bar{x}}<G/F'$ the stabiliser of $\bar{x}$ in $G/F'$. We have: $\bar{g}\in S_{\bar{x}}$ if and only if $g.x=f.x$ for some $f\in F'$, or equivalently, if $g^{-1}f.x=x$, that is, if $g^{-1}.f\in S_x<F'$. Thus $g^{-1}=((g^{-1}f).f^{-1}\in F'$, and $\bar{g}=1_{G/F'}$, as claimed.
\end{proof}

\begin{remark} I did not find a reference for this certainly standard lemma, which plays a crucial role in the next considerations.
\end{remark}

Let us also notice the following easy fact:

\begin{lemma}\label{F^*vsF} With the preceeding notations $G,F,F'$, if $F^*\subset F$ consists of the elements of primary order (i.e: power of a prime number), then $F'$ is generated by $F^*$.
\end{lemma}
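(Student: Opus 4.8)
The statement to prove is Lemma \ref{F^*vsF}: if $F^*\subset F$ is the set of elements of $G$ of primary order (prime-power order) having a fixpoint, then the subgroup they generate is exactly $F'$. Since $F^*\subseteq F$, we automatically have $\langle F^*\rangle\subseteq\langle F\rangle=F'$, so the whole content is the reverse inclusion $F'\subseteq\langle F^*\rangle$. For this it suffices to show that every element $g\in F$ (i.e.\ every $g\in G$ with $X^g\neq\emptyset$) lies in $\langle F^*\rangle$.

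The key observation is that if $g$ has a fixpoint $x$ and $g$ has order $n=\prod_i p_i^{a_i}$, then \emph{every power} of $g$ also fixes $x$, hence every power of $g$ lies in $F$. Now decompose $g$ into its primary components: writing $n=\prod_i p_i^{a_i}$ and choosing integers $m_i$ with $\sum_i m_i (n/p_i^{a_i})\equiv 1\pmod n$ (possible since the numbers $n/p_i^{a_i}$ are globally coprime), the elements $g_i:=g^{m_i(n/p_i^{a_i})}$ are powers of $g$, pairwise commuting, with $g_i$ of order dividing $p_i^{a_i}$ (hence of primary order, or trivial), and $\prod_i g_i=g$. Each nontrivial $g_i$ fixes $x$ (being a power of $g$) and has prime-power order, so $g_i\in F^*$; the trivial ones lie in $\langle F^*\rangle$ vacuously. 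Therefore $g=\prod_i g_i\in\langle F^*\rangle$. This proves $F\subseteq\langle F^*\rangle$, hence $F'=\langle F\rangle\subseteq\langle F^*\rangle$, giving equality.

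I do not anticipate a genuine obstacle here: this is the standard primary-decomposition argument for a cyclic group, combined with the trivial remark that the fixpoint property passes to powers. The only points requiring a word of care are (i) noting that the $g_i$ commute so that the product $\prod_i g_i$ is unambiguous and equals $g$ — this follows since they are all powers of the single element $g$; and (ii) handling the degenerate cases where some $g_i$ is trivial (drop it from the product) or where $g$ itself is already of primary order (then $g\in F^*$ directly). So the proof is essentially a two-line reduction once the decomposition $g=\prod_i g_i$ with $g_i\in F^*\cup\{1\}$ is written down.
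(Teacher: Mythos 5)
Your proof is correct and matches the paper's argument: the paper's one-line proof is exactly that the cyclic group generated by $g$ is the direct product of its primary components, each of which fixes at least the fixpoints of $g$; you have simply written out this primary decomposition (via the coprimality/Bezout choice of exponents) in full detail.
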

\begin{proof} A cyclic group generated by $g$ is the direct product of its primary components, which all have at least as much fixpoints as $g$.
\end{proof}

\subsection{Smooth models of torus quotients.}

Let $A$ be an $n$ dimensional compact complex torus (i.e: a quotient $\C^n/\Lambda$ of $\C^n$ by a cocompact lattice $\Lambda$). Let $G$ be a finite group of complex affine automorphisms of $A$, $\pi:A\to A/G$ the quotient map, and $\rho:X\to A/G$ any desingularisation of $A/G$. By \cite{K}, $\rho_*:\pi_1(X)\to \pi_1(A/G)$ is an isomorphism (by a local version of Serre's covering trick, and rationality of quotient singularities). The Albanese map of $A/G$ is thus well-defined, and coincides with the one of $X$ up to composition with $\rho$. In particular, the finite \'etale covers of $X$ and $A/G$ naturally correspond. Moreover, for any finite \'etale cover $(A/G)'$ of $A/G$, the Albanese map $a':(A/G)'\to Alb((A/G)')$ of $(A/G)'$ is surjective with connected fibres (this is easy to see directly, and follows more generally from the fact that $(A/G)'$ is `special'). Thus: $q((A/G)')\leq n$, for each such $(A/G)'$, $q=dim(Alb((A/G)')$ being the irregularity.

Recall the:

\begin{definition} \label{defq'} Let $q'(A/G)$ be the maximum of all $q((A/G)')$ as $(A/G)'$ runs over all finite \'etale covers of $A/G$. 
\end{definition}

\begin{remark} \label{remq'} Recall that the invariant $q'$ should be carefully distinguished from the invariant $q^+(A/G)$ (see footnote 8). Easy examples show that the invariant $q'(X)$ takes on all possible values $0,1,...,n$ for $X$ such that $\kappa(X)=0$, hence $X$ special.\end{remark}

Recall that the invariants $\kappa_1^*$ and $\nu_1^*$ are still defined for $X'$ singular irreducible, by taking their values on any smooth model $X$ of $X'$, in particular when $X'=A/G$.

\begin{remark} \label{rem'q'} The following properties of $q',\kappa_1^*$ and $\nu_1^*$ are easily checked:

1. Preserved by bimeromorphic equivalence and finite \'etale covers.

2. $\nu_1^*(A/G)\leq \nu_1^*(A)=\kappa_1^*(A)=0$, since $\Omega^1_A$ is trivial. 

3. Let $a:A/G\to Alb(A/G):=B$ be the Albanese map. Then $\kappa_1^*(A/G)\geq 0-(n-dim(B))=dim(B)-n=q(A/G)-n$, since the sections of $Sym^m(\Omega^1_B)$ lift to sections of $Sym^m(\Omega^1_X)$, $X$ a smooth model of $A/G$, for any $m>0$. 

4. $\kappa_1^*(A/G)\geq q'(A/G)-n$: choose $(A/G)'\to (A/G)$ \'etale such that $q((A/G)')=q'(A/G)$, then apply property 3. \end{remark} 

The purpose of this section is to establish the following strengthened reverse inequality, which says in particular that $\nu_1$ and $\kappa_1$ are topological invariants for this class of varieties. These properties extend to $klt$ projective (or compact K\"ahler) varieties with $c_1=0$, as seen in the previous section, using the singular Bogomolov-Beauville-Yau decomposition. Let us recall the statement (of Theorem \ref{thq"}).

\begin{theorem}\label{thq'} For $A/G$ as above, we have: 

$\nu_1^*(A/G)=\kappa_1^*(A/G)=q'(A/G)-n$.
\end{theorem}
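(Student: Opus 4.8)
By Remark~\ref{rem'q'}(4) we already have $\kappa_1^*(A/G)\geq q'(A/G)-n$, and by Remark~\ref{rem'q'}(2) we have $\nu_1^*(A/G)\leq 0$; since $\kappa_1^*\leq\nu_1^*$ always, the whole statement reduces to proving the single inequality
\[
\nu_1^*(A/G)\leq q'(A/G)-n.
\]
Unwinding the normalisation, this means: for every ample line bundle $H$ on a fixed smooth model $X$ of $A/G$, the dimension $h^0(X,\mathrm{Sym}^m(\Omega^1_X)\otimes H)$ grows at most like $m^{q'(A/G)-1}$. Because all the relevant invariants are insensitive to finite \'etale covers and to desingularisation, I may first replace $A/G$ by a finite \'etale cover realising $q'$, i.e.\ assume $q(A/G)=q'(A/G)=:q$, and I may compute everything upstairs on $A$ in a $G$-equivariant way. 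The sections of $\mathrm{Sym}^m(\Omega^1_X)$ (or of $\mathrm{Sym}^m(\Omega^1_{A/G})$ in the orbifold/reflexive sense) pull back to $G$-invariant sections of $\mathrm{Sym}^m(\Omega^1_A)$, and since $\Omega^1_A$ is the trivial bundle with fibre $V:=H^0(A,\Omega^1_A)$, such a section is simply an element of $\mathrm{Sym}^m(V^*)$ — wait, more precisely of $\mathrm{Sym}^m(V)$ viewed as constant tensors — that is $G$-invariant. So the global picture is: $h^0(X,\mathrm{Sym}^m(\Omega^1_X))\leq \dim \mathrm{Sym}^m(V)^{G}$ plus a correction coming from twisting by $H$ and from the fact that genuine sections on a desingularisation may have controlled poles along the exceptional locus.

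\textbf{Key steps, in order.} (1) Set up the comparison carefully: using rationality of quotient singularities and the isomorphism $\pi_1(X)\cong\pi_1(A/G)$ recalled before Definition~\ref{defq'}, reduce to bounding the $G$-invariants of symmetric powers of the translation-invariant $1$-forms, allowing a twist by a pullback of an ample class from $A/G$ and a bounded order of poles along the branch/exceptional divisor; the key point is that twisting by $H$ only multiplies $h^0$ by a factor polynomial of degree $0$ in $m$ (it contributes at most finitely many ``extra'' copies), so it does not affect the growth rate, exactly as in the spectral-sequence arguments used in the proofs of Theorems~\ref{tab} and \ref{tN}. (2) Apply the elementary group-theoretic dichotomy of \S\ref{groupaction}: either $G=E'$ (generated by fixed-point-free elements) — this is the ``translation-like'' case — or $G=F'$ (generated by elements with fixpoints), in which case one expects $q(A/G)$ to drop and $\nu_1^*$ to be very negative. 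Handle the two cases and then, via Lemma~\ref{lnofix}, the mixed case by a two-step quotient: first quotient by the normal subgroup $F'$ generated by elements with fixpoints, then by the induced free action of $G/F'$. (3) For the free part, the quotient is \'etale, so $\Omega^1$ stays locally free and the invariant forms descend; the contribution to the growth is governed by the rank of the part of $V$ on which the free group acts, which is what feeds $q'$. (4) For the part generated by elements with fixpoints, show that $G$-invariant symmetric tensors are forced to vanish along the fixed loci to high order, or rather that only those built from the translation $1$-forms pulled back from the Albanese of the quotient survive; quantitatively, one must show $\dim\mathrm{Sym}^m(V)^{F'}$ that ``extends across the singularities'' grows like $m^{q-1}$ with $q=q(A/F')$. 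This is where the explicit linear-algebra of how a finite linear group acts on $V=V'\oplus V''$ (with $V'$ the fixed subspace, $V''$ its complement) enters: an invariant constant tensor in $\mathrm{Sym}^m V$ that gives a section with at worst log-poles along the branch divisor must lie in $\mathrm{Sym}^{\bullet}(V')\otimes(\text{bounded piece})$, and $\dim\mathrm{Sym}^m(V')=\binom{m+\dim V'-1}{\dim V'-1}\sim m^{\dim V'-1}$, while $\dim V'=q$.

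\textbf{Main obstacle.} The hard part is exactly step~(4): controlling which $G$-invariant (or $F'$-invariant) elements of $\mathrm{Sym}^m(V)$ actually descend to \emph{honest} sections on a \emph{smooth model} $X$ of $A/G$ — equivalently, bounding the order of poles that such tensors acquire along the exceptional divisor of $X\to A/G$, or along the branch divisor of $\pi\colon A\to A/G$. A $G$-invariant translation-invariant symmetric form on $A$ need not descend to a regular form on $A/G$, and the gap between ``$G$-invariant constant tensor'' and ``reflexive/regular symmetric differential on $A/G$'' is precisely what makes $\nu_1^*$ smaller than $0$ in general; pinning down this gap in terms of the fixed-point structure of $G$ acting on $V$, uniformly in $m$, is the core computation and the reason the author describes the proof as ``elementary but lengthy''. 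I would organise it by reducing (via Lemma~\ref{F^*vsF}) to cyclic groups of prime-power order acting linearly on a torus, compute the invariants of $\mathrm{Sym}^m(V)$ together with the vanishing they must satisfy along the fixed subtorus, glue these local computations using the normal-crossing/blow-up bookkeeping already developed in \S\ref{RC} (divisors partially supported on fibres, Lemma~\ref{lemmapsf}), and finally feed the resulting bound $h^0(X,\mathrm{Sym}^m(\Omega^1_X)\otimes H)\leq C\,m^{q(A/G)-1}=C\,m^{q'(A/G)-1}$ (after the initial \'etale reduction making $q=q'$) into the definition of $\nu_1$ to conclude $\nu_1^*(A/G)\leq q'(A/G)-n$, which closes the chain of inequalities.
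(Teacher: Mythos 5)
Your plan follows the same general route as the paper (reduce to $q=q'$ by an \'etale cover, split $G$ via the dichotomy of \S\ref{groupaction} and Lemma \ref{lnofix}, reduce via Lemma \ref{F^*vsF} to primary cyclic elements with fixpoints, and show that invariant symmetric differentials on a smooth model only come from the ``Albanese direction''), but it stops exactly where the proof actually lives. The step you yourself flag as the ``main obstacle'' --- controlling which $G$-invariant constant tensors, after twisting by an ample (or any effective) divisor, give honest sections of $Sym^m(\Omega^1_X)$ on a resolution --- is the entire content of Theorem \ref{Cyclic} and Lemma \ref{B_g} in the paper, and your proposal does not carry it out. The paper does it by an explicit coordinate computation on a weighted blow-up chart of the singularity of type $\frac{1}{N}(1,a_2,\dots,a_n)$ at an isolated fixpoint (after normalising $a_1=1$ using primarity of the order of $g$), comparing the exponents of $y_1$ in the coefficient of $dy_1^{\otimes m}$ and using $a_i<N$ to conclude that $h^0(S^m(A/G)\otimes D)=0$ for all $m\geq m(D)$; without this (or an equivalent quantitative vanishing), the chain of inequalities does not close. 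One also needs the structural input that the addition map $C_g\times\widetilde{B_g}\to A$ is a $g$-equivariant isogeny (Lemma \ref{g-1}), and a way to combine the bounds over the different generators $g\in F^*$, which the paper does via $C_G=\sum_g C_g$ and the intersection identity $\cap_g Sym^m H^0(B_g,\Omega^1_{B_g})=Sym^m H^0(B_G,\Omega^1_{B_G})$ (Lemma \ref{cap}); your $V=V'\oplus V''$ picture gestures at this but is not set up to handle several non-commuting $g$'s simultaneously.

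Moreover, the assertion in your step (1) that twisting by $H$ ``only multiplies $h^0$ by a factor polynomial of degree $0$ in $m$, exactly as in the spectral-sequence arguments of Theorems \ref{tab} and \ref{tN}'' is not a general principle and cannot be invoked here: Proposition \ref{exnu>k} (used in \S\ref{Ssurf}) shows that a twist by an ample line bundle can raise the growth of $h^0(Sym^m(E)\otimes A)$ from bounded to linear, which is precisely why $\nu_1$ can exceed $\kappa_1$ and why the two open cases of Proposition \ref{psurf} remain open. For torus quotients the insensitivity to the twist is a theorem to be proved (it is the ``$D\neq 0$'' part of Theorem \ref{Cyclic}, then propagated through Lemma \ref{B_g}(2) and Claim 3 of Lemma \ref{G-neral}), not a formal consequence of the spectral-sequence bookkeeping used for submanifolds of abelian varieties or complete intersections. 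As it stands, your argument establishes the easy inequalities of Remark \ref{rem'q'} and the correct reduction steps, but the inequality $\nu_1^*(A/G)\leq q'(A/G)-n$ --- the only nontrivial one --- is not proved.
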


In the extreme cases $q'=0,n$, we have various characterisations. 

\begin{corollary}\label{q'=n} For $A/G$ as above, the following are equivalent:

1. $\kappa_1^*(A/G)=0$

2. $\nu_1^*(A/G)=0$

3. $q'(A/G)=n$

4. $\pi:A\to A/G$ is \'etale.

5. $A/G$ is smooth and $\kappa(A/G)=0$.

In particular: $\kappa^*_1(A/G)=0$ if and only if $A/G$ is a finite \'etale quotient of a compact complex torus.
\end{corollary}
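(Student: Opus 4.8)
\textbf{Proof plan for Corollary \ref{q'=n}.}

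The plan is to prove the cycle of implications $3 \Rightarrow 4 \Rightarrow 5 \Rightarrow 1 \Rightarrow 2 \Rightarrow 3$ (with the final sentence then being a reformulation of $1 \Leftrightarrow 4$), drawing on Theorem \ref{thq'}. Two of the arrows are immediate: $1 \Rightarrow 2$ follows from Remark \ref{rnu=k}(2) together with Remark \ref{rem'q'}(2), since $\kappa_1^*(A/G)=0$ forces $\kappa_1(A/G)=0=(n-1)$ only when $n=1$, so more honestly one uses $\kappa_1^* \le \nu_1^* \le 0$ from Remark \ref{rem'q'}(2) to squeeze $\nu_1^*(A/G)=0$; and $2 \Rightarrow 3$ is exactly the content of Theorem \ref{thq'}, read as $q'(A/G)-n = \nu_1^*(A/G) = 0$.

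The geometric heart of the argument is $3 \Rightarrow 4$. Assume $q'(A/G)=n$, so there is a finite \'etale cover $(A/G)' \to A/G$ with $q((A/G)')=n$; pulling back along $\pi$ (and using that finite \'etale covers of $A/G$ correspond to finite \'etale covers of a smooth model, hence to finite index subgroups of $\pi_1(A/G)\cong\pi_1(A)$ via \cite{K}) I would produce a compatible tower and an induced action of a finite group $G''$ on a finite \'etale cover $A''$ of $A$ (again a torus), with $A''/G'' = (A/G)'$ and $q(A''/G'')=n=\dim A''$. The Albanese map $a'' : A''/G'' \to \mathrm{Alb}(A''/G'')$ is surjective with connected fibres and the base has dimension $n$, so $a''$ is generically finite and surjective between varieties of the same dimension, hence (by the connected-fibres property) birational. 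Now one uses the commutative square relating $A'' \to A''/G''$ and $A'' \to A''$ (as in Lemma \ref{lq'}): the composite $A'' \to A''/G'' \dashrightarrow \mathrm{Alb}(A''/G'')$ must coincide, up to isogeny, with the Albanese map of $A''$ itself, which is an isomorphism; comparing degrees forces $G''$ to act trivially, equivalently $A'' \to A''/G''$ is \'etale. Descending back down the finite \'etale tower then gives that $\pi : A \to A/G$ is \'etale as well (the ramification of $\pi$ would pull back to ramification of the cover of $A''$).

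For $4 \Rightarrow 5$: if $\pi$ is \'etale then $A/G$ is a smooth complex torus quotient, $K_{A/G}$ is torsion (trivial, even, since $\pi^*K_{A/G}=K_A=\OO_A$ and $\pi$ is \'etale Galois), hence $\kappa(A/G)=0$. For $5 \Rightarrow 1$: if $A/G$ is smooth with $\kappa(A/G)=0$, then its Albanese map is surjective with connected fibres and, being a smooth quotient of a torus with $\kappa=0$, it is in fact \'etale over its Albanese (a standard structure statement for such quotients), so $\dim\mathrm{Alb}(A/G)=n$; by Remark \ref{rem'q'}(3), $\kappa_1^*(A/G)\ge \dim\mathrm{Alb}(A/G)-n=0$, while Remark \ref{rem'q'}(2) gives $\kappa_1^*(A/G)\le 0$, so $\kappa_1^*(A/G)=0$. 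The final assertion is just the equivalence $1 \Leftrightarrow 4$ restated, since an \'etale $\pi:A\to A/G$ exhibits $A/G$ as a finite \'etale quotient of the torus $A$, and conversely.

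\emph{The main obstacle} I expect is making $3 \Rightarrow 4$ fully rigorous: one must carefully lift the finite \'etale cover realising $q'=n$ through the (possibly ramified) map $\pi$, keeping everything in the category of torus quotients, and then argue that a surjective map with connected fibres from an $n$-dimensional torus quotient onto an $n$-dimensional abelian variety, which is compatible with the torus structure, must come from an isogeny with trivial deck group — i.e. ruling out that $G''$ acts nontrivially yet the quotient still maps birationally onto an abelian variety. The cleanest route is probably to invoke Lemma \ref{lq'}(1) to identify $\mathrm{Alb}$ of the quotient with $A''/G''_1$ for the induced action on $A''$, compare Euler characteristics or fundamental groups, and conclude $G''_1=\{1\}$; everything else is bookkeeping with the already-established Theorem \ref{thq'} and the remarks.
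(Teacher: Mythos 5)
Your arrows $1\Rightarrow 2$ (after your self-correction, via $\kappa_1^*\le\nu_1^*\le 0$ from Remark \ref{rem'q'}), $2\Rightarrow 3$ (Theorem \ref{thq'}) and $4\Rightarrow 5$ are fine, but the cycle has two real problems. First, $5\Rightarrow 1$ rests on a false assertion: a smooth torus quotient with $\kappa=0$ need \emph{not} be \'etale, nor even finite, over its Albanese. Bielliptic surfaces $(E\times F)/G$ are smooth free quotients of abelian surfaces with $\kappa=0$ and $n=2$, yet $q=\dim\mathrm{Alb}=1$ and the Albanese map is an elliptic fibration; your appeal to Remark \ref{rem'q'}(3) then only yields $\kappa_1^*\ge q-n=-1$, not $0$ (the relevant invariant is $q'$, not $q$). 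So as written you only establish $4\Rightarrow 5$, and $5$ is not tied back into the equivalence. The correct route is $5\Rightarrow 4$: $\kappa(A/G)=0$ forces every fixed-point set of $g\ne 1$ to have codimension at least $2$ (a fixed divisor would give $K_A=\pi^*K_{A/G}+R$ with $R>0$ effective, hence $\kappa(A/G)=-\infty$), and then smoothness of $A/G$ together with purity of the branch locus — or, as the paper argues, Corollary \ref{sing} — forces $\pi$ to be \'etale. (Also, in your $4\Rightarrow 5$, ``$K_{A/G}$ trivial'' should be ``torsion''; bielliptic surfaces again, and torsion is all you need.)

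Second, the heart of your $3\Rightarrow 4$ is not actually proved. The fibre-product construction of $A''$, $G''$ with $A''/G''=(A/G)'$ is sound (though the parenthetical $\pi_1(A/G)\cong\pi_1(A)$ is wrong: \cite{K} identifies $\pi_1$ of a smooth model with $\pi_1(A/G)$, not with $\pi_1(A)$), but ``comparing degrees'' cannot force $G''$ to act trivially or freely: the composite $\varphi:A''\to\mathrm{Alb}((A/G)')$ is an isogeny of degree $|G''|$, and isogenies exist in every degree, so there is no contradiction to extract from degrees alone — you flag this yourself as the main obstacle and leave it open. This is exactly the point where the paper's proof is short: Corollary \ref{sing} (an immediate byproduct of Theorem \ref{Cyclic} and Lemma \ref{primary}) says that any $g\ne 1$ with a fixed point forces $\kappa_1^*(A/G)<0$, giving $1\Rightarrow 4$ at once. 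Your geometric route can in fact be closed: $\varphi$ is, up to translation, an isogeny, so every fibre has exactly $|G''|$ points (its deck transformations are translations by $\ker\varphi$); since $\varphi$ is $G''$-invariant, a point with nontrivial $G''$-stabiliser would lie in a fibre with fewer than $|G''|$ points, so $G''$ acts freely, and the \'etale-local identification of $A''\to A''/G''$ with $A\to A/G$ descends freeness to $G$. But as submitted, both the $5\Rightarrow 1$ step and the decisive step of $3\Rightarrow 4$ are gaps.
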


\begin{proof} All equivalences between 1-2-3-4 are immediate consequences of Remark \ref{rem'q'}, except for $1\Longrightarrow 4$ which requires only a much weaker form of Theorem \ref{thq'}, namely Lemma \ref{sing}. The implication $4\Longrightarrow 5$ is obvious, the reverse implication $5\Longrightarrow 4$ again follows from Lemma \ref{sing} since if $\pi:A\to A/G$ is not \'etale, $G$ contains an element $g\neq 1$ with fixpoint set nonempty and of codimension at least $2$ since $\kappa(A/G)=0$, contradicting Corollary \ref{sing}. 
\end{proof}

\begin{corollary}\label{sing} $G$ contains $g\neq 1$ such that $g(a)=a$ for some $a\in A$, if and only if $\kappa_1^*(A/G)<0$. 
\end{corollary}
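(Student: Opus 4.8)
The plan is to deduce this from Theorem~\ref{thq'} together with the elementary group-theoretic splitting of \S\ref{groupaction}. One direction is immediate: if $G$ acts \emph{without} fixpoints on $A$, then $\pi\colon A\to A/G$ is \'etale, hence $A/G$ is smooth with trivial canonical bundle and $\Omega^1_{A/G}$ trivial, so $\kappa_1^*(A/G)=\kappa_1^*(A)=0\not<0$. For the converse, the content is: if some $g\neq 1$ in $G$ has a fixpoint, then $\kappa_1^*(A/G)<0$, equivalently $q'(A/G)<n$ by Theorem~\ref{thq'}. So the whole statement reduces to the purely group-theoretic/geometric claim that $q'(A/G)=n$ \emph{if and only if} $G$ acts freely on $A$ — which is essentially the equivalence $3\Leftrightarrow 4$ already asserted in Corollary~\ref{q'=n}, but I should give the argument rather than merely cite a corollary that depends on this one.

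First I would observe that $q'(A/G)=n$ forces $\pi$ to be \'etale. Suppose not, so the set $F\subset G$ of elements with a fixpoint is nonempty and generates the normal subgroup $F'\neq\{1\}$ of \S\ref{groupaction}. By Lemma~\ref{F^*vsF} we may pick $g\in F'$ of prime order $p$ with nonempty fixpoint locus $A^g$; since $g$ is a complex affine automorphism of finite order, $A^g$ is a nonempty finite union of subtori translates, and because $g$ acts nontrivially on the trivial bundle $\Omega^1_A$ (it has an eigenvalue $\neq 1$ on $H^0(A,\Omega^1_A)$), some holomorphic $1$-form on $A$ is not $g$-invariant. Now for any finite \'etale cover $(A/G)'\to A/G$, pulling back along $\pi$ gives, by the correspondence of finite \'etale covers recalled at the start of \S5.2, a compact complex torus $A'$ with a finite group $G'$ acting, $(A/G)'=A'/G'$, and $G'$ again contains an element with a fixpoint (the relevant stabilizer survives in the cover). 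Hence $H^0(A',\Omega^1_{A'})^{G'}\subsetneq H^0(A',\Omega^1_{A'})$, so $q((A/G)')=\dim H^0(A',\Omega^1_{A'})^{G'}<\dim A'=n$; taking the maximum over covers gives $q'(A/G)<n$.

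With that in hand the corollary follows: by Theorem~\ref{thq'}, $\kappa_1^*(A/G)=q'(A/G)-n$, so $\kappa_1^*(A/G)<0$ precisely when $q'(A/G)<n$, which by the previous paragraph happens precisely when $\pi$ is not \'etale, i.e.\ when some $g\neq 1$ in $G$ has a fixpoint. I expect the main obstacle to be the bookkeeping in the middle step — checking that in \emph{every} finite \'etale cover $(A/G)'=A'/G'$ one still has a nontrivial fixpoint (so that the $G'$-invariant $1$-forms really do drop in dimension uniformly), and hence that the \emph{maximum} $q'$ stays strictly below $n$ rather than just some individual $q$. This is where the splitting $G=E'\cup F'$ and Lemma~\ref{lnofix} are genuinely used: passing to the \'etale cover corresponding to $F'\setminus A$ kills exactly the fixpoint-free part, and one must verify that the residual action of $G/F'$, being fixpoint-free, contributes an honest \'etale quotient while the $F'$-part continues to obstruct full irregularity. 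Everything else is either the easy direction or a direct invocation of Theorem~\ref{thq'}.
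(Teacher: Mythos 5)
Your argument is correct and non-circular (Theorem \ref{thq'} is proved in the paper without ever invoking Corollary \ref{sing}), but it follows a genuinely different route. The paper gets the corollary directly from the weak fragment of the main theorem: if $g\neq 1$ has a fixpoint, apply Lemma \ref{primary} (ultimately Theorem \ref{Cyclic}) to the cyclic group $[g]$ to get $\kappa_1^*(A/[g])=-\dim C_g<0$, and then use that $\kappa_1^*$ can only decrease under the finite dominant map $A/[g]\to A/G$; the converse is the same easy \'etale observation you make. You instead invoke the full equality $\kappa_1^*(A/G)=q'(A/G)-n$ of Theorem \ref{thq'} and supply the missing group-theoretic input, namely that a nontrivial fixpoint forces $q'(A/G)<n$: every connected finite \'etale cover of $A/G$ is again of the form $A'/G'$ with $A'$ a torus, the fixpoint survives (your ``stabilizer survives'' step, which is correct but only gestured at --- it needs the fibre-product argument that a component of $A\times_{A/G}Y$ is a torus on which the stabilizing subgroup of $G$ acts, with $g$ fixing a point above $a$), and a nontrivial finite-order automorphism with a fixpoint has a nontrivial linear part, so $\dim H^0(\Omega^1_{A'})^{G'}\leq n-1$ uniformly, whence $q'<n$. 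What the paper's route buys is logical economy: Corollary \ref{sing} is deliberately derived from Theorem \ref{Cyclic} alone because it is then fed back into the proof of the implication $1\Rightarrow 4$ of Corollary \ref{q'=n}, whereas your route consumes the full main theorem; what your route buys is an explicit, elementary proof that $q'(A/G)=n$ iff $G$ acts freely, independent of the symmetric-differential computations. Two small slips to fix: in the easy direction, for a free quotient $A/G$ neither $K_{A/G}$ nor $\Omega^1_{A/G}$ need be trivial (they are only torsion, resp.\ flat --- think of bielliptic surfaces); the correct justification is the one you also give, invariance of $\kappa_1^*$ under finite \'etale covers. Also, the appeal to Lemma \ref{F^*vsF} and to the splitting $G=E'\cup F'$ is unnecessary for your argument: any $g\neq 1$ with a fixpoint already has an eigenvalue $\neq 1$ on $H^0(A,\Omega^1_A)$, no reduction to primary order is needed.
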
 

Corollary \ref{sing} follows immediately from Theorem \ref{Cyclic}, essential step in the proof of Theorem \ref{thq'}.

\begin{corollary}\label{q'=0} For $A/G$ as above, the following are equivalent:

1. $\kappa_1^*(A/G)=-n$

2. $\nu_1^*(A/G)=-n$

3. $q'(A/G)=0$

4. $\pi_1(A/G)$ is finite.

\end{corollary}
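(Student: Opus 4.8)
The plan is to deduce this corollary directly from the main Theorem \ref{thq'} together with Proposition \ref{pq'} and the basic properties recorded in Remark \ref{rem'q'}. The equivalences $1 \Longleftrightarrow 2 \Longleftrightarrow 3$ are essentially immediate: by Theorem \ref{thq'} we have $\kappa_1^*(A/G) = \nu_1^*(A/G) = q'(A/G) - n$, so the value $-n$ is attained for one of the three invariants exactly when it is attained for all three, which happens precisely when $q'(A/G) = 0$. (One could even avoid the full strength of Theorem \ref{thq'} here: $\kappa_1^* \leq \nu_1^* \leq 0$ always holds by Remark \ref{rem'q'}(2), and $\kappa_1^*(A/G) \geq q'(A/G) - n$ by Remark \ref{rem'q'}(4); but it is cleanest to just invoke Theorem \ref{thq'}.)

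The substantive point is the equivalence with Claim 4, that $\pi_1(A/G)$ is finite. First I would show $4 \Longrightarrow 3$: if $\pi_1(A/G)$ is finite, then every finite \'etale cover $(A/G)'$ of $A/G$ has finite fundamental group, hence $H^1((A/G)', \mathcal{O}) = 0$ by Hodge theory (the abelianization of a finite group is finite, so $b_1 = 0$), so $q((A/G)') = 0$ for all such covers, giving $q'(A/G) = 0$. For the converse $3 \Longrightarrow 4$, suppose $q'(A/G) = 0$, i.e. the abelianization of every finite-index subgroup of $\pi_1(A/G) \cong \pi_1(X)$ is finite (using Proposition \ref{pq'}(2) to identify $\pi_1$ of $A/G$ with that of its smooth model). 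The key observation is that $\pi_1(A/G)$ is virtually a quotient of $\pi_1(A) = \Lambda \cong \mathbb{Z}^{2n}$: more precisely, the translation subgroup realizes an exact sequence, and $\pi_1(A/G)$ is almost-crystallographic, in particular virtually abelian (indeed it contains a finite-index free abelian subgroup $\Lambda' \leq \Lambda$). If $\pi_1(A/G)$ were infinite, this finite-index abelian subgroup $\Lambda'$ would be infinite abelian, so its abelianization $\Lambda'$ itself would have positive rank, contradicting $q'(A/G) = 0$. Hence $\pi_1(A/G)$ is finite.

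The main obstacle is making the assertion ``$\pi_1(A/G)$ is virtually abelian (in fact virtually $\Lambda$)'' precise and rigorous. The cleanest route is Bieberbach-type reasoning: the quotient $A/G$ is covered, after pulling back along a finite-index translation-invariant subgroup, by a torus isogenous to $A$; concretely, let $G_0 \leq G$ be the subgroup acting by translations and note $[G : G_0] < \infty$ is not quite what one wants since $G$ need not contain translations — instead use that the linear parts of elements of $G$ generate a finite group $\bar{G} \leq GL(n,\mathbb{C})$, and the kernel of $G \to \bar{G}$ consists of translations by torsion points, hence is finite. One then checks $\pi_1(A/G)$ fits in $1 \to \Lambda \to \pi_1(A/G) \to \bar{G} \to 1$ up to finite-index subtleties, so $\pi_1(A/G)$ contains $\Lambda$ as a finite-index subgroup and the argument above applies. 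I would also double-check the edge case where $G$ acts without fixed points (so $A/G$ is already smooth with $\kappa = 0$): there $\pi_1(A/G)$ is genuinely a crystallographic group, infinite iff $n > 0$, consistent with $q' = 0$ forcing $n = 0$ — but in fact when $A/G$ is an \'etale quotient one has $q'(A/G) = n$ by Corollary \ref{q'=n}, so $q'(A/G) = 0$ then forces $n = 0$ and the statement is trivial; the interesting content is genuinely in the case with fixed points, already handled.
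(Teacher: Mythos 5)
Your handling of the equivalences $1\Leftrightarrow2\Leftrightarrow3$ (via Theorem \ref{thq'}) and of $4\Rightarrow3$ matches the paper. The gap is in $3\Rightarrow4$. The structural claim you lean on --- that $\pi_1(A/G)$ fits into $1\to\Lambda\to\pi_1(A/G)\to\bar G\to1$ ``up to finite-index subtleties'', hence contains $\Lambda\cong\mathbb{Z}^{2n}$ as a finite-index subgroup --- is false exactly in the relevant case, namely when $G$ has fixed points. The quotient map $\pi\colon A\to A/G$ is then ramified, so $\pi_1(A/G)$ (which is $\pi_1$ of any smooth model, by Koll\'ar) is only a quotient of the crystallographic extension of $\bar G$ by $\Lambda$: the isotropy loops are killed, and with them possibly a large part of the image of $\Lambda$. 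For a singular Kummer surface $A/\{\pm1\}$ one has $\pi_1(A/G)=\{1\}$ and $q'=0$ although $n=2$; more generally Lemma \ref{tpi_1} gives finite $\pi_1(A/G)$ whenever $G$ has an isolated fixed point. If your claim were true, every $A/G$ with $n\geq1$ would have $q'(A/G)=n$, the implication $3\Rightarrow4$ you are proving would be vacuous for $n>0$, and Corollary \ref{q'=n} would force $\pi$ to be \'etale, contradicting these examples. Your Bieberbach/almost-crystallographic reasoning is valid only in the fixed-point-free case, which, as you yourself note, is not where the content lies.

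What is true, and what the paper uses (Lemma \ref{aab}, resting on \cite{Ca91}, Proposition 1.3), is that the image $\pi_*(\pi_1(A))\subset\pi_1(A/G)$ --- a quotient of $\Lambda$, not $\Lambda$ itself --- is abelian, normal, of finite index dividing the order of $G$, and of $\mathbb{Z}$-rank equal to $2q'(A/G)$; with $q'=0$ this finite-index subgroup is finite, hence so is $\pi_1(A/G)$. Alternatively, your argument can be repaired without the precise rank statement: $\pi_1(A/G)$ is a quotient of the genuinely crystallographic orbifold group (extension of $G$ by $\Lambda$), hence finitely generated and virtually abelian; if it were infinite, a finite-index abelian subgroup would be infinite and finitely generated, so the corresponding finite \'etale cover would have $b_1>0$, i.e.\ $q'(A/G)>0$. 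Either way, the missing point in your write-up is that the finite-index abelian subgroup is only an image of $\Lambda$, and one must still argue that its rank is positive when $\pi_1(A/G)$ is infinite (or identify it with $2q'$, as Lemma \ref{aab} does).
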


\begin{proof} The equivalences between 1-2-3 are immediate from Theorem \ref{thq'}. The implication $4\Longrightarrow 3$ is obvious, the reverse implication follows from the next Lemma \ref{aab}, which applies with $s=n$.\end{proof}

\begin{lemma}\label{aab} Let $\pi_*:\pi_1(A)\to \pi_1(A/G)$ be the natural map. Then $2q'(A/G)=2(n-s)$ is the rank over $\Bbb Z$ of its image, which
 is Abelian, normal in $\pi_1(A/G)$, and of finite index dividing the order of $G$.\end{lemma}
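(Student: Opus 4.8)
The plan is to replace $\pi_1(A/G)$ by an explicit affine‑group model. Write $A=\mathbb{C}^n/\Lambda$, and let $\Gamma\subset\mathrm{Aff}(\mathbb{C}^n)$ be generated by $\Lambda$ and a full set of lifts of the elements of $G$, so that $A/G=\mathbb{C}^n/\Gamma$ and $1\to\Lambda\to\Gamma\to G\to 1$ is exact with $\Lambda$ normal of index $|G|$. Let $N\vartriangleleft\Gamma$ be the subgroup generated by those elements of $\Gamma$ that fix a point of $\mathbb{C}^n$; a conjugate of such an element again fixes a point, so $N$ is in fact generated, as a subgroup, by such elements, and by \cite{K} together with Armstrong's description of the fundamental group of an orbit space one has $\pi_1(A/G)=\Gamma/N$, with universal cover $\mathbb{C}^n/N$. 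Under this identification $\pi_*\colon\pi_1(A)\to\pi_1(A/G)$ is the composite $\Lambda\hookrightarrow\Gamma\twoheadrightarrow\Gamma/N$, so $\mathrm{Im}(\pi_*)=\overline{\Lambda}:=\Lambda N/N\cong\Lambda/(\Lambda\cap N)$. This is a quotient of $\Lambda\cong\mathbb{Z}^{2n}$, hence Abelian; it is normal in $\Gamma/N$ because $\Lambda$ is normal in $\Gamma$; and $(\Gamma/N)/\overline{\Lambda}\cong G/F'$, where $F'$ is the image of $N$ in $G$, which is precisely the subgroup of $G$ generated by the elements having a fixed point on $A$ (the group $F'$ of \S\ref{groupaction}). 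In particular $\overline{\Lambda}$ has finite index in $\pi_1(A/G)$ dividing $|G|$, which yields every assertion of the lemma except the value of the rank.

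To compute the rank I would look at the finite \'etale cover $V\to A/G$ attached to the subgroup $\overline{\Lambda}\leq\pi_1(A/G)$, namely $V=\mathbb{C}^n/(\Lambda N)$, which is Galois over $A/G$ with group $G/F'$. Since $\overline{\Lambda}$ is Abelian, $b_1(V)=\mathrm{rank}(\overline{\Lambda})$; and since $V$ is a compact K\"ahler variety with quotient singularities, passing to a smooth model and using Hodge theory there gives $b_1(V)=2q(V)$. Finally, $q(V)$ equals the dimension of the space of holomorphic $1$‑forms on $\mathbb{C}^n$ invariant under $\Lambda N$: invariance under the translations $\Lambda$ forces constant forms, i.e. the space $(\mathbb{C}^n)^\ast$, and on it the residual group $\Lambda N/\Lambda=F'$ acts through its linear part, so $q(V)=\dim\big((\mathbb{C}^n)^\ast\big)^{F'}$. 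Thus $\mathrm{rank}(\mathrm{Im}\,\pi_*)=2q(V)$, and in particular $q'(A/G)\geq q(V)=\tfrac12\,\mathrm{rank}(\mathrm{Im}\,\pi_*)$.

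It remains to prove the reverse inequality $q'(A/G)\leq q(V)$. Let $Y\to A/G$ be an arbitrary connected finite \'etale cover; enlarging it, I may assume it Galois with group $K$. Let $A'$ be a connected component of $A\times_{A/G}Y$ (equivalently, the normalization of $A$ in the function field of $Y$). Then $A'\to A$ is finite \'etale, so $A'$ is a complex torus isogenous to $A$; the map $A'\to A/G$ is Galois with group $D\leq G\times K$ surjecting onto both factors; and $Y=A'/N_D$, where $N_D:=\ker(D\to K)=D\cap(G\times\{1\})$, which we regard as a subgroup of $G$ acting on $T_0A'\cong\mathbb{C}^n$ through the restriction of the linear $G$‑action. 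The decisive point is that, because $Y\to A/G$ is unramified, the finite maps $A'\to A/G$ and $A'\to Y$ have the same branch locus; equivalently, every element of $D$ fixing a point of $A'$ already lies in $N_D$. Lifting to $A'$ an arbitrary fixed point on $A$ of a given element of $G$ — possible since $A'\to A$ is a connected Galois \'etale cover, whose deck group acts transitively on fibres — one checks that the fixed‑point elements of $D$ surject onto the set $F$ of fixed‑point elements of $G$; hence $N_D\supseteq\langle F\rangle=F'$ as subgroups of $G$. Consequently $q(Y)=\dim\big((\mathbb{C}^n)^\ast\big)^{N_D}\leq\dim\big((\mathbb{C}^n)^\ast\big)^{F'}=q(V)$. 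Taking the supremum over $Y$ gives $q'(A/G)=q(V)=\tfrac12\,\mathrm{rank}(\mathrm{Im}\,\pi_*)$, i.e. $\mathrm{rank}(\mathrm{Im}\,\pi_*)=2q'(A/G)=2(n-s)$ with $s:=n-q'(A/G)$.

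The routine parts are the presentation $\pi_1(A/G)=\Gamma/N$, the identity $q(V)=\dim\big((\mathbb{C}^n)^\ast\big)^{F'}$, and $b_1=2q$ for smooth models of torus quotients. The real work is in the last paragraph: producing, for an arbitrary finite \'etale cover $Y\to A/G$, the intermediate torus $A'$ with $Y=A'/N_D$, and above all extracting from the \'etaleness of $Y\to A/G$ the inclusion $F'\subseteq N_D$. This step — matching the branch loci of $A'\to A/G$ and $A'\to Y$ and then transporting fixed points through the isogeny $A'\to A$ — is where care is needed, and it is essentially the group‑theoretic mechanism isolated in \S\ref{groupaction} (the splitting of $G$ along $F'$) applied at the level of covers.
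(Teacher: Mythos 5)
Your argument is correct, but it follows a genuinely different route from the paper. The paper's proof is short and ``soft'': it quotes \cite{Ca91}, Proposition 1.3 for the normality, abelianness and finite index of ${\rm Im}(\pi_*)$, passes to the \'etale cover $(A/G)'=A/G'$ on which $\pi_*$ becomes surjective, and then exploits the purely group-theoretic fact that all finite-index subgroups of a finitely generated abelian(-plus-torsion) group have the same free rank, together with $b_1=2q$ on smooth models, to conclude $q'(A/G)=n-s$ at once. You instead (i) present $\pi_1(A/G)$ as $\Gamma/N$ via Armstrong's theorem, which re-proves the structural claims without \cite{Ca91} and identifies ${\rm Im}(\pi_*)=\Lambda N/N$ explicitly, (ii) identify the corresponding cover as $V=A/F'$ -- note that the \'etaleness of $A/F'\to A/G$ you use is exactly Lemma \ref{lnofix} of \S\ref{groupaction}, so it is worth citing -- and (iii) bound $q(Y)$ for an arbitrary \'etale cover $Y$ by $\dim((\C^n)^*)^{F'}$ via the fibre-product torus $A'$ and the inclusion $F'\subseteq N_D$. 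Step (iii) is sound: the key point is simply that if $g\in G$ fixes $a\in A$ and $(a,y)\in A'$ lies over $a$, then $(g,1)$ fixes $(a,y)$, hence preserves the component $A'$, giving $F\subseteq N_D$; your side assertions that $A'\to A/G$ is Galois with group $D$ surjecting onto both factors are neither quite justified nor needed, and for the inequality $q(Y)\leq\dim((\C^n)^*)^{N_D}$ one does not even need $Y=A'/N_D$ exactly, only the dominant map $A'\to Y$. What each approach buys: yours is self-contained and pins down the cover realizing $q'$ (namely $A/F'$), tying the lemma to the fixed-point/free splitting of \S\ref{groupaction}; the paper's is much shorter, and indeed your entire last paragraph can be compressed to its mechanism -- a nonzero homomorphism from a group to $\Q$ remains nonzero on any finite-index subgroup, so in a virtually abelian group the rank of the abelianization of a finite-index subgroup never exceeds the rank of the abelian part, which gives $q(Y)\leq q(V)$ for every cover $Y$ without any fibre-product geometry.
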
 

\begin{proof} The last two claims are a direct consequence of \cite{Ca91}, Proposition 1.3. We replace $A/G$ by its Galois \'etale cover $(A/G)'=A/G'$ such that $\pi_1(A/G')$ is $\pi_*(\pi_1(A))$. The existence of $G'\subset G$ achieving this equality follows from the fact that the map $\pi:A\to A/G$ lifts to $\pi':A\to (A/G)'$. We thus assume that $\pi_*$ is surjective, and that $\pi_1(A/G)=\pi_1(A)/K$, where $K$ is a free abelian subgroup of $\pi_1(A)$ of $\Bbb Z$-rank $r$. Thus $\pi_1(A/G)$ is the direct sum of a torsion group and of a free abelian group of even $\Bbb Z$-rank $2n-r$, $r=2s$. Any finite index subgroup of $\pi_1(A/G)$ has thus the same structure, with the same rank $r=2s$, and so $q'(A/G)=n-s$. \end{proof}

\subsection{Proof of Theorem \ref{thq'}.} We now start the proof of Theorem \ref{thq'}. We shall assume that $G$ does not contain non-trivial translations. This is no restriction, and does not alter neither the hypothesis, nor the conclusions below.

We make a first reduction.

\begin{lemma}\label{F^*} We may (and shall) assume that $G$ is generated as a group by $F^*$, the set of its elements which have fixpoints and primary orders.
\end{lemma}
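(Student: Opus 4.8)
The plan is to exploit the dichotomy established in \S\ref{groupaction}: by the first Lemma there, either $G=F'$ or $G=E'$, where $F'$ (resp. $E'$) is the normal subgroup of $G$ generated by the elements having (resp. not having) fixpoints on $A$.

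In the first case $G=F'$, Lemma \ref{F^*vsF} says directly that $F'$, hence $G$, is generated by $F^*$, so there is nothing to prove. So I would concentrate on the case $G=E'$. Here I would pass to the intermediate quotient $A_1:=A/F'$, which is again a quotient of a compact complex torus by a finite group of affine automorphisms with no nontrivial translations, and on which the quotient group $G/F'$ acts with $A_1/(G/F')=A/G$. By Lemma \ref{lnofix} this action is without fixpoints, i.e.\ free; since a free action of a finite group on a reduced complex space yields an unramified quotient map in the analytic category, $A_1\to A/G$ is finite étale. As $\nu_1^*$, $\kappa_1^*$ and $q'$ are invariant under finite étale covers (Remark \ref{rem'q'}(1): a resolution of $A/G$ pulls back along the étale cover to a resolution of $A_1$ with an induced finite étale map), and $\dim A_1=\dim A/G=n$, proving Theorem \ref{thq'} for $A_1$ yields it for $A/G$. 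Finally, the pair $(A,F')$ is exactly of the form asserted by the Lemma: every element of $G$ with a fixpoint on $A$ lies in $F\subseteq F'$, so the elements of $F'$ having a fixpoint and primary order are precisely $F^*$, and $F^*$ generates $F'$ by Lemma \ref{F^*vsF}. This reduces all of Theorem \ref{thq'} to the case where $G=\langle F^*\rangle$ (with the degenerate subcase $F'=\{1\}$, $A_1=A$, being the trivial torus case).

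The one delicate point is the passage through the singular space $A_1=A/F'$: one must confirm that the free $G/F'$-action on $A_1$ really produces a finite étale cover $A_1\to A/G$ and that a resolution of $A/G$ pulls back to a resolution of $A_1$ along it, so that the three invariants genuinely transfer. This is elementary — it rests only on the local structure of free finite quotients of complex spaces — and everything else is a formal consequence of the group-theoretic lemmas of \S\ref{groupaction}.
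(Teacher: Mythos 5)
Your proposal is correct and follows essentially the same route as the paper: the dichotomy $G=E'$ or $G=F'$, Lemma \ref{lnofix} to see that $G/F'$ acts without fixpoints so that $A/F'\to A/G$ is finite \'etale and the invariants $\nu_1^*,\kappa_1^*,q'$ transfer, and Lemma \ref{F^*vsF} to conclude that $F'$ is generated by $F^*$. The only difference is that you spell out the \'etale-transfer step a bit more carefully than the paper, which simply asserts it.
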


\begin{proof}We use the notations of \S\ref{groupaction}. Let $E\subset G$ be the set of $g\in G$ acting without fixpoint on $A$, and  let $F$ be its complement in $G$. Let $E',F'$ be the (normal) subgroups of $G$ generated by $E,F$ respectively. Recall that either $G=E'$, or $G=F'$. In the first case, $G/F'=E'/F'$ acts without fixpoints on $A/F'$, in other words, the natural quotient map $A/F'\to A/G$ is \'etale, and the invariants $\kappa_1,\nu_1,q'$ coincide for $A/F'$ and $A/G$. Moreover, (see Lemma \ref{F^*vsF}), $F'$ is generated by $F^*$, the set of its elements of primary orders (which have fixpoints, too).\end{proof}

Theorem \ref{thq'} will be proved by combining the following two lemmas \ref{primary}, \ref{G-neral}, proved in the next two subsections.

Let $A^0=Aut^0(A)$ be the group of translations of $A$. Any choice of $a\in A$ defines an isomorphism between the group $A^0$ and $A$, with any given $a$ as zero element of the addition on $A$.

The first Lemma \ref{primary} proves Theorem \ref{thq'} when $G$ is cyclic of primary order, with fixed points.

\begin{lemma}\label{primary} Let $G$ be cyclic, generated by $g$, which has fixpoints on $A$. Let $C_g:=Im(g-1_A)<A^0$: this is a connected Lie subgroup. Let $a_g:A\to B_g:=A/C_g$ be the quotient map.

1. $C_g$ is preserved by the action of $g$, and $g$ acts trivially on $B_g$. 

2. $a_g:\widetilde{B_g}\to B_g$ is an isogeny, if $\widetilde{B_g}\subset A$ is any component of the fixpoint set of $g$. 

3. The addition map: $C_g\times \widetilde{B_g}\to A$ is a $g$-equivariant isogeny.

4. It induces a finite \'etale morphism $q_g: (C_g/G)\times \widetilde{B_g}\to A/G$. 

5. If $g$ is of primary order, then $\nu_1(C_g/G)=-\infty$ and $q'(C_g/G)=0$. 

6. $\nu_1^*(A/G)=\kappa_1^*(A/G)=q'(A/G)-n=-dim(C_g)$.

7.  $\pi_1(C_g/G)$ is finite, of exponent at most the order of $g$, if primary. 

\end{lemma}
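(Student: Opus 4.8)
The plan is to treat each item in sequence, exploiting the simplicity of the situation: $G=\langle g\rangle$ cyclic with a fixed point, which after a translation we may assume is the origin $0\in A$, so that $g$ is a genuine group automorphism of $A$. First I would establish the algebraic backbone (items 1--4), which is essentially linear algebra on $V:=\mathrm{Lie}(A)$. Writing $N:=g-1_A$ as an endomorphism of $A$ (or of $\Lambda$), the image $C_g:=\mathrm{Im}(N)$ is a complex subtorus (the image of a torus homomorphism is a subtorus), and it is $g$-stable because $g$ commutes with $N$; moreover $g$ acts as the identity on $B_g:=A/C_g$ since $N$ lands in $C_g$ by definition — this is item 1. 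For item 2, the component $\widetilde{B_g}$ of $A^g$ through $0$ is the subtorus with Lie algebra $\ker(N\otimes\bC)$; the composite $\widetilde{B_g}\hookrightarrow A\to B_g$ has finite kernel because $\ker N$ and $\mathrm{Im}\,N$ meet in a finite set inside $\Lambda$ (over $\bQ$ they are complementary as $g$ acts with finite order, hence semisimply over $\bQ$), so it is an isogeny. Item 3 follows: the addition map $C_g\times\widetilde{B_g}\to A$ is a surjective homomorphism of tori of the same dimension with finite kernel, hence an isogeny, and it is $g$-equivariant since $g$ preserves both factors. Item 4 is then formal: quotienting the source by $G$ (acting trivially on $\widetilde{B_g}$) and the target by $G$, the isogeny descends to a finite surjective morphism $q_g:(C_g/G)\times\widetilde{B_g}\to A/G$ which is étale because the original isogeny is étale and $G$ acts freely enough — more carefully, one checks that $q_g$ is unramified by comparing it with the étale cover $C_g\times\widetilde{B_g}\to A$ and noting the $G$-action is compatible.

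Item 5 is the arithmetic heart, and I expect it to be the main obstacle. We must show that when $g$ has primary order $p^k$, the quotient $C_g/G$ has $\nu_1=-\infty$ and $q'=0$. The point is that $g$ acts on $C_g$ \emph{without nonzero fixed vectors} on $\mathrm{Lie}(C_g)=\mathrm{Im}(N\otimes\bC)$ — indeed $\ker(N)$ and $\mathrm{Im}(N)$ are $g$-complementary, so $g|_{C_g}$ has $1$ as an eigenvalue only with multiplicity zero on the tangent space, i.e. $g-1_A$ restricted to $C_g$ is again an isogeny. For $q'=0$: any finite étale cover of $C_g/G$ corresponds to a subtorus-type construction, and since $(g-1)$ is invertible up to isogeny on $C_g$, the coinvariants of $\langle g\rangle$ on $H_1$ of any such cover are finite; concretely $\pi_1(C_g/G)$ has finite abelianization on every finite-index subgroup, so $q'(C_g/G)=0$. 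This also feeds item 7: $\pi_1(C_g/G)$ is an extension of $\bZ/p^k$ by $\Lambda_{C_g}$, and the conjugation action of $g$ on $\Lambda_{C_g}$ has $(g-1)$ invertible over $\bQ$, which by a standard argument (the semidirect product $\Lambda\rtimes\langle g\rangle$ with $g-1$ invertible over $\bQ$ is finite-by-torsion, and here one gets exactly a finite group since every element of $\Lambda$ becomes torsion) forces $\pi_1(C_g/G)$ finite of exponent dividing $p^k$. The statement $\nu_1(C_g/G)=-\infty$ I would deduce either directly from $q'=0$ together with Remark \ref{rem'q'}, or — to avoid circularity, since this lemma is the engine for Theorem \ref{thq'} — from the vanishing $H^0(C_g,\mathrm{Sym}^m\Omega^1_{C_g})^G=0$ for all $m>0$: the $G$-invariant symmetric differentials on $C_g$ are the $g$-invariants in $\mathrm{Sym}^m V^*$, and since $g$ has no eigenvalue $1$ on $V^*=\mathrm{Lie}(C_g)^*$ of finite order $p^k$, no monomial is $g$-invariant unless... here one must be slightly careful — a product of eigenvectors can be invariant even if no single factor is — so the correct statement is that $(\mathrm{Sym}^m V^*)^g=0$ requires checking that $1$ is not a product of $m$ (with multiplicity) eigenvalues of $g$ on $V^*$; this can fail, so in fact one needs to pull back from twists by ample divisors on a smooth model and use the full strength of the étale structure rather than a naive eigenvalue count. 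This subtlety — distinguishing $\kappa_1=-\infty$ from the genuinely stronger $\nu_1=-\infty$ — is where the real work lies, and I would handle it by the spectral-sequence/filtration method used for Proposition \ref{carab} combined with the finiteness of $\pi_1(C_g/G)$ from item 7, since a variety with finite $\pi_1$ and $\kappa_1=-\infty$ on a torus quotient has $\nu_1=-\infty$ by the argument reducing to a finite étale cover which is a torus quotient with no invariant $1$-eigenspace.

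Finally, item 6 assembles everything: by item 4 and the invariance of $\nu_1^*,\kappa_1^*,q'$ under finite étale covers, these invariants for $A/G$ equal those for $(C_g/G)\times\widetilde{B_g}$. Since $\widetilde{B_g}$ is a torus, $\kappa_1^*(\widetilde{B_g})=\nu_1^*(\widetilde{B_g})=0$ and $q'(\widetilde{B_g})=\dim\widetilde{B_g}=n-\dim C_g$; by the product behaviour of $\kappa_1^*,\nu_1^*$ (additivity) and of $q'$ (additivity of irregularities), and by item 5 which gives $\kappa_1^*(C_g/G)=\nu_1^*(C_g/G)=-\dim C_g$ and $q'(C_g/G)=0$, we obtain
\[
\nu_1^*(A/G)=\kappa_1^*(A/G)=-\dim C_g+0=-\dim C_g,\qquad q'(A/G)=0+(n-\dim C_g)=n-\dim C_g,
\]
so indeed $\nu_1^*(A/G)=\kappa_1^*(A/G)=q'(A/G)-n=-\dim C_g$, completing item 6 and the lemma.
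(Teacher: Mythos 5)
Your items 1--4 are fine and essentially the paper's argument (the paper gets the complementarity of $\ker(g-1)$ and $\mathrm{Im}(g-1)$ up to $N$-torsion from the Bezout identity $N=U(X)-(X-1)V(X)$ in $\ZZ[X]$, which is the same semisimplicity-over-$\Q$ fact you use), and your outline of items 6--7 is broadly in the right direction (the paper proves item 7 by noting that $\pi$ is totally ramified at the isolated fixed point, so $\pi_*$ is surjective, and that $1+g_*+\dots+g_*^{N-1}=0$ on $\pi_1(A)$, whence $N\cdot\pi_*=0$; your ``extension of $\ZZ/p^k$ by $\Lambda$'' is really only a group surjecting onto $\pi_1(C_g/G)$, and the exponent claim needs the surjectivity of $\pi_*$, but these are repairable).

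The genuine gap is item 5, which you yourself flag as the heart, and for which you do not give a proof of either $\kappa_1(C_g/G)=-\infty$ or $\nu_1(C_g/G)=-\infty$. You correctly observe that the naive eigenvalue count fails: $(\Sym^m V^*)^g$ is in general nonzero (e.g.\ $dy_1^{\otimes N}$ is $g$-invariant), so $G$-invariance on $C_g$ alone proves nothing, not even $\kappa_1=-\infty$. The actual mechanism, and the paper's main technical contribution (Theorem \ref{Cyclic}), is that such invariant symmetric differentials fail to extend to holomorphic sections of $\Sym^m\Omega^1$ on a \emph{resolution} of $C_g/G$: one uses the primary order $p^r$ to arrange (after replacing $g$ by a power) that one weight equals $1$, passes to the corresponding chart of the weighted blow-up of the singularity $\frac{1}{N}(1,a_2,\dots,a_n)$, and compares the coefficient of $dy_1^{\otimes m}$ upstairs and downstairs to force all constant coefficients to vanish --- and, crucially, the same computation is carried out after twisting by an arbitrary effective divisor $D$, with the conclusion $h^0(\Sym^m\Omega^1(D))=0$ for $m\geq m(D)$, which is exactly what yields $\nu_1=-\infty$ and not merely $\kappa_1=-\infty$. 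Your proposed substitute --- a spectral-sequence/filtration argument plus the assertion that ``a torus quotient with finite $\pi_1$ and $\kappa_1=-\infty$ has $\nu_1=-\infty$'' --- is not an argument: that assertion is essentially the statement of Theorem \ref{thq'} in the case $q'=0$, i.e.\ the very thing this lemma is the engine for, so invoking it is circular; and no analogue of the Koszul/spectral-sequence method of \S3 is available here, since $C_g/G$ is not embedded in an abelian variety with controlled normal bundle. Without the resolution computation (or some genuine replacement for it), item 5, and hence item 6, is unproved.
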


The first 4 claims are elementary, essentially consequences of Bezout Theorem (see Lemma \ref{g-1}), the main claim is Claim 5, proved in Theorem \ref{Cyclic}. Claim 7 is established in Lemma \ref{tpi_1}.

\begin{corollary} If $A$ is simple (i.e: has no non-trivial subtorus), then, either $G$ acts fixpoints freely and $A/G$ is an \'etale quotient of $A$, or $\nu_1^*(A/G)=-n, q'(A/G)=0$, and $\pi_1(A/G)$ is finite.
\end{corollary}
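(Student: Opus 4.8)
The plan is to reduce to the cyclic, primary-order case already settled in Lemma \ref{primary}. As in the proof of Theorem \ref{thq'}, we may assume that $G$ contains no non-trivial translation, so that every $g\in G$ is a group automorphism of $A$ fixing the origin and $C_g:=\mathrm{Im}(g-1_A)$ is a closed connected subgroup of $A$, i.e.\ a subtorus. If every non-trivial element of $G$ acts without fixpoint on $A$, then $\pi\colon A\to A/G$ is \'etale and we are in the first alternative. So suppose some $g\in G\setminus\{1\}$ has a fixpoint. Inside the cyclic group $\langle g\rangle$, each primary component is a power of $g$, hence fixes every point fixed by $g$; by Lemma \ref{F^*vsF} we may therefore pick a non-trivial $g'\in G$ of primary order with $A^{g'}\neq\emptyset$, and set $H:=\langle g'\rangle$.

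Next I would exploit simplicity of $A$ to force $C_{g'}=A$: the subtorus $C_{g'}$ is non-zero because $g'\neq 1_A$, hence equals $A$. Thus $\dim C_{g'}=n$, and Lemma \ref{primary} applied to the cyclic group $H$ (its Claims 5, 6 and 7, the substantive one being the fixpoint-free vanishing of Theorem \ref{Cyclic}) gives $\nu_1(A/H)=-\infty$, equivalently $\nu_1^*(A/H)=-n$, together with $q'(A/H)=n-\dim C_{g'}=0$ and $\pi_1(A/H)$ finite.

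Finally I would transport these conclusions down the finite surjection $p\colon A/H\to A/G$ induced by $H\subset G$. Since $\nu_1$ does not decrease under dominant maps, $\nu_1(A/G)\le\nu_1(A/H)=-\infty$, so $\nu_1^*(A/G)=-n$; consequently $-n\le\kappa_1^*(A/G)\le\nu_1^*(A/G)=-n$, whence $\kappa_1^*(A/G)=-n$. Plugging this into the inequality $\kappa_1^*(A/G)\ge q'(A/G)-n$ of Remark \ref{rem'q'}(4) yields $q'(A/G)=0$. And $\pi_1(A/G)$ is finite by Lemma \ref{aab}: the image of $\pi_1(A)$ in $\pi_1(A/G)$ has $\mathbb Z$-rank $2q'(A/G)=0$ and finite index in $\pi_1(A/G)$, so $\pi_1(A/G)$ is finite.

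All the real difficulty is imported from Lemma \ref{primary}/Theorem \ref{Cyclic}; the corollary itself only uses simplicity (to get $C_{g'}=A$), the elementary passage from $g$ to a primary component with a fixpoint, and the monotonicity of $\nu_1$, $\kappa_1$ and $q'$ under dominant maps and \'etale covers recorded in Remark \ref{rem'q'}. The only step deserving a word of care is that passage to $g'$, which is exactly Lemma \ref{F^*vsF}.
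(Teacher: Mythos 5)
Your proof is correct and follows exactly the route the paper intends (it states the corollary without proof, as an immediate consequence of Lemma \ref{primary}): pick a non-trivial primary-order element with a fixpoint, use simplicity to get $C_{g'}=A$, apply Lemma \ref{primary} to $\langle g'\rangle$, and descend to $A/G$ via monotonicity of $\nu_1,\kappa_1$ under the dominant map $A/H\to A/G$, Remark \ref{rem'q'}(4) and Lemma \ref{aab}. The only blemish is your parenthetical that, absent non-trivial translations, every $g\in G$ fixes the origin and $\mathrm{Im}(g-1_A)$ is a subtorus --- this is false for elements without fixpoints (where it is only a translate of a subtorus) --- but it is harmless since you only use it for $g'$, which has a fixpoint.
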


Lemma \ref{G-neral} then permits to deal with the general case.

\begin{lemma}\label{G-neral} Assume that $G$ is generated by $F^*$, as in Lemma \ref{F^*}. Let $C_G<A^0$ be the connected Lie subgroup generated by the $C_g's,g\in F^*$, and let $a_G:A\to B_G:=A/C_G$ be the quotient map. 

1. $C_G$ is preserved by the action of $G$, and $G$ acts trivially on $B_G$. 

2. $a_G:A\to A/G$ induces a fibration $q_G:A/G\to B_G$ which is analytically locally trivial with fibre $C_G/G$.

3. $\nu_1^*(A/G)=q'(A/G)-n=-dim(C_G)$, and:

4. $dim(B_G)=q(A/G)=q'(A/G)=dim(B_G)$.

5. $q_G:A/G\to B_G$ is the Albanese map of $A/G$.

6. If $C_G/G$ is not uniruled, there exists a finite \'etale cover $\beta: B'_G\to B_G$ such that if $(A/G)':=(A/G)\times _{B_G}B'_G$, then $(A/G)'=(C_G/G)\times B'_G$.
\end{lemma}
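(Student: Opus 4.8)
The plan is to bootstrap from the cyclic primary case (Lemma \ref{primary}), which handles each individual $g \in F^*$, to the group $G$ generated by all of $F^*$. First I would establish Claims 1 and 2, the geometric set-up. Since $C_G$ is generated by the subgroups $C_g = \mathrm{Im}(g - 1_A)$ for $g \in F^*$, and each $C_g$ is $g$-invariant, I would check that for any $h \in G$ and $g \in F^*$ one has $h\, C_g\, h^{-1} = C_{hgh^{-1}}$, which is again of the form $C_{g'}$ with $g' \in F^*$ (conjugation preserves order and fixpoint-freeness, cf.\ the remarks in \S\ref{groupaction}); hence $C_G$ is normal and $G$-invariant. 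On the quotient $B_G = A/C_G$ each generator $g$ acts trivially by Lemma \ref{primary}(1) applied to $C_g \subset C_G$, so $G$ acts trivially on $B_G$, giving Claim 1. Claim 2 then follows because $a_G : A \to B_G$ is $G$-equivariant with $G$ acting trivially on the base, so it descends to $q_G : A/G \to B_G$; local triviality with fibre $C_G/G$ comes from the fact that $a_G$ is analytically locally trivial with fibre $C_G$ (a fibration of complex tori) and the $G$-action is fibrewise.

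Next I would prove Claims 3--5 together. For Claim 3, the upper bound $\nu_1^*(A/G) \le q'(A/G) - n$ is Remark \ref{rem'q'}(3) (valid for all varieties in this class). For the lower bound and for $\dim(C_G)$, the idea is to filter: over a suitable finite \'etale cover one decomposes, using the addition isogeny, $A$ up to isogeny as a product of $C_G$ with a complementary subtorus mapping isogenously onto $B_G$; equivariance of this splitting (after shrinking $G$ appropriately as in Lemma \ref{F^*}) lets one write a finite \'etale cover of $A/G$ dominated by $(C_G/G') \times (\text{torus})$, and then invoke Lemma \ref{primary}(5)--(6) generator by generator — or rather the subgroup version — to conclude $\nu_1^*(C_G/G) = -\dim(C_G)$ and $q'(C_G/G) = 0$. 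Feeding this into the product behaviour of $\nu_1^*, \kappa_1^*$ and of $q'$ across $q_G$, together with Lemma \ref{lq'} (whose hypotheses — klt, $q=0$ on the fibre $C_G/G$ — are exactly met), yields $\nu_1^*(A/G) = -\dim(C_G) = q'(A/G) - n$ and $q(A/G) = q'(A/G) = \dim(B_G)$, which is Claim 4; Claim 5 is then immediate since $q_G$ is a fibration onto a complex torus of dimension $q(A/G)$ with fibres of irregularity zero, hence factors through, and equals, the Albanese map by its universal property (the argument in Lemma \ref{lq'}(1)).

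Finally, Claim 6: if $C_G/G$ is not uniruled — so $\mathrm{Aut}^0(C_G/G)$ is trivial and, more to the point, the monodromy of the locally trivial fibration $q_G$ acts through a finite group on a fibre with no positive-dimensional automorphisms — I would argue that after a finite \'etale base change $\beta : B_G' \to B_G$ killing this finite monodromy, the pulled-back fibration becomes a product $(C_G/G) \times B_G'$. Concretely, $q_G$ is classified by an element of $H^1(B_G, \underline{\mathrm{Aut}}(C_G/G))$ with values in a discrete (indeed finite, by Lemma \ref{primary}(7) the fibre has finite $\pi_1$, and it is not uniruled) group, so it is trivialised on a finite \'etale cover.

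\medskip

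The main obstacle I anticipate is Claim 3, specifically producing the $G$-equivariant ``complement'' to $C_G$ inside $A$ (up to isogeny) and controlling how $G$ acts on it: the subtori $C_g$ need not be in direct sum, the naive complement is only defined up to isogeny and need not be $G$-stable, and one must pass to a finite \'etale cover compatibly with the reduction already made in Lemma \ref{F^*}. Getting the bookkeeping right so that Lemma \ref{primary} can genuinely be applied to each primary generator over this cover — rather than just to a single cyclic group — is where the real work lies; the topological statements (Claims 4, 5, 7-type consequences) are then formal.
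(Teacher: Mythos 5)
Your Claims 1--2 and the formal deduction of Claims 4--5 from Claim 3 match the paper. But your route to Claim 3 has a genuine gap, and you have (correctly) located it yourself: the $G$-equivariant splitting of $A$ into $C_G$ times a complement, with a usable action on the fibre, is not available at the point where you need it. Two distinct problems. First, even up to isogeny, a complement $S$ to $C_G$ that is stable for the linear parts of $G$ will in general still be moved by the translation parts of the affine action, so $(C_G\times S)/G$ need not split as $(C_G/G')\times(\hbox{torus})$ after an \'etale cover \emph{unless} one already knows enough about the fibre (this is exactly why Claim 6 carries the hypothesis that $C_G/G$ is not uniruled, is proved via the discreteness of $\Aut(C_G/G)$ from \cite{Ca20}, and is a \emph{consequence} of Claims 3--5 rather than an ingredient). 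Second, and more seriously, even granting the reduction to the fibre, the statement $\nu_1^*(C_G/G)=-\dim(C_G)$, $q'(C_G/G)=0$ is precisely the case $B_G=\{pt\}$ of the lemma being proved: Lemma \ref{primary}(5)--(7) only covers a \emph{cyclic} group of primary order with an isolated fixpoint, and a single generator $g\in F^*$ restricted to $C_G$ has positive-dimensional fixpoint locus whenever $C_g\subsetneq C_G$, so Theorem \ref{Cyclic} does not apply to it on $C_G$; there is no ``subgroup version'' to invoke, and applying the primary lemma ``generator by generator'' only controls each $C_g/[g]$, not $C_G/G$. So your bootstrap is circular at the decisive step.

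The paper's proof avoids any equivariant splitting. For each $g\in F^*$ it uses the cyclic case (via Lemma \ref{B_g}, itself a consequence of Theorem \ref{Cyclic} together with the isogeny $C_g\times\widetilde{B_g}\to A$) to show that sections of $S^m(A/[g])\otimes L_g$ lie, up to finitely many bounded factors independent of $m$, in the symmetric powers pulled back from $B_g=A/C_g$. Since $H^0(S^m(A/G)\otimes L)$ injects into each $H^0(S^m(A/[g])\otimes L_g)$, one intersects these conditions over all $g\in F^*$ inside $H^0(A,\Sym^m(\Omega^1_A))$, and the elementary linear-algebra Lemma \ref{cap} ($\Sym^m(\cap_j F_j)=\cap_j\Sym^m(F_j)$) converts the intersection into $\Sym^m(\Omega^1_{B_G})$ because $C_G=\sum_g C_g$. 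This intersection argument is the missing combination step in your plan; without it (or a genuinely new argument for the case $C_G=A$), the proposal does not close.
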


\subsection{Proof of Lemma \ref{primary}.}\label{Sprimary}

We denote by $A^0:=Aut^0(A)$ the group of translations on $A$. The determination of the invariants of $(A/G)$ for $G$ cyclic, generated by $g$ of order $N$, rests on the following qualitative description of the action of $G$ on $A$: $(g-1):A\to A^0$ is nothing but the affine map sending $x\in A$ to the translation from $x$ to $g(x)$. Its image $Im(g-1)\subset A^0$ is thus a translate of a connected Lie subgroup $C_g$ of $A^0$. And $Im(g-1)=C_g$ if (and only if) it $Im(g-1)$ contains the neutral element of $A^0$, that is: if and only if $g$ has fixpoints on $A$. If $g$ has a fixpoint $a$, chosen as the neutral element of $A$, $Ker(g-1)$ is then a Lie subgroup of $(A,a)$, the elements of which are precisely the fixpoints of $g$ on $A$. The Lie subgroup $Ker(g-1)$ is thus well-defined, independently of the choice of the fixpoint $a$.

\begin{lemma}\label{g-1} Assume $g$ is of finite order $N>1$, and has a fixed point $a$, used as the zero of the addition on $A$.
 The fixed point set of $g$ is then $B_g:=Ker(g-1_A)$, and if $\widetilde{B_g}$ is its connected component containing the origin, it is a subtorus `complementary' to $C_g$, in the sense that $\widetilde{B_g} \cap C_g$ is a torsion subgroup of order dividing $N$, so that the addition map $s_g:C_g\times \widetilde{B_g}\to A$ is a $G$-equivariant isogeny, the action of $G$ on $C_g$, which is preserved by $G$, being induced by that on $A$, the action on $\widetilde{B_g}$ being trivial. \end{lemma}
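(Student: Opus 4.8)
The plan is to analyze the affine action of $g$ on $A$ carefully. First I would use the zero element $a$ (a fixpoint of $g$) to identify $A$ with its own group of translations $A^0$, so that $g$ becomes a \emph{linear} automorphism of $A$ (a $\ZZ$-linear automorphism of $\Lambda$ which is semisimple of order $N$ on $H_1(A,\Q)$, since $g^N=1$). With this linearization, $g-1_A$ is a genuine group homomorphism $A\to A$; its image is the connected Lie subgroup $C_g=\mathrm{Im}(g-1_A)$ (connected because $g-1$ applied to $\C^n$ is a linear surjection onto a subspace defined over $\Q$, pushed down modulo $\Lambda$), and $\widetilde{B_g}=\mathrm{Ker}(g-1_A)^0$ is the identity component of the fixpoint set. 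That $g$ preserves both $C_g$ and $\widetilde{B_g}$ is immediate from $g$ commuting with $g-1$; that $g$ acts trivially on $\widetilde{B_g}$ is the definition of the kernel; that $g$ acts trivially on $B_g = A/C_g$ follows because $g(x)-x\in C_g$ for all $x$.

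The heart of the argument is showing $\widetilde{B_g}$ and $C_g$ are ``complementary'' up to $N$-torsion. On the level of rational homology (or of the Lie algebras $\C^n$), write $V := H_1(A,\Q)$ and let $g$ act on $V$; since $g^N=1$ is semisimple, $V = V_1 \oplus V'$ where $V_1 = \mathrm{Ker}(g-1)$ is the fixed subspace and $V'=\mathrm{Im}(g-1)$ is the sum of the non-trivial eigenspaces, and these are complementary rational subspaces. Thus the Lie algebras of $\widetilde{B_g}$ and of $C_g$ span $\C^n$ and meet only in $0$, so the addition map $s_g:C_g\times\widetilde{B_g}\to A$ is surjective with finite kernel, i.e.\ an isogeny; it is $G$-equivariant by construction. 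The bound on $|\widetilde{B_g}\cap C_g|$ dividing $N$ I would get as follows: if $x\in\widetilde{B_g}\cap C_g$, write $x=(g-1)(y)$ for some $y\in A$; since $x$ is fixed by $g$, applying $1+g+\cdots+g^{N-1}$ to $x=(g-1)y$ gives $Nx = (g^N-1)y = 0$, so $Nx=0$. Hence the intersection is killed by $N$ (and contained in $A[N]$), which also gives the torsion bound stated. This is really an elaboration of Bezout / the fact that $\det(g-1)|_{V'}$ divides a power of $N$, and I expect no serious obstacle here beyond bookkeeping about lattices versus rational vector spaces.

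The main obstacle I anticipate is being careful that the isogeny $s_g$ is genuinely $G$-equivariant when $G$ is cyclic generated by $g$ and not just ``commutes with $g$'': since $G=\langle g\rangle$, equivariance for $g$ is equivariance for all of $G$, so this is actually automatic, but one must note that the induced $G$-action on $C_g$ is by the restriction of $g$ (a nontrivial action in general) while on $\widetilde{B_g}$ it is trivial, which is exactly why the quotient $A/G$ will be isogenous-over-a-finite-\'etale-map to $(C_g/G)\times\widetilde{B_g}$ rather than to a product with a $G$-quotient in the second factor. I would close by assembling these facts into the stated form: $\widetilde{B_g}\cap C_g$ is torsion of order dividing $N$; the addition map $s_g\colon C_g\times\widetilde{B_g}\to A$ is a $G$-equivariant isogeny; $G$ preserves $C_g$ and acts trivially on $\widetilde{B_g}$. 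The only subtlety worth flagging in the write-up is that the choice of the fixpoint $a$ as origin is needed to make $g$ linear, but the resulting subgroups $C_g$ and $\widetilde{B_g}$ (as subsets of $A$, once one translates back) and the conclusion do not depend on that choice — $\widetilde{B_g}$ is a translate of the identity component of the fixpoint locus and $C_g$ is intrinsically $\mathrm{Im}(g-1_A)$, a subgroup since $g$ has a fixpoint.
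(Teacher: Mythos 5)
Your proof is correct, and its core computation is the same one the paper uses: the norm operator $U(g)=1+g+\dots+g^{N-1}$ together with $(g-1)U(g)=g^N-1=0$ is exactly what kills elements of $\widetilde{B_g}\cap C_g$ up to $N$-torsion. Where you diverge is in how you get complementarity. The paper extracts everything from one integral Bezout identity in $\ZZ[X]$, namely $N=U(X)-(X-1)V(X)$ with $V(X)=(U(X)-N)/(X-1)\in\ZZ[X]$: evaluated at any $b\in A$ this gives $N.b=U(g).b-(g-1)(V(g).b)\in B_g+C_g$, so surjectivity of the addition map follows at once from surjectivity of multiplication by $N$ on the torus, and specializing $b$ to the intersection gives the torsion bound --- one identity, both conclusions. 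You instead obtain surjectivity (and the dimension count) from semisimplicity of $g$ on $H_1(A,\Q)$, i.e.\ the decomposition $V=\mathrm{Ker}(g-1)\oplus\mathrm{Im}(g-1)$ at the level of the Lie algebra, and then prove the torsion bound by applying $U(g)$ directly to an element $x=(g-1)y$ fixed by $g$. Both routes are valid; yours is a touch more conceptual (the eigenspace decomposition makes the complementarity transparent, at the cost of invoking characteristic-zero semisimplicity), while the paper's is more self-contained and uniform, since the single integral identity handles surjectivity and torsion simultaneously without passing to rational homology. Your handling of $G$-equivariance (equivariance for the generator suffices, $C_g$ is $g$-stable because $g(g-1)=(g-1)g$, the action on $\widetilde{B_g}$ is trivial by definition of the kernel) matches what the paper leaves implicit, and your reading of ``order dividing $N$'' as ``killed by $N$'' is exactly what the paper's argument actually establishes.
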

 
 \begin{proof} In $\Bbb Z[X]$, we have Bezout equality: $N=U(X)-(X-1).V(X)$, with: $U(X)=1+X+\dots+X^{N-1},$ and: $V(X):=\frac{(U(X)-N)}{(X-1)}$. For any $b\in A$, $N.b=U(g).b-(g-1).(V(g).b),$ the first term is in $B_g$ (since $(g-1)U(g)=(g^N-1)=0$), the second is in $C_g:=Im(g-1)$, by definition. If $b\in B_g$, the second term vanishes; if $b\in C_g$, the first term vanishes, since $C_g$ is preserved by $g$, because $g.(g-1)a=(g-1).ga, \forall a\in A$. Thus, if $b\in \widetilde{B_g}\cap C_g$, both terms vanish, and $b$ is of $N$-torsion. \end{proof} 
 
 This establishes the first 3 claims, and hence the fourth claim, of Lemma \ref{primary}. We now turn to its fifth claim, when $N=p^r$ is primary. We may thus assume that $A=C_g$, and we only need to show that $\nu_1(A/G)=-\infty$, and $q'(A/G)=0$ if $g$ has an isolated fixpoint $a$. We thus assume this now. The action of $g$ on the tangent space to $A$ has then all of its eigenvalues different from $1$, and $g$ acts linearly on global coordinates $y_1,\dots,y_n$ on $A$, with an isolated singularity of type $\frac{1}{N}.(a_1,\dots ,a_n)$ and $0<a_i<N, \forall i=1,...,n$, which means that $g$ acts on these coordinates by $g.(y_1,\dots,y_n)=(u^{a_1}.y_1,\dots,u^{a_n}.y_n)$ where $u$ is any primitive $N$-th root of unity. Moreover, since $g$ has order $N=p^r$, not all of the $a_i's$ are divisible by $p$, and so at least one of them, say $a_1$, is prime to $p$. Replacing thus $g$ by $g^k$, where $k$ is an inverse of $a_1$ modulo $p^r$, we shall assume that $a_1=1$. The claims 5 and 7 of Lemma \ref{primary} then follow from the next Theorem \ref{Cyclic} and Lemma \ref{tpi_1}, while its Claim 6 follows from Claims 5 and 4.

\begin{theorem}\label{Cyclic} Assume that $G$ is cyclic, of order $N$, and has an isolated fixpoint $a$, at which $A/G$ has a singularity of type $\frac{1}{N}(a_1=1,a_2,\dots,a_n)$. Then:

 For any effective divisor $D$ on $A/G$, there exists $m(D)$ such that, if $m\geq m(D)$, $h^0(A/G, Sym^m(\Omega^1_{A/G})(D))=0$.

Thus: $\nu_1(A/G)=\kappa_1(A/G)=-\infty$. \end{theorem}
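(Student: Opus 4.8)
The plan is to reduce the vanishing of $H^0(A/G, \Sym^m(\Omega^1_{A/G})(D))$ to an explicit estimate of the growth, in $m$, of the dimension of $G$-invariant symmetric differentials on $A$ with prescribed pole order, near the isolated fixpoint $a$. Since $\Omega^1_A$ is trivial, a section $\sigma$ of $\Sym^m(\Omega^1_{A/G})(D)$ on $A/G$ pulls back to a $G$-invariant section of $\Sym^m(\Omega^1_A)(\pi^*D) = \cO_A^{\oplus r_m}(\pi^*D)$ with $r_m = \binom{n+m-1}{n-1}$; equivalently, to a $G$-invariant vector $(P_1,\dots,P_{r_m})$ of sections of $\cO_A(\pi^*D)$, i.e. a $G$-invariant element of $H^0(A,\cO_A(\pi^*D))\otimes \Sym^m(V)$ where $V = H^0(A,\Omega^1_A)$. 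The point is that $G$ acts on $V$ and, because $a$ is an \emph{isolated} fixpoint with weights $(1,a_2,\dots,a_n)$, one of the weights is $1$: in suitable coordinates $g$ acts on $V$ with a weight-$1$ eigenvector $\omega_1$ (dual to $y_1$).

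First I would set up the invariance condition concretely: the pole divisor $\pi^*D$ is $G$-invariant, so $W := H^0(A,\cO_A(\pi^*D))$ is a finite-dimensional $G$-representation, and we must bound $\dim\bigl(W \otimes \Sym^m V\bigr)^G$. Decompose $W$ into isotypic (here, into $g$-eigenspace) components with eigenvalues $u^{b}$, $u = e^{2\pi i/N}$; a $g$-invariant tensor must pair a $u^{-b}$-eigenvector of $W$ with a $u^{b}$-eigenvector of $\Sym^m V$. So it suffices to bound, for each fixed character $u^{b}$ of $\langle g\rangle$, the dimension of the $u^{b}$-eigenspace $\Sym^m(V)_{b}$, and show this is \emph{zero} for $m$ large — but that is false (it grows), so the real idea must be finer: we bound the dimension of the eigenspace relative to the \emph{full} dimension, exploiting that the monomials $y_1^{k_1}\cdots y_n^{k_n}$ of $\Sym^m V$ lie in eigenspace $k_1 a_1 + \dots + k_n a_n \bmod N$, and since $a_1 = 1$, for \emph{each residue} the number of such monomials is roughly $r_m / N$. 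Then $\dim(W\otimes \Sym^m V)^G \leq (\dim W)\cdot \max_b \dim \Sym^m(V)_b$, which grows like $m^{n-1}$, not zero — so growth estimates alone are insufficient.

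The decisive step, and the one I expect to be the main obstacle, is therefore not a mere dimension count but the use of the \emph{algebraic} structure forced by the pole being supported away from a general point together with the translation action of $A$ on itself. The right approach: since $\pi^*D$ is effective, a section $P \in W$ of $\cO_A(\pi^*D)$ that is a monomial times an invariant is, after translating, a global section of a line bundle algebraically equivalent to $\cO_A(\pi^*D)$; but a $G$-invariant section of $\Sym^m(\Omega^1_{A/G})$ with poles only along $D$ gives, via the triviality of $\Omega^1_A$, a map $A \to \mathbb{P}(\Sym^m V^*)$ defined off $\pi^*D$, hence (by Hartogs and properness, $\Omega^1_A$ trivial) a morphism, hence constant on the rationally-connected... — no: the cleanest route is Sakai's, namely to observe that such a section, restricted to a general translate of the $1$-dimensional orbit through $a$ in the $y_1$-direction, forces a polynomial identity with too few zeros unless it vanishes. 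Concretely: the weight-$1$ coordinate $y_1$ on $C_g = A$ gives a $g$-eigenform; I would show by induction on $m$, peeling off the highest power of $dy_1$, that any $G$-invariant $\sigma \in H^0(A,\Sym^m(\Omega^1_A)(\pi^*D))$ must have $y_1$-leading term vanishing on $A$ (using that its restriction to a general line is a section of $\cO_{\mathbb{P}^1}(\text{something})$ of the wrong eigen-parity once $m$ exceeds the pole order), and then $\sigma$ descends in $m$. After finitely many steps $m > m(D)$ forces $\sigma = 0$.

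Once $H^0(A/G,\Sym^m(\Omega^1_{A/G})(D)) = 0$ for all large $m$ and all effective $D$, the conclusion $\nu_1(A/G) = \kappa_1(A/G) = -\infty$ is immediate from the definition of $\nu$ (the sup over $D$ of the growth exponents): on a smooth model $X$ of $A/G$, $H^0(X,\Sym^m(\Omega^1_X)(D'))$ injects into $H^0(A/G,\Sym^m(\Omega^1_{A/G})(D))$ for a suitable effective $D$ pushing forward $D'$ (using that $A/G$ has rational, in fact quotient, singularities, so symmetric differentials extend), whence all these spaces vanish for $m \gg 0$, i.e. $\nu_1 = -\infty$. I would flag that the reduction to $A = C_g$ and $a_1 = 1$ has already been carried out before the statement, so the proof body may assume it.
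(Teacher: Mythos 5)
Your reduction is set up on the wrong space, and this is a fatal gap rather than a presentational one. You translate the problem into bounding (and ultimately killing) the $G$-invariants of $H^0(A,\cO_A(\pi^*D))\otimes \Sym^m V$, i.e.\ the $G$-invariant sections of $\Sym^m(\Omega^1_A)(\pi^*D)$ on $A$. But that space does \emph{not} vanish for large $m$: already for $D=0$, the constant differential $dy_1^{\otimes Nk}$ is $g$-invariant for every $k$ (and since $a_1=1$ the invariant monomials $dy^{(m)}$ with $\sum_i a_i m_i\equiv 0 \pmod N$ are plentiful, growing like $m^{n-1}/N$). Invariant sections upstairs correspond to sections of the \emph{reflexive} symmetric powers on $A/G$, whereas the theorem (see the paper's notational convention for $H^0(S^m(A/G)\otimes L)$) is about $\Sym^m(\Omega^1_X)$ on a \emph{resolution} $X$; the entire content of the statement is that invariant differentials on $A$ do not extend holomorphically across the exceptional divisor over the isolated fixed point $a$. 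Your third paragraph, which is supposed to supply the decisive mechanism, instead tries to prove the false statement that invariant $\sigma\in H^0(A,\Sym^m(\Omega^1_A)(\pi^*D))$ vanish for $m$ large, and the tool invoked (restricting to a ``general line'' and counting zeros of a section of $\cO_{\mathbb{P}^1}(\cdot)$ with the ``wrong eigen-parity'') is not available on a torus, which contains no rational curves; restricting to a one-dimensional subtorus or orbit gives no such zero-count contradiction. So the key idea is missing, not merely unpolished.

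What the paper actually does, and what your sketch never reaches, is a local computation on a chart of the weighted blow-up of the singularity $\frac1N(1,a_2,\dots,a_n)$ at $a$: introduce the $g$-equivariant substitution $y_i=z_i\,y_1^{a_i}$ ($i\ge 2$), pass to the quotient coordinates $(y_1^N,z)$, and compare the coefficient of $dy_1^{\otimes m}$ in the pullbacks. Holomorphy of the descended section on this resolution chart forces a factor $y_1^{(N-1)m-rN}$ (with $r$ the multiplicity of $D$ at $a$), while a global section upstairs has constant coefficients (global sections of the trivial bundle $\Sym^m\Omega^1_A$ twisted by $\cO(\pi^*D)$ contribute $y_1$-order at most $\sum_i(a_i-1)m_i\le (N-1)m-m$, using $a_1=1$ and $a_i\le N-1$); for $m>rN$ this is incompatible unless all coefficients vanish. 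This is where $G$-invariance alone is insufficient and the exceptional divisor enters; without some version of this step (or another argument showing non-extendability over a resolution), your plan cannot yield the theorem, even though your final paragraph's deduction of $\nu_1(A/G)=\kappa_1(A/G)=-\infty$ from the uniform vanishing would be fine once the vanishing on the smooth model is actually established.
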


\begin{remark} When $n=1$, $g$ is a quasi-reflexion, and all the statements are easy (also proved below by removing the coordinates $y_i,i>1$).
\end{remark}
 
  \begin{proof}   Let $(y):=(y_1,\dots,y_n)$ be global linear coordinates on $A$ on which $g$ acts diagonally, as said above. We just need to consider these linear coordinates on a small neighborhood of $a$ in $A$. Let $\pi:(y_1,z_2,\dots,z_n):=(y_1,z)\to (y):=(y_1,y_2,\dots,y_n)$ be the (birational) map defined by $y_i:=z_i.y_{1}^{a_i}$ for $i=2,\dots, d$.  The map $\pi$ is $g$-equivariant if we let $g$ act on $(y_1,z)$ by: $g.(y_1,z):=(u.y_1,z)$, where $u$ is a primitive $N$-th root of unity, choosen as said above, before the statement of Theorem \ref{Cyclic}. Define the finite proper map  $q':(y_1,z)\to (y:=y_1^N,z)$. There is a unique birational map $p:(y,z)\to (\Bbb C^n/G)$ such that $p\circ q'=q\circ \pi:(y_1,z)\to (\Bbb C^n/G)$, with $q:\Bbb C^n\to \Bbb C^n/G$ the $G$-quotient. In fact, the map $p$ is nothing but a chart of the weighted blow-up $Z$ of the singularity $\frac{1}{N}(1,a_2,\dots,a_n)$, and so is a resolution of $\Bbb C^n/G$ at the generic point of the exceptional divisor over $a$. 
  
  We treat first\footnote{This is logically not necessary, but may help to follow the computations when $D\neq 0$.} the case when $D=0$.
 
 Let $w:=\sum_{(m)}c_{(m)}dy^{(m)}$be a `constant' section of $Sym^m(\Omega^1_{\Bbb C^n})$, that is: $(m):=(m_1,\dots,m_n)$ is an $n$-tuple of nonnegative integers of length $m:=\sum_im_i$ with $m>0$, $dy^{(m)}:=\otimes_idy_i^{\otimes{m_i}}$, and the $c_{(m)}\in \Bbb C$ are constants. Assume that there exists $v:=\sum_{(h)}b_{(h)}dy^{\otimes{h_1}}\otimes dz^{(h')}$ such that $q'^*(v)=\pi^*(w)$. Here $(h)=(h_1,h')$ is an $n$-tuple of nonnegative integers of length $m$, with $(h')=(h_2,\dots, h_n)$, and $b_{(h)}(y,z)$ are holomorphic functions in $(y,z)$. We shall prove that $c_{(m)}=0,$ for any $(m)$ of positive length $m$. This shows Theorem \ref{Cyclic} when $D=0$, since $(q')^*(H^0((A/G)',Sym^m(\Omega^1_{(A/G)'}))\subset \pi^*(H^0(A,Sym^m(\Omega^1_A)))$, if $(A/G)'$ is a smooth model of $A/G$.
 
 We will show this vanishing by comparing the coefficients of $(dy_1)^{\otimes m}$ in $\pi^*(w)$ and $q'^*(v)$. This coefficient in $q'^*(v)$ is $b_{(m,0,\dots,0)}.N^m.y_1^{(N-1).m}$. On the other hand, the coefficient of $dy_1^{\otimes m}$ in $\pi^*(dy^{(m)})$ is easily seen to be $(a.z)^{(m')}:=(a_2.z_2)^{m_2}.\dots.(a_n.z_n)^{m_n}.y_1^{(\sum(a_i-1).m_i)}$ for each $(m)=(m_1,m')=(m_1,m_2,\dots,m_n)$. The coefficient of $dy_1^{m}$ in $\pi^*(w)$ is thus: $$(\sum_{(m)}c_{(m)}.(a.z)^{(m')}.y_1^{(\sum(a_i-1)m_i)}).$$
 
 Notice that, since $a_i<N,\forall i=1,2,\dots,n$, and $a_1=1$, we have: $$\sum(a_i-1).m_i\leq (N-2).(\sum m_i)=(N-1).m-m.$$
 The equality $\pi^*(w)=q'^*(v)$ thus implies that: $$b_{(m,0,\dots,0)}(y,z).N^m.y_1^{(N-1).m}=\sum_{(m)}c_{(m)}.(a.z)^{(m')}.y_1^{(N-1).m-m-s((m))},$$
 where $s((m)):=(N-1).m-m-\sum(a_i-1).m_i\geq 0,\forall (m)$. Hence:
 
 $$b_{(m,0,\dots,0)}(y,z).N^m=\sum_{(m)}c_{(m)}.(a.z)^{(m')}.y_1^{-m-s((m))}.$$
 
 From which we conclude that $c_{(m)}=0,\forall (m)$, since $m>0$ and the $c_{(m)}$ are constants, the functions $z^{(m')}$ being linearly independent polynomials in the $z_i's$.
  The theorem is thus proved when $D=0$, with $m(D)=1$.
 
 We now prove the general case, when $D\neq 0$, by a completely similar computation. We may write a local equation\footnote{$A/G$ is $\Bbb Q$-factorial, so we may replace $D$ a suitable multiple to get $D$ Cartier.} of $D$ near $q'((\pi^{-1})(a))$ in the form: $y^{r}.F(y,z)$, $r$ being the multiplicity of the divisor $D$ at $a$, with $F$ not vanishing on the divisor $y_1^N=0$. We write a local section of $Sym^m(\Omega^1_Z)\otimes \mathcal{O}_Z(D)$ in the form: $v':=\frac{v}{y_1^{r.N}.F}$, where $v:=\sum_{(h)}b_{(h)}dy^{\otimes{h_1}}\otimes dz^{(h')}$, as before. The coefficient of $dy_1^m$ in $q'^*(v')$ is then: $b_{(m,0,\dots,0)}.N^m.y_1^{(N-1).m-r.N}.F^{-1}$. 
 
 If $q'^*(v')=\pi^*(w')$, for some $w'=\frac{w}{f(y_1,\dots,y_d)}$, where $w$ is as above and $f=f(y_1,\dots,y_d)$ is holomorphic nonzero, we get by the calculation above: $$(\sum_{(m)}c_{(m)}.(a.z)^{(m')}.y_1^{\sum(a_i-1).m_i})=y_1^{(N-1)m-r.N}.f.F^{-1}.b_{(m,0,\dots,0)}.N^m,$$ 

 or equivalently: $$\sum_{(m)}c_{(m)}.(a.z)^{(m')}.y_1^{(N-1).m-m-s((m))}=y_1^{(N-1)m-r.N}.f. F^{-1}.b_{(m,0)}.N^m$$ 
 
 $$=\sum_{(m)}c_{(m)}.(a.z)^{(m')}.y_1^{-s((m))}=y_1^{m-r.N}.f. F^{-1}.b_{(m,0)}.N^m.$$ 

We saw that $s((m))=(N-1).m-m-\sum(a_i-1).m_i\geq 0,\forall (m)$. So that $c_{(m)}=0,\forall (m)$, if $m>r.N$.\end{proof}

The next lemma concludes the proof of Lemma \ref{primary}.

\begin{lemma}\label{tpi_1} Let $G$ be a group of order $N>1$ acting on $A$. Assume that $G$ has an isolated fixed point $a\in A$. Let $\pi:A\to A/G$ be the natural quotient map. Then $\pi_*:\pi_1(A)\to \pi_1(A/G)$ is surjective, and $\pi_1(A/G)$ is finite, abelian, of exponent dividing $N$. In particular: $q'(A/G)=0$.
\end{lemma}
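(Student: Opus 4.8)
The plan is to read off everything from the $\mathbb Z[G]$-module structure of $\pi_1(A)$, using the fixed point $a$ as a base point. Take $a$ as the origin of $A$ and as base point, and $\bar a:=\pi(a)$ as base point of $A/G$. Since $g(a)=a$ for all $g\in G$, each $g$ induces an automorphism $g_*$ of $\Lambda:=\pi_1(A,a)\cong\mathbb Z^{2n}$, which is just the linear part of the affine automorphism $g$. Thus $\Lambda\otimes\mathbb C$ is the $G$-representation $T_aA\oplus\overline{T_aA}$, and its space of invariants vanishes precisely because $a$ is \emph{isolated} in $A^G$ (a positive-dimensional common $(+1)$-eigenspace of $G$ on $T_aA$ would exponentiate to a positive-dimensional subgroup of fixed points through $a$).

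I would first prove surjectivity of $\pi_*$. The finite quotient map $\pi\colon A\to A/G$ is an open, proper, branched covering, hence has the path-lifting property; so any loop in $A/G$ at $\bar a$ lifts to a path in $A$ issuing from $a$, whose endpoint lies over $\bar a$ and hence equals $g\cdot a=a$ for some $g\in G$. The lift is therefore a loop, and pushing it forward returns the original loop.

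Next I would bound $\ker\pi_*$ from below. Because $\pi\circ g=\pi$ for every $g\in G$, one has $\pi_*(g_*\gamma)=\pi_*(\gamma)$ for all $\gamma\in\Lambda$, so the subgroup $\Lambda_0:=\sum_{g\in G}\operatorname{Im}(g_*-\operatorname{id})$ is contained in $\ker\pi_*$. Two elementary facts about $\Lambda_0$: (i) by complete reducibility of $\Lambda\otimes\mathbb C$ and the vanishing of $(\Lambda\otimes\mathbb C)^G$, the rational span of $\Lambda_0$ is all of $\Lambda\otimes\mathbb Q$, so $\Lambda/\Lambda_0$ is finite; (ii) for any $\gamma\in\Lambda$ the class $\sum_{g\in G}g_*\gamma$ is $G$-invariant, hence lies in $\Lambda^G=\Lambda\cap(\Lambda\otimes\mathbb Q)^G=0$, so that $N\gamma=-\sum_{g\in G}(g_*-\operatorname{id})\gamma\in\Lambda_0$. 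Thus $\Lambda/\Lambda_0$ is finite abelian of exponent dividing $N$. Combining with the previous paragraph, $\pi_1(A/G)$ is a quotient of $\Lambda/\Lambda_0$, hence finite abelian of exponent dividing $N$, and $\pi_*$ maps $\pi_1(A)$ onto it. Finally, since $\pi_1(A/G)$ is finite, every finite-index subgroup of it is finite, so has abelianisation of $\mathbb Z$-rank $0$; as $\pi_1$ of a smooth model of $A/G$ is $\pi_1(A/G)$, this yields $q'(A/G)=0$.

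The step I expect to be the real (though small) obstacle is the path-lifting used for surjectivity of $\pi_*$: it rests on the standard fact that a finite quotient map of complex manifolds is a branched covering with the path-lifting property, which deserves a sentence of justification. One may bypass it by appealing to Armstrong's description (or to \cite{K}) of the fundamental group of an orbit space, which in fact pins down $\pi_1(A/G)$ itself as a quotient of $\Lambda/\Lambda_0$; the remaining ingredients are routine linear algebra over $\mathbb Z[G]$.
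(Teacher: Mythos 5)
Your argument is correct, and it follows a genuinely different (and slightly more general) route than the paper's. The paper also gets surjectivity of $\pi_*$ exactly as you do, from total ramification at $a$ ($\pi^{-1}(\pi(a))=\{a\}$, so lifted paths are loops), but then it argues with a single generator: writing $h$ for the linear part of $g$ at the fixed point, isolation of $a$ forces $1$ not to be an eigenvalue of $h$, hence $1+h+\dots+h^{N-1}=0$, so $1+g_*+\dots+g_*^{N-1}=0$ on $\pi_1(A)$, and composing with $\pi_*$ (using $\pi_*\circ g_*=\pi_*$) gives $N\cdot\pi_*=0$; surjectivity then yields that $\pi_1(A/G)$ is a finitely generated abelian group of exponent dividing $N$, hence finite. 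This is really the cyclic case (as it is applied in Lemma \ref{primary}), whereas your $\mathbb Z[G]$-module argument — $\Lambda_0=\sum_{g}\operatorname{Im}(g_*-\mathrm{id})\subset\ker\pi_*$, full rational rank of $\Lambda_0$ and $N\Lambda\subset\Lambda_0$ from $(\Lambda\otimes\mathbb Q)^G=0$, which you correctly derive from isolation of $a$ in the common fixed locus — treats an arbitrary finite $G$ directly, matching the generality of the statement. Two minor remarks: the finiteness of $\Lambda/\Lambda_0$ is not strictly needed, since a finitely generated abelian group of exponent dividing $N$ is automatically finite (this is how the paper concludes); and the path-lifting property of the finite quotient map, which you flag as the one point needing justification, is asserted with the same brevity in the paper, so you are not missing anything it supplies. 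Your closing step for $q'(A/G)=0$ via $\pi_1(X)\cong\pi_1(A/G)$ for a smooth model is consistent with the paper's earlier use of \cite{K}.
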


\begin{proof} (simplified version of my initial proof, suggested by one of the referees) The surjectivity of $\pi_*$ is clear, using $a$, and $\pi(a)$ as base points, since $\pi$ is totally ramified at $a$ (i.e: $\pi^{-1}((\pi(a)))=\{a\}$), so that loops at $\pi(a)$ lift to loops at $a$. We chose $a$ as the neutral element of the addition on $A$, and let $h$ be the linear map on $\Bbb C^n$ inducing $g$ on $A$. Since $a$ is an isolated fixed point, $1$ is not an eigenvalue of $h$, and so $1+h+\dots+h^{N-1}=0$. Thus $1+g_*+\dots+g_*^{N-1}=0$ as an endomorphism of $\pi_1(A)$. Let now $\pi_*:\pi_1(A)\to \pi_1(A/G)$ be the induced map by the natural quotient $\pi:A\to A/G$, the fundamental groups being based at $0\in \Bbb C^n$ and $a$. Since $\pi_*\circ g_*=\pi_*$, we get: $N.\pi_*=\pi_*\circ (1+g_*+\dots +g_*^{N-1})=\pi_*\circ 0=0$, and the second Claim.\end{proof}

\subsection{Proof of Lemma \ref{G-neral}.}

\

{\bf Notation:}\label{nb} For $A/G$, and $L$ a line bundle on $A/G$, we denote with $H^0(S^m(A/G)\otimes L):=H^0(X, Sym^m(\Omega^1_X)\otimes L_X)$ , if $X\to A/G$ is a resolution of the singularities, and $L_X$ the inverse image of $L$ on $X$, this vector space being independent of the resolution. If $B$ is a complex compact torus, $Sym^m(\Omega^1_B)$ denotes both the corresponding trivial vector bundle on $B$, and its space of sections. Write $\alpha_g: \widetilde{A/G}:=(C_g/G)\times \widetilde{B_g}\to A/G$ for the projection defined in the statement of Lemma \ref{primary}.

\begin{lemma}\label{B_g} In the situation of Lemma \ref{primary} ($G$ cyclic, generated by $g$ of primary order $p^r$, $B_g,C_g,\widetilde{B_g}$ defined in this Lemma), we have: 

1. $H^0(B_g,Sym^m(\Omega^1_{B_g}))=H^0(S^m(A/G)),\forall m\geq 0$.

2. If $L$ is a line bundle on $A/G$, there is a vector bundle $E_{B_g}$ on $B_g$, a line bundle $L_C$ on $C_g/G,$ and an integer $m(L)$ such that:

 $H^0(S^m(A/G)\otimes L))=\oplus_{0\leq j\leq m(L)}S_j\otimes Sym^{m-j}(\Omega^1_{B_g})\otimes H^0(B_g, E_{B_g})$ if $m\geq m(L)$, where $S_j:=H^0(S^j(C_g/G)\otimes L_C)$.
\end{lemma}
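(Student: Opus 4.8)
The plan is to reduce everything to a direct computation with symmetric differentials on the isogeny $s_g\colon C_g\times\widetilde{B_g}\to A$ from Lemma \ref{g-1}, which is $G$-equivariant with $G$ acting trivially on $\widetilde{B_g}$. First I would record the product decomposition of the cotangent bundle: since $s_g$ is \'etale and $G$-equivariant, the $G$-invariant sections of $Sym^m(\Omega^1_A)$ (pulled back appropriately) decompose as $\bigoplus_{i+j=m} \bigl(Sym^i(\Omega^1_{C_g})\bigr)^G \otimes Sym^j(\Omega^1_{\widetilde{B_g}})$, where both factors are constant-coefficient tensors because the tori have trivial cotangent bundle, and $G$ acts only on the first factor. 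Passing to a resolution $X\to A/G$ and using that $\Omega^1$-symmetric-power sections are a birational invariant, this identifies $H^0(S^m(A/G))$ with $\bigoplus_{i+j=m} H^0\bigl(S^i(C_g/G)\bigr)\otimes Sym^j(\Omega^1_{\widetilde{B_g}})$. Now Theorem \ref{Cyclic} (via Claim 5 of Lemma \ref{primary}, which asserts $\nu_1(C_g/G)=-\infty$) tells us $H^0(S^i(C_g/G))=0$ for all $i>0$, so only the term $i=0$ survives: $H^0(S^m(A/G)) = Sym^m(\Omega^1_{\widetilde{B_g}})$. Finally, since $\widetilde{B_g}\to B_g$ is an isogeny (Lemma \ref{primary}(2)) and $\Omega^1$ is translation-invariant, $Sym^m(\Omega^1_{\widetilde{B_g}})\cong Sym^m(\Omega^1_{B_g})$ as spaces of sections, giving Claim 1.

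For Claim 2 the same decomposition applies after twisting. Pull $L$ back along $\alpha_g\colon (C_g/G)\times\widetilde{B_g}\to A/G$; its pullback, after a finite \'etale/birational adjustment, is a line bundle on the product, and I would like to write it as $L_C\boxtimes L_B$ up to something controllable — but in general a line bundle on a product of a (singular) quotient and a torus need not split. The clean way is: on the torus factor use that $\mathrm{Pic}$ of $(C_g/G)\times\widetilde{B_g}\to \mathrm{something}$ and the seesaw principle force $\alpha_g^*L$ to be of the form $(L_C\boxtimes \cO)\otimes(\cO\boxtimes M)$ with $M\in\mathrm{Pic}^0(\widetilde{B_g})$ up to torsion, so after a further finite \'etale cover one reduces to $M$ trivial; alternatively one absorbs the non-split part into the vector bundle $E_{B_g}$ appearing in the statement. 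With $L=L_C\boxtimes L_B$, the Künneth-type filtration of $Sym^m(\Omega^1)$ on the product gives
\[
H^0\bigl(S^m(A/G)\otimes L\bigr)=\bigoplus_{j=0}^{m} H^0\bigl(S^j(C_g/G)\otimes L_C\bigr)\otimes H^0\bigl(B_g, Sym^{m-j}(\Omega^1_{B_g})\otimes L_B\bigr).
\]
Because $\nu_1(C_g/G)=-\infty$, Theorem \ref{Cyclic} (the effective-divisor version: $h^0(C_g/G, Sym^j(\Omega^1)(D))=0$ for $j\geq m(D)$) kills the factor $S_j:=H^0(S^j(C_g/G)\otimes L_C)$ once $j\geq m(L)$ for a suitable $m(L)$. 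So only finitely many $j\leq m(L)$ contribute. Setting $E_{B_g}:=L_B$ (or more precisely the vector bundle obtained from the finitely many twists $Sym^{m-j}(\Omega^1_{B_g})\otimes L_B$ reorganized; since $\Omega^1_{B_g}$ is trivial this is a direct sum of copies of $L_B$) yields exactly the stated formula $H^0(S^m(A/G)\otimes L)=\bigoplus_{0\le j\le m(L)} S_j\otimes Sym^{m-j}(\Omega^1_{B_g})\otimes H^0(B_g,E_{B_g})$ for $m\ge m(L)$.

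The main obstacle I anticipate is \emph{not} the vanishing input — that is handed to us by Theorem \ref{Cyclic} — but rather the bookkeeping in making the product decomposition rigorous through the singular quotient and a resolution: one must check that passing to a smooth model of $A/G$ versus taking the product of a smooth model of $C_g/G$ with $\widetilde{B_g}$ gives the same spaces of twisted symmetric differentials (this is where $\Q$-factoriality of $A/G$, rationality of the singularities, and the birational invariance of $H^0(Sym^{[m]}\Omega^1\otimes L)$ all get used), and that the $G$-invariants commute with the Künneth decomposition when $G$ acts only on one factor. A secondary technical point is handling line bundles on the product that do not literally split; this is resolved by a seesaw argument together with a further finite \'etale cover (harmless for the final application, which is up to \'etale covers anyway) or by bundling the discrepancy into $E_{B_g}$. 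Once these compatibilities are in place, both claims follow by the displayed identities.
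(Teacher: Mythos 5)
Your proposal is correct and follows essentially the same route as the paper: the product cover $(C_g/G)\times\widetilde{B_g}\to A/G$ from Lemma \ref{primary}, the K\"unneth-type decomposition of symmetric differentials on that product, the vanishing of $H^0(S^j(C_g/G)\otimes L_C)$ for large $j$ from Theorem \ref{Cyclic}, and the splitting of $\mathrm{Pic}$ of the product via $q'(C_g/G)=0$. The only cosmetic difference is in the descent step: you pass through $G$-/deck-invariance (or a further \'etale cover), while the paper uses the projection formula, absorbing $(\alpha_g)_*\mathcal{O}_{\widetilde{A/G}}$, a sum of torsion line bundles, into the bundle $E_{B_g}$ — exactly the ``absorb the discrepancy'' option you mention.
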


\begin{proof} {\bf Claim 1.} We have $\forall j>0$: 

$H^0(S^m((C_g/G)\times \widetilde{B_g}))=\oplus_{0\leq j\leq m} H^0(S^j(C_g/G))\otimes Sym^{m-j}(\Omega^1_{\widetilde{B_g}})$

$=Sym^m(\Omega^1_{\widetilde{B_g}})$, since $H^0(S^j(C_g/G))=\{0\}, \forall j>0$, for any $j>0$, by Theorem \ref{Cyclic}. 
On the other hand, $H^0(S^m( \widetilde{A/G}))=H^0(S^m(A/G)\otimes (\alpha_g)_*(\mathcal{O}_{\widetilde{A/G}}))$, since $\alpha_g:\widetilde{A/G}\to A/G$ is a Galois \'etale cover with abelian Galois group, and so $(\alpha_g)_*(\mathcal{O}_{\widetilde{A/G}})$ is a direct sum of line bundles, one factor being trivial, while the others are torsion non-trivial. The first part of the argument, applied after tensorisation with $(\alpha_g)_*(\mathcal{O}_{\widetilde{A/G}})$  then implies the claim. 

{\bf Claim 2.} The proof is similar, but uses in addition the fact that $q'(C_g/G)=0$, so that $Pic((C_g/G)\times \widetilde{B_g})= \gamma^*(Pic(C_g/G))+\beta^*(Pic(\widetilde{B_g}))$, up to torsion, and so $L=\gamma^*(L_C)+\beta^*(L_B)$ with $L_C,L_B$ line bundles on $(C_g/G), \widetilde{B_g}$ respectively, and $\gamma,\beta$ the obvious projections. Hence:

$H^0(S^m(\widetilde{A/G})\otimes L)$

$=\oplus_{0\leq j\leq m}H^0(S^j(C_g/G)\otimes L_C)\otimes H^0(\widetilde{B_g},Sym^{m-j}(\Omega^1_{\widetilde{B_g}})\otimes L_B)$

$=\oplus_{0\leq j\leq m(L_C)}H^0(S^j(C_g/G)\otimes L_C)\otimes H^0(\widetilde{B_g},Sym^{m-j}(\Omega^1_{\widetilde{B_g}})\otimes L_B),$ 

$=\oplus_{0\leq j\leq m(L_C)}S_j\otimes Sym^{m-j}(\Omega^1_{\widetilde{B_g}})\otimes H^0(\widetilde{B_g}, L_B),$  since there exists $m(L_C)$ such that $H^0(S^j(C_g/G)\otimes L_C)=\{0\},$ for all $  j\geq m(L_C)$, by Theorem \ref{Cyclic}, and since the vector bundles $Sym^{m-j}(\Omega^1_{\widetilde{B_g}})$ are trivial. 

In order to get Claim 2 (for $A/G$ instead of $\widetilde{A/G}$), we  just have to argue as in the proof of Claim 1, and to replace $H^0(\widetilde{B_g}, L_B)$ by $H^0(B_g, E_{B_g}),$ with the vector bundle $E_{B_g}:=(\alpha_g)_*(\mathcal{O}_{\widetilde{A/G}})$.\end{proof}

We now consider the case of a possibly non-cyclic $G$ generated by elements of primary orders having fixpoints.

For each $g\in F^*$, $C_g:=(g-1).A\subset A^0$ is a connected Lie subgroup, and thus so is $C_G$, the subgroup generated by the $C_g's$. Fix arbitrarily an origin $a$ in $A$ which identifies $A$ and $A^0$, and equips $A$ with a Lie group structure. The group $G=[g]$ generated by $g$ naturally acts on $C_g<A^0$ by: $h.(g-1).a':=(g-1).h.a'$ for any $h=g^k\in [g]$, and $a'\in A$. This is well-defined, since $(g-1).g^k.a'=0$ if $(g-1).a'=0$, since then $g^{k+1}.a'=g^k.a', \forall k\in \Bbb Z$.

We now prove the various claims of Lemma \ref{G-neral}.

\

{\bf Claim 1.} $C_G:=\sum_{g\in F^*}C_g\subset A$ is preserved by the action of $G$. Indeed, for $h\in G$, we have: 

$h.C_G:=h. (\sum_{F^*}(g-1)A)=\sum_{F^*}(h.g.h^{-1}-1).h(A)=C_G,$ 
since $F^*$ is stable by conjugation in $G$, and $h(A)=A$.

The quotient $a_G:A\to B_G:=A/C_G$ is thus well-defined, and $G$-equivariant for the trivial action of $G$ on $B_G$.  

{\bf Claim 2.} It thus induces a quotient map $q_G: A/G\to B_G$. For each $b\in B_G$, $q_G^{-1}(b)$ is naturally isomorphic to the quotient $C_G/G$. Indeed, by the equalities written above, for all $g\in F^*, a_g's \in A$, we have:

 $h.\sum_{g\in F^*}(g-1).a_g=\sum_{g\in F^*}(hgh^{-1}-1).h.a_g=\sum_{F^*}(g-1).(h.a_{h^{-1}.g.h}),$
 
 which shows that this action of $G$ is defined on $C_G<A^0$, and independent of the chosen origin $a\in A$.
 
 The fibration $q_G:A/G\to B$ is thus analytically locally trivial with fibre $C_G/G$.
 
 {\bf Claim 3.} We first prove that $\kappa_1^*(A/G)\leq dim(B_G)-n$. For any $g\in F^*$, recall that $C_g=(g-1).A<A^0$, and that $q'_g:A\to B_g:=A/C_g$ is the quotient.
 
 We have then two other quotients: $\pi_g:A/{[g]}\to A/G$, and $\beta_g:B_g\to B_G:=B_g/(C_G/C_g)$, such that $\beta_g\circ q_g=q_G\circ \pi_g:A/{[g]}\to B_G$.
 
 Denote, for $m\geq 0$, by $H^0(S^m(A/G))$ the vector space of sections of $Sym^m(\Omega^1_X)$ over $X$, if $X$ is any smooth model of $A/G$. Similarly for $A/{[g]}$.
 For any $m\geq 0$, we have a natural isomorphism $q_g^*:H^0(B_g,Sym^m(\Omega^1_{B_g}))\to H^0(S^m(A/{[g]}))$, and natural injective maps:
 
  $\beta_g^*:H^0(B_G,Sym^m(\Omega^1_{B_G}))\to H^0(B_g,Sym^m(\Omega^1_{B_g}))$, and:
  
   $\pi_g^*: H^0(S^m(A/G))\to H^0(S^m(A/{[g]})),$ hence an injective map:
   
    $(\pi_g\circ \beta_g)^*=(q_G\circ \beta_g)^*:H^0(S^m(A/G))\to H^0(B_g,Sym^m(\Omega^1_{B_g}))$.
 
 From this we deduce that:
 
  $H^0(S^m(A/G))\subset \cap_{g\in F^*}H^0(B_g,Sym^m(\Omega^1_{B_g}))\subset H^0(A,Sym^m(\Omega^1_{A}))$.
 
 The next Lemma \ref{cap} (given by lack of reference) shows that:
 $$\cap_{g\in F^*}H^0(B_g,Sym^m(\Omega^1_{B_g}))=Sym^m(\cap_{g\in F^*}H^0(B_g,\Omega^1_{B_g}))$$
 
 $=Sym^m(H^0(B_G,\Omega^1_{B_G})),$ since $B_G=A/C_G$, and $C_G=\sum_{g\in F^*}(C_g)$. 
 
 Hence $H^0(S^m(A/G))\subset H^0(B_G,Sym^m(\Omega^1_{B_G})), \forall m>0$. 
 
 Which implies the claimed inequality:  $$\kappa_1^*(A/G)\leq \kappa_1^*(B_G)-(dim(A/G)-dim(B_G))=0-(n-dim(B_G)).$$

 \begin{lemma}\label{cap} Let $F_j\subset E$ be a finite number of vector subspaces. For any $m\geq 0$, $Sym^m(\cap F_j)=\cap Sym^m(F_j)\subset Sym^m(E)$.
 \end{lemma}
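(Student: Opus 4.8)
The plan is to establish the non-trivial inclusion $\bigcap_j \Sym^m(F_j) \subset \Sym^m\bigl(\bigcap_j F_j\bigr)$; the reverse inclusion is immediate from functoriality of $\Sym^m$, since each $F_j$ is a direct summand of $E$ (hence $\Sym^m(F_j)\hookrightarrow \Sym^m(E)$ is injective, as $\Sym^m(E)\cong \bigoplus_{i+j=m}\Sym^i(F_j)\otimes\Sym^j(C)$ for any complement $C$), and $\bigcap_j F_j\subset F_j$ gives $\Sym^m\bigl(\bigcap_j F_j\bigr)\subset\Sym^m(F_j)$ for every $j$. First I would reduce to the case of two subspaces: setting $F':=\bigcap_{j\geq 2}F_j$, if the statement is known for families of fewer than $k$ subspaces then $\bigcap_{j\geq 2}\Sym^m(F_j)=\Sym^m(F')$, and if it is known for two subspaces then $\Sym^m(F_1)\cap\Sym^m(F')=\Sym^m(F_1\cap F')=\Sym^m\bigl(\bigcap_j F_j\bigr)$. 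So an induction on the number of subspaces reduces everything to the case $F_1=V$, $F_2=W$.

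For two subspaces, the plan is to produce a basis of $E$ simultaneously adapted to $V$, $W$ and $U:=V\cap W$. I would pick a complement $V'$ of $U$ in $V$ and a complement $W'$ of $U$ in $W$; a short check gives $V'\cap W=W'\cap V=0$ and $(V'\oplus W')\cap U=0$, so $V+W=U\oplus V'\oplus W'$, and one extends this to $E=U\oplus V'\oplus W'\oplus Z$. Choosing bases of $U,V',W',Z$, their union is a basis of $E$, and the degree-$m$ monomials in these basis vectors form a basis of $\Sym^m(E)$. A class lies in $\Sym^m(V)=\Sym^m(U\oplus V')$ exactly when only monomials in the $U$- and $V'$-basis vectors occur in its expansion, and similarly for $\Sym^m(W)=\Sym^m(U\oplus W')$; hence it lies in both precisely when only monomials in the $U$-basis vectors occur, i.e. when it belongs to $\Sym^m(U)=\Sym^m(V\cap W)$. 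This yields the desired inclusion and the lemma.

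The one point that needs care, and which forces the structure of the argument, is that for three or more subspaces there is in general no single basis of $E$ adapted to all of them at once — three lines in general position in a plane already fail this — so the monomial argument cannot be applied to the whole family directly; this is exactly why the reduction to pairs via induction comes first, using that any two subspaces do admit a common adapted decomposition $U\oplus V'\oplus W'\oplus Z$. The remaining ingredients are routine finite-dimensional linear algebra: in the intended application the $F_j=H^0(B_g,\Omega^1_{B_g})$ sit inside the finite-dimensional space $H^0(A,\Omega^1_A)$, so the choice of complements presents no difficulty.
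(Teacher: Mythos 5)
Your proof is correct and follows essentially the same route as the paper: induction on the number of subspaces to reduce to two, then a simultaneous decomposition $E=U\oplus V'\oplus W'(\oplus Z)$ adapted to both subspaces, with the conclusion read off from the induced monomial (equivalently, graded direct-sum) decomposition of $\Sym^m(E)$. The only cosmetic difference is that the paper tacitly works with $E=F_1+F_2$ while you carry an extra complement $Z$, which changes nothing.
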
 
 
 \begin{proof} Induction on the number $k\geq 1$ of the $F_j's$. The case $k>2$ reduces by an easy induction to the case $k=2$, which we now consider, with $E=F_1+F_2$. In this case, write $E=F\oplus V_1\oplus V_2$, with $F:=F_1\cap F_2, F_j=V\oplus V_j$. Then, with $(h):=(h,h_1,h_2)$ running through triples of nonnegative integers of sum $m$, we have: 
 
 $Sym^m(E)=\oplus_{(h)} Sym^h(F)\otimes Sym^{h_1}(V_1)\otimes Sym^{h_2}(V_2)$, with
  $Sym^m(F_1)$ (resp. $Sym^m(F_2)$) defined by the conditions $h_2=0$ (resp. $h_1=0$). Their intersection is thus defined by $h_1=h_2=0$.
 \end{proof}

 \smallskip
 
 We now prove similarly that $\nu_1^*(A/G)=dim(B_G)-n$, too. Let $L$ be an effective line bundle on $A/G$, we denote by $L_g, L_{B_g}, L_G, L_A$ its inverse images on $A/{[g]}, B_g,B_G, A$ by means of the maps $\pi_g, \pi_g\circ q_g, q_G, q'_g\circ q_g\circ \pi_g$, if $q'_g:A\to B_g=A/C_g$ is the quotient.
 
 By the previous argument, and Lemma \ref{B_g}, we get, for each $g\in F^*$, an injective map $H^0(S^m(A/G)\otimes L)\to H^0(S^m(A/{[g]}\otimes L_g), \forall m>0$,
and, if $m\geq m(L,g)$ defined in Lemma \ref{B_g}, and $S_{g,j}:=H^0(S^j(C_g/{[g]})\otimes L_{C_g})$,an equality:

$H^0(S^m(A/{[g]})\otimes L_g))=[\oplus_{0\leq j\leq m(L,g)}S_{g,j}\otimes Sym^{m-j}(\Omega^1_{B_g})]\otimes H^0(B_g, L_{B_g}).$

For any $g,j\leq m, m>0$, define: $T_{g,j}:=S_{g,j}\otimes H^0(B_g, L_{B_g})$, we thus write the preceding equality as:

$H^0(S^m(A/{[g]})\otimes L_g))=\oplus_{0\leq j\leq m(L,g))}T_{g,j}\otimes Sym^{m-j}(\Omega^1_{B_g}),$ so that we get for any $g\in F^*,m\geq m(L,g)$ an inclusion:

$H^0(S^m(A/G)\otimes L_g))\subset \oplus_{0\leq j\leq m(L(g,j))}T_{g,j}\otimes Sym^{m-j}(\Omega^1_{B_g}).$

If we define $m(L):=sup_{g\in F^*} m(L,g)$, and $T_L:=\oplus_{\{j\leq m(L),g\in F^*\}}T_{g,j}$, we get an inclusion, for $m>0$:

$H^0(S^m(A/{[g]}\otimes L_{g}))\subset \oplus_{0\leq j\leq m(L))}T_L\otimes (\cap_{g\in F^*}Sym^{m-j}(\Omega^1_{B_g})),$ and finally by Lemma \ref{cap}:

$H^0(S^m(A/{[g]})\otimes L_{g}))\subset \oplus_{0\leq j\leq m(L)}T_L\otimes (Sym^{m-j}(\Omega^1_{B_G})),$ seen as a vector subspace of  $\oplus_{0\leq j\leq m(L))}T_L\otimes (Sym^{m-j}(\Omega^1_{A})),$
from which we get: $h^0(S^m(A/G)\otimes L)\leq m(L).dim(T).h^0(Sym^m(\Omega^1_{B_G}))$, for $m>0$.
  
This shows that $\nu_1^*(A/G)\leq dim(B_G)-n$, as claimed.

 {\bf Claim 4.} Thus: $\nu_1^*(A/G)\leq dim(B_G)-n\leq q(A/G)-n\leq q'(A/G)-n$. By Remark \ref{rem'q'}, $\nu_1^*(A/G)\geq \kappa_1^*(A/G)\geq q'(A/G)-n$, we thus have have $q(A/G)=dim(B_G)=q'(A/G)$, and $\kappa_1^*(A/G)=\nu_1^*(A/G)=q'(A/G)-n$.

{\bf Claim 5.} $q(A/G)=dim(B_G)$, so $q_G$ is the Albanese map of $A/G$. 

{\bf Claim 6.} This follows from \cite{Ca20},Theorem 3.4, since $q(C_G/G)=0$, $C_G/G$ is not uniruled, and so $Aut(C_G/G)$ is discrete since $q(C_g/G)=0$.

 \subsection{Decomposition of torus quotients.}\label{deco}
 
 Let $\pi:A\to A/G$ be the quotient of an Abelian variety\footnote{The results of this subsection certainly hold true for compact complex tori, but further arguments are then required.} $A$ by a finite group $G$ of affine automorphisms. Write $G=E\cup F$, with the same meaning as in \S\ref{groupaction} and Lemma \ref{F^*}. Let $F'$ be the normal subgroup of $G$, and $Q:=G/F'$ its quotient group. We have natural quotient maps: $\pi':A\to A/F'$ and $\pi_Q:A/F'\to A/G=(A/F')/Q$, the latter one being \'etale since the action of $Q$ on $A/F'$ is fixpoint free. Because of this, we shall assume that $G=F'$, since \'etale covers do not modify the invariants and fibrations we are considering here. 

 In this subsection we decompose $A/G$ into simpler varieties of the same form by means of its MRC fibration, followed by the Albanese map (well-defined for any K\"ahler $X$ with rational singularities).

 \begin{proposition}\label{propdeco} Assume that $G$ is generated by its elements having fixpoints. Let $a_G:A/G\to A_G$ be its (surjective and connected) Albanese map. Then $a_G=b_G\circ r_G$, where $r_G:A/G\to B/H$ is the MRC fibration of $A/G$, and $b_G:B/H\to A_G$ is the Albanese map of $B/H$. 
 
 Here, $r:A\to B=A/K$ is the quotient of $A$ by a subtorus $K$, preserved by a normal subgroup $S$ of $G$ such that $H=G/S$ acts effectively on $B$, with rationally connected quotient $K/S$. The fibration $r_G:A/G\to B/H$ is then a locally trivial fibre bundle with fibre $K/S$. 
 
 Next, $b_G$ is a locally trivial fibre bundle with fibre $L/H$, where $L$ is a subtorus of $B$ preserved by $H$ such that $A_G=B/L$. The fibre $L/H$ has only canonical singularities, and a torsion $\Bbb Q$-Cartier canonical bundle\footnote{ The torsion index is bounded by the largest $N$ such that $\varphi (N)$ divides $n$, $\varphi$ being the Euler totient function, $n:=dim(A)$.}.

 \end{proposition}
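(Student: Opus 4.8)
The plan is to analyze the two natural fibrations on $A/G$ — the MRC fibration and the Albanese map — and show that each is induced by a $G$-equivariant quotient of $A$ by a subtorus, using the structure of $G$ as a group generated by elements with fixpoints together with Lemma \ref{G-neral} and Lemma \ref{primary}. First I would recall that, since $G$ is generated by elements having fixpoints, Lemma \ref{G-neral} already provides the subtorus $C_G \subset A^0$ (generated by the $C_g$, $g\in F^*$), preserved by $G$, with $a_G\colon A/G \to B_G := A/C_G$ the Albanese map of $A/G$, locally trivial with fibre $C_G/G$. So the Albanese part of the statement is essentially in hand; what remains is to further factor $a_G$ through the MRC fibration $r_G\colon A/G \to B/H$ and to identify the fibres.

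The key step is the construction of the subtorus $K$ governing the MRC fibration. The fibre $C_G/G$ of $a_G$ is a torus quotient with $q'(C_G/G) = 0$ (by Lemma \ref{G-neral}, Claim 4). Its MRC fibration is again a quotient map: one takes the sub-Lie-group $K_0 \subset C_G$ generated by those $C_g$ with $g\in F^*$ for which the corresponding singularity direction is "rationally connected" in the sense that the weighted-blowup resolution of $C_g/G$ is uniruled — concretely, one lets $S \triangleleft G$ be the normal subgroup generated by those $g\in F^*$ whose fixed-point locus has the property that $C_g/[g]$ is uniruled, sets $K := $ (Lie subgroup generated by the $C_g$ for $g\in S$), and checks $K$ is $G$-stable. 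Then $r\colon A \to B := A/K$ descends to $r_G\colon A/G \to B/H$ with $H := G/S$, and one must verify: (i) $K/S$ is rationally connected (the quotient of a torus by the group generated by primary-order elements acting with only "uniruled" fixed loci is rationally connected — this follows from the explicit coordinates in the proof of Theorem \ref{Cyclic}, since a torus quotient by such an action is built from weighted projective-type pieces); (ii) $B/H$ is not uniruled, so $r_G$ is genuinely the MRC fibration of $A/G$; (iii) $r_G$ is a locally trivial bundle with fibre $K/S$ (same local-triviality argument as in Lemma \ref{G-neral}, Claim 2, since $K$ is a $G$-stable subtorus). Local triviality combined with the universal property of the MRC fibration gives $a_G = b_G \circ r_G$ with $b_G\colon B/H \to A_G$ the Albanese map of $B/H$; the fibre of $b_G$ is $L/H$ where $L := C_G/K \subset B$ is the image subtorus, again $H$-stable, and $b_G$ is locally trivial with fibre $L/H$ by the same reasoning.

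Finally, the claim that $L/H$ has only canonical singularities with torsion $\Q$-Cartier canonical bundle is where the bookkeeping concentrates. Since $L$ is a torus and $H$ acts on it fixing the generic point of every exceptional divisor only in "non-uniruled" directions (we have quotiented out exactly the rationally connected part), every $h\in H$ acting nontrivially on $L$ with fixpoints has, at each isolated fixed point, weights $(a_1,\dots,a_k)$ summing in a way that forces the discrepancies of the weighted blowup to be $\ge 0$ (rather than $<0$, which would make the quotient uniruled); this is the standard Reid–Tai criterion, and it simultaneously gives that $K_{L/H}$ is torsion in $\mathrm{Pic}$ (a power of $\omega_L = \mathcal{O}_L$ descends), with torsion index controlled by the exponents $N$ for which $\varphi(N) \mid \dim L \le n$. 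The main obstacle I expect is step (ii) above — showing $B/H$ is not uniruled, i.e. that we have genuinely extracted all of the rationally connected directions into $K/S$ and not left any behind in $B/H$; this requires knowing that a torus quotient $B/H$ where no element of $H$ contributes a "uniruled direction" is non-uniruled, which one gets from Theorem \ref{Cyclic} / Corollary \ref{sing} applied fibrewise (such a quotient has $\kappa_1^* \ge 0$ hence is not uniruled, using $\nu_1 = -\infty$ would fail). Managing the interaction between the group-theoretic decomposition of $G$ and the geometric MRC/Albanese decomposition — ensuring the two normal subgroups $S \subset$ (group generated by $F^*$) $= G$ are chosen compatibly so that all the quotients are again of the form torus-mod-finite-group — is the part requiring the most care, but it is structural rather than computational.
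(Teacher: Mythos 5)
There is a genuine gap, and it sits exactly at the heart of your construction: you build the subtorus $K$ ``bottom up'', as the Lie subgroup generated by those $C_g$ for which $C_g/[g]$ is uniruled, and then assert (i) that $K/S$ is rationally connected and (ii) that $B/H$ is not uniruled. Neither assertion is supported by the tools you invoke. Theorem \ref{Cyclic} and Corollary \ref{sing} are statements about symmetric differentials ($\nu_1=\kappa_1=-\infty$), and this invariant does not detect rational connectedness or uniruledness in either direction: a singular Kummer surface $A/\pm 1$ is precisely a quotient $C_g/[g]$ with $\nu_1^*=-2$, yet it is not uniruled, let alone rationally connected; so ``weighted projective-type pieces'' coming from the coordinates in Theorem \ref{Cyclic} prove nothing about rational connectedness of $K/S$, and the claim that $B/H$ would otherwise have $\nu_1=-\infty$ (or $\kappa_1^*\geq 0$) cannot certify non-uniruledness. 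Moreover uniruledness of $A/G$ is governed by the Reid--Tai (canonical singularities) criterion applied to individual elements (Proposition \ref{pun}, after \cite{KL}), whereas the MRC fibre must be \emph{rationally connected}, a much stronger condition for which no such element-by-element criterion is available; your recipe for $S$ (elements of $F^*$ with $C_g/[g]$ uniruled) can therefore both miss directions (uniruledness created only by composite or iterated quotients) and overshoot (uniruled does not imply the fibre you build is rationally connected). These verifications are not ``structural bookkeeping''; they are the actual mathematical content, and as proposed they would fail.

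The paper's proof goes the other way and avoids the problem entirely: it takes the MRC fibration of $A/G$ as given (so its fibres are rationally connected by definition and its base is not uniruled by \cite{GHS}), passes to a fibre model via the Chow variety, and proves the structural fact that the induced almost holomorphic map $A\dasharrow B$ is the quotient by a subtorus $K$ (Lemma \ref{ah}, a rigidity-lemma argument: the translates of a generic fibre form the whole family, so the fibre is an orbit of a subgroup of translations). The subgroup $S$ and the quotient action of $H=G/S$ on $B=A/K$ then come for free because $G$ permutes the fibres of $r\circ\pi$, local triviality is as in Lemma \ref{G-neral}, and canonical singularities plus torsion canonical bundle for $B/H$ and $L/H$ follow from non-uniruledness via Proposition \ref{pun} rather than a direct Reid--Tai computation. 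Your treatment of the Albanese half (Lemma \ref{G-neral}) and of the factorisation $a_G=b_G\circ r_G$ (rationally connected fibres are contracted by the Albanese map) does agree with the paper's Lemma \ref{factorisation}; it is the identification of the MRC with a subtorus quotient that needs the top-down argument, not a group-theoretic construction of $K$.
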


 We start the proof of Proposition \ref{propdeco}. First, recall the criteria for non-uniruledness of $A/G$  (see \cite{KL} for example):
 
 \begin{proposition}\label{pun} The following are equivalent, for a smooth model $X$ of $A/G$
 
 1. $X$ is not uniruled
 
  2. $\kappa(X)=0$
  
  3. The singularities of $A/G$ are canonical \footnote{This can be checked using the Reid-Tai criterion.}.
   
   \end{proposition}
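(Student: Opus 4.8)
**Proof plan for Proposition \ref{pun} (equivalence of non-uniruledness, $\kappa = 0$, and canonical singularities for $A/G$).**

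The plan is to prove the cycle of implications $3 \Rightarrow 2 \Rightarrow 1 \Rightarrow 3$, exploiting the very special structure of $A/G$: the pull-back of a smooth form to $A$ is a constant-coefficient form, so the canonical ring of $A/G$ is controlled by invariant top forms on $A$. First I would observe that $K_A = \mathcal{O}_A$, with a nowhere-vanishing $G$-(semi)invariant generator $\omega := dy_1 \wedge \cdots \wedge dy_n$ (after choosing coordinates in which $G$ acts linearly near a fixpoint, or more intrinsically using that $\Omega^1_A$ is trivial), and that $G$ acts on $H^0(A, K_A) = \mathbb{C}\cdot\omega$ by a character $\chi : G \to \mathbb{C}^*$. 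If $\chi$ is trivial then $\omega$ descends to a nowhere-zero section of $K_{(A/G)_{\mathrm{reg}}}$, hence (by reflexivity / Reid--Tai) $A/G$ is Gorenstein with trivial canonical sheaf and canonical singularities, giving a model $X$ with $\kappa(X) = 0$. If $\chi$ is nontrivial of order $d$, then $\omega^{\otimes d}$ is $G$-invariant and descends, so some power of $K_X$ has a nonzero section, but the crucial point is that $A/G$ still has canonical singularities precisely when $X$ is not uniruled; this is where the Reid--Tai criterion enters quantitatively.

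For $3 \Rightarrow 2$: if $A/G$ has canonical singularities, take any resolution $\rho : X \to A/G$; by definition of canonical singularities $\rho_* \omega_X^{[m]} = \omega_{A/G}^{[m]}$ for all $m \geq 0$, so $h^0(X, mK_X) = h^0(A/G, mK_{A/G}) = h^0(A, mK_A)^G = \dim (\mathbb{C}\cdot \omega^{\otimes m})^G$, which is $1$ if $d \mid m$ and $0$ otherwise. Hence $\kappa(X) = 0$. For $2 \Rightarrow 1$: $\kappa(X) = 0 \geq 0$ forces $K_X$ pseudoeffective by standard facts for $\kappa \geq 0$, and a pseudoeffective $K_X$ on a projective manifold means $X$ is not uniruled (by \cite{BDPP}, the uniruled locus being exactly where $K_X$ fails to be pseudoeffective; or more simply, a uniruled variety has $\kappa = -\infty$). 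For $1 \Rightarrow 3$: I would argue contrapositively via the Reid--Tai criterion. Each $g \in G$ with a fixpoint $a$ acts on the tangent space $T_aA$ with eigenvalues that are $N$-th roots of unity, say of "ages"; if some $g$ has age strictly less than $1$ at a fixpoint, the quotient singularity is not canonical, and a standard construction (the weighted blow-up chart appearing in the proof of Theorem \ref{Cyclic}, or directly \cite{KL}) produces a rational curve through the corresponding point sweeping out $A/G$, i.e. $A/G$ is uniruled; conversely if all fixpoint stabilizers satisfy the age $\geq 1$ condition then the singularities are canonical.

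The main obstacle is the implication $1 \Rightarrow 3$ (equivalently, showing a non-canonical quotient singularity of $A/G$ globally forces uniruledness of $A/G$, not merely a local rational curve). Locally, non-canonical means some $g$ has a fixpoint where the Reid--Tai age is $< 1$, which produces rational curves on a resolution of the local model; one must propagate this to genuine rational curves on $A/G$ itself and check they move enough to cover it. Here the homogeneity of $A$ helps decisively: the translation action of $A$ on itself descends to a transitive-enough action making any rational curve through one point move to cover a positive-dimensional (hence, by simplicity-type arguments on the torus factors, all of) $A/G$. I would invoke \cite{KL} (cited in the excerpt right before the statement) for this precise equivalence rather than reprove it, and limit my contribution to spelling out the age computation in the coordinates already set up for Theorem \ref{Cyclic}, so that the proof reads self-containedly modulo that reference.
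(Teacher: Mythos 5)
The paper itself offers no proof of Proposition \ref{pun}: it is simply recalled from \cite{KL}, so your decision to delegate the hard implication to \cite{KL} is in line with what the paper does. The supplementary arguments you add, however, contain two genuine gaps. First, in $3\Rightarrow 2$ you identify $h^0(A/G,mK_{A/G})$ with $\left(H^0(A,mK_A)\right)^G$; this requires $\pi:A\to A/G$ to be \'etale in codimension one. If $G$ contains a quasi-reflection (an element whose fixpoint locus has a divisorial component), the Hurwitz ramification term destroys this identification, and the step as written actually fails: for $A=E\times E'$ and $G$ generated by $(-1)\times\mathrm{id}$, the quotient $A/G\cong \mathbb{P}^1\times E'$ is smooth, hence has canonical singularities, yet $\kappa=-\infty$ and it is uniruled. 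So condition 3 has to be read as the age condition of \cite{KL} (all ages $\geq 1$ for all $g\neq 1$ at all fixpoints, as the Reid--Tai footnote suggests), or you must first dispose of the ramified-in-codimension-one case separately, e.g. by noting that then $\pi^*K_{A/G}=-R$ with $R$ the (nonzero effective) ramification divisor, so $K_{A/G}$, hence $K_X$, is not pseudoeffective and $X$ is uniruled by \cite{BDPP} and \cite{MiMo86}.

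Second, in $1\Rightarrow 3$ the claim that ``the translation action of $A$ descends to a transitive-enough action'' moving a local rational curve so as to cover $A/G$ is not correct: a translation $t_a$ descends to $A/G$ only if it normalizes $G$ up to translations contained in $G$, and since $G$ contains no translations this forces $a$ to be fixed by all linear parts, so in general only finitely many translations descend. Propagating the local rational curves produced by an age $<1$ to uniruledness of $A/G$ is exactly the nontrivial content of \cite{KL} and cannot be obtained from homogeneity of $A$. Alternatively, once quasi-reflections are excluded, you can bypass it: $K_{A/G}$ is then a torsion $\mathbb{Q}$-Cartier class, so on a resolution $K_X\sim_{\mathbb{Q}}\sum_i a_iE_i$ with the $E_i$ exceptional; non-canonical means some $a_i<0$, and an exceptional divisor with a negative coefficient is never pseudoeffective (a standard consequence of the negativity lemma, cf. \cite{Nak}), so $K_X$ is not pseudoeffective and $X$ is uniruled by \cite{BDPP} together with \cite{MiMo86} --- the same mechanism as in Proposition \ref{punr}. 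Your step $2\Rightarrow 1$ (uniruled implies $\kappa=-\infty$) is fine as it stands.
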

  
   The structure of the `rational quotient' (or MRC fibration) is then easy to describe (see \cite{KL} for a different proof in a more general situation):

  \begin{lemma}\label{rqab} Let $A/G$ be as in \ref{pun}. There exists a subtorus $K<A$ with quotient $q:A\to B:=A/K$ and a normal subgroup $S<G$ preserving $K$, $H:=G/S$ thus naturally acting on $B$ with the following properties:
  
  The quotient $h:B\to B/H$ induces a quotient $r:A/G\to B/H$ such that $r\circ \pi=h\circ q: A\to B/H$ is the $MRC$-fibration of $A/G$. 
  
  Its (rationally connected) fibres are isomorphic to $K/S$, its base $B/H$ has canonical singularities and torsion canonical bundle.  \end{lemma}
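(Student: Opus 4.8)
The plan is to establish Lemma \ref{rqab} by combining three ingredients: the classification of rationally connected subvarieties of abelian varieties (there are none of positive dimension), the structure of the MRC fibration, and the equivariance of all constructions under the $G$-action. First I would recall that since a smooth model $X$ of $A/G$ maps (after taking a suitable cover) generically finitely to a quotient of $A$, and abelian varieties contain no rational curves, the MRC fibration $r_X\colon X\dasharrow R_X$ has positive-dimensional fibres only along the locus contracted over the singularities of $A/G$; more precisely, the rational quotient is induced downstairs by a genuine subtorus. Concretely, I would pull the MRC fibration of $A/G$ back along $\pi\colon A\to A/G$: the composition $A\to A/G\dasharrow R_X$ is a $G$-equivariant almost holomorphic fibration on the abelian variety $A$, and any such fibration on an abelian variety is (up to translation) a quotient homomorphism $q\colon A\to B:=A/K$ by a subtorus $K$, because the general fibre, being a subvariety of $A$ through which a dominant family of rational curves on $X$ passes, must itself be (birational to) a rationally connected variety dominated by a subtorus-translate, forcing the fibre of $q$ to be exactly $K$ and $K/S$ rationally connected for the relevant stabilizer $S$.

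The second step is to extract the group data. The subtorus $K$ is canonical (it is the connected part of the kernel of the map to the Albanese-type quotient coming from the MRC fibration), hence preserved by every automorphism of $A$ that descends from $A/G$; in particular $G$ preserves $K$. Let $S<G$ be the (normal) subgroup acting trivially on $B=A/K$ — equivalently the elements of $G$ inducing translations on $B$, which we have normalized away, so in fact $S$ is the subgroup whose induced action on $B$ is trivial as automorphisms — and set $H:=G/S$. Then $H$ acts effectively on $B$ by construction, $q$ is $G$-equivariant for the $H$-action on $B$, and passing to quotients gives $r\colon A/G\to B/H$ with $r\circ\pi=h\circ q$ where $h\colon B\to B/H$. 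The fibres of $r$ are the images in $A/G$ of the fibres of $q$, i.e.\ copies of $K$ modulo their stabilizer, which is $S$ up to conjugacy; hence the generic (indeed every) fibre is $K/S$, and since this is the MRC fibre it is rationally connected. That $r$ is exactly the MRC fibration of $A/G$ follows because a fibration with rationally connected fibres onto a non-uniruled base is automatically the MRC fibration, and $B/H$ is non-uniruled by the criterion below.

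The third step is the statement about the base $B/H$: it has canonical singularities and torsion canonical bundle. For this I would invoke Proposition \ref{pun}: a smooth model of $A/G$ has $\kappa=0$ and is not uniruled (this is where the hypothesis that we are in the $c_1=0$/\,$\kappa=0$ situation enters, via Proposition \ref{pun}, equivalently the Reid--Tai criterion), and by Theorem \ref{tRQ} applied to the rationally connected fibration $r_X$, a smooth model of $B/H$ also has $\kappa=0$; applying Proposition \ref{pun} in the other direction (now to $B/H=B/H$, still a torus quotient) shows $B/H$ has canonical singularities. The canonical bundle of $B/H$ is $\Bbb Q$-Cartier and torsion because $K_B=\OO_B$ descends to a torsion $\Bbb Q$-Cartier divisor on $B/H$ — the standard computation for quotients of varieties with trivial canonical bundle by a finite group acting without quasi-reflections in codimension one, with torsion index dividing the exponent of the relevant linearization; the bound stated in Proposition \ref{propdeco} comes from the fact that the order of a finite cyclic subgroup of $GL(n,\Bbb Z)$ is constrained by $\varphi(N)\mid n$.

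The main obstacle I anticipate is verifying rigorously that the MRC fibration of $A/G$ descends from a subtorus quotient of $A$ — i.e.\ that the $G$-equivariant almost holomorphic MRC map lifts to a genuine morphism $A\to B$ that is a group homomorphism (after suitable choice of origin). One must rule out that the rational quotient of $A/G$ behaves worse than a subtorus quotient; the key point is that the general MRC fibre of $A/G$, being rationally connected, lifts to a subvariety of $A$ that is swept out by rational curves only if it is contained in a union of translates of a subtorus modulo $G$ — but since $A$ itself contains no rational curves, the only way $X$ can acquire them is through the resolution of the quotient singularities, and this forces the fibre structure to be exactly $K/S$ for a subtorus $K$. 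Making this precise may require a careful dévissage through the resolution $\rho\colon X\to A/G$ and the behaviour of families of rational curves under $\rho$, or alternatively citing the more general structure theorem for MRC fibrations of quotients in \cite{KL}; I would lean on the latter and only sketch the equivariance bookkeeping in detail.
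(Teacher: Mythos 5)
The crucial step---that the almost holomorphic map $r\circ\pi\colon A\dasharrow R$ is, after Stein factorisation, induced by the quotient of $A$ by a subtorus---is exactly where your argument breaks down. You justify it by saying that the general fibre in $A$, ``being a subvariety of $A$ through which a dominant family of rational curves on $X$ passes, must itself be (birational to) a rationally connected variety''. But no positive-dimensional subvariety of an abelian variety contains a rational curve, let alone is rationally connected: the rational curves live only on the resolution $X$ (they are created by resolving the quotient singularities) and do not lift to $A$; the fibre in $A$ merely maps \emph{finitely} onto the rationally connected MRC fibre of $A/G$. So the mechanism you invoke cannot force the fibre of $q$ to be a subtorus translate, and your declared fallback is to cite \cite{KL} wholesale, which is precisely the content to be proved here. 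The paper closes this gap with Lemma \ref{ah}, a rational-curve-free argument: for any almost holomorphic connected-fibre map $q\colon A\dasharrow Z$, the rigidity lemma shows that the image of the associated fibre-map $Z\to \mathrm{Chow}(A)$ is an irreducible component of $\mathrm{Chow}(A)$; since all translates of a general fibre $A_z$ lie in this component, cover $A$, and exactly one member passes through a general point, the family is the family of translates of $A_z$, hence $A_z$ is an orbit of the group $K$ of translations preserving it, i.e.\ a translate of a subtorus. This Chow/rigidity argument applies to any almost holomorphic fibration on $A$ (cf.\ Remark \ref{remalmhol}), and it is the missing idea in your proposal.

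A second defect is your treatment of the base. You assume a smooth model of $A/G$ is non-uniruled with $\kappa=0$ and then invoke Theorem \ref{tRQ} to transfer this to $B/H$. But the lemma is used (e.g.\ in Proposition \ref{propdeco}) precisely when $A/G$ is uniruled---if $A/G$ were non-uniruled its MRC fibration would be the identity and the statement would be vacuous---and Theorem \ref{tRQ} concerns $\nu_p,\kappa_p$ of cotangent bundles, not the Kodaira dimension of the base. The correct (and the paper's) argument is shorter: the base of an MRC fibration is never uniruled by \cite{GHS}; since $B/H$ is again a torus quotient, Proposition \ref{pun} then gives canonical singularities, and the canonical class is torsion because $B/H$ is a finite quotient of an abelian variety with canonical singularities. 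Your remaining bookkeeping ($K$ canonical hence $G$-stable, $S$ the kernel of the induced action on $B$, fibres isomorphic to $K/S$, $r\circ\pi=h\circ q$) is essentially the same as the paper's and is fine once the two points above are repaired.
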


  \begin{remark}\label{remalmhol} The proof in fact shows that the statements are true (except the last sentence of \ref{rqab}) if $f:A/G\dasharrow Z$ is any almost holomorphic fibration, not just the MRC fibration of $A/G$.
  \end{remark} 
  
  \begin{proof} Let $r_0:A/G\dasharrow R_0$ be any birational model of the MRC\footnote{It is easy to see that the MRC is defined for normal varieties, coincides with the one of a smooth model, and is almost holomorphic.} of $A/G$. It is almost holomorphic, that is: its generic fibre does not meet its indeterminacy locus. We choose for $r$ the `fibre-model' of $r_0$, defined as follows: for $z\in R_0$ generic, let $Y_z$ be the reduced fibre of $r_0$ over $z$. This defines a meromorphic map $\varphi:R_0\dasharrow Chow(A/G)$. We let $R$ be the normalisation of its image, and denote by $Y^+\subset R\times (A/G)$ the incidence graph of the family of cycles parametrised by $R$. The natural projection $d^+:Y^+\to (A/G)$ is thus birational, and $r:=p\circ (d^+)^{-1}:(A/G)\dasharrow R$ is the model of the MRC of $(A/G)$ we shall consider, with $p:Y^+\to R$ the first projection. Let $r\circ \pi: A\dasharrow R$ be the composed map. It is almost holomorphic. Let $r\circ \pi=s\circ q$ be the Stein factorisation, with $q:A\dasharrow B$ connected (i.e: with connected fibres), and $s:B\to B/H$ finite. From Lemma \ref{ah} below, we deduce that $q$ is holomorphic, and that $B=A/K$ is a quotient Abelian variety of $A$. The rest of the proof about $H,S$ is then quite standard, since the fibres of $r\circ \pi$ are exchanged by the affine action of $G$ on $A$. The last sentence follows from the fact that $B/H$ is not uniruled (by \cite{GHS}), it has thus only canonical singularities, by Proposition \ref{pun}, and a torsion canonical bundle (since it is a quotient $B/H$).\end{proof}

\begin{lemma}\label{ah} Let $q: A\dasharrow Z$ be an almost holomorphic map with connected fibres. The `fibre-model' of $q$ is then the quotient map from $A$ onto a quotient Abelian variety $B=A/K$.
\end{lemma}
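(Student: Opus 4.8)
The plan is to reduce everything to the claim that a \emph{general fibre of $q$ is a translate of a subtorus of $A$}; once this is known, identifying the fibre-model with a quotient map is purely formal. So fix a general $z\in Z$ and set $Y:=q^{-1}(z)\subset A$. Since $q$ is almost holomorphic we may choose $z$ so that $Y$ is disjoint from the indeterminacy locus of $q$; then $Y$ is a connected closed (hence projective) subvariety of $A$, smooth by generic smoothness, and it moves in a covering family $\{Y_z\}$ of pairwise disjoint subvarieties of $A$.

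First I would establish that $\kappa(Y)=0$. Resolve the indeterminacy of $q$ by a birational morphism $\mu:\wt A\to A$ with $\wt A$ smooth, yielding a morphism $g:\wt A\to W$ to a smooth projective model $W$ of $Z$, and chosen so that $\mu$ is an isomorphism over $A\setminus\mathrm{Indet}(q)$. Write $K_{\wt A}=\OO_{\wt A}(E)$ with $E$ effective and $\mu$-exceptional (possible because $A$ is smooth with trivial canonical bundle). A general fibre $F$ of $g$ is isomorphic to $Y$ and avoids $E$ (it avoids the indeterminacy locus of $q$), so $\OO_{\wt A}(mE)|_{F}=\OO_{F}$ and therefore $g_*\OO_{\wt A}(mE)$ has generic rank $1$ on $W$; by the projection formula $g_*\bigl(\omega_{\wt A/W}^{\ot m}\bigr)=g_*\OO_{\wt A}(mE)\ot\omega_{W}^{-m}$ also has generic rank $1$. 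On the other hand the normal bundle of the fibre $F$ in $\wt A$ is trivial, so $\omega_{\wt A/W}^{\ot m}|_{F}=\omega_{F}^{\ot m}$, and the generic rank of $g_*\bigl(\omega_{\wt A/W}^{\ot m}\bigr)$ equals $h^0(F,\omega_F^{\ot m})$. Comparing the two computations gives $h^0(Y,\omega_Y^{\ot m})=1$ for all $m\ge1$, i.e. $\kappa(Y)=0$. (Alternatively one may quote the classical fact that the general fibre of any fibration of an abelian variety has Kodaira dimension $0$.)

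Now I invoke Kawamata's theorem on varieties of Kodaira dimension zero (see \cite{Kaw} and the references there): the Albanese map $\alpha:Y\to\mathrm{Alb}(Y)$ is surjective with connected fibres. Picking $y_0\in Y$, the morphism $Y\to A$, $y\mapsto y-y_0$, obtained by translating the inclusion $Y\hookrightarrow A$ so that $y_0\mapsto0$, is injective, and by the universal property of $\mathrm{Alb}(Y,y_0)$ it factors as $Y\xrightarrow{\ \alpha\ }\mathrm{Alb}(Y)\xrightarrow{\ \phi\ }A$ with $\phi$ a homomorphism of complex tori. Injectivity of the composite forces $\alpha$ to be injective; being also surjective, $\alpha$ is an isomorphism. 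Hence $Y$ is (isomorphic to) an abelian variety and $\phi$ is an injective homomorphism, so $K:=\phi(\mathrm{Alb}(Y))=\mathrm{Im}(Y-y_0)$ is a subtorus of $A$ and $Y=y_0+K$ is a $K$-coset.

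Finally I would globalise. The subtorus $K=Y_z-Y_z$ attached to a general fibre is locally constant in $z$, hence equal to a fixed subtorus $K<A$ for all $z$ in a dense open subset of $Z$; thus the general fibres of $q$ are $K$-cosets. Since these are pairwise disjoint, cover a dense open of $A$, and the $K$-cosets partition $A$, a general $K$-coset is exactly one of the $Y_z$. Consequently the meromorphic map $Z\dasharrow\mathrm{Chow}(A)$, $z\mapsto[Y_z]$, is generically injective with image Zariski-dense in the orbit $\{\,K\text{-cosets}\,\}\cong A/K$, so its normalized image $R$ is $A/K$ and the induced meromorphic map $A\dasharrow R$ sends $a$ to the $K$-coset through $a$, i.e. it is the quotient map $\pi_K:A\to A/K$. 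This $\pi_K$ is the fibre-model of $q$, and $B:=A/K$ is the required quotient abelian variety. The one genuinely non-formal step is the vanishing $\kappa(Y)=0$, and within it the point requiring care is the choice of birational model so that the general fibre misses the $\mu$-exceptional divisor; everything else is bookkeeping, granted Kawamata's theorem.
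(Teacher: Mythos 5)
Your proof is correct, but it follows a genuinely different route from the paper's. The paper argues via the rigidity lemma and the Chow variety: since $q$ is almost holomorphic, the image of the fibre map $Z\dasharrow \mathrm{Chow}(A)$ is a whole irreducible component, hence contains all translates of a general fibre $A_z$; as the fibres are disjoint and the translates cover $A$, the family of fibres must coincide with the family of translates, forcing $A_z$ to be a single orbit of its stabilizer subtorus $K$. You instead prove that the general fibre $Y$ has $\kappa(Y)=0$ (via $K_{\wt A}=\mu^*K_A+E$ with $E$ exceptional, the general fibre of the resolved map avoiding $E$, and a projection-formula comparison of generic ranks of direct images), and then invoke Kawamata's theorem on the Albanese map of varieties with $\kappa=0$ to identify $Y$ with a translate of a subtorus; the globalisation and the identification of the fibre-model with $A\to A/K$ are then the same formal bookkeeping in both arguments. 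What each buys: the paper's argument is shorter and stays entirely within elementary geometry of cycle spaces (rigidity plus translation-invariance of a Chow component), whereas yours trades that for a deeper but completely standard input --- in effect you are reproving Ueno's theorem that a $\kappa=0$ subvariety of an abelian variety is a subtorus translate, which you could also just cite directly and skip the Albanese factorisation. Two small points: the step you flag (choosing $\mu$ an isomorphism over $A\setminus\mathrm{Indet}(q)$ and checking the general $g$-fibre has no component inside $\mu^{-1}(\mathrm{Indet}(q))$) does go through by the usual dimension count, so the argument is complete; and the reference you want is Kawamata's 1981 \emph{Characterization of abelian varieties} (or Ueno), not the paper cited as \cite{Kaw} here, which is the 2013 abundance article.
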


\begin{proof}Since $q$ is almost-holomorphic, the standard `rigidity lemma' shows that the image $B^+$ of the associated fibre-map $\psi:Z\to Chow(A)$ is an irreducible component of $Chow(A)$. Let $A_z$ be a generic fibre of $q$ not meeting the indeterminacy locus of $q$. Then $B^+$ contains all of the translates of $A_z$ inside $A$. Since these translates cover $A$, and only one of them passes through the generic point of $A$, the family $B^+$ coincides with the family of translates of $A_z$, and also with the quotient of $A$ by the group $K$ of translations of $A$ preserving $A_z$. Thus, $A_z$ must be a single orbit of $K$, and is so a translate of a subtorus of $A$. 
\end{proof}

\begin{lemma}\label{factorisation} Assume $G=F'$. Let $r:A/G\to B/H$ be the MRC of $A/G$ described in Lemma \ref{rqab}, and let $a_G:A/G\to A_G$ be the Albanese map of $A/G$. Recall that $A_G=A/C_G$, and that $a_G$ is a locally trivial fibre bundle with fibre $C_G/G$, with the notations of Lemma \ref{G-neral}. Then $a_G=b_G\circ r_G$, where $b_G:B/H\to A_G$ is the Albanese map of $B/H$.
\end{lemma}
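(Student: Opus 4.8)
The plan is to argue in two steps. First I would produce the factorisation $a_G=b_G\circ r_G$, and then identify the resulting $b_G$ with the Albanese map of $B/H$ by a formal diagram chase using the universal property of the Albanese map.

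\textbf{Step 1 (factoring $a_G$ through $r_G$).} By Lemma~\ref{rqab} the MRC fibration $r_G:A/G\to B/H$ is a locally trivial fibre bundle whose fibre $K/S$ is rationally connected. Since a complex torus contains no rational curves, every morphism from a rationally connected projective variety to a complex torus is constant; hence $a_G$ is constant on each fibre of $r_G$. As $r_G$ is proper and surjective with connected reduced fibres and $B/H$ is normal, one has $(r_G)_*\OO_{A/G}=\OO_{B/H}$, so any morphism out of $A/G$ to a separated variety that is constant on the fibres of $r_G$ factors through $r_G$; applied to $a_G$ this produces a holomorphic map $b_G:B/H\to A_G$ with $a_G=b_G\circ r_G$.

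\textbf{Step 2 (identifying $b_G$ with the Albanese map of $B/H$).} Let $\alpha:B/H\to Alb(B/H)$ be the Albanese map of $B/H$, which is available because $B/H$ has rational singularities, as recalled at the beginning of \S\ref{Storusq}. The universal property of $\alpha$ factors $b_G$ as $b_G=c\circ\alpha$ for a unique homomorphism $c:Alb(B/H)\to A_G$ (after normalising translations), while Albanese functoriality applied to $r_G$ gives a unique homomorphism $\rho:A_G=Alb(A/G)\to Alb(B/H)$ with $\alpha\circ r_G=\rho\circ a_G$. Combining, $a_G=b_G\circ r_G=c\circ\alpha\circ r_G=c\circ\rho\circ a_G$; since the image of $a_G$ generates $A_G$ as a group, this forces $c\circ\rho=\mathrm{id}_{A_G}$, so $\rho$ is a split injection. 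On the other hand $\rho$ is surjective: surjectivity of $r_G$ makes $\rho(A_G)$ an abelian subvariety of $Alb(B/H)$ containing $\alpha(B/H)$, which generates $Alb(B/H)$. Hence $\rho$ is an isomorphism, $c=\rho^{-1}$, and $b_G=\rho^{-1}\circ\alpha$; identifying $A_G$ with $Alb(B/H)$ via $\rho$ we conclude that $b_G$ is the Albanese map of $B/H$.

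The only point that is not purely formal is the holomorphicity of the factorisation $a_G=b_G\circ r_G$ over the possibly singular (but normal) base $B/H$, and I expect this to be the main --- though rather mild --- obstacle; it rests on $r_G$ coinciding with its own Stein factorisation (connected reduced fibres, normal base). Everything else is a formal consequence of the universal property of the Albanese map, which applies here since all varieties involved have rational singularities, together with the absence of rational curves in complex tori, so no ingredient beyond Lemmas~\ref{rqab} and~\ref{G-neral} is needed.
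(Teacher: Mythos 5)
Your proposal is correct and follows essentially the same route as the paper: the fibres of $r_G$ are rationally connected, hence contracted by $a_G$, giving the factorisation $a_G=b_G\circ r_G$, and $b_G$ is then identified with the Albanese map of $B/H$ by the universal property. Your Step 2 simply spells out the diagram chase that the paper compresses into the sentence ``necessarily $b_G$ is the Albanese map of $B/H$ since $a_G$ is the one of $A/G$''.
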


\begin{proof}The fibres of $r_G$ are rationally connected and thus mapped to points by $a_G$, there thus exists a factorisation $a_G=b_G\circ r_G$ with $b_G:B/H\to A_G$ a fibration. Necessarily $b_G$ is the Albanese map of $B/H$ since $a_G$ is the one of $A/G$.\end{proof}

 The quotients $A/G$ which are either rationally connected, or with canonical singularities (and so torsion canonical bundle) and finite fundamental groups are thus the two `building blocks' (the tori being the obvious third class) of this class of varieties. Their structure is not, by far, well-understood. Hence some remarks and questions about them.

 \begin{remark} If $A/G$ has a non-canonical singuarity and $A$ is simple, then $A/G$ is rationally connected. By \cite{KL}, Theorem 9, this can occur only when $dim(A)\leq 3$.
 \end{remark} 
 
\subsection{Examples, Questions}\label{r}  

\begin{question}

 1. If $A/G$ is rationally connected, is it  unirational? Is it rational? Is it uniformly rational in the sense of Gromov? See \cite{KL} for a thorough study of pairs $(A,G)$ such that $A/G$ is rationally connected. In particular, is the quotient $E^{\oplus 3}/\Bbb Z_4$ uniformly rational, where $E$ is the Gauss elliptic curve $\Bbb C/\Bbb Z[\sqrt{-1}]$, and $\sqrt{-1}$ acts diagonally on the three factors?
 
  2. For $n$ fixed, what are the possible finite groups $\pi_1(A/G)$ when $A/G$ is not uniruled, and $q'(A/G)=0$? 
 
 3. For $n$ fixed, what are the possible groups $G/F'$, Galois group of the \'etale cover $A/F'\to A/G$? 
 
 The groups $G$ acting freely (i.e with $F'=\{1\}$) on Abelian threefolds are classified in \cite{OS}, they are solvable and belong to a very short list.
 
 4. Classical examples of pairs $(A,G)$, especially when $n=2$, are described in \cite{BiL}, \S13. Poincar\'e reducibility shows that only the actions on products of simple Abelian varieties give indecomposable examples.

 5. Assume that $A/G$ has canonical singularities and trivial fundamental group. When does $A/G$ admit a crepant resolution (i.e: a resolution $X$ with $K_X$ trivial)? This is always the case when $n=2$, common when $n=3$, but rare when $n\geq 4$. The cases of holonomy $SU$ and $Sp$ behave differently.
 
 \end{question}
 
 \begin{remark}
  
 If such a crepant resolution $X$ does exist, the holonomy of a Ricci-flat K\"ahler metric on $X$ decomposes with irreducible factors of type either $SU$ or $Sp$. One can get information on this decomposition from the values of the dimensions of $H^{0,p}(A/G)=(H^{0,p}(A))^G$ and $\chi(\mathcal{O}_{A/G})$. The local existence of crepant resolutions is proved for all $Sl(3)$ quotients when $n=3$ (see the references in \cite{SS}).

In particular, any threefold $A/G$ with isolated canonical singularities and trivial canonical bundle is simply-connected, and has  crepant resolutions $X$ which thus admit Ricci-flat K\"ahler metrics of holonomy $SU(3)$. The simplest example is $A/G$, if $A=E^{\oplus 3}$, where $E$ is the elliptic curve with complex multiplication $\mu$ by a primitive cubic root of unity in $\Bbb C^*$, and $G$ the cyclic group of order $3$ acting on $A$ by $\mu$ simultaneously on each of the factors. There is only one other threefold $A/G$ with crepant resolution with holonomy $SU(3)$, by \cite{O}: it is the quotient of the Jacobian $J=E^3$ of the Klein quartic $C$ by a (cyclic) $7$-Sylow subgroup of $Aut(C)$, where $E$ is an elliptic curve with complex multiplication by $\frac{1+\sqrt{-7}}{2}$ (\cite{Prap}). 

Examples of quotients $A/G$ with crepant resolutions and irreducible holonomy $SU(n)$ in dimension $n\geq 4$ are given in \cite{CH}, \S 2 by a Kummer construction (see \cite{CH}, Corollary 2.3). These examples are simply-connected, by Lemma \ref{lch} below. The case when $G$ acts freely in codimension $2$ on $A$ is studied in the just posted preprint \cite{G'}, in which the non-existence is shown in dimensions $4,5$, and the lack of such examples in dimensions larger is noticed. Examples with irreducible $Sp$ holonomy are well-known in each even dimension (see \ref{exsp} below).\end{remark}

\begin{example}\label{exsp}

 1. Any finite group $G$ can be embedded as a transitive subgroup of some symmetric group $S_{n+1}$ on $n+1$ letters, and so act effectively on suitable Abelian varieties, as follows (\cite{beauv}). Let $B$ be any Abelian surface. Then $S_{n+1}$, and so $G$, acts on $B^{n+1}$ by permutation of the factors, preserving the Abelian subvariety $A:=\{(b_0\dots b_n)\vert b_0+\dots+b_n=0\}$. There is then a natural finite quotient map $K_{(G)}:=A/G\to A/S_{n+1}:=K_{(n)}$. By \cite{beauv}, $K_{(n)}$ is simply connected, has canonical singularities, trivial canonical bundle, and admits a crepant resolution $K^{[n]}$ which is an irreducible hyperk\"ahler manifold of dimension $2n$.  Thus $A/G$ has canonical singularities and trivial canonical bundle. Moreover, since any $1\neq g\in S_{n+1}$ fixes the points $(b,\dots,b)\in A$, if $b\in B$ is a point of $(n+1)$-torsion, we deduce from Lemma \ref{tpi_1}, that $\pi_1(A/G)$ is finite and abelian.
Indeed, it is easy to check that $(H^{0,1}(A))^G=\{0\}$, so $q(A/G)=q'(A/G)=0$, and Proposition \ref{G-neral} shows the finiteness of $\pi_1(A/G)$, which is abelian since $\pi:A\to A/G$ has total ramification at the previous torsion points, which implies the surjectivity of $\pi_*$ at the level of fundamental groups. Moreover, $(H^{0,2}(A))^G=\Bbb C$ in this situation.
 
 2. In the preceding construction, if $G$ is embedded in $S_n$, let $S_n$ act on $B^n$ by permutation of the factors. This action commutes with the action of $S_2^n$ by multiplication by either $+1$ or $- 1$ on each of the factors, and induces a Galois quotient $B^n\to T^n/S_n$, where $T=B/S_2$ is the Kummer surface of $B$. Taking the Hilbert scheme crepant resolution and deformations of $T^n/S_n$ leads to the second classical family of Hyperk\"ahler manifolds found in \cite{beauv}. One can restrict the action of $S_n$ on $T^n$ to $G$ in order to get a class a varieties having the same properties as in the preceding example $K_G$.
 
 A different quotient $A/G$ is given in \cite{BDW}, which constructs a crepant resolution $X$ of a quotient $E^4/G$, with $E$ the Gauss elliptic curve and $G$ a group of order $32$. However, $X$ turns out to be a deformation of the Hilbert square of $K3$ surfaces. 
 
 3. Numerous examples of quotients $A/G$ arise when $A$ is the Jacobian variety of a curve $C$ of genus $g>1$ with a non-trivial group $G$ of automorphisms. See \cite{OS} and \cite{Prap} in dimension $3$. Examples of uniruled $A/G's$  are given in \cite{Beauv"} for $g\leq 4$, where it is additionally shown that if the genus $g$ of $C$ is at least $21$, then $A/G$ is not uniruled (so that $\kappa(A/G)=0$). It is moreover shown in \cite{Beauv"} that $A/G$ is not uniruled if $g=5$, and the author suspects that $A/G$ is not uniruled if $g\geq 6$.

\end{example}

\begin{lemma}\label{lch} Let $G$ be a finite group acting on the complex manifolds $X_i,i=1,2$, and diagonally on the product $X:=X_1\times X_2$. Assume that $G$ fixes the points $a_i\in X_i,i=1,2$. Let $Y_i:=X_i/G,i=1,2$. The natural quotient map $q:Y:=X/G\to Y_1\times Y_2$ induces a morphism of groups $q_*:\pi_1(Y)\to \pi_1(Y_1\times Y_2)$ which is isomorphic.

In particular, $Y$ is simply-connected if so are $Y_i,i=1,2$.
\end{lemma}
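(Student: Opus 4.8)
The plan is to realize both fundamental groups as quotients of the fundamental group of a single, simpler space and check the induced map is a bijection. First I would use the fixed points to trivialize the monodromy. Set $a=(a_1,a_2)\in X$, which is fixed by the diagonal $G$-action. Since $G$ fixes $a_i$ in $X_i$, the covering $X_i\to Y_i$ is totally ramified over the image $b_i$ of $a_i$ (that is, the fibre over $b_i$ is the single point $a_i$), so every loop in $Y_i$ at $b_i$ lifts to a loop in $X_i$ at $a_i$; hence $(\rho_i)_*\colon\pi_1(X_i,a_i)\to\pi_1(Y_i,b_i)$ is surjective, where $\rho_i\colon X_i\to Y_i$ is the quotient map. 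The same argument applied to the diagonal action shows $q'_*\colon\pi_1(X,a)\to\pi_1(Y,b)$ is surjective, where $\rho\colon X\to Y$ is the quotient and $b=\rho(a)$. (Here I am using, as in the discussion preceding Lemma \ref{tpi_1}, that total ramification at a point forces surjectivity on $\pi_1$ with that point as base point; no smoothness is needed for this elementary lifting statement.)

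Next I would identify the kernels. Write $\pi_1(X,a)=\pi_1(X_1,a_1)\times\pi_1(X_2,a_2)$ (product of spaces). Let $K_i=\ker\big((\rho_i)_*\big)$; then by surjectivity $\pi_1(Y_i,b_i)\cong\pi_1(X_i,a_i)/K_i$, so $\pi_1(Y_1\times Y_2,(b_1,b_2))\cong\big(\pi_1(X_1,a_1)/K_1\big)\times\big(\pi_1(X_2,a_2)/K_2\big)=\pi_1(X,a)/(K_1\times K_2)$. The content of the lemma is then the identity $\ker\big(q'_*\big)=K_1\times K_2$ inside $\pi_1(X,a)$. One inclusion is formal: the composite $X\xrightarrow{\rho}Y\xrightarrow{q}Y_1\times Y_2$ equals $\rho_1\times\rho_2$, so $\ker\big(q'_*\big)\supseteq \ker\big((\rho_1\times\rho_2)_*\big)\supseteq K_1\times K_2$; moreover $q_*\circ q'_*=(\rho_1\times\rho_2)_*$ shows $q_*$ is surjective and that $q_*$ is an iso precisely when $\ker\big(q'_*\big)=\ker\big((\rho_1\times\rho_2)_*\big)$.

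The remaining point — and the step I expect to be the main obstacle — is the reverse inclusion $\ker\big(q'_*\big)\subseteq K_1\times K_2$, equivalently that a loop in $X$ whose image in $Y$ is nullhomotopic already has each of its two factors killed in $Y_i$. I would argue this at the covering-space level: both $Y\to Y_1\times Y_2$ and $Y_i\xleftarrow{}X_i$ are quotients by $G$ (the first one \'etale in codimension zero only away from the branch loci), and a connected finite \'etale cover of $Y_1\times Y_2$ dominated by $Y$ pulls back, after base change to $X=X_1\times X_2$, to a connected cover of $X$; since finite \'etale covers of a product $X_1\times X_2$ of (say) simply connected or more generally `good' spaces are products of covers of the factors, the cover of $Y$ is induced from a product of covers of the $Y_i$, which forces $\ker\big(q'_*\big)=K_1\times K_2$. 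Concretely, in the application at hand the $X_i$ are tori (or smooth models thereof) for which $\pi_1$ of a product is the product of the $\pi_1$'s and the K\"unneth-type splitting of covers is available, so this is where one invokes that structure; I would state the lemma's proof using exactly this input and note that the last sentence of the lemma is then immediate, since $Y_i$ simply connected gives $K_i=\pi_1(X_i,a_i)$, whence $\pi_1(Y,b)\cong\pi_1(X,a)/(K_1\times K_2)=\{1\}$.
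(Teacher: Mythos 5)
Your reduction of the lemma to the identity $\ker(\rho_*)=K_1\times K_2$ inside $\pi_1(X,a)$ is correct, but you have the two inclusions backwards, and this is where the proposal breaks. From $q_*\circ\rho_*=(\rho_1\times\rho_2)_*$ the only formal consequence is $\ker(\rho_*)\subseteq\ker\big((\rho_1\times\rho_2)_*\big)=K_1\times K_2$: if $\rho_*(\gamma)=1$ then $(\rho_1\times\rho_2)_*(\gamma)=q_*(\rho_*(\gamma))=1$. The inclusion you declare formal, $K_1\times K_2\subseteq\ker(\rho_*)$, does \emph{not} follow from the factorisation (it would say that a class killed in $Y_1\times Y_2$ is killed already in $Y$, which is exactly the injectivity of $q_*$ you are trying to prove); it is precisely the step that uses the fixed points a second time. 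The correct argument, and the one the paper uses, is geometric: since $a_2$ is $G$-fixed, the slice $X_1\times\{a_2\}$ is $G$-invariant and its image in $Y$ is the fibre $q_2^{-1}(b_2)$, which is isomorphic to $Y_1$ compatibly with $\rho_1$; hence for $\gamma_1\in K_1$ the loop $\rho\circ(\gamma_1,\mathrm{const}_{a_2})$ is the image of the nullhomotopic loop $\rho_1\circ\gamma_1$ under the inclusion $Y_1\cong q_2^{-1}(b_2)\hookrightarrow Y$, so it dies in $\pi_1(Y)$; symmetrically for $K_2$, giving $K_1\times K_2\subseteq\ker(\rho_*)$. (The paper phrases this by saying $\pi_1(Y)$ is generated by the commuting subgroups $H_i:=\rho_*\big(\pi_1(X_i\times\{a_{3-i}\})\big)$, each isomorphic to $\pi_1(Y_i)$ via these fibres, and that the resulting morphism $H_1\times H_2\to\pi_1(Y)$ inverts $q_*$.)

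Conversely, the step you single out as the main obstacle, $\ker(\rho_*)\subseteq K_1\times K_2$, is the trivial one, and the covering-space argument you sketch for it is unsound: finite \'etale covers of a product $X_1\times X_2$ do not in general split as products of covers of the factors (a finite-index subgroup of $\pi_1(X_1)\times\pi_1(X_2)$ need not be a product of subgroups; this already fails for a product of two elliptic curves), so the K\"unneth-type splitting you invoke is false, and retreating to the case where the $X_i$ are tori neither repairs it nor covers the lemma as stated for arbitrary complex manifolds. Once the two inclusions are assigned their correct proofs as above, no such splitting is needed and your framework does yield the lemma, including the final simple-connectedness statement.
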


\begin{proof} Let $r:X\to Y$ be the quotient map. Since $a:=(a_1,a_2)$ is fixed by $G$, the map $r_*:\pi_1(X)\to \pi_1(Y)$ is surjective (the base-points being $a$ and $r(a)$). Since $\pi_1(X)$ is the direct product of the $\pi_1(X_i)$, $\pi_1(Y)$ is generated by its subgroups $H_1:=r_*(\pi_1(X_1\times \{a_2\}))$ and $H_2:=r_*(\pi_1(\{a_1\}\times X_2))$.These subgroups are isomorphic to $\pi_1(Y_1)$ and $\pi_1(Y_2)$ respectively, since the $q_2$ and $q_1$-fibres of $Y$ over $b_2:=q_2\circ r (a_2)$ and $b_1:=q_1\circ r(a_1)$ respectively are isomorphic to $Y_1$, $Y_2$. Here $q_i:Y\to X_i,i=1,2$ are the compositions of $q:Y\to Y_1\times Y_2$ with the projections on the factors. We thus obtain a group morphism: $H_1\times H_2\to \pi_1(Y)$ which is an inverse to $q_*$.
\end{proof}

\section{Remarks on Special Manifolds.}\label{Sspec}

For more details on the orbifold notions, see  \cite{Ca04} and \cite{Ca11}.

We briefly recall the (conditional in the conjecture $C_{n,m}^{orb}$ of \cite{Ca04}) decomposition of the core map $c=(J\circ r)^n, n=dim(X)$ described in this text, 
by means of iterated fibrations $J\circ r$, the fibrations $r,J$ being defined as follows, for any smooth geometric orbifold $(X,\Delta)$.

$\bullet$ $r:(X,\Delta)\to (R,\Delta_r)$ is the birationally unique fibration such that its orbifold base $(R,\Delta_r)$ has $\kappa\geq 0$, and its generic (smooth) orbifold fibres $(X_r,\Delta_{\vert X_r})$ have $\kappa^+=-\infty$.
The meaning of $\kappa^+(Y,\Delta_Y)=-\infty$ is that for any rational fibration $f:(Y,\Delta_Y)\dasharrow (Z,\Delta_f)$, one has: $\kappa(Z,\Delta_f)=-\infty$. The conjecture $C_{n,m}^{orb}$ implies the existence of $r$.

When $\Delta=0$, $r: X=(X,0)\to (R,\Delta_r)$ is nothing else but the usual `rational quotient' (or MRC fibration) of $X$, with $\Delta_r=0$ by \cite{GHS}.

$\bullet$ $J:(X,\Delta)\to (J,\Delta_J)$ is the Moishezon-Iitaka fibration, defined when $\kappa(X,K_X+\Delta)\geq 0$, by sections of suitably divisible multiples of $(K_X+\Delta)$. Its generic orbifold fibres have $\kappa=0$, and $dim(J)=\kappa(X,K_X+\Delta)$.

We may assume that $r$ and $J$ are regular, replacing $(X,\Delta)$ by a suitable orbifold birational model $(X',\Delta')$ (obtained by putting large enough multiplicities on the exceptional divisors of the modification $X'\to X$, and the original ones on the strict transform of $\Delta)$. Such an orbifold modification does not change the sections of the orbifold cotangent tensors). It coincides with the usual Moishezon-Iitaka fibration when $\Delta=0$.

Then $c:=(J\circ r)^n:(X,\Delta)\to (C, \Delta_c)$ is the `core map' of any $n$-dimensional $(X,\Delta)$, with its orbifold base $(C,\Delta_c)$ of general type, and orbifold fibres special. In particular, $C$ is a point if and only if $(X,\Delta)$ is special.

The relative versions of the core map and its decomposition are also true: if $f:X\to Y$ is any fibration (with $X,Y$ compact K\"ahler smooth), the maps $J_f,r_f$ are also defined over the base $Y$, inducing $J,r$ on the general smooth fibres of $f$, and give the core decomposition of the general fibre of $f$. Recall that the existence of the maps $r$ is conditional in the Conjecture $C_{n,m}^{orb}$.

We shall now show how Conjecture \ref{Conjspec} claiming that $\nu_1^*(X)=\kappa_1^*(X)=q'(X)-n$ could be possibly proved\footnote{There are several quite serious difficulties to overcome, especially with the multiple fibres and orbifold divisors.}, when $X$ is a special compact K\"ahler $n$-fold. 

We may and shall assume that $q(X)=q'(X)$, by replacing $X$ with a suitable finite \'etale cover, which is still special, by \cite{Ca04}. The Albanese map $a_X:X\to A_X$ is a fibration with special smooth fibres, after \cite{CC}, and we can apply to it the relative $J\circ r$ decomposition.

Assume\footnote{In order to see what already happens in the much simpler case of absence of multiple fibres.} that (possibly after a finite \'etale cover), the orbifold fibres of the fibrations $J,r$ in the relative decomposition sequence $c=(J\circ r)^n$ of the core map of $a_X$  all have zero orbifold divisors\footnote{This might follow from the `Abelianity conjecture', claiming that $\pi_1(X)$ is virtually abelian if $X$ is special.}. The manifold $X$ thus appears as a tower of fibrations with fibres either rationally connected, or with $\kappa=0$. The latter ones conjecturally admit good minimal models, and thus fibre over their Albanese varieties with fibres having $\nu_1=-\infty$. Conditionally in this good minimal model conjecture, we thus inductively only need to prove that if $f:Y\to Z$ is a term of this decomposition,  with trivial orbifold base, and with smooth fibres either rationally connected, or with $\kappa=0$ and $\nu_1=-\infty$, or complex tori, then $\nu_1(Y)+dim(Y)=\nu_1(Z)+dim(Z)$ since $q'(X)=q'(Y)=q'(Z)$.

When the smooth fibres of $f$ are rationally connected (resp. have $\nu_1=-\infty$), this follows from Theorem \ref{tRQ} (resp. Corollary \ref{cRQ}, since the orbifold divisor of the orbifold base is assumed to be zero, the hypothesis 3 of this Corollary is satisfied). The (highly) non-trivial case is when $f:Y\to Z$ has smooth fibres complex tori. Even when $dim(Y)=2,dim(Z)=1$, some cases are still open (see \cite{HP20} and \S\ref{Ssurf} below).

We raise this explicitly as a possibly tractable problem.

\begin{question}\label{qalb} Let $f:Y\to Z$ be a fibration with $Y,Z$ smooth projective. Assume that $q'(Y)=q'(Z)$, that the smooth fibres of $f$ are Abelian varieties, and that $f$ has no multiple fibre in codimension one. Do we then have: $\nu_1^*(Y)+dim(Y)=\nu_1^*(Z)+dim(Z)$? Is this true at least in the two extreme cases when $f$ is either isotrivial, or has maximal variation?
One can probably reduce the general question to these two extreme cases in general.
\end{question}

The computation of $\kappa_1^*$ might be much easier (replacing $\nu_1$ by $\kappa_1$ in the preceding question) when the variation of $f$ is maximal. When fibre and base are one-dimensional, this is true as a consequence of the next proposition \ref{exnu>k}, proved below and already stated in Example \ref{nu>k}. This is due to Atiyah when $X$ is an elliptic curve (used in \cite{Sak}, Example 3), by a similar use of the Clebsch-Gordan formula. 

\begin{proposition}\label{exnu>k} Let $0\to \mathcal{O}_X\to E\to \mathcal{O}_X\to 0$ be a non-split extension, and let $A$ be an ample line bundle such that $h^1(X,A)=0$ on $X$, smooth projective. Then:

1. $h^0(X,Sym^m(E))=1,\forall m\geq 0$.

2. $h^0(X,Sym^m(E)\otimes A)=(m+1).h^0(X,A),\forall m\geq 0$.

In particular: $\kappa(X,E)=-1<\nu(X,E)=0$.
\end{proposition}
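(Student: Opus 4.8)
First I would record the structural observation that underlies everything: since the sub-line bundle $\mathcal{O}_X\hookrightarrow E$ is nowhere vanishing, it gives a section $s\in H^0(X,E)$ with $E/\mathcal{O}_X s\cong\mathcal{O}_X$, so that multiplication by $s$ produces, for every $m\geq 1$, a short exact sequence
\[
0\lra Sym^{m-1}(E)\xrightarrow{\ \cdot s\ }Sym^m(E)\xrightarrow{\ Sym^m(q)\ }\mathcal{O}_X\lra 0 ,
\]
where $q\colon E\twoheadrightarrow\mathcal{O}_X$ is the quotient map; iterating, $Sym^m(E)$ is an iterated extension of $m+1$ copies of $\mathcal{O}_X$. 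I would then prove both (1) and (2) by induction on $m$ from this sequence, the case $m=0$ being trivial.

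For (1), the cohomology sequence reads $0\to H^0(Sym^{m-1}E)\to H^0(Sym^m E)\to H^0(\mathcal{O}_X)\xrightarrow{\delta_m}H^1(Sym^{m-1}E)$, so by the inductive hypothesis it suffices to show $\delta_m\neq 0$. Writing $\xi\in H^1(X,\mathcal{O}_X)\setminus\{0\}$ for the class of the (non-split) extension and choosing, on an open cover, local lifts $s'_U\in E(U)$ of $1\in\mathcal{O}_X=E/\mathcal{O}_X s$ (so $s'_U-s'_V=c_{UV}\,s$ with $[c_{UV}]=\xi$), the local sections $(s'_U)^m$ lift $1\in H^0(\mathcal{O}_X)$, and a binomial expansion in the symmetric algebra gives $(s'_U)^m-(s'_V)^m=s\cdot\omega_{UV}$ with $\omega_{UV}\equiv m\,c_{UV}(s'_V)^{m-1}\pmod{s}$. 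Hence the $1$-cocycle $(\omega_{UV})$ representing $\delta_m(1)$ is carried by $Sym^{m-1}(q)\colon Sym^{m-1}(E)\to\mathcal{O}_X$ to the cocycle $(m\,c_{UV})$; that is, the image of $\delta_m(1)$ in $H^1(X,\mathcal{O}_X)$ is $m\xi\neq 0$, so $\delta_m\neq 0$ and $h^0(Sym^m E)=h^0(Sym^{m-1}E)=1$. This connecting-map identification is the only genuinely delicate step of the argument.

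For (2), tensor the displayed sequence with $A$. Since $A$ occurs as each of the $m$ successive graded pieces of a filtration of $Sym^{m-1}(E)\otimes A$, repeated use of the inequality $h^1(\mathcal{F})\leq h^1(\mathcal{F}')+h^1(\mathcal{F}'')$ for short exact sequences, together with $h^1(X,A)=0$, gives $H^1(X,Sym^{m-1}(E)\otimes A)=0$. Therefore $0\to H^0(Sym^{m-1}E\otimes A)\to H^0(Sym^m E\otimes A)\to H^0(A)\to 0$ is exact and, by induction, $h^0(Sym^m E\otimes A)=h^0(Sym^{m-1}E\otimes A)+h^0(A)=(m+1)h^0(A)$.

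Finally I would pass to $P:=\mathbb{P}(E)$ with $\pi\colon P\to X$ and $L_E=\mathcal{O}_P(1)$, using $\mathrm{Pic}(P)=\mathbb{Z}L_E\oplus\pi^*\mathrm{Pic}(X)$ and $H^0(P,mL_E+\pi^*N)=H^0(X,Sym^m(E)\otimes N)$ for $m\geq 0$. By (1), $h^0(P,mL_E)=1$ for all $m$, so $\kappa(P,L_E)=0$ and $\kappa(X,E)=0-(r-1)=-1$. For the numerical dimension: applying (2) to a very ample $A$ with $h^1(X,A)=0$ (so $h^0(A)\geq 1$), $h^0(P,mL_E+\pi^*A)=(m+1)h^0(A)$ grows linearly, whence $\nu(P,L_E)\geq 1$; conversely, any effective divisor $D$ on $P$ satisfies $\mathcal{O}_P(D)\cong\mathcal{O}_P(aL_E)\otimes\pi^*N$ with $a\geq 0$ (restrict to a fibre), so $h^0(P,mL_E+D)=h^0(X,Sym^{m+a}(E)\otimes N)\leq (m+a+1)\,h^0(X,N)$ is at most linear in $m$, giving $\nu(P,L_E)\leq 1$. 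Thus $\nu(P,L_E)=1$ and $\nu(X,E)=1-(r-1)=0>-1=\kappa(X,E)$.
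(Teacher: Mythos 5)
Your argument is correct, but it follows a genuinely different route from the paper's. The paper runs everything through the rank-two Clebsch--Gordan identity $Sym^m(E)\otimes E\cong Sym^{m+1}(E)\oplus Sym^{m-1}(E)$ (valid since $\det E$ is trivial): tensoring the defining sequence with $Sym^m(E)$ gives $2\,h^0(Sym^m(E))\geq h^0(Sym^{m+1}(E))+h^0(Sym^{m-1}(E))$, and induction starting from $h^0(\mathcal{O}_X)=h^0(E)=1$ yields claim 1, so non-splitness is used only once, through $h^0(E)=1$, with no cocycle computation; claim 2 is the same inequality tensored with $A$, carrying $h^1(Sym^m(E)\otimes A)=0$ along in the induction. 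You instead use the multiplication-by-$s$ filtration $0\to Sym^{m-1}(E)\to Sym^m(E)\to\mathcal{O}_X\to 0$, which forces you to show the connecting map is nonzero for \emph{every} $m$; your \v{C}ech computation identifying its image in $H^1(\mathcal{O}_X)$ as $m\xi\neq 0$ is correct (and is where characteristic zero enters for you, much as it does for Clebsch--Gordan in the paper). What your route buys: it does not depend on the rank-two, trivial-determinant miracle, it gives the $H^1$-vanishing in claim 2 at once by subadditivity along the filtration, the same filtration yields the bound $h^0(Sym^{m+a}(E)\otimes N)\leq (m+a+1)h^0(N)$ needed for the upper estimate $\nu(P_E,L_E)\leq 1$, and you make explicit the passage to $\mathbb{P}(E)$ (using $\mathrm{Pic}(P_E)=\mathbb{Z}L_E\oplus\pi^*\mathrm{Pic}(X)$ and restriction to a generic fibre to get $a\geq 0$) deducing $\kappa(X,E)=-1$ and $\nu(X,E)=0$, a step the paper leaves implicit. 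What the paper's route buys is brevity: no connecting-map identification and a single use of the extension class.
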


\begin{proof} Everything follows easily from (a small part of the special case when $det(E)=0$) of the Clebsch-Gordan formula for $m\geq 1$: 

$Sym^m(E)\otimes E=Sym^{m+1}(E)\oplus Sym^{m-1}(E)$, with $Sym^0(E)=\mathcal{O}_X$.

Claim 1. We have an exact sequence:

$0\to Sym^m(E)\to Sym^m(E)\otimes E\to Sym^m(E)\to 0,$ 

which implies that $h^0(X, Sym^m(E))\otimes E)\leq 2.h^0(Sym^m(E))$, and so

$2.h^0(X, Sym^m(E))\geq h^0(X, Sym^{m+1}(E))+h^0(X, Sym^{m-1}(E))$.

Since $h^0(X,Sym^m(E))=1$ for $m=0,1$, we get inductively:

$h^0(X,Sym^m(E))= 1$. 

Claim 2. The proof is entirely similar, tensoring the exact sequences with $A$, and using the fact that $h^1(X,A)=0$. The induction step includes however, not only that $h^0(X,Sym^m(E)\otimes A)\leq (m+1).h^0(X,A)$, but also that $h^1(X,Sym^m(E)\otimes A)=0,\forall m\geq 0$. The easy details are left to the reader.
\end{proof}

\section{Remarks on Surfaces not of general type.}\label{Ssurf}

We consider here smooth compact K\"ahler surfaces not of general type (see Remark \ref{rem ample} for simple remarks on these). They have already been thoroughly studied in \cite{HP20} from the same viewpoint as here, with which the present section overlaps considerably (and partially rests). Recall (\cite{Ca04}) that a compact K\"ahler surface $X$ not of general type is `special' if and only if no finite \'etale cover of $X$ fibres over a curve of genus at least $2$, or equivalently if $q'(X)\leq 2$.

Recall the statement:

\begin{proposition}\label{psurf'} If $X$ is a smooth projective surface not of general type, then $\nu_1(X)=\kappa_1(X)$, unless possibly when $\kappa(X)=1$, the elliptic fibration is not isotrivial, and $q'(X)>0$. 

Moreover, $\kappa_1(X)=q'(X)-2$ if $X$ is special. 

If $X$ is not special,  replacing $X$by a suitable finite \'etale cover\footnote{Except in the easy case when $f:X\to \Bbb P^1$ has two multiple fibres of different multiplicities.This particular case is easily treated by going to a cyclic (ramified) cover over the two multiple fibres, which fibres over $\Bbb P_1$ without multiple fibre.}, there is a fibration $f:X\to C$ over a curve $C$ of genus at least $2$ with smooth fibres $F$ either rational or elliptic. If $F$ is rational, $\kappa_1(X)=0$. If $F$ is elliptic, $\kappa_1(X)=0$ if $f$ is not isotrivial, and $\kappa_1(X)=1$ if $f$ is isotrivial.
\end{proposition}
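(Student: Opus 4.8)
The plan is to reduce everything to a classification of smooth compact Kähler (in fact we may take projective) surfaces not of general type according to their Kodaira dimension $\kappa(X)\in\{-\infty,0,1\}$, invoke the stability of $\nu_1^*,\kappa_1^*,q'$ under finite étale covers and bimeromorphic modifications, and compute case by case, citing \cite{Sak} for the $\kappa_1$-values and \cite{HP20} for the analytic input needed when $\nu_1(X)=-\infty$. First I would dispose of $\kappa(X)=-\infty$: then $X$ is uniruled, hence (after étale cover and blow-downs) either rational/ruled over $\mathbb P^1$ or a ruled surface over a curve $C$ of genus $g\geq 1$; in the ruled case Theorem \ref{tRQ} (for a $\mathbb P^1$-bundle, with $b=1$) gives $\nu_1^*(X)=\nu_1^*(C)-1$ and $\kappa_1^*(X)=\kappa_1^*(C)-1$, and for a curve $\nu_1=\kappa_1$ always, so the equality holds; $q'(X)=q'(C)$ equals $0,2g$ or $\infty$ according to $g=0$, $g=1$ or $g\geq 2$, and the stated formula $\kappa_1=q'-2$ in the special case ($g\leq 1$) is immediate. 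For the non-special subcase one passes to the étale cover fibering over a curve of genus $\geq 2$ with rational fibres, where $\kappa_1(X)=\kappa_1^*(C)-1+1=0$ by Theorem \ref{tRQ}.

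Next I would treat $\kappa(X)=0$. Here Theorem \ref{c_1=0} applies directly (a minimal surface with $\kappa=0$ has $c_1=0$ on its minimal model, which has canonical, hence klt, singularities — in fact the minimal model is smooth): it yields $\nu_1^*(X)=\kappa_1^*(X)=q'(X)-2$ with no exceptional case, and $q'(X)\leq 2$ so $X$ is special, consistent with the claimed formula. The bulk of the work, and the only place where an exceptional case survives, is $\kappa(X)=1$, i.e. $X$ has an elliptic fibration $f\colon X\to C$ which (after passing to the minimal model and a suitable blow-up) we may take relatively minimal, and after a finite étale base change we may further assume $f$ has no multiple fibres in codimension one except possibly handling the two-multiple-fibre $\mathbb P^1$ case by a ramified cyclic cover as in the footnote. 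If $f$ is \emph{isotrivial}, I would give the short argument announced: after an étale cover $X$ becomes (birational to) a product $F\times C'$ or a quotient by a fixed-point-free group, the generic fibre $F$ being a fixed elliptic curve, $\Omega^1_X$ contains a subbundle pulled back from $C$ together with the relative part which is (virtually) trivial along fibres, so $\kappa_1^*(X)=\kappa_1^*(C,\Delta_f)$ for the orbifold base, and since the orbifold base has general type exactly when $q'(X)$ would force it (genus $\geq 2$ or enough multiple fibres), a Clebsch–Gordan/Atiyah-type computation as in Proposition \ref{exnu>k} together with Theorem \ref{tRQ} applied to the orbifold base gives $\nu_1=\kappa_1$, equal to $1$ when the base orbifold has $\kappa\geq 0$ and to $q'(X)-2$ in the special range. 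If $f$ is \emph{not} isotrivial, then $\kappa_1(X)$ is computed by Sakai: when the orbifold base $(C,\Delta_f)$ does \emph{not} have pseudoeffective canonical bundle one shows $\kappa_1(X)=\nu_1(X)=0$ using that the only symmetric differentials come from the (then negative) orbifold base direction; this is where \cite{HP20} supplies the $\nu_1=-\infty$ computation on fibres and one glues via Corollary \ref{cRQ}; when $K_C+\Delta_f$ is pseudoeffective, one only gets $\kappa_1(X)=1$ (sections of $K_C+\Delta_f$ pull back) but $\nu_1(X)$ could a priori be $2$, and this is precisely the case left open.

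**Main obstacle.** The hard part is exactly the non-isotrivial $\kappa=1$ case with pseudoeffective orbifold base: here the variation of $f$ produces a positive contribution of the relative differentials to the numerical dimension of $\mathcal O_{\mathbb P(\Omega^1_X)}(1)$ that I cannot currently separate from a genuine growth of sections, so I would state this as the open case and record the suggested approach — prove Conjecture \ref{ConjK} for the threefold orbifold pair $\big(\mathbb P(\Omega^1_X),D\big)$ with $D$ an effective $\mathbb Q$-divisor in $|\mathcal O_{\mathbb P(\Omega^1_X)}(2)|$ — rather than resolve it. A secondary, purely bookkeeping, obstacle is the orbifold base: Remark \ref{Delta} shows that without tracking $\Delta_f$ the gluing formulas fail, so every application of Theorem \ref{tRQ}/Corollary \ref{cRQ} in the $\kappa=1$ (and in the multiple-fibre $\kappa=-\infty$) cases must be replaced by its orbifold analogue, using $\mathrm{Sym}^{[m]}(\Omega^1(C,\Delta_f))$; the reduction-to-no-multiple-fibres étale trick of the footnote is what keeps this manageable.
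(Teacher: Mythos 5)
Your overall skeleton (case division by $\kappa(X)$, invariance under \'etale covers and birational maps, Sakai for the $\kappa_1$-values, \cite{HP20} for the non-isotrivial genus-zero case, and leaving open exactly the non-isotrivial case with pseudoeffective orbifold base) is the same as the paper's. But there is a genuine gap in your isotrivial $\kappa=1$ case. An isotrivial elliptic surface with $\kappa(X)=1$ is in general \emph{not}, even after a finite \'etale cover, a product $F\times C'$ or a free quotient: it is only birational to $(F\times C')/G$ with $G$ acting \emph{with fixed points} on $C'$ whenever $f$ has a singular (non-multiple) fibre, e.g.\ of type $I_0^*$; the quotient map is then ramified and your structural claim fails. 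Moreover your proposed gluing tools do not apply there: Theorem \ref{tRQ} and Corollary \ref{cRQ} require the fibres to be rationally connected, resp.\ to have $\nu_1=-\infty$, whereas an elliptic fibre has $\nu_1=0$ -- indeed the torus-fibre case is precisely what the paper flags as the hard problem (Question \ref{qalb}), so it cannot be invoked as a black box. When $g(C)\geq 2$ the equality $\nu_1=\kappa_1$ in the isotrivial case is cheap ($\kappa_1=1=n-1$, Remark \ref{rnu=k"}), but for $g(C)\leq 1$ with at least one singular fibre you need a genuine argument; the paper supplies it by an explicit local analysis of $G$-invariant sections of $Sym^m(\Omega^1)$ on the cyclic-quotient model near each singular fibre (Lemma \ref{singfib}), which yields the uniform bound $Sym^m(\Omega^1_X)\otimes A\subset \oplus_p f^*(K_X^{m-p}\otimes(-[\frac{p}{N}]L)\otimes H)^{\oplus R}$ and hence $\nu_1=\kappa_1$ (Corollary \ref{surfk=1}). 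Nothing in your proposal replaces this computation, and this subcase is part of the statement (the exception clause excludes only the non-isotrivial case).

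Two further, smaller, corrections. First, in the non-isotrivial case without multiple fibres the computation $\kappa_1^*(X)=\kappa_1^*(C)-1$ is \emph{not} in \cite{Sak} for $g(C)\leq 1$: the paper obtains it from $f_*(Sym^m\Omega^1_X)=f_*\bigl(f^*(mK_C)^{sat}\bigr)$ together with Lemma \ref{lemmapsf} (divisors partially supported on the fibres) and Lemma \ref{inject}; and when the orbifold base is not pseudoeffective the correct conclusion is $\kappa_1(X)=\nu_1(X)=-\infty$ (i.e.\ $\kappa_1^*=-2$), the $\nu_1$-part being exactly \cite{HP20}'s analytic theorem applied to $X$ itself, not a fibrewise statement glued by Corollary \ref{cRQ}. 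Second, $q'$ of an elliptic curve is $1$ (the maximal irregularity of \'etale covers), not $2g=2$; with your value the formula $\kappa_1=q'-2$ would fail already for a ruled surface over an elliptic curve, where $\kappa_1^*=-1$.
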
 

\begin{remark} The cases in which Conjecture \ref{ConjO} is left open are the following ones: $\kappa(X)=1$, and after a finite \'etale cover, the elliptic fibration $f:X\to C$ has no multiple fibre, and $g(C)\geq 1$. In these cases, $\kappa_1(X)=-1$ if $g(C)=1$, and $\kappa_1(X)=0$ if $g(C)\geq 2$.
\end{remark}

\begin{proof} When $\kappa(X)=-\infty$, $X$ is birationally a product $\Bbb P_1\times C$, thus $\nu_1^*(X)=\kappa_1^*(X)=\kappa^*(\Bbb P_1)+\kappa_1^*(C)=-1+min\{1,(g-1)\}$.

If $\kappa=0$, $X$ is birationally, after a finite \'etale cover, either a complex torus, or a $K3$ surface. In the first case, $\nu_1(X)=0$, in the second case, $\nu_1(X)=-\infty$ (by \cite{HP18}, for example).

The main case is thus when $\kappa(X)=1$. In this case, replacing $X$ by a suitable finite \'etale cover, the elliptic fibration $f:X\to C$ has no multiple fibre\footnote{Except in the easy case when $f:X\to \Bbb P^1$ has two multiple fibres of different multiplicities, treated as before.}.

We have different cases, according to the genus $g$ of $C$, and to whether or not $f$ is isotrivial.

$\bullet$ First case: $f$ is not isotrivial. 

We then have: $\kappa_1^*(X)= \kappa_1^*(C)-1=min\{1,(g-1)\}-1$. Thus $\kappa_1^*(X)=-2,-1,0$ if $g=0,1$, or $g\geq 2$ respectively. This follows from \cite{Sak}, Example 3, when $g(C)\geq 2$, but remains true if $g(C)\leq 1$. Indeed, since $\kappa(F, E)=-1$ when $F$ is an elliptic curve and $E$ is a non-split extension of two trivial line bundles over $F$ (see Proposition \ref{exnu>k}), we have $f_*(Sym^m(\Omega^1_X))=mK_C$ over the Zariski-open subset of $C$ over which $f$ is submersive. We thus have: $f_*(Sym^m(\Omega^1_X))=f_*(f^*(mK_C)^{sat})$, where $f^*(mK_C)^{sat}$ is the saturation of $f^*(mK_C)$ inside $Sym^m(\Omega^1_X)$. Since $f$ has no multiple fibre\footnote{We implicitely use the fact that `classical' and `non classical' multiplicities coincide for elliptic fibrations.} by assumption, the quotient $f^*(mK_C)^{sat}/f^*(mK_C)$ is  a divisor $E$ `partially supported on the fibres of $f$' (see Definition \ref{PSF}), and so one has, by Lemma \ref{lemmapsf} (of which only the easy case $dim(T)=2, dim(W)=1$ is used here): $f^*(mK_C)^{sat}\subset f^*(mK_C)\otimes \mathcal{O}_X(sE)$, for a certain positive integer $s$ independent of $m$. Thus $f_*(f^*(mKC)^{sat})\subset mK_C\otimes f_*(\mathcal{O}_X(sE))\subset (mK_C\otimes H)^{\oplus R}$, for a certain $H$ ample on $C$, and $R>0$, by Lemma \ref{inject}. This shows the claim since $\nu(K_C)=\kappa(K_C)$ for every curve $C$.

The remaining problem is the equality $\nu_1(X)=\kappa_1(X)$. 

If $g=0$, and if $\kappa_1=-\infty$, this is proved  in \cite{HP20}, Theorem 1.2, using analytic methods (see Proposition 5.4 in the non-isotrivial case).

We are thus left with the two cases $g=1,g\geq 2$.

If $g=1$, we must show that $\nu_1^*(X)=-1$, thus exclude the existence of an ample $A$ on $X$ such that $h^0(X, Sym^m(X)\otimes A)$ grows at least linearly with $m$. 

If $g\geq 2$, we must show that $\nu_1^*(X)=0$, thus exclude the existence of an ample $A$ on $X$ such that $h^0(X, Sym^m(X)\otimes A)$ grows at least quadratically with $m$. 

The difficulty is that, if $F$ is a smooth fibre of $f:X\to C$, then $E:=\Omega^1_{X\vert F}$ is a non-split extension of two trivial line bundles, and so (see proposition \ref{exnu>k}) $h^0(F,Sym^m(E)\otimes A)=(m+1).h^0(F,A), \forall m>0,$ $A$ ample on $F$, while $h^0(F,Sym^m(E))=1$. The equality $\nu_1(X)=\kappa_1(X)$ cannot thus be deduced (at least in an immediate way) from local arguments near the generic smooth fibre, as the ones given above for computing $\kappa_1$.

We leave these two cases open.

$\bullet$ Second case: $f$ is isotrivial. 

If $g(C)\geq 2$, it follows from \cite{Sak}, Example 3, that $\kappa_1(X)=1$. Remark \ref{rnu=k"} thus implies that $\nu_1(X)=\kappa_1(X)$ in this case. We shall recover this equality also when $g(C)=1$ by a more explicit method.

Let $F$ be a smooth (elliptic) fibre of $f:X\to C$.  When $C=\Bbb P_1$ and $F$ has no automorphism other than translations, the equality $\nu_1=\kappa_1$ is shown in \cite{HP20} by expressing $X$ as a global quotient $(F\times C')/G$. We shall prove this for any $F$, and compute $\kappa_1(X)=\nu_1(X)$ by a different method, analyzing the sections of $Sym^m(\Omega^1_X)$ near a singular fibre of $f$ (if any), by means of a computation similar to the one made to prove Theorem \ref{Cyclic}. We may indeed assume that $X$ is minimal, and has at least one singular fibre, since otherwise $X$ has a finite \'etale cover which is a product $F\times C'$. We are thus locally near such a singular fibre in the following situation.

\begin{lemma}\label{singfib} Assume that $f:X\to C$ is isotrivial with smooth fibres $F$, $X$ being minimal. Let $a\in C$ be a point over which $f:X\to C$ has a singular fibre. Let $a\in U\subset C$ be a small disc around $a$, and assume that $X_U:=f^{-1}(U)$ is a minimal resolution of $(U\times F)/G$, where $G$ is a cyclic group of order $N=2$ or $3$, generated by $g$, and acting on $U\times F$ by $g.(s,x)=(\zeta.s,\zeta^{\varepsilon}x)$, where $\zeta$ is a primitive $N$-th root of $1$, and $\varepsilon\in \{1,-1\}$. 

Let $w:=\sum_{p=0}^{p=m}c_p(s)(ds)^p.(dx)^{m-p}$ be a holomorphic $G$-invariant section of $Sym^m(\Omega^1_{U\times F})$. Then $w$ descends to a holomorphic section of $Sym^m(\Omega^1_{X_U})$ if and only if:

1. $w=\sum_{p=0}^{p=m} a_p(s^N)(d(s^N))^{m-p}.(s^{N-1}.dx)^{p}$ if $\varepsilon=1$,

2. $w=\sum_{p=0}^{p=m} a_p(s^N)(d(s^N))^{m-p}.(s.dx)^{p}$ if $\varepsilon=-1$, 

the $a_p's$ being holomorphic in both cases.

3.Thus, if $a_1,\dots, a_r$ are the points of $C$ over which $f$ has singular fibres, $Sym^m(\Omega^1_X)\subset f^*(\oplus_{p=0}^{p=m}(K_X^p)\otimes \mathcal{O}_C(-[\frac{m-p}{N}].(a_1+\dots+a_r)))$.\end{lemma}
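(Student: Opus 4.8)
The statement is local over $C$, so I would work near one point $a=a_i\in C$ carrying a singular fibre. After shrinking $U$ and linearising the order-$N$ automorphism of $F$ at each of its fixed points, $X_U$ is, near each of the finitely many singular points of $(U\times F)/G$ lying over $a$, a minimal resolution of $\mathbb{C}^2/G$ where $G=\langle g\rangle$ acts by $g\cdot(s,x)=(\zeta s,\zeta^{\varepsilon}x)$, i.e.\ of the cyclic quotient singularity $\tfrac1N(1,1)$ when $\varepsilon=1$ and $\tfrac1N(1,N-1)=A_{N-1}$ when $\varepsilon=-1$. On $U\times\Delta$ the sheaf $\Sym^m(\Omega^1)$ is free with basis $(ds)^p(dx)^{m-p}$, and $g^*(ds)=\zeta\,ds$, $g^*(dx)=\zeta^{\varepsilon}dx$; hence $w=\sum_{p}c_p(s)(ds)^p(dx)^{m-p}$ is $G$-invariant exactly when $c_p(\zeta s)=\zeta^{-p-\varepsilon(m-p)}c_p(s)$, i.e.\ when $c_p(s)=s^{r_p}\phi_p(s^N)$ with $0\le r_p<N$, $r_p\equiv -p-\varepsilon(m-p)\pmod N$, and $\phi_p$ holomorphic. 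This is the "invariance" half; the content of the lemma is the extra vanishing forced by descent to $X_U$.

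To get this I would mimic the proof of Theorem \ref{Cyclic}. Introduce the weighted blow-up chart $\pi:(y_1,z)\mapsto(s,x)=(y_1,z\,y_1^{w})$ with $w=1$ if $\varepsilon=1$ and $w=N-1$ if $\varepsilon=-1$; it is $g$-equivariant for $g\cdot(y_1,z)=(\zeta y_1,z)$, and, together with the cyclic cover $q':(y_1,z)\mapsto(y,z)=(y_1^N,z)$, it exhibits $(y,z)$ as a chart of the resolution (for $\varepsilon=1$ with $N\le 3$, or for $N=2$, this chart is already smooth; for $N=3,\varepsilon=-1$, i.e.\ $A_2$, the weighted blow-up leaves one residual node, so there I would use the full minimal resolution — a chain of two $(-2)$-curves — and run the computation below along each exceptional curve). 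Two observations reduce the problem to an algebraic divisibility condition: first, $\Sym^m(\Omega^1_{X_U})$ is locally free, so $w$ descends iff $\pi^*w$ has no pole along the exceptional divisor(s), which is checked at their generic points; second, since $q'$ is a cyclic cover branched along $\{y=0\}$, comparison of logarithmic forms shows that the $G$-invariant holomorphic sections of $\Sym^m(\Omega^1_{(y_1,z)})$ are precisely the pullbacks by $q'$ of the holomorphic sections of $\Sym^m(\Omega^1_{(y,z)})$. As $\pi^*w$ is automatically holomorphic and $G$-invariant on $(y_1,z)$, the descent condition is exactly that $\pi^*w$ lies in the image of $q'^*$.

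I would then expand, with $dx=d(z y_1^{w})=y_1^{w}dz+w\,z\,y_1^{w-1}dy_1$,
$$\pi^*w=\sum_{p}y_1^{r_p}\phi_p(y_1^N)(dy_1)^p\,(y_1^{w}dz+w\,z\,y_1^{w-1}dy_1)^{m-p},$$
collect the coefficient of each basis monomial $(dy_1)^{m-k}(dz)^{k}$, and impose that it be divisible by $N^{m-k}y_1^{(N-1)(m-k)}$ with quotient holomorphic in $(y_1^N,z)$ (the shape of the corresponding coefficient of $q'^*$ of a section). Exactly as the "coefficient of $(dy_1)^{\otimes m}$" step in Theorem \ref{Cyclic}, and because the powers of $z$ occurring there separate the contributions of distinct $p$, this forces each $\phi_p$ to vanish at $0$ to the order $\tfrac1N\bigl(m(N-w)+(w-1)p-r_p\bigr)$ (the $k=0$ monomial gives the binding constraint), i.e.\ $c_{m-p}(s)=a_p(s^N)\,N^{m-p}\,s^{(N-1)(m-p)+\varepsilon' p}$ with $a_p$ holomorphic and $\varepsilon'=N-w$, which is precisely the normal forms (1) (where $\varepsilon'=N-1$) and (2) (where $\varepsilon'=1$) after using $d(s^N)=N s^{N-1}ds$; running the computation backwards produces the section $v$ with $q'^*v=\pi^*w$, so the condition is also sufficient. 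For (3) I would globalise: over $C\setminus\{a_1,\dots,a_r\}$ the sequence $0\to f^*\Omega^1_C\to\Omega^1_X\to\Omega^1_{X/C}\to 0$ gives the filtration of $\Sym^m(\Omega^1_X)$ with graded pieces $f^*(\Omega^1_C)^{\otimes(m-p)}\otimes(\Omega^1_{X/C})^{\otimes p}$, with $\Omega^1_{X/C}\cong K_X\otimes f^*K_C^{-1}$ since $f$ has no multiple fibre; near each $a_i$ the normal form just obtained says precisely that the $p$-th component of any global section vanishes at $a_i$ to order at least $[\tfrac{m-p}{N}]$ (this is the effect of the extra factor $s^{(N-w)(m-p)}$, read on $C$ through the coordinate $t=s^N$). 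Assembling over all $p$ and all $a_i$ gives the displayed inclusion, which feeds (via Lemmas \ref{lemmapsf} and \ref{inject}, as in the $\kappa_1$ computation) into the bound for $h^0(X,\Sym^m(\Omega^1_X)\otimes A)$.

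I expect the main obstacle to be the bookkeeping of the third paragraph: extracting the exact order of vanishing of each $\phi_p$ — hence the exponent $(N-1)(m-p)+\varepsilon' p$ and, on $C$, the floor $[\tfrac{m-p}{N}]$ — uniformly across the four cases $(N,\varepsilon)\in\{(2,\pm1),(3,\pm1)\}$, and in particular checking, in the $A_2$ case, that holomorphicity of $\pi^*w$ along the first exceptional curve of the minimal resolution is correctly supplemented by the independent condition along the second, with no spurious extra vanishing imposed.
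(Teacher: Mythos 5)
Your proposal follows essentially the same route as the paper's proof: the same $g$-equivariant substitution $z=x/s^{w}$ together with the cyclic cover $(y_1,z)\mapsto(y_1^N,z)$ onto a resolution chart, the same identification of $\pi^*w$ with $q'^*$ of a holomorphic section, and the same extraction of the normal forms from the coefficient of the top power of $ds$ (your $k=0$, the paper's $h=0$), followed by the same globalisation for Claim 3. The only difference is that you are more explicit about the $G$-invariance normal form and about checking holomorphy along both exceptional curves in the $A_2$ case (which the paper dispatches with ``a completely similar computation''), and that extra care is harmless and indeed confirms the stated forms.
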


\begin{proof} Assume $\varepsilon=1$, the second case is proved by a completely similar computation (just replacing $z=\frac{x}{s}$ by $z:=\frac{x}{s^{N-1}}$). Let $(s,x)=(0,0)$ be a fixed point of $g$ on the fibre $\{0\}\times F$. Blow-up this point. We have thus coordinates $(s,z:=\frac{x}{s})$ on this blow-up. 

Then: $w=\sum_pc_p(s).ds^{m-p}.(sdz+zds)^p$

$=\sum_pc_p(s).ds^{m-p}.(\sum_{h=0}^{h=p}(C_{p}^h.s^h.z^{p-h}ds^{p-h}dz^h))$

$=\sum_{h=0}^{h=m}s^h.ds^{m-h}dz^h.(\sum_{p=h}^{p=m}C_{p}^{h}c_p(s)z^{p-h})$.

On the other hand, in the coordinates $(s^N,z)$, $w$ takes the form:

$w=\sum_{h=0}^{h=m}b_h(s^N,z).d(s^N)^{m-h}.dz^h$, with $b_h$ holomorphic. 

Identifying both expressions, we get:

$(N-1)^{m-h}.b_h(s^N,z)s^{(N-1).(m-h)}=s^h.(\sum_{p=h}^{p=m}C_{p}^{h}c_p(s)z^{p-h}),\forall h$.

For $h=0$, this gives: $(N-1)^{m}.b_0(s^N,z)s^{(N-1).(m)}=(\sum_{p=0}^{p=m}c_p(s)z^{p})$, which implies that $c_p(s)=s^{(N-1).m}a_p(s^N), \forall p\geq 0$, $a_p$ holomorphic.

Thus $w=\sum_pc_p(s).ds^{m-p}.dx^p=\sum_pa_p(s^N).s^{(N-1).m}.ds^{m-p}.dx^p=\sum_pa_p(s^N).s^{(N-1).(m-p)}.ds^{m-p}.s^{(N-1).p}dx^p$, which is Claim 1.

Claim 3. The statement follows immediately from Claims 1 and 2 if all singular fibres of $f$ arise from a cyclic quotient of order $2$ or $3$. Since a cyclic group $G$ of automorphisms of any elliptic curve has order a divisor of $6$ or $4$, Claim 3 also applies to any isotrivial elliptic fibration $f:X\to C$ without multiple fibre, since we can factor a quotient $(U\times F)/G$  as $((U\times F)/H))/(G/H)$, with $H$ of order $N=2$, $G/H$ of order $3$ or $2$, and the sections of $Sym^m(\Omega^1_{X_U})$ for $X_U$ a a smooth model of $(U\times F)/G$ lift to those of a smooth model of $(U\times F)/H$ if $G$ has order $6$ or $4$. (We could also have worked directly with these quotients at the expense of some more direct computations.)\end{proof}

\begin{corollary}\label{surfk=1} Let $f:X\to C$ be an isotrivial elliptic fibration from a minimal surface $X$. Assume that $f$ has no multiple fibre, and at least one singular fibre. Then $\nu_1(X)=\kappa_1(X)=\kappa(C)-1$ if $g(C)\leq 1$, and $\nu_1(X)=\kappa_1(X)=\kappa(C)$ if $g(C)\geq 2$.
\end{corollary}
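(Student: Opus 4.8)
The plan is to deduce Corollary~\ref{surfk=1} directly from Lemma~\ref{singfib}, in particular from its Claim~3, combined with the standard fact that for every curve $C$ one has $\nu(C,K_C)=\kappa(C,K_C)$ (so the distinction between $\nu_1$ and $\kappa_1$ disappears once everything is pushed down to $C$). First I would record the easy inequality $\nu_1^*(X)\geq \kappa_1^*(X)\geq \kappa_1^*(C)-1=\kappa(C)-1$, coming from the fact that sections of $mK_C$ pull back (via $f^*$) to sections of $\mathrm{Sym}^m(\Omega^1_X)$; and when $g(C)\geq 2$ one has moreover the better bound $\kappa_1^*(X)\geq \kappa(C)=1$, since already $f^*$ of sections of $mK_C$, together with $\mathrm{Sym}$ of the relative differential, give growth of order $m^{\dim C+\kappa_1(F)}=m^{1+0}$; alternatively one may simply invoke \cite{Sak}, Example~3, or Remark~\ref{rnu=k"} once $\kappa_1(X)=1$ is known. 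So the whole content is the reverse inequality for $\nu_1^*$.

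Next I would use Claim~3 of Lemma~\ref{singfib}: after passing, if necessary, to a finite \'etale cover (which changes none of the invariants $\nu_1^*,\kappa_1^*$) we may assume $f:X\to C$ has no multiple fibre, so that
\[
\mathrm{Sym}^m(\Omega^1_X)\ \hookrightarrow\ f^*\Bigl(\bigoplus_{p=0}^{m}K_C^{\otimes p}\otimes \mathcal{O}_C\bigl(-\lfloor\tfrac{m-p}{N}\rfloor(a_1+\cdots+a_r)\bigr)\Bigr),
\]
where $N\in\{2,3\}$ and $a_1,\dots,a_r$ are the (finitely many) points with singular fibre. Note $K_C^{\otimes p}$ appears here because $K_{X/C}$ is torsion on the elliptic fibration and $f^*K_C$ is the relevant summand; the point is that each summand is, up to a bounded twist, a pull-back of a line bundle on $C$. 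Pushing forward by $f_*$ and tensoring by an arbitrary ample $A$ on $X$ (which I replace, as usual, by an ample line bundle pulled back from $C$ plus a vertical correction absorbed by the $\lfloor\cdot\rfloor$ term, using Lemma~\ref{inject} to embed $f_*(\text{vertical sheaf})$ into $H^{\oplus R}$ for $H$ ample on $C$), I obtain
\[
h^0(X,\mathrm{Sym}^m(\Omega^1_X)\otimes A)\ \leq\ R\sum_{p=0}^{m} h^0\bigl(C,\,K_C^{\otimes p}\otimes H\bigr).
\]
Now everything is a one-dimensional estimate: if $g(C)=0$ then $K_C^{\otimes p}\otimes H$ has sections only for bounded $p$, so the sum is $O(1)$ and $\nu_1^*(X)=-1=\kappa(C)-1$. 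If $g(C)=1$ then $K_C$ is trivial, each term contributes $h^0(C,H)=O(1)$, there are $m+1$ of them, so the total is $O(m)$, giving $\nu_1^*(X)\leq 0=\kappa(C)$; but we must do slightly better — since $\deg K_C=0$ and $\kappa(C)=1$ means $g\geq 2$, in the case $g(C)=1$ we have $\kappa(C)=0$ and we want $\nu_1^*(X)=-1$. Here I would use that for $p\geq 1$, $K_C^{\otimes p}\otimes H$ and $H$ have the \emph{same} $h^0$ only up to the refinement that the genuine contribution of the $\mathrm{Sym}^m(\Omega^1_X)$ sections forces $c_p$ to vanish to high order as in Lemma~\ref{singfib}, so in fact only $p=m$ (the $(dx)^m$-direction) survives with a nonzero bounded contribution and the count is $O(1)$; this matches $\kappa_1^*(X)=-1$ computed in the non-isotrivial analysis and gives $\nu_1^*(X)=\kappa(C)-1=-1$. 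Finally if $g(C)\geq 2$ then $\deg K_C>0$, so $h^0(C,K_C^{\otimes p}\otimes H)$ grows linearly in $p$, the sum over $p\leq m$ grows like $m^2$, hence $\nu_1^*(X)\leq 2-1=1=\kappa(C)$, matching the lower bound.

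The main obstacle I anticipate is the borderline case $g(C)=1$: the crude pushforward bound only gives $\nu_1^*(X)\leq 0$, whereas we need $\nu_1^*(X)=-1$. Resolving it requires exploiting the \emph{precise} shape of the invariant sections in Claim~1/2 of Lemma~\ref{singfib} — namely that a $G$-invariant section descending to $X_U$ must have the form $\sum_p a_p(s^N)\,(d(s^N))^{m-p}(s^{N-1}dx)^p$, which at the singular fibres forces the component $c_p$ in the $f^*(K_C^{\otimes p})$-summand to be divisible by a high power of the local parameter. Combining this vanishing condition at each of the (at least one) singular points with the triviality $K_C\cong\mathcal{O}_C$ kills all the summands except one bounded-dimensional piece, so that $h^0(X,\mathrm{Sym}^m(\Omega^1_X)\otimes A)$ stays bounded. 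This is exactly the kind of argument used in the proof of Theorem~\ref{Cyclic}, and I would phrase it so as to reuse that computation rather than redo it; once it is in place, $\nu_1^*(X)=\kappa_1^*(X)=\kappa(C)-1$ in the cases $g(C)\le 1$ and $=\kappa(C)$ in the case $g(C)\ge 2$ follow at once.
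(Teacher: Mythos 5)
Your overall strategy is exactly the paper's: combine Claim 3 of Lemma~\ref{singfib} with Lemma~\ref{inject} (to replace $f_*(A)$ by $H^{\oplus R}$ with $H$ ample on $C$) and then do degree counts on $C$, where $\nu=\kappa$ is automatic. But there is a concrete gap in your $g(C)=0$ case, caused by the fact that your displayed estimate drops the twist $\mathcal{O}_C\bigl(-\lfloor\tfrac{m-p}{N}\rfloor(a_1+\cdots+a_r)\bigr)$. Without that twist, the summand $p=0$ contributes $h^0(C,H)>0$ for every $m$, so your bound only shows that $h^0(X,\mathrm{Sym}^m(\Omega^1_X)\otimes A)$ is \emph{bounded}, i.e.\ $\nu_1(X)\leq -1$. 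The Corollary asserts $\nu_1(X)=\kappa_1(X)=\kappa(C)-1=-\infty$ when $g(C)=0$ (equivalently $\nu_1^*(X)=-2$, not $-1$ as you wrote; note also $\kappa(\Bbb P_1)-1=-\infty\neq -1$), and to reach $-\infty$ you must prove eventual \emph{vanishing} of $h^0(X,\mathrm{Sym}^m(\Omega^1_X)\otimes A)$ for every ample $A$. This is precisely what the twist gives: the summand $K_C^{\otimes p}\otimes\mathcal{O}_C(-\lfloor\tfrac{m-p}{N}\rfloor(a_1+\cdots+a_r))\otimes H$ has degree $-2p-\lfloor\tfrac{m-p}{N}\rfloor r+\deg H$, which is negative for \emph{all} $0\leq p\leq m$ once $m$ is large, because the hypothesis that $f$ has at least one singular fibre guarantees $r\geq 1$. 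So the $g=0$ case is not merely a cruder constant; with the twist discarded the argument cannot give the statement, and the value you record is wrong.

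Two smaller points. First, in the $g(C)=1$ case your verbal fix is the right idea (keep the twist, use $K_C\cong\mathcal{O}_C$, conclude only boundedly many summands have nonnegative degree, each of bounded $h^0$, hence $\nu_1=-1$, matching the lower bound $\kappa_1\geq \kappa(C)-1$), but the parenthetical ``only $p=m$ (the $(dx)^m$-direction) survives'' is mislabeled: in the indexing of Claim 3 the untwisted summand $p=m$ is the purely horizontal piece $f^*(K_C^{\otimes m})$, and in fact all summands with $m-p$ bounded (those closest to the horizontal direction) survive; what matters is only that their number and their individual $h^0$ are bounded independently of $m$. Second, the phrase about the ample twist being ``absorbed by the $\lfloor\cdot\rfloor$ term'' is not the right mechanism; the correct (and your also-stated) route is the projection formula together with $f_*(A)\hookrightarrow H^{\oplus R}$ from Lemma~\ref{inject}, exactly as in the paper. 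With the twist kept throughout, all three cases $g=0$, $g=1$, $g\geq 2$ go through as you outline, and the $g\geq 2$ count giving quadratic growth and $\nu_1\leq 1=\kappa(C)$ is fine.
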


\begin{proof} The inequality $\kappa_1(X)\geq \kappa(C)-1$ is clear, since $f^*(mK_C)\subset Sym^m(\Omega^1_X)$ for each $m\geq 0$.  We have equality if $g(C)\leq 1$, but $\kappa_1(X)=\kappa(C)$ if $g(C)\geq 2$, by \cite{Sak}, Example 3, p. 553 . We shall thus prove that $\nu_1(X)\leq \kappa_1(X)$. Let $A$ be an ample line bundle on $X$. By Lemma \ref{inject}, there are an ample line bundle $H$ on $C$, a positive integer $R$, and an injection $f_*(A)\subset H^{\oplus R}$. From the preceding Lemma \ref{singfib}, for each integer $m\geq 0$, there is an injection:

$Sym^m(\Omega^1_X)\otimes A\subset \oplus_{p=0}^{p=m}f^*(K_X^{m-p}\otimes -[\frac{p}{N}]L\otimes H)^{\oplus R},$

where $L$ is a line bundle of degree $\ell$ at least $1$ on $C$, and $N=2$, or $3$. The degree of $K_X^{m-p}\otimes ( -[\frac{p}{N}]L)\otimes H)$ is $d(m,p)=(m-p)2(g-1)-[\frac{p}{N}].\ell+d$, if $d$ is the degree of $H$. Let $h(m):=h^0(X,Sym^m(\Omega^1_X)\otimes A)$. 

$\bullet$  If $g(C)=0$, $d(m,p)<0, \forall p, \forall m>N.d$. Thus $\nu_1(X)=-\infty$.
 
$\bullet$  If $g(C)=1$, $d(m,p)=-[\frac{p}{N}].\ell+d<0$ if $p>\frac{Nd}{\ell}$. Thus:
 
  $h(m)\leq R. (\sum_{p=0}^{p=Nd} (d-[\frac{p}{N}].\ell))\leq RNd^2$ is bounded, and $\nu_1(X)=-1$.
 
 $\bullet$ If $g(C)\geq 2$, then $h(m)\leq R. (\sum_{p=0}^{p=m}((2m(g-1))+d)$
 
 $=R.(2(g-1).m^2+m.d)$. Hence $\nu_1(X)=2-1=\kappa(C)=\kappa_1(X)$. This completes the proof of Proposition \ref{psurf'}. \end{proof}\end{proof}

\section{Pseudoeffectivity of subsheaves of $\Omega^p_X$.}\label{Sfol}

Pseudoeffective line bundles $L\subset \Omega^p_X$ have been intensively studied. Bogomolov proved that $\kappa(X,L)\leq p$. The strengthening $\nu(X,L)\leq p$ has been obtained by \cite{Mou}, \cite{Bou}. It is also proved in \cite{Dem} that the distribution annihilating $L$ is a foliation (however possibly of rank larger than $(n-p)$).

The equality $\nu=\kappa$ however fails in general for these sheaves (either subsheaves, or quotients of $\Omega^p_X$). The failure seems however to be linked with quite peculiar and interesting geometrical situations.

\begin{question}\label{rnu>kappa} The presently known examples of $L\subset \Omega^p_X$ of rank one, saturated, such that $\nu(X,L)>\kappa(X,L)$ have also $\kappa(X,L)=-\infty$. 

Are there examples with $\nu(X,L)>\kappa(X,L)\geq 0$?
\end{question}

{\bf Acknowledgements:} Many thanks to the referees for their careful reading of the first versions, pointing out gaps, inaccuracies at many places. And also for simplifications in the proofs of Lemmas \ref{lnofix} and \ref{tpi_1},  and also for pointing out the insufficiency of the proof of Lemma \ref{lemmapsf} in the first version of the text. Their help lead to a considerable improvement of the text.

\end{document}